%% file: main.tex
\documentclass[a4paper,11pt,reqno]{article}
\usepackage{relsize}
\usepackage{color}
\usepackage{mathtools}
\usepackage[dvipsnames,table]{xcolor}
\usepackage[noadjust]{cite}
\usepackage{mathrsfs}
\usepackage{float}

\usepackage{hyperref, enumitem}

\hypersetup{
  colorlinks   = true, 
  urlcolor     = blue, 
  linkcolor    = Purple, 
  citecolor   = red 
}

\usepackage{amsmath,amsthm,amssymb}
\usepackage{xurl}
\hypersetup{breaklinks=true}
\usepackage[margin=2.5cm]{geometry}
\usepackage{bm}

\usepackage{graphicx}

\usepackage{algorithmic}
\usepackage[T1]{fontenc}
\usepackage{authblk}
\usepackage[english]{babel}
\usepackage{stackrel}

\usepackage[capitalize,noabbrev]{cleveref}

\usepackage{tikz}
\usetikzlibrary{decorations.pathreplacing}
\usetikzlibrary{shapes.geometric,quotes}
\usetikzlibrary{shapes.misc}

\usepackage{tikz-cd}

\usepackage{pgf}
\usepackage{tikz}
\usepackage{tikz-3dplot}
\usepackage[outline]{contour}
\usepackage{ mathdots }
\usepackage{ytableau}
\usepackage{adjustbox}

\tikzset{cross/.style={cross out, draw=black, minimum size=2*(#1-\pgflinewidth), inner sep=0pt, outer sep=0pt}, 
cross/.default={1pt}}
\theoremstyle{definition}
\newtheorem{theorem}{Theorem}[section]

\newtheorem{proposition}[theorem]{Proposition}
\newtheorem{corollary}[theorem]{Corollary}
\newtheorem{lemma}[theorem]{Lemma}
\newtheorem{definition}[theorem]{Definition}
\newtheorem{example}[theorem]{Example}
\newtheorem{remark}[theorem]{Remark}

\newtheorem{conjecture}[theorem]{Conjecture}
\newtheorem{observation}[theorem]{Observation}

\newcommand{\F}{\mathbb F}
\newcommand{\Z}{\mathbb Z}
\newcommand{\R}{\mathbb R}

\newcommand{\mA}{\mathcal A}
\newcommand{\mB}{\mathcal B}
\newcommand{\mC}{\mathcal C}
\newcommand{\C}{\mathcal C}
\newcommand{\mD}{\mathcal D}

\newcommand{\mE}{\mathcal E}
\newcommand{\mG}{\mathcal G}
\newcommand{\mF}{\mathcal F}
\newcommand{\mR}{\mathcal R}
\newcommand{\mT}{\mathcal T}
\newcommand{\mS}{\mathcal S}

\newcommand{\card}[1]{\left| #1\right|}

\usepackage{todonotes}

\usepackage[ruled,linesnumbered]{algorithm2e}
\SetKwComment{Comment}{// }{}
\DontPrintSemicolon

\newcommand{\N}{\mathbb{N}}

\newcommand{\irr}{\ensuremath{\mathfrak{Irr}}}
\newcommand{\poly}{\ensuremath{\mathfrak{P}}}
\newcommand{\spoly}
{\ensuremath{\mathfrak{K}}}
\newcommand{\lpoly}
{\ensuremath{\mathfrak{L}}}

\usepackage{comment}
\usepackage{subcaption}

\DeclareMathOperator{\rk}{rk}

\usepackage{authblk}
\title{Irreducible Ferrers
diagrams in the Etzion-Silberstein conjecture }%(and their polytopal characterization)
\author[1]{Hugo Beeloo-Sauerbier Couv\'ee}
\author[2]{Alessandro Neri}
\affil[1]{Associate Professorship of Coding and Cryptography, Technical University of Munich, Germany, \url{hugo.sauerbier-couvee@tum.de}}
\affil[2]{Department of Mathematics and Applications ``R. Caccioppoli'', University of Naples Federico II, Italy, \url{alessandro.neri@unina.it}}
\date{ }

\rmfamily\mdseries\scshape

\begin{document}

\maketitle
%\tableofcontents

%\newpage
\begin{abstract}
\noindent The Etzion--Silberstein conjecture asserts that, for any finite field $\mathbb F$, Ferrers diagram $\mathcal D$, and integer $d$, there exists a linear matrix code supported on $\mathcal D$ with minimum rank distance $d$ that attains a natural upper bound on its dimension. Codes achieving this bound are called \emph{maximum Ferrers diagram} (MFD) codes. While the conjecture has been established for several classes of diagrams (including rectangular, monotone, and MDS-constructible cases), it remains open in general.

In this paper, we study the \emph{reducibility} of Ferrers diagrams. For a fixed distance $d$, a diagram $\mathcal D$ is said to \emph{reduce} to $\mathcal D'$ if an MFD code for $(\mathcal D,d)$ can be obtained from one for $(\mathcal D',d)$ via shortening or inclusion. Diagrams that are not reducible are called \emph{irreducible}. We show that the conjecture holds for all diagrams if and only if it holds for irreducible ones, thereby reducing the problem to this fundamental class.

Our main result provides a complete characterization of irreducible diagrams: for each $d$, they correspond exactly to the integer points of a polytope $\mathfrak{P}_d \subset \mathbb{R}^{2d-3}$. We prove that these polytopes are integral, enabling the use of Ehrhart-theoretic tools to study their structure. Finally, we formulate a new conjecture on puncturing and inclusion of maximum rank distance codes, and show that it arises as a special case of the Etzion--Silberstein conjecture.
\end{abstract}

{
  \hypersetup{linkcolor=black}
  \tableofcontents
}

\newpage

\section{Introduction}

\noindent Linear spaces of matrices with prescribed rank conditions have been widely studied in mathematics because of their relevance to several research areas. Already in the late 50's, James first \cite{james1957whitehead}, and Adams later \cite{adams1962vector}, investigated linear spaces of non-singular square matrices over the reals, in connection with the classical problem in differential geometry of the hairy ball theorem and in general of vector fields on spheres. Additional fields of research that involve linear spaces of matrices with rank properties include algebraic geometry \cite{westwick1972spaces}, Clifford algebras and mathematical physics \cite{hopf1940systeme,lounesto2001clifford}, and Young measures in PDE theory and calculus of variations \cite{bhattacharya1994restrictions}.

More recently, matrix spaces with rank constraints have attracted significant attention in coding theory, due to the growing interest in rank-metric codes, originally introduced by Delsarte \cite{delsarte1978bilinear}. These are spaces of matrices over a (typically finite) field, in which the error-correction capability is proportional to the minimum rank of a nonzero element, called the \emph{minimum rank distance}. In fact, one of the main goals is to design rank-metric codes whose dimension is  as large as possible for the given minimum rank distance. One of the main motivations for their study comes from applications in random network coding and subspace codes \cite{silvaetal2008a,koetter2008coding}.
 In the last decade, rank-metric codes have revealed deep connections to additional areas of mathematics and computer science, including finite geometry \cite{sheekey201913}, tensors \cite{byrne2019tensor},  semifields \cite{sheekey2020new}, and matroids \cite{Jurrius_Pellikaan_2018}.

The study of rank-metric codes has been generalized to spaces of matrices with prescribed support, with a particular focus on matrices supported on Ferrers diagrams. This development was largely motivated by a question posed by Etzion and Silberstein in 2009 \cite{etzion2009error}. In that work, they conjectured that, over any finite field and for any Ferrers diagram, there exists a space of matrices with prescribed minimum rank distance whose dimension attains a certain theoretical upper bound. This statement is now known as the \emph{Etzion-Silberstein conjecture}.  Because of the origins of this problem, throughout the paper we will use the terminology coming from coding theory, although the problem itself is purely algebraic and combinatorial in nature.

\subsection*{Formal setting}

For the purposes of this paper, the set of natural numbers $\N$ will be $\{1,2,3,\ldots\}$.
For a natural number $n$, the set $\{1,2,\ldots,n\}$ is denoted $[n]$. 

Let $\F$ be a finite field, $m,n$ natural numbers and $\F^{m \times n}$ the space of $(m \times n)$-matrices over $\F$. This space is naturally equipped with a distance function $d_{\operatorname{R}}$ called the \emph{rank metric}:
\[
d_{\operatorname{R}}(A,B) := \rk(B - A),
\]
where $\rk(M)$ is the $\F$-rank of a matrix $M \in \F^{m \times n}$. Classical rank-metric coding theory,  originally introduced by Delsarte in 1978 over finite fields \cite{delsarte1978bilinear}, studies (linear) codes in $\F^{m \times n}$ endowed with  $d_{\operatorname{R}}$. As a natural generalization, Etzion and Silberstein initiated in 2009 the study of \emph{Ferrers diagram rank-metric codes} \cite{etzion2009error}. These are spaces of matrices over a field $\F$ supported on a given Ferrers diagram $\mD$, i.e., a finite subset $\mD \subseteq \N^2$ satisfying the condition: if $(x,y) \in \mD$, then $(i,j) \in \mD$ for all $i \in [x]$ and all $j \in [y]$. Formally, for a given Ferrers diagram $\mD \subseteq [n]^2$, the space of matrices supported on $\mD$ is given by
\[
\F^{\mD}:=\left\{M\in \F^{n\times n}\,:\, m_{ij}=0 \mbox{ if } (i,j)\notin \mD \right\}.
\]
A $[\mD,k,d]_\F$ \emph{(Ferrers diagram rank-metric) code} $\mC$ is a $k$-dimensional $\F$-subspace of $\F^{\mD}$ such that 
\[
d:=\min\{\rk(M)\,:\, M\in \mC\setminus \{0\}\}.
\]
The original motivation for the study of these codes was to provide a tool to construct large subspace codes in random network coding thanks to the so-called \emph{multilevel construction} \cite{etzion2009error}. We will not go into details of how Ferrers diagrams are related to network coding, but we will rather give the minimum background needed to understand the main problem dealt with in this paper: the \emph{Etzion-Silberstein conjecture}. However, the interested reader is referred to \cite{silvaetal2008a,etzion2009error,silva2009metrics,etzion2011error,koetter2008coding} for the use of rank-metric and subspace codes in random network coding.

\subsection*{The Etzion-Silberstein conjecture}

For a $[\mD,k,d]_{\F}$ code $\mC$, Etzion and Silberstein derived a bound on its parameters {\cite[Theorem 1]{etzion2009error}}. In particular, for a fixed diagram $\mD$ and minimum distance $d$, the dimension of $\mC$ is upper-bounded by
\[
k \leq \nu_{\min}(\mD,d) := \min_{0\le j \le d-1}\nu_j(\mD,d),
\]
where the quantity $\nu_j(\mD,d)$ is equal to the cardinality of $\mD$ after deleting the first $j$ columns and the first $d-1-j$ rows. This bound relies on the usual arguments for Singleton-like bounds in coding theory, and at the same time, Etzion and Silberstein also conjectured this bound to be tight over \emph{every} finite field $\F$. Namely, a $[\mD,k,d]_{\F}$ code is called a \emph{maximum Ferrers diagram (MFD) code} if $k=\nu_{\min}(\mD,d)$. The \emph{Etzion-Silberstein conjecture} now states:

\medskip

{
\noindent
\textbf{Conjecture} (\cite[Conjecture 1]{etzion2009error})\textbf{.}{
For every Ferrers diagram $\mD$, every positive integer $d$ and every finite field $\F$, there exists a $[\mD,\nu_{\min}(\mD,d),d]_{\F}$ MFD code.
}}

\medskip

Of course, the Etzion-Silberstein conjecture can be restricted to some families of pairs $(\mD,d)$ and to some finite fields $\F$. In that case, we say that the \emph{Etzion-Silberstein conjecture holds for the pair $(\mD,d)$ over $\F$} if there exists a $[\mD,\nu_{\min}(\mD,d),d]_{\F}$ MFD code.

\subsection*{Some known constructions and solved cases}

Since its formulation, the Etzion--Silberstein conjecture has been proved to hold only in certain cases; see, e.g., \cite{etzion2016optimal,antrobus2019maximal,gorla2017subspace,liu2019constructions,liu2019several,neri2023proof,rakhi2023}. In general, however, it remains widely open. The case of rectangular $[m]\times [n]$ Ferrers diagrams was already settled by Delsarte \cite{delsarte1978bilinear}; in this special case, MFD codes are also known as \emph{maximum rank distance (MRD) codes}, according to classical rank metric coding theory. Since then, only a few systematic constructions of MFD codes have been obtained for other families of Ferrers diagrams. 

In \cite{etzion2016optimal}, the conjecture was proved for \emph{MDS-constructible} pairs $(\mD,d)$ over sufficiently large fields. This requirement on the field size stems from the use of MDS codes in the Hamming metric, whose existence imposes lower bounds on the field size. This restriction was recently removed in \cite{neri2023proof}: the main idea is to construct MFD codes on all triangular diagrams $\mT_n$ with column heights $(n,n-1,\ldots,2,1)$ using flags arising from a special nilpotent endomorphism, and then apply a \emph{reduction argument} (see Proposition~\ref{prop: reduction}) to prove the conjecture for all MDS-constructible pairs over arbitrary finite fields. 

Additional systematic constructions of MFD codes for special block Ferrers diagrams have been obtained in \cite{rakhi2023} using automorphisms of function fields, in \cite{neri2023proof} through special flags of subspaces, and recently generalized in \cite{calderini2026} via maximum sum-rank distance codes.

\subsection*{Our contributions}

In this paper, we investigate the notion of \emph{reducibility} of Ferrers diagrams; for a fixed integer $d$, a diagram $\mathcal{D}$ \emph{reduces} to $\mathcal{D}'$ if an MFD code for $(\mathcal{D},d)$ can be obtained by shortening or inclusion of some MFD code for $(\mathcal{D}’,d)$. If a diagram $\mathcal{D}$ does not reduce to any other diagram for this $d$, we call the pair $(\mD,d)$ \emph{irreducible}. First, we show that the Etzion-Silberstein conjecture for irreducible diagram pairs implies the conjecture for all diagrams.

\medskip

\noindent
\textbf{Theorem} \textbf{\ref{thm: es conjecture true iff irred}}\textbf{.}
The Etzion-Silberstein conjecture holds true if and only if it holds true for every irreducible pair $(\mD,d)$ and finite field $\F$.

\medskip

\noindent
As one of our main results, we characterize, for each integer $d$, all sufficiently large irreducible diagrams in terms of the quantities $\nu_j(\mD,d)$.

\medskip

%\newpage

\noindent
\textbf{Theorem} \textbf{\ref{thm:main} (Main classification theorem, informally).}
Let $(\mD,d)$ be a pair with  $(d,d)\in \mD$.
Then the following are equivalent:
\begin{enumerate}[label = (\arabic*)]
\item  $(\mD,d)$ is irreducible.
\item  $(\mD,d)$ satisfies 
\begin{itemize}
    \item $\mD \cap (\N_{\geq d-1})^2 = \{d-1,\ldots,a\} \times  \{d-1,\ldots,b\}$ for some $a,b \geq d$; 
    \item $\nu_0(\mD,d) = \nu_{d-1}(\mD,d)$;
    \item $\nu_j(\mD,d) \geq \nu_0(\mD,d) $ for all $j \in [d-2]$.
\end{itemize}
\end{enumerate}

\medskip
\noindent
Next, we use this classification to define a polytope $\poly_d$ in $\mathbb{R}^{2d-3}$ for each $d$ (see Definition \ref{def: Pd constraints}) whose integer points correspond to the irreducible diagram pairs.

\medskip
\noindent
\textbf{Theorem} \textbf{\ref{thm bijection mu with polytope} (Polytopal characterization, informally).}
For each $d \geq 3$, there is a natural bijection between the integer points of $\poly_d$ and the families of irreducible diagram pairs $(\mD,d)$ with $(d,d) \in \mD$.

\medskip
\noindent
We then show that the polytopes $\poly_d$ are integral, which makes them suitable for studying their properties with Ehrhart polynomials. Additionally, we conjecture the combinatorial structure of these polytopes.
 
\medskip
\noindent
\textbf{Theorem} \textbf{\ref{thm:main_poly} (Integrality of vertices).}
 For $d \geq 3$, the polytope $\poly_d$ is an integral polytope with $3^{d-2}$ vertices.

\medskip
\noindent
\textbf{Conjecture} \textbf{\ref{conj: product triangles} (Product of triangles).}
For all $d \geq 3$, the polytope $\poly_d$ is combinatorially equivalent to the Cartesian product of $d-2$ triangles.

\medskip
\noindent
We verified this conjecture computationally for all $3 \leq d \leq 7$. In the remainder of the paper, we investigate the Etzion-Silberstein conjecture in closer detail for $d = 3$, linking it to a conjecture on MRD codes.

\medskip
\noindent
\textbf{Theorem} \textbf{\ref{thm: MRD conj d=3} (MRD code correspondence for $d=3$).} Let $\F$ be a finite field. The following are equivalent.
\begin{enumerate}[label = (\arabic*)]
    \item  The Etzion-Silberstein conjecture holds for $d=3$ over any Ferrers diagram $\mD$.
    \item For every $n\ge 3$ there exists an
    $[n\times (n-1),n(n-3),3]_\F$ MRD code $\C$ such that its puncturing on one row is contained in an $[(n-1)\times (n-1),(n-1)(n-2),2]_\F$ MRD code.
\end{enumerate}

\medskip
\noindent
Lastly, we generalize this correspondence for larger $d$ and obtain a new conjecture on MRD codes.

\medskip
\noindent
\textbf{Conjecture} \textbf{\ref{conj:punct_ext_MRD} (Puncturing-inclusion of MRD codes).}
 Let $n,d \in \N$ with $1<d<n$, and let $\F$ be a finite field. There exists an $[n\times (n-1),n(n-d),d]_\F$ MRD code $\C$ such that its puncturing on one row  is contained in an $[(n-1)\times (n-1),(n-1)(n-d+1),d-1]_\F$ MRD code.

\subsection*{Outline of the paper}

In Section \ref{sec: prelim} we give the preliminaries on Ferrers diagrams, rank-metric codes and the Etzion-Silberstein conjecture.\\

\noindent In Section \ref{sec: irred pairs} we introduce the theory of irreducible diagram pairs and reduce the Etzion-Silberstein conjecture to this class in Theorem \ref{thm: es conjecture true iff irred}. We do this by endowing the edges of  Young's lattice with an orientation, and then studying the resulting \emph{Young digraph}. \\

\noindent In Section \ref{sec:classification} we provide the necessary lemmas and propositions to prove our main classification Theorem \ref{thm:main}, which characterizes the irreducible Ferrers diagram pairs $(\mD,d)$, under the assumption that $(d,d)\in \mD$.\\

\noindent In Section \ref{sec: polytopal char} we show the correspondence between irreducible Ferrers diagram pairs and integer points of certain polytopes in Theorem \ref{thm bijection mu with polytope}. We further investigate the structure of these polytopes and prove in Theorem \ref{thm:main_poly} that they are integral.\\

\noindent Section \ref{sec: d=3} focuses on the special case of minimum rank distance $d=3$. Here, we identify 4 families of irreducible diagram pairs $(\mD,3)$, and show in Theorem \ref{thm: MRD conj d=3}, that this case is equivalent to a puncturing-inclusion property of rectangular MRD codes of minimum rank distance $3$.\\

\noindent Finally, in Section \ref{sec:punct_ext_conjecture} we extend the argument of $d=3$ to general $d$, and obtain the more general puncturing-inclusion MRD conjecture \ref{conj:punct_ext_MRD}, which arises as a special instance of the Etzion-Silberstein conjecture. 

\subsection*{Acknowledgements}

This research was partially supported by the Italian National Group for Algebraic and Geometric Structures and their Applications (GNSAGA - INdAM). A. Neri is supported by the INdAM - GNSAGA Project CUP E53C24001950001  ``Noncommutative polynomials in coding theory''. H. Beeloo-Sauerbier Couvée is supported by the German Research Council (DFG) as an ANR-DFG project under Grant no. WA 3907/9-1.

\section{Preliminaries}\label{sec: prelim}
Recall that throughout this paper, the set of natural numbers $\N$ is $\{1,2,3,\ldots\}$, and for a natural number $n$, the set $\{1,2,\ldots,n\}$ is denoted $[n]$. 

\subsection{Ferrers diagrams}
\begin{definition}
    A \textbf{Ferrers diagram} is a \textit{finite} subset $\mD \subseteq \N^2$ satisfying the condition
\begin{itemize}
    \item if $(x,y) \in \mD$, then $(i,j) \in \mD$ for all $i \in [x]$ and all $j \in [y]$.
\end{itemize}
Furthermore, $\mD$ is said to be of \textbf{order $\boldsymbol{n}$} if $\mD \subseteq [n]^2$, and of \textbf{proper order $\boldsymbol{n}$} if $n$ is minimal in this respect, i.e. $n = \min\{i \in \N \mid \mD \subseteq [i]^2\}$. 
\end{definition}

We will represent Ferrers diagrams \textit{top-left aligned}, i.e. when representing $\N^2$ as a 2-dimensional grid, the point $(i,j)$ corresponds to an element in the $i$-th row from the top and $j$-th column from the left.

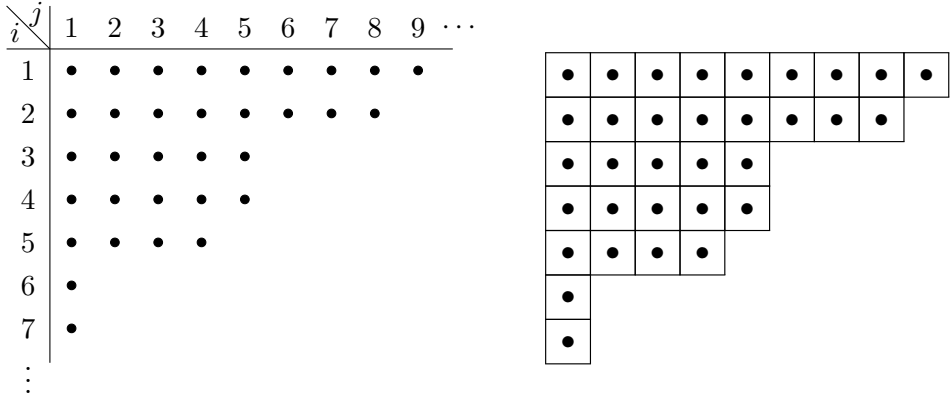
\begin{figure}[h]
\begin{center}
\begin{subfigure}[t]{0.4\textwidth}
    \begin{tikzpicture}[scale=0.57]

\draw[black,fill=black] (1,-1) circle (0.1cm);
%\draw[black] (1,-1) circle (0.25cm) node[below] {\scalebox{0.7}{$(1,1)$}};
\draw[black,fill=black]  (2,-1) circle (0.1cm);
\draw[black,fill=black]  (3,-1) circle (0.1cm);
\draw[black,fill=black] (4,-1) circle (0.1cm);
\draw[black,fill=black] (5,-1) circle (0.1cm);
\draw[black,fill=black] (6,-1) circle (0.1cm);
\draw[black,fill=black] (7,-1) circle (0.1cm);
\draw[black,fill=black] (8,-1) circle (0.1cm);
\draw[black,fill=black] (9,-1) circle (0.1cm);

\draw[black,fill=black] (1,-2) circle (0.1cm);
\draw[black,fill=black] (2,-2) circle (0.1cm);
\draw[black,fill=black] (3,-2) circle (0.1cm);
\draw[black,fill=black] (4,-2) circle (0.1cm);
\draw[black,fill=black] (5,-2) circle (0.1cm);
\draw[black,fill=black] (6,-2) circle (0.1cm);
\draw[black,fill=black] (7,-2) circle (0.1cm);
\draw[black,fill=black] (8,-2) circle (0.1cm);

\draw[fill=black] (1,-3) circle (0.1cm);
\draw[fill=black] (2,-3) circle (0.1cm);
\draw[fill=black] (3,-3) circle (0.1cm);
\draw[fill=black] (4,-3) circle (0.1cm);

\draw[fill=black] (5,-3) circle (0.1cm);

\draw[fill=black] (1,-4) circle (0.1cm);
\draw[fill=black] (2,-4) circle (0.1cm);
\draw[fill=black] (3,-4) circle (0.1cm);
\draw[fill=black] (4,-4) circle (0.1cm);
\draw[fill=black] (5,-4) circle (0.1cm);

\draw[fill=black] (1,-5) circle (0.1cm);
\draw[fill=black] (2,-5) circle (0.1cm);
\draw[fill=black] (3,-5) circle (0.1cm);
\draw[fill=black] (4,-5) circle (0.1cm);

\draw[fill=black] (1,-6) circle (0.1cm);

\draw[fill=black] (1,-7) circle (0.1cm);

\draw[-] (-0.5,0.5) -- (0.5,-0.5);
\draw[-] (0.5,0.5) -- (0.5,-7.8);
\draw[-] (-0.5,-0.5) -- (9.8,-0.5);
\draw (-0.3,0.35) node[below] {$i$};
\draw (-0.2,0.3) node[right] {$j$};

\draw (0,-1) node[] {$1$};
\draw (0,-2) node[] {$2$};
\draw (0,-3) node[] {$3$};
\draw (0,-4) node[] {$4$};
\draw (0,-5) node[] {$5$};
\draw (0,-6) node[] {$6$};
\draw (0,-7) node[] {$7$};
\draw (0,-8) node[] {$\vdots$};

\draw (1,0) node[] {$1$};
\draw (2,0) node[] {$2$};
\draw (3,0) node[] {$3$};
\draw (4,0) node[] {$4$};
\draw (5,0) node[] {$5$};
\draw (6,0) node[] {$6$};
\draw (7,0) node[] {$7$};
\draw (8,0) node[] {$8$};
\draw (9,0) node[] {$9$};
\draw (10,0) node[] {$\cdots$};
\end{tikzpicture}
\end{subfigure}\qquad\begin{subfigure}[t]{0.4\textwidth}\vspace{-4.65cm}
\ydiagram[\bullet]{9,8,5,5,4,1,1}\end{subfigure}
\end{center}
    \caption{Example of top-left aligned representation of a Ferrers diagram of proper order 9. In the paper, diagrams are either represented by dots alone (left), or dots surrounded by square boxes (right).}
    \label{fig:example}
\end{figure}

A Ferrers diagram of order $n$ can be compactly represented as a sequence of column heights 
$(c_1, c_2, \ldots, c_n)$ with $c_1 \geq c_2 \geq \ldots \geq c_n$ given by $c_i = \card{\mD \cap (\N \times \{i\}) }$, or as a sequence of row lengths $(r_1, r_2, \ldots, r_n)$ with $r_1 \geq r_2 \geq \ldots \geq r_n$ given by $r_i = \card{\mD \cap (\{i\} \times \N) }$. In the example of Figure \ref{fig:example}, we have $(c_1, c_2, \ldots, c_9) = (7,5,5,5,4,2,2,2,1)$ and $(r_1, r_2, \ldots, r_9) = (9,8,5,5,4,1,1,0,0)$. Note that the proper order of a diagram $\mD$ is equal to $\max\{c_1, r_1\}$.
It is occasionally useful to extend the column sequence or row sequence of a diagram by appending zeros on the end. With abuse of notation, when more convenient, we will identify the diagram with its sequence of column lengths, and write $\mD=(c_1,c_2,\ldots,c_n)$.

\newpage

The \textbf{removable points} of a Ferrers diagram $\mD$ are the points $P \in \mD$ such that $\mD \setminus \{P\}$ is also a Ferrers diagram. If $\mD$ is non-empty, these points constitute a non-empty subset of $\mD$ denoted by $\mR(\mD)$. It can be characterized as
\begin{align*}
\mR(\mD) &= \{(i, r_i) \in \mD \,\mid\,  i \geq 1\text{ with }  r_i > r_{i+1} \}\\
&= \{(c_j, j) \in \mD \,\mid\,   j \geq 1 \text{ with }  c_j > c_{j+1}\}.
\end{align*} 

Likewise, the \textbf{addible points} of $\mD$ are the points $P \in \N^2 \setminus \mD$ such that $\mD \cup \{P\}$ is also a Ferrers diagram. These points constitute a non-empty subset of $\mD$ denoted by $\mA(\mD)$. It can be characterized as
\begin{align*}
\mA(\mD) &= \{(i, r_i + 1) \in \N^2 \setminus\mD \,\mid\, i=1, \text{ or }  i \geq 2  \text{ with } r_{i-1} > r_{i}\}\\
&= \{(c_j+1, j) \in \N^2 \setminus \mD \,\mid\, j=1, \text{ or } j \geq 2  \text{ with } c_{j-1} > c_{j} \}.
\end{align*}

\begin{figure}[h]
    \centering
    \ydiagram[*(white!70!gray) ]
{9+1,8+1,5+1,0,4+1,1+1,0,0+1}
*[*(white!50!red) \bullet]{8+1,7+1,0,4+1,3+1,0,1}
*[*(white) \bullet]{8,7,5,4,3,1}

    \caption{The dotted boxes represents the  Ferrers diagram $\mD$  defined in Figure \ref{fig:example}.  The removable points $\mR(\mD)$ are represented with red background dotted boxes \scalebox{0.5}{\ydiagram[*(white!50!red) \bullet]{1}}, and the addible points $\mA(\mD)$ are represented with gray background empty boxes \scalebox{0.5}{\ydiagram[*(white!70!gray)]{1}}. }   
\end{figure}

The \textbf{adjoint} or \textbf{transpose} of a Ferrers diagram $\mD$ is formed by swapping rows and columns, i.e. the Ferrers diagram defined as
\[
\mD^\top := \{(y,x) \in \N^2 \mid (x,y) \in \mD\}.
\]

\subsection{Ferrers diagram rank-metric codes and the Etzion-Silberstein conjecture}

Let us fix a (finite) field $\F$. For a given Ferrers diagram $\mD$ of order $n$, we consider the Ferrers diagram matrix space
$$\F^{\mD}:=\left\{M\in \F^{n\times n}\,:\, m_{ij}=0 \mbox{ if } (i,j)\notin \mD \right\}.$$

\begin{definition}
    A $[\mD,k,d]_\F$ \textbf{(Ferrers diagram rank-metric) code} $\mC$ is a $k$-dimensional $\F$-subspace of $\F^{\mD}$ such that 
    $$d:=\min\{\rk(M)\,:\, M\in \mC\setminus \{0\}\}.$$
\end{definition}

Ferrers diagram rank-metric codes can be seen as natural generalizations of classical rank-metric codes, which were originally introduced by Delsarte in 1978 over finite fields \cite{delsarte1978bilinear}. In fact, if one considers $\mD=[m]\times [n]$, then a $[[m]\times[n],k,d]_{\F}$ code is simply an $\F$ subspace of $\F^{m\times n}$, that is, a classical rank-metric code and we will simply denote it as an $[m\times n,k,d]_{\F}$ code.

For a $[\mD,k,d]_{\F}$ code, Etzion and Silberstein derived a bound on their parameters.

\begin{theorem}[{\cite[Theorem 1]{etzion2009error}}]\label{thm:singES_bound}
    Let $\mD=(c_1,\ldots,c_n)$ be a Ferrers diagram of order $n$ and let $\C$ be an $[\mD,k,d]_{\F}$ code. Then
    $$k\le \nu_{\min}(\mD,d):=\min_{0\le j \le d-1}\nu_j(\mD,d),$$
    where
$$\nu_j(\mD,d):=\sum_{i=j+1}^n\max\{0,c_i-d+j\}.$$
\end{theorem}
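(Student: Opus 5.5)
The plan is the standard Singleton-type argument: restrict the code to a sub-diagram of $\mD$ and show that this restriction is injective. Fix $j$ with $0\le j\le d-1$. Let $\mD_j\subseteq\mD$ be the set of cells that survive when we delete the first $j$ columns and the first $d-1-j$ rows. This is exactly the set whose cardinality the introduction calls $\nu_j(\mD,d)$. Consider the linear map
\[
\pi_j:\F^{\mD}\to\F^{\mD_j},\qquad M\mapsto (m_{ab})_{(a,b)\in\mD_j},
\]
and restrict it to $\C$. It suffices to show that $\pi_j|_{\C}$ is injective, since then $k=\dim\C\le\dim\F^{\mD_j}=|\mD_j|$. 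Taking the minimum over $j$ gives $k\le\nu_{\min}(\mD,d)$.

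For injectivity, take $M\in\C$ with $\pi_j(M)=0$. Every nonzero entry of $M$ then lies in one of the first $j$ columns or in one of the first $d-1-j$ rows. Write $M=A+B$, where $A$ is supported on the first $j$ columns and $B$ on the remaining entries, which all lie in the first $d-1-j$ rows. Then $\rk A\le j$ and $\rk B\le d-1-j$. By subadditivity of rank, $\rk M\le d-1<d$. Since every nonzero element of $\C$ has rank at least $d$, this forces $M=0$.

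It remains to count $|\mD_j|$. The first $j$ columns are removed entirely. For $i>j$, column $i$ occupies rows $1,\dots,c_i$, and after removing the top $d-1-j$ rows it keeps $\max\{0,\,c_i-(d-1-j)\}$ cells. Summing over $i=j+1,\dots,n$ gives $|\mD_j|$. This is the only delicate step, and it is pure bookkeeping: one must match this count with the displayed normalization of $\nu_j$.

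I note a concern about that normalization. As literally displayed, the summand is $c_i-d+j$, which is one less per column than the cell count above. That stronger form fails in the basic case $d=1$, $j=0$: the displayed formula gives $\sum_i(c_i-1)$, yet $\F^{\mD}$ itself is a code of dimension $\sum_i c_i$ with $d=1$. So the argument above proves the bound with $\nu_j(\mD,d)$ read as the cardinality after deletion, as the introduction defines it. I believe the displayed shift by $j$ versus $j+1$ is a typographical slip, and I do not see a route to the literal displayed version, since the $d=1$ case contradicts it.
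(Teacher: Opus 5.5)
Your proof is correct and is the standard Singleton-type argument the paper relies on. The paper gives no proof of its own: it cites Etzion--Silberstein and notes that $\nu_j(\mD,d)$ counts the cells left after deleting the first $j$ columns and the first $d-1-j$ rows, and projecting onto those cells is injective on $\C$ because any kernel element has rank at most $j+(d-1-j)=d-1$. Your off-by-one diagnosis is also right, and the paper's own example $\mD=(5,4,4,1,1)$, $d=3$ (values $7,6,6$) and its formula $\nu_{\min}([m]\times[n],d)=\max(m,n)(\min(m,n)-d+1)$ both require the summand $\max\{0,c_i-d+1+j\}$, so the displayed $c_i-d+j$ is a typo.
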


\begin{remark}
    Let $\mD$ be a Ferrers diagram of order $n$. The quantity $\nu_j(\mD,d)$ is equal to the cardinality of $\mD$ after deleting the first $j$ columns and the first $d-1-j$ rows.

\end{remark}

    \begin{example}
Let $\mD=(5,4,4,1,1)$ be a Ferrers diagram of order $5$ and let $d=3$.     Then 
    \[
    \nu_{\min}(\mD,d)=\min\left\{\sum_{i=j+1}^{5}\max\{0,c_i-3+j\}\ :\ j\in\{0,1,2\}\right\}=\min\{7, 6, 6\}=6,
    \]
    which is the same as the minimum among the number of dots remaining after performing the following deletions:

      \bigskip 
            \centering\reflectbox{\begin{tikzpicture}[scale=0.5]
 \draw[help lines, very thick, white] (0,1) -- (0,-4) -- (5,-4)--(5,1)--(0,1);
\draw[red, dashed] (0,0.5) -- (5,0.5);
\draw[red, dashed] (0,-0.5) -- (5,-0.5);
\draw (0.5,0.5) node [cross=3.5pt,red,thick] {};
\draw (1.5,0.5) node [cross=3.5pt,red,thick] {};
\draw (2.5,0.5) node [cross=3.5pt,red,thick] {};
\draw (3.5,0.5) node [cross=3.5pt,red,thick] {};
\draw (4.5,0.5) node [cross=3.5pt,red,thick] {};
\draw (2.5,-0.5) node [cross=3.5pt,red,thick] {};
\draw (3.5,-0.5) node [cross=3.5pt,red,thick] {};
\draw (4.5,-0.5) node [cross=3.5pt,red,thick] {};
\draw[fill=black] (0.5,0.5) circle (0.1cm);
\draw[fill=black] (1.5,0.5) circle (0.1cm);
\draw[fill=black] (2.5,0.5) circle (0.1cm);
\draw[fill=black] (3.5,0.5) circle (0.1cm);
\draw[fill=black] (4.5,0.5) circle (0.1cm);
\draw[fill=black] (2.5,-0.5) circle (0.1cm);
\draw[fill=black] (3.5,-0.5) circle (0.1cm);
\draw[fill=black] (4.5,-0.5) circle (0.1cm);
\draw[fill=black] (2.5,-1.5) circle (0.1cm);
\draw[fill=black] (3.5,-1.5) circle (0.1cm);
\draw[fill=black] (4.5,-1.5) circle (0.1cm);
\draw[fill=black] (2.5,-2.5) circle (0.1cm);
\draw[fill=black] (3.5,-2.5) circle (0.1cm);
\draw[fill=black] (4.5,-2.5) circle (0.1cm);
\draw[fill=black] (4.5,-3.5) circle (0.1cm);
%%%%%%%%%%%%%%%
\draw[black] (0,0) -- (5,0);
\draw[black] (0,1) -- (5,1);
\draw[black] (2,-1) -- (5,-1);
\draw[black] (2,-2) -- (5,-2);
\draw[black] (2,-3) -- (5,-3);
\draw[black] (4,-4) -- (5,-4);
%%%%%%%%%%%
\draw[black] (5,1) -- (5,-4);
\draw[black] (4,1) -- (4,-4);
\draw[black] (3,1) -- (3,-3);
\draw[black] (2,1) -- (2,-3);
\draw[black] (1,1) -- (1,0);
\draw[black] (0,1) -- (0,0);

\end{tikzpicture}} \quad \quad
       \reflectbox{\begin{tikzpicture}[scale=0.5]
 \draw[help lines, very thick, white] (0,1) -- (0,-4) -- (5,-4)--(5,1)--(0,1);
\draw[red, dashed] (0,0.5) -- (5,0.5);
\draw[red, dashed] (4.5,1) -- (4.5,-4);
\draw (0.5,0.5) node [cross=3.5pt,red,thick] {};
\draw (1.5,0.5) node [cross=3.5pt,red,thick] {};
\draw (2.5,0.5) node [cross=3.5pt,red,thick] {};
\draw (3.5,0.5) node [cross=3.5pt,red,thick] {};
\draw (4.5,0.5) node [cross=3.5pt,red,thick] {};
\draw (4.5,-0.5) node [cross=3.5pt,red,thick] {};
\draw (4.5,-1.5) node [cross=3.5pt,red,thick] {};
\draw (4.5,-2.5) node [cross=3.5pt,red,thick] {};
\draw (4.5,-3.5) node [cross=3.5pt,red,thick] {};

\draw[fill=black] (0.5,0.5) circle (0.1cm);
\draw[fill=black] (1.5,0.5) circle (0.1cm);
\draw[fill=black] (2.5,0.5) circle (0.1cm);
\draw[fill=black] (3.5,0.5) circle (0.1cm);
\draw[fill=black] (4.5,0.5) circle (0.1cm);
\draw[fill=black] (2.5,-0.5) circle (0.1cm);
\draw[fill=black] (3.5,-0.5) circle (0.1cm);
\draw[fill=black] (4.5,-0.5) circle (0.1cm);
\draw[fill=black] (2.5,-1.5) circle (0.1cm);
\draw[fill=black] (3.5,-1.5) circle (0.1cm);
\draw[fill=black] (4.5,-1.5) circle (0.1cm);
\draw[fill=black] (2.5,-2.5) circle (0.1cm);
\draw[fill=black] (3.5,-2.5) circle (0.1cm);
\draw[fill=black] (4.5,-2.5) circle (0.1cm);
\draw[fill=black] (4.5,-3.5) circle (0.1cm);

%%%%%%%%%%%%%%%
\draw[black] (0,0) -- (5,0);
\draw[black] (0,1) -- (5,1);
\draw[black] (2,-1) -- (5,-1);
\draw[black] (2,-2) -- (5,-2);
\draw[black] (2,-3) -- (5,-3);
\draw[black] (4,-4) -- (5,-4);
%%%%%%%%%%%
\draw[black] (5,1) -- (5,-4);
\draw[black] (4,1) -- (4,-4);
\draw[black] (3,1) -- (3,-3);
\draw[black] (2,1) -- (2,-3);
\draw[black] (1,1) -- (1,0);
\draw[black] (0,1) -- (0,0);
\end{tikzpicture} }\quad \quad
                   \reflectbox{\begin{tikzpicture}[scale=0.5]
 \draw[help lines, very thick, white] (0,1) -- (0,-4) -- (5,-4)--(5,1)--(0,1);
\draw[red, dashed] (3.5,1) -- (3.5,-4);
\draw[red, dashed] (4.5,1) -- (4.5,-4);
\draw (3.5,0.5) node [cross=3.5pt,red,thick] {};
\draw (3.5,-0.5) node [cross=3.5pt,red,thick] {};
\draw (3.5,-1.5) node [cross=3.5pt,red,thick] {};
\draw (3.5,-2.5) node [cross=3.5pt,red,thick] {};
\draw (4.5,0.5) node [cross=3.5pt,red,thick] {};
\draw (4.5,-0.5) node [cross=3.5pt,red,thick] {};
\draw (4.5,-1.5) node [cross=3.5pt,red,thick] {};
\draw (4.5,-2.5) node [cross=3.5pt,red,thick] {};
\draw (4.5,-3.5) node [cross=3.5pt,red,thick] {};
\draw[fill=black] (0.5,0.5) circle (0.1cm);
\draw[fill=black] (1.5,0.5) circle (0.1cm);
\draw[fill=black] (2.5,0.5) circle (0.1cm);
\draw[fill=black] (3.5,0.5) circle (0.1cm);
\draw[fill=black] (4.5,0.5) circle (0.1cm);
\draw[fill=black] (2.5,-0.5) circle (0.1cm);
\draw[fill=black] (3.5,-0.5) circle (0.1cm);
\draw[fill=black] (4.5,-0.5) circle (0.1cm);
\draw[fill=black] (2.5,-1.5) circle (0.1cm);
\draw[fill=black] (3.5,-1.5) circle (0.1cm);
\draw[fill=black] (4.5,-1.5) circle (0.1cm);
\draw[fill=black] (2.5,-2.5) circle (0.1cm);
\draw[fill=black] (3.5,-2.5) circle (0.1cm);
\draw[fill=black] (4.5,-2.5) circle (0.1cm);
\draw[fill=black] (4.5,-3.5) circle (0.1cm);

%%%%%%%%%%%%%%%
\draw[black] (0,0) -- (5,0);
\draw[black] (0,1) -- (5,1);
\draw[black] (2,-1) -- (5,-1);
\draw[black] (2,-2) -- (5,-2);
\draw[black] (2,-3) -- (5,-3);
\draw[black] (4,-4) -- (5,-4);
%%%%%%%%%%%
\draw[black] (5,1) -- (5,-4);
\draw[black] (4,1) -- (4,-4);
\draw[black] (3,1) -- (3,-3);
\draw[black] (2,1) -- (2,-3);
\draw[black] (1,1) -- (1,0);
\draw[black] (0,1) -- (0,0);
\end{tikzpicture}}

    \end{example}

The idea behind  Theorem \ref{thm:singES_bound} is pretty standard, and it relies on the usual arguments for Singleton-like bounds in coding theory.  However, at the same time, Etzion and Silberstein also conjectured this bound to be tight over \emph{every} finite field $\F$. 

\begin{definition}
    A $[\mD,k,d]_{\F}$ code is called a \textbf{maximum Ferrers diagram (MFD) code} if $k=\nu_{\min}(\mD,d)$.
\end{definition}

When $\mD=[m]\times [n]$, it is easy to verify that $\nu_{\min}([m]\times[n],d)=\max(m,n)(\min(m,n)-d+1)$. In this case, an $[m\times n,\max(m,n)(\min(m,n)-d+1),d]_{\F}$ MFD code is also called \textbf{maximum rank distance (MRD)}. This terminology comes from the theory of classical rank-metric codes \cite{gabidulin1985theory}.

\begin{conjecture}[Etzion-Silberstein Conjecture {\cite[Conjecture 1]{etzion2009error}}]\label{con: ES conjecture}
    For every Ferrers diagram $\mD$, every positive integer $d$ and every finite field $\F$, there exists a $[\mD,\nu_{\min}(\mD,d),d]_{\F}$ MFD code.
\end{conjecture}

Of course, the Etzion-Silberstein conjecture can be restricted to some families of pairs $(\mD,d)$ and to some finite fields $\F$. In that case, we say that the \textbf{Etzion-Silberstein conjecture holds for the pair $(\mD,d)$ over $\F$} if there exists a $[\mD,\nu_{\min}(\mD,d),d]_{\F}$ MFD code. Similarly, we will say that  the \textbf{Etzion-Silberstein conjecture holds for the pair $(\mD,d)$} if there exists a $[\mD,\nu_{\min}(\mD,d),d]_{\F}$ MFD code over every finite field $\F$. Last, we will say that the \textbf{Etzion-Silberstein conjecture holds for the diagram $\mD$} if there exists a $[\mD,\nu_{\min}(\mD,d),d]_{\F}$ MFD code over every finite field $\F$ and for every value $d$.

\medskip

Since 2009, the Etzion-Silberstein conjecture has been proved to hold for some cases (see \cite{etzion2016optimal,antrobus2019maximal,gorla2017subspace,liu2019constructions,liu2019several,neri2023proof,rakhi2023}), but it is in general widely open, and no counterexample has been derived so far. It should be noted that most of the systematic constructions of MFD codes -- that is, those that work for every finite field and all the minimum distances for the given family of Ferrers diagram -- rely on the general construction of classical MRD codes over the rectangular diagrams $[m] \times [n]$ in \cite{delsarte1978bilinear,gabidulin1985theory}, and more recently, on the construction of MFD codes over the triangular diagrams $\mT_n=(n,n-1,\ldots,2,1)$ in \cite{neri2023proof}. 
In fact, starting from these general constructions, many more instances of the Etzion-Silberstein conjecture have been proved using a \emph{reduction argument} based on the upcoming Proposition \ref{prop: reduction}. We will explain this method in detail and analyze it combinatorially in the rest of the paper.

\section{Irreducible Ferrers diagram pairs}\label{sec: irred pairs}

The aim of this section is to formalize the notion of reducibility of Ferrers diagram pairs and to provide a theoretical study on irreducible Ferrers diagrams, using a graph theoretical approach.

\subsection{Reduction of Ferrers diagram pairs}

The following easy result can be considered as the stepping stone for this section, and it is directly obtained from \cite{antrobus2019maximal}.

\begin{proposition}\label{prop: reduction}
	Let $\mD$ be a Ferrers diagram of order $n$ with $|\mD|\ge 2$ and let $d$ be a positive integer with $1\le d \le n$. Let $P \in \mR(\mD)$ and $\mD' :=\mD\setminus\{P\}$. Then
	\[
	 \nu_{\min}(\mD',d) \in \{\nu_{\min}(\mD,d),\, \nu_{\min}(\mD,d)-1 \}.
	\]
	In particular:
	\begin{enumerate}[label = (\arabic*)]
		\item If $\nu_{\min}(\mD',d)=\nu_{\min}(\mD,d)$, then every $[\mD',\nu_{\min}(\mD',d),d]_\F$ code is also a $[\mD,\nu_{\min}(\mD,d),d]_\F$ code.
		\item If $\nu_{\min}(\mD',d)=\nu_{\min}(\mD,d)-1$, then for every $[\mD,\nu_{\min}(\mD,d),d]_\F$ code $\mC$, $\mC\cap \F^{\mD'}$ is a $[\mD',\nu_{\min}(\mD',d),d]_\F$ code.
	\end{enumerate}
\end{proposition}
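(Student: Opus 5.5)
The plan is to compare the quantities $\nu_j$ before and after removing $P$, and then treat the two cases (1) and (2) separately.

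\emph{Step 1: each $\nu_j$ drops by at most one.} Write $P=(c_t,t)$ with $c_t>c_{t+1}$. By the Remark following Theorem~\ref{thm:singES_bound}, $\nu_j(\mD,d)$ counts the points of $\mD$ that survive deleting the first $j$ columns and the first $d-1-j$ rows. Deleting one point changes this count by exactly $1$ if $P$ survives the deletion, and by $0$ otherwise. Concretely, the change is $1$ exactly when $t>j$ and $c_t>d-1-j$, which matches the formula $\max\{0,c_t-d+j\}$ decreasing by one. Hence
\[
\nu_j(\mD',d)\in\{\nu_j(\mD,d),\,\nu_j(\mD,d)-1\}\qquad\text{for all } 0\le j\le d-1.
\]

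\emph{Step 2: the minimum drops by at most one.} Taking minima over $j$, we get $\nu_{\min}(\mD,d)-1\le \nu_{\min}(\mD',d)\le \nu_{\min}(\mD,d)$. Indeed, every $\nu_j(\mD',d)$ is at least $\nu_j(\mD,d)-1\ge \nu_{\min}(\mD,d)-1$, and the $j$ attaining the minimum for $\mD$ gives $\nu_{\min}(\mD',d)\le\nu_{\min}(\mD,d)$. This proves the displayed claim. The hypotheses $|\mD|\ge2$ and $d\le n$ are only needed so that $\mD'$ is a nonempty Ferrers diagram of order $n$ and the bound is meaningful.

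\emph{Case (1): inclusion.} Since $\mD'\subseteq\mD$, we have $\F^{\mD'}\subseteq\F^{\mD}$. A code in $\F^{\mD'}$ is therefore a subspace of $\F^{\mD}$ with the same dimension and the same minimum rank. Its dimension is $\nu_{\min}(\mD',d)=\nu_{\min}(\mD,d)$, so it is an MFD code for $(\mD,d)$.

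\emph{Case (2): shortening.} Let $\mC$ be a $[\mD,\nu_{\min}(\mD,d),d]_\F$ code. The space $\F^{\mD'}$ is the kernel of the linear functional $M\mapsto m_P$ on $\F^{\mD}$, so it is a hyperplane of $\F^{\mD}$. It follows that $\dim(\mC\cap\F^{\mD'})\ge \nu_{\min}(\mD,d)-1=\nu_{\min}(\mD',d)$. Every nonzero element of $\mC\cap\F^{\mD'}$ lies in $\mC$, so its rank is at least $d$. Theorem~\ref{thm:singES_bound}, applied to $\mD'$, gives the reverse bound $\dim(\mC\cap\F^{\mD'})\le\nu_{\min}(\mD',d)$, hence equality holds.

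One subtlety is that the minimum distance of $\mC\cap\F^{\mD'}$ should be exactly $d$, not merely at least $d$. If its minimum distance were some $d''>d$, we would have $\nu_{\min}(\mD',d'')<\nu_{\min}(\mD',d)$, because $\nu_{\min}$ is strictly decreasing in $d$ while it is positive. Theorem~\ref{thm:singES_bound} at distance $d''$ would then contradict the dimension count.

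The main obstacle is this strict monotonicity of $\nu_{\min}$ in $d$ within the relevant range, together with the degenerate situation where $\nu_{\min}(\mD',d)=0$. In that degenerate case the code is zero, and I would follow the convention that such a pair is treated trivially. Alternatively, I would simply read ``$[\mD',k,d]$ code'' as allowing minimum distance at least $d$, which is the reading under which the reduction argument is used later in the paper.
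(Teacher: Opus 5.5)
Your proof is correct. The paper does not prove this proposition itself but attributes it to \cite{antrobus2019maximal}, and your argument is the standard one: each $\nu_j$ drops by at most one, plain inclusion gives (1), and intersecting with the hyperplane $\{m_P=0\}$ of $\F^{\mD}$ together with Theorem~\ref{thm:singES_bound} gives (2). The strict decrease you rely on to pin the distance at exactly $d$ does hold. If $\nu_{j^*}(\mD',d)=\nu_{\min}(\mD',d)>0$, then the corner $(d-j^*,j^*+1)$ of $\mS_{n,d,j^*}$ lies in $\mD'$ but outside $\mS_{n,d+1,j^*}\subseteq\mS_{n,d,j^*}$, so $\nu_{\min}(\mD',d+1)\le\nu_{\min}(\mD',d)-1$. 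The nesting $\mS_{n,e+1,j}\subseteq\mS_{n,e,j}$ then gives non-increase for all larger distances.
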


Observe that Proposition \ref{prop: reduction} gives a reduction relation between two Ferrers diagrams which are obtained one from the other by removing just an element. 

\begin{definition}
    Let $\mD$ be a Ferrers diagram of order $n$ and let $d$ be a positive integer. Let $P \in \mR(\mD)$ and $\mD' :=\mD\setminus\{P\}$. We say that $(\mD,d)$ \textbf{is reduced from} $(\mD',d)$, and denote by $\mD'\stackrel{d}{\longrightarrow} \mD$, if $\nu_{\min}(\mD',d)=\nu_{\min}(\mD,d)$. Otherwise we say that $(\mD',d)$ \textbf{is reduced from} $(\mD,d)$, and denote it by $\mD\stackrel{d}{\longrightarrow} \mD'$. Similarly, we say that $(\mD_\ell,d)$ is \textbf{reachable from} $(\mD_0,d)$ if there exist diagrams $\mD_1,\ldots,\mD_{\ell-1}$ such that $\mD_0\stackrel{d}{\longrightarrow} \mD_1 \stackrel{d}{\longrightarrow} \mD_2 \stackrel{d}{\longrightarrow} \cdots \stackrel{d}{\longrightarrow} \mD_{\ell-1} \stackrel{d}{\longrightarrow} \mD_\ell$. Note that reachability is a transitive relation.
\end{definition}

The reduction and reachability relations defined above formalize the concept of constructing MFD codes of a given minimum rank on a Ferrers diagram from an MFD code on another Ferrers diagram. Indeed, according to Proposition \ref{prop: reduction},  if
$$
\mD\stackrel{d}{\longrightarrow} \cdots \stackrel{d}{\longrightarrow} \mD'
$$
then one can construct a $[\mD',\nu_{\min}(\mD',d),d]_{\F}$ code from any $[\mD,\nu_{\min}(\mD,d),d]_{\F}$ code.

\begin{observation}\label{obs: one reducible from other}
Let $\mD_1, \mD_2$ be two Ferrers diagrams satisfying $|\mD_1\triangle\mD_2|=1$. Then
\begin{enumerate}[label = (\arabic*)]
    \item $\mD_1\stackrel{d}{\longrightarrow} \mD_2  \ \text{ or } \ \mD_2\stackrel{d}{\longrightarrow} \mD_1$.
    \item  $\mD_1\stackrel{d}{\longrightarrow} \mD_2$ holds if and only if $\mD_1^\top\stackrel{d}{\longrightarrow} \mD_2^\top$. This follows directly from noting that $\nu_j(\mD,d) = \nu_{d-1-j}(\mD^\top,d)$ for all $0 \leq j \leq d-1$.
\end{enumerate}
\end{observation}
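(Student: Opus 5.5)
Part (1) will follow from the definition of the reduction relation together with Proposition~\ref{prop: reduction}. Since $|\mD_1\triangle\mD_2|=1$, one diagram is obtained from the other by deleting a single point. Without loss of generality, I take $\mD_2=\mD_1\setminus\{P\}$. Because both $\mD_1$ and $\mD_1\setminus\{P\}$ are Ferrers diagrams, $P\in\mR(\mD_1)$. (The degenerate case $|\mD_1|=1$, where $\mD_2=\emptyset$, is handled the same way with $\nu_{\min}(\emptyset,d)=0$.) By Proposition~\ref{prop: reduction}, $\nu_{\min}(\mD_2,d)$ equals either $\nu_{\min}(\mD_1,d)$ or $\nu_{\min}(\mD_1,d)-1$. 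The definition assigns exactly one arrow in each case: $\mD_2\stackrel{d}{\longrightarrow}\mD_1$ in the first and $\mD_1\stackrel{d}{\longrightarrow}\mD_2$ in the second. The symmetric case $\mD_1=\mD_2\setminus\{P\}$ is identical.

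For part (2), the key step is the identity $\nu_j(\mD,d)=\nu_{d-1-j}(\mD^\top,d)$ for $0\le j\le d-1$. I will use the remark that $\nu_j(\mD,d)$ is the number of points of $\mD$ remaining after deleting the first $j$ columns and the first $d-1-j$ rows, i.e.
\[
\nu_j(\mD,d)=\bigl|\{(x,y)\in\mD : x>d-1-j,\ y>j\}\bigr|.
\]
Transposition $(x,y)\mapsto(y,x)$ is a bijection from this set onto $\{(y,x)\in\mD^\top : y>j,\ x>d-1-j\}$. That set is exactly what is counted by $\nu_{d-1-j}(\mD^\top,d)$, which deletes the first $d-1-j$ columns and the first $j$ rows of $\mD^\top$. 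Taking the minimum over $j$, and noting that $j\mapsto d-1-j$ permutes $\{0,\dots,d-1\}$, gives $\nu_{\min}(\mD,d)=\nu_{\min}(\mD^\top,d)$ for every Ferrers diagram $\mD$.

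Finally, transposition preserves the relevant structure. We have $|\mD_1^\top\triangle\mD_2^\top|=|\mD_1\triangle\mD_2|=1$, and $\mD_2=\mD_1\setminus\{P\}$ if and only if $\mD_2^\top=\mD_1^\top\setminus\{P^\top\}$. The direction of the arrow between $\mD_1,\mD_2$ is determined only by which diagram is smaller and whether the two $\nu_{\min}$ values agree. All of these data are preserved under $\top$, so $\mD_1\stackrel{d}{\longrightarrow}\mD_2$ if and only if $\mD_1^\top\stackrel{d}{\longrightarrow}\mD_2^\top$.

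I do not expect any genuine obstacle. The only points needing care are checking the index bookkeeping in the bijection, and confirming that the two possible arrows are mutually exclusive and exhaustive, which rests entirely on the dichotomy in Proposition~\ref{prop: reduction}.
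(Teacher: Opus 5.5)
Your proposal is correct and follows essentially the same route as the paper. Part (1) is just the dichotomy built into the definition of $\stackrel{d}{\longrightarrow}$: the ``otherwise'' clause already makes the two arrows exhaustive, so you do not actually need Proposition~\ref{prop: reduction} or its hypotheses $|\mD|\ge 2$, $d\le n$. Part (2) rests on the identity $\nu_j(\mD,d)=\nu_{d-1-j}(\mD^\top,d)$, which your bijection of the counted point sets verifies correctly, together with the fact that transposition preserves containment, cardinality and $\nu_{\min}$.
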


\begin{definition}\label{def:reducible}
    Let $\mD$ be a Ferrers diagram of any order and let $d$ be a positive integer. We say that $(\mD,d)$ is \textbf{reducible} if there exists a Ferrers diagram $\mD'$ (of any order) such that $\mD'\stackrel{d}{\longrightarrow} \mD$. Moreover, we say that $(\mD,d)$ is \textbf{irreducible} if it is not reducible.
\end{definition}

The following result is straightforward, and it comes directly from the definition of irreducible pairs. We state it properly because we will make a wide use of it throughout the paper as it provides a more usable characterization of irreducibility.

\begin{proposition}\label{prop:characterization_irreducible}
Let $\mD$ be a Ferrers diagram  and let $d$ be a positive integer. The pair $(\mD,d)$ is irreducible if and only if it satisfies these two conditions:
\begin{enumerate}[label = (\arabic*)]
    \item For every addible point $P \in \mA(\mD)$ we have $\nu_{\min}(\mD \cup \{P\},d) = \nu_{\min}(\mD,d)$;
    \item For every removable point $P \in \mR(\mD)$ we have $\nu_{\min}(\mD \setminus \{P\},d) = \nu_{\min}(\mD,d) -1$.
\end{enumerate}
\end{proposition}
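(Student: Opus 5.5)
The plan is to unfold Definition~\ref{def:reducible} and rewrite the relation $\mD'\stackrel{d}{\longrightarrow}\mD$ in terms of single-point moves. By definition, $\mD'\stackrel{d}{\longrightarrow}\mD$ is only possible when $|\mD\triangle\mD'|=1$, and there are exactly two ways this can happen.

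\emph{Case (a):} $\mD=\mD'\setminus\{P\}$ with $P\in\mR(\mD')$. Then $P\notin\mD$ and $\mD\cup\{P\}=\mD'$ is a Ferrers diagram, so $P\in\mA(\mD)$. By the definition, the arrow points from $\mD'$ to $\mD$ exactly when $\nu_{\min}(\mD',d)=\nu_{\min}(\mD,d)-1$. Here the larger diagram is $\mD'$, so the definition is stated with the roles of $\mD$ and $\mD'$ swapped. Concretely, writing $\mE=\mD'$ for the big diagram and $\mE\setminus\{P\}=\mD$, we get $\mE\stackrel{d}{\longrightarrow}\mE\setminus\{P\}$ precisely when $\nu_{\min}(\mE\setminus\{P\},d)\neq\nu_{\min}(\mE,d)$. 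By Proposition~\ref{prop: reduction} this forces $\nu_{\min}(\mD,d)=\nu_{\min}(\mD\cup\{P\},d)-1$. So $\mD$ is reducible through an addible point $P$ if and only if $\nu_{\min}(\mD\cup\{P\},d)\neq\nu_{\min}(\mD,d)$.

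\emph{Case (b):} $\mD'=\mD\setminus\{P\}$ with $P\in\mR(\mD)$. The definition gives $\mD'\stackrel{d}{\longrightarrow}\mD$ exactly when $\nu_{\min}(\mD',d)=\nu_{\min}(\mD,d)$. So $\mD$ is reducible through a removable point $P$ if and only if $\nu_{\min}(\mD\setminus\{P\},d)=\nu_{\min}(\mD,d)$. By Proposition~\ref{prop: reduction} the only other possible value is $\nu_{\min}(\mD,d)-1$.

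Combining the two cases, $(\mD,d)$ is irreducible if and only if no addible $P$ falls under case~(a) and no removable $P$ falls under case~(b). That is exactly conditions (1) and (2) of the proposition.

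The only delicate point is applying Proposition~\ref{prop: reduction} in case~(b) for $\mD'$ and in case~(a) for $\mD\cup\{P\}$. Its hypotheses are $|\mD|\ge 2$ (respectively $|\mD\cup\{P\}|\ge 2$) and $d\le n$. These can be met by taking the order $n$ large, since the order of a diagram can be chosen freely. The degenerate small cases, where $|\mD|\le 1$, I would check by hand: there $\nu_{\min}$ changes by at most one trivially, because removing one point changes each $\nu_j$ by $0$ or $1$, and hence changes their minimum by $0$ or $1$. In fact this direct observation already gives the dichotomy without invoking the proposition at all.

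I expect no real obstacle here. The main care is in keeping the direction of the arrows straight across the two cases.
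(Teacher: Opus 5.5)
Your proof is correct and is the direct unfolding of Definition~\ref{def:reducible} that the paper intends; the paper gives no written proof and simply calls the result immediate. Incoming arrows at $\mD$ come either from $\mD\cup\{P\}$ with $P\in\mA(\mD)$, which yields condition (1) with no further input, or from $\mD\setminus\{P\}$ with $P\in\mR(\mD)$; there the one-step dichotomy $\nu_{\min}(\mD\setminus\{P\},d)\in\{\nu_{\min}(\mD,d),\nu_{\min}(\mD,d)-1\}$ turns ``$\neq$'' into ``$=\nu_{\min}(\mD,d)-1$'', and you rightly observe that this dichotomy follows directly from $\nu_j(\mD,d)=|\mD\cap\mS_{n,d,j}|$ without any size or order hypotheses. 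One small slip: in case (a) your first formula is reversed and should read $\nu_{\min}(\mD,d)=\nu_{\min}(\mD',d)-1$, since $\mD'$ is the larger diagram; your restatement via $\mE$ immediately corrects it.
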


For the rest of the paper, we introduce the following notation. We use $\mathfrak D$ to denote the set of all Ferrers diagrams, and $\mathfrak D_n$ to the restriction of $\mathfrak D$ to the set of Ferrers diagrams of order $n$.

\subsection{The Young digraph}

We can graphically represent the reduction relations using the Hasse diagram of Young's lattice on integer partitions. 

\begin{definition}
     The \textbf{Young's lattice} is the poset $(\mathfrak D,\subseteq)$, where $\mathfrak D$ is the set of all the Ferrers diagrams, and the partial order is  the standard containment $\mD\subseteq \mD'$.
\end{definition}

The Hasse diagram of the Young's lattice $(\mathfrak D,\subseteq)$ is a representation as a graph, in which the set of vertices is $\mathfrak D$, and there is an edge between $\mD$ and $\mD'$ if and only if $|\mD\triangle\mD'|=1$. Thus, this is compatible with the reducibility notion given in Definition \ref{def:reducible}, provided that we give a direction to each edge. This motivates the following definition.

\begin{definition}
    Let $d$ be a positive integer. The \textbf{$d$-Young (reducibility) digraph} is a directed graph $\mathfrak G_d=(V,E)$ such that $V=\mathfrak D$, and $(\mD,\mD')\in E$ if and only if $\mD \stackrel{d}{\longrightarrow}\mD'$. The \textbf{$d$-Young (reducibility) digraph of order $n$} is the subgraph of $\mathfrak G_d$ induced from the set of vertices $\mathfrak D_n$, and it is denoted by $\mathfrak G_{d,n}$.
\end{definition}

\begin{example}\label{exa:n=d=3}
  Let us fix $n=d=3$. Below, we have a graphical representation of the $3$-Young digraph of order $3$. In this representation, we use square boxes instead of dots. Note that this digraph possesses a natural mirror symmetry, due to the involution $\mD \mapsto \mD^\top$, which preserves the reducibility relation, due to Observation \ref{obs: one reducible from other}(2).

 \bigskip 

\noindent\begin{tabular}{p{\textwidth}}
\adjustbox{scale=0.35,center}{
\input{TikZ_pictures/BulletsDigraph_n=3d=3}
} \\ 
\\
\hline 
 \footnotesize{The irreducibility digraph for $n=d=3$. The coloured diagrams are irreducible.} \\
\hline
\end{tabular}

\end{example}

\begin{example}\label{exa:n=4}
    For $n=4$, we provide below a graphical illustration of the $4$-Young digraph and of the $4$-Young digraph of order $4$. Clearly, the vertices of the two digraphs are the same, but the orientation of the edges are sometimes different. Note that in this case the graphical representation does not reflect the mirror symmetry, due to the two central Ferrers diagrams of size $8$ which are self-adjoint. %\todo[]{reformulate?}

 \bigskip 

\adjustbox{scale=0.8,center}{\begin{tabular}{c|c}
\input{TikZ_pictures/BulletsDigraph_n=4_d=3} 
 & 
\hspace*{-2cm}\input{TikZ_pictures/BulletsDigraph_n=4_d=4}

\\ 
\\
\hline 
 \multicolumn{2}{c}{\footnotesize{The irreducibility digraphs for $\mathfrak G_{3,4}$ and $\mathfrak G_{4,4}$. The coloured diagrams are irreducible.}} \\
\hline
\end{tabular}}

\end{example}

\begin{proposition}\label{prop: digraph no cycles}
For every $d \in \N$, the $d$-Young reducibility digraph $\mathfrak G_d = (V,E)$ is a directed acyclic graph (DAG), i.e. it does not contain directed cycles. As an immediate consequence, the subgraphs $\mathfrak G_{d,n}$ are DAG's.
\end{proposition}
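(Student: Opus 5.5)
We propose to exhibit a potential function $\Phi:\mathfrak D\to\R$ that strictly increases along every directed edge of $\mathfrak G_d$; the existence of such a function rules out directed cycles immediately. Summing the strict increases of $\Phi$ around a directed cycle $\mD_0\to\mD_1\to\cdots\to\mD_\ell=\mD_0$ would give $\Phi(\mD_0)<\Phi(\mD_0)$, a contradiction.

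The natural candidate is built from the two quantities that change along an edge: the cardinality $|\mD|$ and the bound $\nu_{\min}(\mD,d)$. Recall the two kinds of edges. First, if $\mD'=\mD\setminus\{P\}$ and $\nu_{\min}(\mD',d)=\nu_{\min}(\mD,d)$, the edge is $\mD'\to\mD$: it increases $|\mD|$ by one and keeps $\nu_{\min}$ fixed. Second, if $\nu_{\min}(\mD',d)=\nu_{\min}(\mD,d)-1$ (the only other option, by Proposition \ref{prop: reduction}), the edge is $\mD\to\mD'$: it decreases $|\mD|$ by one and decreases $\nu_{\min}$ by one. Hence the quantity
\[
\Phi(\mD):=|\mD|-2\,\nu_{\min}(\mD,d)
\]
increases by $1$ along edges of the first kind, and changes by $-1+2=+1$ along edges of the second kind. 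So $\Phi$ increases by exactly one along every edge, and acyclicity follows. (Equivalently, one could use the lexicographic pair $(-\nu_{\min},|\mD|)$.)

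The only point requiring care is checking that Proposition \ref{prop: reduction} applies to every edge. Its hypotheses are $|\mD|\ge 2$ and $1\le d\le n$ for some order $n$. If $|\mD|=1$, then $\mD'=\emptyset$, and $\nu_{\min}(\emptyset,d)=0$ while $\nu_{\min}(\{(1,1)\},d)\in\{0,1\}$, so the dichotomy still holds directly. The condition $d\le n$ can always be met, because a diagram of order $n$ is also of order $n'$ for any $n'\ge n$, and $\nu_j$ does not depend on the chosen order; we simply take $n\ge d$. Even without these checks, the orientation rule in the definition ensures that along an edge $\nu_{\min}$ either stays equal while the size grows, or the target has strictly smaller $\nu_{\min}$ and smaller size. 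In both cases $(-\nu_{\min},|\mD|)$ strictly increases lexicographically, because removing a point never increases $\nu_{\min}$ (each $\nu_j$ is monotone in $\mD$).

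Finally, the statement for $\mathfrak G_{d,n}$ is immediate, since an induced subgraph of a DAG is a DAG.
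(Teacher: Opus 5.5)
Your proof is correct and is essentially the paper's argument. The paper shows that $\nu_{\min}$ is non-increasing along every edge, hence constant around a directed cycle, which forces $|\mD|$ to grow by one at each step; this is exactly your lexicographic pair $(-\nu_{\min},|\mD|)$. Your linear potential $|\mD|-2\nu_{\min}(\mD,d)$ packages the same two facts into a single quantity. It also gives slightly more, namely a grading: every edge raises it by exactly one, so all directed paths between two fixed vertices have the same length.
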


\begin{proof}
 First note that if $(\mD,\mD') \in E$ is any directed edge, then $\card{D'} = \card{D} \pm 1$ and
 \[
\nu_{\min}(\mD',d) = \begin{cases}
\nu_{\min}(\mD,d)
  & \text{ if } \card{D'} = \card{D}+1; \\
\nu_{\min}(\mD,d)-1
    & \text{ if } \card{D'} = \card{D}-1.
\end{cases}
 \]
Now suppose there exists a directed cycle in $\mathfrak G_d$ with  vertex sequence $(\mD_0, \mD_1, \mD_2, \ldots, \mD_{\ell-1}, \mD_\ell = \mD_0)$.  Since $\nu_{\min}(\mD_i,d) \geq \nu_{\min}(\mD_{i+1},d)$ for all $0 \leq i < \ell$ but also $\nu_{\min}(\mD_0,d) = \nu_{\min}(\mD_\ell,d)$, we necessarily get $\nu_{\min}(\mD_0,d) =  \nu_{\min}(\mD_1,d) = \ldots = \nu_{\min}(\mD_{\ell-1},d) = \nu_{\min}(\mD_\ell,d)$. But then it follows that $\card{\mD_\ell} = \card{\mD_{\ell-1}} + 1 = \ldots = \card{\mD_0}+\ell$, a contradiction.
\end{proof}

\begin{definition}\label{def:n-reducible}
    Let $\mD$ be a Ferrers diagram of order $n$ and let $d$ be a positive integer. We say that $(\mD,d)$ is \textbf{$\bm{n}$-reducible} if there exists a Ferrers diagram $\mD'$ of order $n$ such that $\mD'\stackrel{d}{\longrightarrow} \mD$. Moreover, we say that $(\mD,d)$ is \textbf{$\bm{n}$-irreducible} if $\mD$ is of order $n$ and $(\mD,d)$ is not $n$-reducible.
\end{definition}

The irreducible pairs are exactly the \textit{sources} in the entire digraph $\mathfrak G_d$ (i.e. the vertices without incoming edges). Likewise, the $n$-irreducible pairs are the sources in the sub-digraph $\mathfrak G_{d,n}$. 
It is immediate that any irreducible pair $(\mD,d)$ with $\mD$ of order $n$ is also $n$-irreducible. Interestingly, the converse also holds, thereby showing that these definitions are equivalent.

\begin{proposition}\label{prop: irred and n-irred equivalent}
Let $\mD$ be a Ferrers diagram of order $n$ and let $d$ be a positive integer. Then $(\mD,d)$ is irreducible if and only if it is $n$-irreducible.    
\end{proposition}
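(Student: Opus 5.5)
The forward implication is immediate: an irreducible pair has no incoming edge in $\mathfrak G_d$, so it has none in the induced subgraph $\mathfrak G_{d,n}$. For the converse I will show that if $(\mD,d)$ is $n$-irreducible, then no diagram $\mD'$ outside $\mathfrak D_n$ satisfies $\mD'\stackrel{d}{\longrightarrow}\mD$. This will be done for $d\ge 2$; the case $d=1$ needs separate treatment, discussed at the end.

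\textbf{Reduction to two candidate points.} Any $\mD'$ with an edge to $\mD$ is either $\mD\setminus\{P\}$ with $P\in\mR(\mD)$, which is again of order $n$, or $\mD\cup\{P\}$ with $P\in\mA(\mD)$. In the second case $\mD\cup\{P\}\not\subseteq[n]^2$ forces $P=(n+1,1)$ with $c_1=n$, or $P=(1,n+1)$ with $r_1=n$. By Observation~\ref{obs: one reducible from other}(2) and the identity $\nu_j(\mD,d)=\nu_{d-1-j}(\mD^\top,d)$, these two cases are exchanged by transposition, and $n$-irreducibility is preserved by transposition. So it suffices to treat $P=(n+1,1)$ with $c_1=n$, and to show $\nu_{\min}(\mD\cup\{P\},d)=\nu_{\min}(\mD,d)$.

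Since $P$ lies in column $1$ and $d\ge2$, we have $\nu_j(\mD\cup\{P\},d)=\nu_j(\mD,d)$ for all $1\le j\le d-1$. Also $\nu_0$ increases by at most one. Hence $\nu_{\min}$ can increase only if $\nu_0(\mD,d)<\nu_j(\mD,d)$ for every $j\in[d-1]$. I will assume this strict minimality and derive a contradiction with $n$-irreducibility.

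\textbf{Contradiction argument.} Use Proposition~\ref{prop:characterization_irreducible}, applied inside $\mathfrak D_n$.
\begin{itemize}
\item By condition (2), every removable point must decrease $\nu_{\min}$. By strict minimality, $\nu_{\min}=\nu_0$, so every removable point must decrease $\nu_0$. Hence no removable point lies in the first $d-1$ rows. In particular the removable point $(i,r_i)$ with $i$ the last row of length $r_1$ has $i\ge d$, so $r_1=\dots=r_d=:r$.
\item If $r<n$ and all rows $d,\dots,n$ have length $r$, then $\mD=[n]\times[r]$. A direct computation gives $\nu_0-\nu_{d-1}=(d-1)(n-r)>0$, contradicting strict minimality.
\item If $r=n$ and all rows are full, then $\mD=[n]^2$ and $\nu_0=\nu_{d-1}$, again a contradiction.
\item Otherwise there is a first index $i>d$ with $r_i<r_{i-1}$, possibly with $r_i=0$ since $c_1=n$ guarantees all rows are nonempty. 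Then $(i,r_i+1)$ is an addible point lying in $[n]^2$ and in a row $\ge d$. Adding it raises $\nu_0$ by exactly one, and no other $\nu_j$ decreases. All the $\nu_j$ with $j\ge1$ were already $\ge\nu_0+1$, so $\nu_{\min}$ increases. This contradicts condition (1) of Proposition~\ref{prop:characterization_irreducible}.
\end{itemize}

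\textbf{Main obstacle and the case $d=1$.} The delicate step is the case analysis in the last bullet. One must check that an addible point strictly below row $d-1$ exists whenever $\mD$ is not a full rectangle $[n]\times[r]$, and verify the sign of $\nu_0-\nu_{d-1}$ on rectangles.

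The case $d=1$ genuinely behaves differently. There $\nu_{\min}=|\mD|$, so every addition increases $\nu_{\min}$. Consequently $[n]^2$ is $n$-irreducible but not irreducible, and the proposition fails for $d=1$. I would therefore state and prove the result under the assumption $2\le d$; this matches the range $1\le d\le n$ of Proposition~\ref{prop: reduction} apart from the excluded trivial case $d=1$.
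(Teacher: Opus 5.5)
Your argument is correct for $d\ge 2$ and is essentially the paper's proof. After reducing by transposition to $P=(n+1,1)$ with $c_1=n$ and deducing $\nu_0(\mD,d)<\nu_j(\mD,d)$ for all $j\in[d-1]$, the paper likewise uses $n$-irreducibility to exclude removable points in the first $d-1$ rows and addible points of $[n]^2$ in rows $\ge d$, forcing the rectangle $[n]\times[r]$ where $\nu_{d-1}\le\nu_0$. It phrases this via column heights $c_y\in\{0,n\}$ rather than your row lengths, and your formula $(d-1)(n-r)$ for $\nu_0-\nu_{d-1}$ holds only when $r\ge d-1$, although $\nu_{d-1}\le\nu_0$ holds regardless. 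Your $d=1$ remark is right: $([n]^2,1)$ is $n$-irreducible but reducible via $[n]^2\cup\{(n+1,1)\}$, and the paper's final contradiction is vacuous there, so the statement tacitly needs $d\ge 2$.
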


\begin{proof}
%\Ale{This proofs must be adapted to top-left alignment}
We will prove this by contradiction: suppose a pair $(\mD,d)$ (with $\mD$ of order $n$) is $n$-irreducible but not irreducible. 
Then the reducibility of  $(\mD,d)$ implies $\mD'\stackrel{d}{\longrightarrow} \mD$ for some diagram $\mD'$ of order $n+1$. Without loss of generality, we can assume $\mD' = \mD \cup \{(n+1,1)\}$ and $c_{1}(\mD) = n$ (the only other possibility would be the transposed case $\mD' = \mD \cup \{(1,n+1)\}$).

Since $\mD'\setminus \mD = \{(n+1,1)\}$, we have $\nu_0(\mD',d) = \nu_0(\mD,d) + 1$ and $\nu_j(\mD',d) = \nu_j(\mD,d)$ for every $j \geq 1$. As $\mD'\stackrel{d}{\longrightarrow} \mD$ together with $\card{\mD'} = \card{\mD}+1$ means that $\nu_{\min}(\mD',d) = \nu_{\min}(\mD,d)+1$, we deduce that $\nu_j(\mD,d) > \nu_0(\mD,d)$ for every $j \geq 1$.

Next we make the following two observations:
\begin{enumerate}[label = (\arabic*)]
    \item For every $y \in \{2,\ldots,n\}$, $c_y(\mD) \notin \{d-1,d,\ldots,n-1\}$. Otherwise it would be possible to add a point $P \in \{d,\ldots,n\} \times \{2,\ldots,n\}$ such that $\mD'':= \mD \cup \{P\}$ is a Ferrers diagram of order $n$ and $\mD''\stackrel{d}{\longrightarrow} \mD$ (since $\nu_0(\mD'',d) = \nu_0(\mD,d)+1 \leq \nu_j(\mD,d) \leq \nu_j(\mD'',d)$ for $j \geq 1$), contradicting $n$-irreducibility. 
    \item For every $y \in \{2,\ldots,n\}$, $c_y(\mD) \notin \{1,2,\ldots,d-1\}$. Otherwise it would be possible to remove a point $Q \in (\{1,\ldots,d-1\} \times \{2,\ldots,n\}) \cap \mD$ such that $\mD''':= \mD \setminus \{Q\}$ is a Ferrers diagram of order $n$  and $\mD'''\stackrel{d}{\longrightarrow} \mD$ (since $\nu_0(\mD''',d) = \nu_0(\mD,d) \leq \nu_j(\mD,d)-1 \leq \nu_j(\mD''',d)$ for $j \geq 1$), contradicting $n$-irreducibility. 
\end{enumerate}

Hence we have that for every $y \in \{1,\ldots,n+1\}$, $c_y(\mD) = 0$ or $c_y(\mD) = n$. The problem is now that for such a Ferrers diagram $\mD$ it is easy to see that $\nu_{\min}(\mD,d) = \nu_{d-1}(\mD,d)$, a contradiction.
\end{proof}

\begin{corollary}\label{cor: path from irred to red}
Let $\mD$ be a Ferrers diagram of order $n$ and let $d$ be any positive integer with $d\le n$. If $(\mD,d)$ is reducible, then in the $d$-Young reducibility digraph $\mathfrak G_{d,n}$ of order $n$   there is a directed path from some irreducible $(\mD',d)$ to $(\mD,d)$.
\end{corollary}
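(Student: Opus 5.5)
The plan is to walk backwards along incoming edges inside the finite digraph $\mathfrak G_{d,n}$ and let acyclicity force the walk to stop.

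Since $(\mD,d)$ is reducible and $\mD$ has order $n$, Proposition~\ref{prop: irred and n-irred equivalent} tells us that $(\mD,d)$ is not $n$-irreducible. Hence it is $n$-reducible, so there is a diagram $\mD_1$ of order $n$ with $\mD_1\stackrel{d}{\longrightarrow}\mD$; this is an edge of $\mathfrak G_{d,n}$.

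We then iterate. Set $\mD_0:=\mD$. Given $\mD_k$ of order $n$, we ask whether $(\mD_k,d)$ is $n$-irreducible.
\begin{itemize}
\item If it is, we stop.
\item If it is not, there is a diagram $\mD_{k+1}$ of order $n$ with $\mD_{k+1}\stackrel{d}{\longrightarrow}\mD_k$.
\end{itemize}
This produces a backward walk $\cdots\to\mD_{k}\to\cdots\to\mD_1\to\mD_0$ in $\mathfrak G_{d,n}$.

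The vertex set $\mathfrak D_n$ is finite, since it consists of subsets of $[n]^2$. By Proposition~\ref{prop: digraph no cycles}, $\mathfrak G_{d,n}$ is a DAG, so no vertex can repeat along the walk: a repetition would close a directed cycle. Therefore the process terminates after at most $|\mathfrak D_n|$ steps, at some $\mD_\ell$ that is $n$-irreducible.

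Applying Proposition~\ref{prop: irred and n-irred equivalent} again, $(\mD_\ell,d)$ is irreducible. Reversing the walk gives a directed path $\mD_\ell\to\cdots\to\mD_0=\mD$ lying entirely in $\mathfrak G_{d,n}$, which is the claim with $\mD':=\mD_\ell$.

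The only substantive point is ensuring that every intermediate diagram stays within order $n$. This is exactly what Proposition~\ref{prop: irred and n-irred equivalent} provides, so no real obstacle remains. The hypothesis $d\le n$ is used only to match the range of $d$ in Proposition~\ref{prop: reduction}.
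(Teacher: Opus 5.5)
Your argument is correct given Proposition~\ref{prop: irred and n-irred equivalent}, and it is essentially the paper's proof. You use that proposition to turn reducibility into $n$-reducibility and to turn the $n$-irreducible endpoint back into an irreducible pair, and finiteness plus acyclicity of $\mathfrak G_{d,n}$ force that endpoint to exist; the paper takes a longest path ending at $\mD$, while you run a terminating backward walk, which amounts to the same thing.

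One caveat applies equally to the paper. The hypothesis that actually matters is $d\ge 2$, not $d\le n$, which your argument never uses (nor does it use Proposition~\ref{prop: reduction}). For $d=1$ one has $\nu_{\min}(\mD,1)=|\mD|$, so every pair is reducible and no irreducible pair exists. For example, $([n]^2,1)$ is $n$-irreducible but not irreducible. Hence both the cited proposition and the statement itself fail when $d=1$.
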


\begin{proof}
First, because of Proposition \ref{prop: irred and n-irred equivalent}, $(\mD,d)$ is $n$-reducible with $n$ the order of $\mD$. As $\mathfrak G_{d,n}$ is a DAG and moreover finite, there must be a directed path in $\mathfrak G_{d,n}$ from some $n$-irreducible $(\mD',d)$ to $(\mD,d)$ (if we look at all directed paths ending in $(\mD,d)$, these paths cannot be arbitrarily long because of finiteness; a path of maximum length has to start at an $n$-irreducible because of acyclicness).  
\end{proof}

From the notion of irreducibility and from Corollary \ref{cor: path from irred to red} we obtain a statement equivalent to the Etzion-Silberstein conjecture (Conjecture \ref{con: ES conjecture}).

\begin{theorem}\label{thm: es conjecture true iff irred}
    The Etzion-Silberstein conjecture holds true if and only if it holds true for every irreducible pair $(\mD,d)$ and finite field $\F$. 
\end{theorem}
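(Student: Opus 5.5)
The forward direction is immediate: if the Etzion-Silberstein conjecture holds for every pair $(\mD,d)$ over every finite field, it holds in particular for the irreducible ones. The content is the converse. Fix a finite field $\F$, a Ferrers diagram $\mD$ of order $n$ and a positive integer $d$, and assume the conjecture holds over $\F$ for every irreducible pair. We want to produce a $[\mD,\nu_{\min}(\mD,d),d]_\F$ code.

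First we dispose of the degenerate ranges of $d$ that fall outside the hypotheses of Proposition~\ref{prop: reduction} and Corollary~\ref{cor: path from irred to red}. If $d>n$, then every $\nu_j(\mD,d)=0$, because deleting $d-1\ge n$ rows and columns in total removes all of $\mD$ (each column has height at most $n$). The zero code then trivially attains the bound. Likewise, the empty diagram and diagrams with $|\mD|=1$ can be handled directly: either $\nu_{\min}=0$, or $d=1$ and the full space $\F^{\mD}$ works. In fact for $d=1$ the full space $\F^{\mD}$ is always MFD, since $\nu_{\min}(\mD,1)=\nu_0(\mD,1)=|\mD|$. So from now on we may assume $1\le d\le n$.

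Now split on whether $(\mD,d)$ is irreducible. If it is, the hypothesis gives the code directly. Otherwise $(\mD,d)$ is reducible, and Corollary~\ref{cor: path from irred to red} yields a directed path
\[
\mD_0\stackrel{d}{\longrightarrow}\mD_1\stackrel{d}{\longrightarrow}\cdots\stackrel{d}{\longrightarrow}\mD_\ell=\mD
\]
in $\mathfrak G_{d,n}$, with $(\mD_0,d)$ irreducible. By hypothesis there is a $[\mD_0,\nu_{\min}(\mD_0,d),d]_\F$ code. We propagate it along the path by induction on $i$, using Proposition~\ref{prop: reduction} at each edge.

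For an edge $\mD_i\stackrel{d}{\longrightarrow}\mD_{i+1}$, exactly one of two cases occurs.
\begin{itemize}
\item If $\mD_i=\mD_{i+1}\setminus\{P\}$ with equal $\nu_{\min}$, then by part (1) of Proposition~\ref{prop: reduction} the MFD code on $\mD_i$ is itself MFD on $\mD_{i+1}$ (inclusion).
\item If $\mD_{i+1}=\mD_i\setminus\{P\}$, then by definition of the orientation $\nu_{\min}$ drops by one, and part (2) of Proposition~\ref{prop: reduction} says that intersecting with $\F^{\mD_{i+1}}$ gives an MFD code on $\mD_{i+1}$ (shortening).
\end{itemize}
Both steps need $|\cdot|\ge 2$ for the larger diagram and $d\le n$, which hold inside $\mathfrak G_{d,n}$ for nonempty diagrams. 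The only minor checks are the degenerate edges involving the empty diagram, which can be verified directly since there $\nu_{\min}=0$.

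The main obstacle is essentially bookkeeping. The substantive input is already in Corollary~\ref{cor: path from irred to red}, which rests on Proposition~\ref{prop: irred and n-irred equivalent}: it guarantees that the source of the path is irreducible in the global sense, not merely $n$-irreducible. This matters because the hypothesis only covers globally irreducible pairs. Beyond that, one only has to confirm that the edge-direction convention matches the two cases of Proposition~\ref{prop: reduction}, and that the ranges of $d$ excluded by that proposition are trivial.
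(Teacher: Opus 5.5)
Your proposal is correct and follows the paper's argument: the paper obtains the theorem directly from Corollary~\ref{cor: path from irred to red}, whose source is globally irreducible by Proposition~\ref{prop: irred and n-irred equivalent}, and then propagates the code along the path by inclusion or shortening via Proposition~\ref{prop: reduction}, exactly as you do. One small slip: for $d>n$ it is not true that every $\nu_j(\mD,d)$ vanishes (e.g.\ $\mD=[2]^2$, $d=3$ gives $\nu_1=1$), but $\nu_0(\mD,d)=0$ already forces $\nu_{\min}(\mD,d)=0$, and you could instead simply regard $\mD$ as a diagram of order $\max(n,d)$.
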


\begin{remark}
    In \cite{neri2023proof} the Etzion-Silberstein conjecture was proved to hold for every MDS-constructible pair over any finite field. The main idea to achieve this proof was to prove the conjecture for all triangular Ferrers diagrams, that is, diagrams of the form $\mT_{n}:=(n,n-1,\ldots,2,1)$, and then use the reduction argument formalized here. Indeed, in \cite[Theorem 4.22]{neri2023proof} it was shown that every MDS-constructible pair can be reduced from a triangular diagram.
\end{remark}

It is now natural to ask how one could determine the sources of the digraph $\mathfrak G_d$, for a given $d$. First, one could restrict to the finite subgraphs $\mathfrak G_{d,n}$, and then run an algorithm for finding the sources of $\mathfrak G_{d,n}$. The computational cost of this algorithm will depend on the number of vertices and edges of $\mathfrak G_{d,n}$, which we exhibit in the following combinatorial result.

\begin{proposition}\label{prop:vertices_edges_Gdn}
    The number of vertices of $\mathfrak G_{d,n}$ is equal to $N=\binom{2n}{n}$, while the number of edges is $M=n\binom{2n-1}{n}=\frac{n}{2}\binom{2n}{n}$.
\end{proposition}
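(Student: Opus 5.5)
The vertices of $\mathfrak G_{d,n}$ are the Ferrers diagrams contained in $[n]^2$. I would encode each such diagram by its boundary lattice path. Using the column-height description $\mD=(c_1,\ldots,c_n)$ with $n\ge c_1\ge c_2\ge\cdots\ge c_n\ge 0$, the boundary separating $\mD$ from $[n]^2\setminus\mD$ is a monotone lattice path from one corner of the $n\times n$ square to the opposite corner. It consists of $n$ horizontal unit steps (one per column) and $n$ vertical unit steps (one per unit of height), so it is a word of length $2n$ with exactly $n$ letters of each kind. This is a bijection: the heights $c_j$ are read off as the number of vertical steps occurring before the $j$-th horizontal step, counted appropriately. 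Hence $N=\binom{2n}{n}$. Equivalently, non-increasing sequences of length $n$ with values in $\{0,\ldots,n\}$ correspond to multisets of size $n$ from $n+1$ values, and $\binom{2n}{n}$ counts these.

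For the edges, note first that the orientation plays no role. By Observation \ref{obs: one reducible from other}(1), every pair $\mD_1,\mD_2\in\mathfrak D_n$ with $|\mD_1\triangle\mD_2|=1$ carries exactly one directed edge. It cannot carry two, by Proposition \ref{prop: digraph no cycles}, or directly because the relation is defined by a dichotomy. So $M$ is the number of covering pairs in Young's lattice restricted to $[n]^2$. I would count these by their upper element: $M=\sum_{\mD\in\mathfrak D_n}|\mR(\mD)|$. In the path encoding, a removable point of $\mD$ corresponds to an outer corner of the boundary, that is, an occurrence of a fixed consecutive pattern of two letters, say ``$VH$''. Removing the point swaps this pattern to ``$HV$'', which keeps the path inside the square. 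So $M$ is the total number of occurrences of the pattern $VH$ over all words with $n$ letters $V$ and $n$ letters $H$.

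To compute this total, I would count by position. For each $i\in\{1,\ldots,2n-1\}$, the words with $V$ at position $i$ and $H$ at position $i+1$ have the remaining $2n-2$ letters free with $n-1$ of each, giving $\binom{2n-2}{n-1}$ words. Summing over $i$ gives
\[
M=(2n-1)\binom{2n-2}{n-1}=n\binom{2n-1}{n}=\frac n2\binom{2n}{n},
\]
where the last two equalities are routine binomial identities. For example, $(2n-1)\binom{2n-2}{n-1}=\frac{(2n-1)!}{(n-1)!^2}=n\binom{2n-1}{n}$.

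The main obstacle is making sure the correspondence between removable points and $VH$ corners is exact at the boundary of the square. The case to check is a point $(c_j,j)$ with $c_j=n$ or $j=n$, where the step adjacent to the corner lies on the frame of $[n]^2$. I would fix the path convention explicitly (start at the top-right corner and end at the bottom-left in the top-left-aligned picture, or the reverse) and verify that the characterization $\mR(\mD)=\{(c_j,j): c_j>c_{j+1}\}$, with the convention $c_{n+1}=0$, matches the pattern occurrences exactly. As a sanity check, $n=1$ gives $N=2$ and $M=1$.
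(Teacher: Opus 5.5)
Your proof is correct; it counts the edges in a different order from the paper, which lets you avoid the binomial identity the paper needs. Both proofs count pairs (diagram, removable cell) through boundary lattice paths. The paper groups these pairs by the removed cell: the number of paths with an outer corner at a given cell is $\binom{i+j}{i}\binom{2n-2-i-j}{n-1-i}$. For each fixed $i$ it then sums over $s=i+j$ using the upper-index form of Chu--Vandermonde, $\sum_{s}\binom{s}{i}\binom{2n-2-s}{n-1-i}=\binom{2n-1}{n}$, which does not depend on $i$ and gives $n\binom{2n-1}{n}$. You group the pairs instead by the position of the corner in the word. In the paper's notation that corner occupies positions $s+1$ and $s+2$, so you are evaluating the same double sum in the other order: over $i$ with $s$ fixed. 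In that order the inner sum is the ordinary Vandermonde convolution $\sum_i\binom{s}{i}\binom{2n-2-s}{n-1-i}=\binom{2n-2}{n-1}$. Your count (fix the two letters, place the remaining $2n-2$ freely) proves this bijectively, so the only identity left is the elementary $(2n-1)\binom{2n-2}{n-1}=n\binom{2n-1}{n}$. The paper's route gives finer information: how many diagrams of order $n$ have a prescribed cell removable. Yours is shorter and fully bijective, and it makes explicit something the paper leaves implicit: by the dichotomy in the definition of $\stackrel{d}{\longrightarrow}$, each covering pair inside $[n]^2$ carries exactly one arc. The boundary check you flag goes through without special cases. Traverse the boundary from the bottom-left to the top-right corner of the square in the top-left-aligned picture, and set $c_{n+1}=0$. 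Then the $j$-th horizontal step lies at height $c_j$ and is immediately followed by a vertical step exactly when $c_j>c_{j+1}$, which is the stated characterization of $\mR(\mD)$. Steps lying on the frame of the square are ordinary letters of the word and need no separate treatment.
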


\begin{proof}
    The number of Ferrers diagram of order $n$ coincides with the number of right-top paths from $(0,0)$ to $(n,n)$, which is equal to $N=\binom{2n}{n}$. For the number of edges, one needs to count for how many Ferrers diagrams of order $n$ the point $(i+1,j+1)$ is removable, and then sum over all the $(i,j) \in \{0,\ldots,n-1\}^2$. This number coincides with
    $$M=\sum_{i=0}^{n-1}\sum_{j=0}^{n-1}\binom{i+j}{i}\binom{2n-i-j-2}{n-i-1}.$$
    Let us call $s=i+j$, and rewrite the sum as
    $$M=\sum_{i=0}^{n-1}\sum_{s=i}^{n-1+i}\binom{s}{i}\binom{2n-2-s}{n-i-1}=\sum_{i=0}^{n'}\sum_{s\ge0}\binom{s}{i}\binom{2n'-s}{n'-1},$$
    where we have substituted $n'=n-1$ and extended the inner sum to all $s\ge 0$, since outside $\{i,\ldots,n'-i\}$ the quantity in the sum is $0$. By the Chu-Vandermonde identity, the inner sum is equal to $\binom{2n'+1}{n'+1}=\binom{2n-1}{n}$ and thus the whole sum is
    $$M=\sum_{i=0}^{n-1}\binom{2n-1}{n}=n\binom{2n-1}{n},$$
    completing the proof.
\end{proof}

\begin{remark} Algorithmically, determining all the sources of a digraph requires $\mathcal{O}(M+N)$ operations, where $N$ is the number of vertices and $M$ the number of edges. In this case, if we restrict ourselves to study the finite DAG $\mathfrak G_{d,n}$, by Proposition \ref{prop:vertices_edges_Gdn} we have $N=\binom{2n}{n}$ and $M=\frac{n}{2}\binom{2n}{n}$. Moreover, to construct the graph $\mathfrak G_{d,n}$, one has to  compute $\nu_{\min}(\mD,d)$, for every Ferrers diagram $\mD$ of order $n$, which requires $d$ operations. The total cost can be estimated, asymptotically in $n$ as
$$M=\frac{\sqrt{n}4^n}{2\sqrt{\pi}}(1+\mathcal{O}(n^{-1})).$$ However, in the next section, we will provide a combinatorial characterization of irreducible pairs (see Theorem \ref{thm:main}), which allows us to skip the algorithmic computation of sources in $\mathfrak G_{d,n}$. In particular, our combinatorial characterization does not depend at all on $n$.
\end{remark}

\begin{remark}
    Although not of interest for  our scope, it is worth to mention that one can easily characterize the sinks of the $d$-Young digraphs of order $n$. Indeed, the reader can verify that they consists exactly of the Ferrers diagrams of the form
     $\mathcal L_{n,d,j}=(c_1,\ldots, c_n)$,  for $0\le j <d$, given by 
    $$c_i =\begin{cases}
        n & \mbox{ if } 1\le i \le d-j-1, \\
        j & \mbox{ if } d-j \le i \le n.
    \end{cases}$$
\end{remark}

\section{Classifying irreducible pairs}\label{sec:classification}

\subsection{A classification theorem for large irreducible Ferrers diagrams}

By the nature of the $d$-Young digraph, we have that a Ferrers diagram $\mD$ is a source of the $d$-Young digraph $\mathfrak G_d$ if and only if $(\mD,d)$ is irreducible. The aim of this section to characterize the irreducible pairs $(\mD,d)$. 

 We start with three easy results, which are helpful for the reader to get familiar with the notion of irreducible pairs.

\begin{lemma}\label{lem:adjoint_irred}
    Let $\mD$ be a Ferrers diagram and let $d \in \N$. Then, $(\mD,d)$ is irreducible if and only if $(\mD^\top,d)$ is irreducible.
\end{lemma}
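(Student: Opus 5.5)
The plan is to use Observation \ref{obs: one reducible from other}(2): transposition preserves the reduction relation, i.e.\ $\mD_1\stackrel{d}{\longrightarrow}\mD_2$ if and only if $\mD_1^\top\stackrel{d}{\longrightarrow}\mD_2^\top$. This rests on the identity $\nu_j(\mD,d)=\nu_{d-1-j}(\mD^\top,d)$ for $0\le j\le d-1$, which holds because deleting the first $j$ columns and first $d-1-j$ rows of $\mD$ is the same as deleting the first $d-1-j$ columns and first $j$ rows of $\mD^\top$. Taking the minimum over $j$ gives $\nu_{\min}(\mD,d)=\nu_{\min}(\mD^\top,d)$ for every diagram.

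Next, note that $\mD\mapsto\mD^\top$ is an involution on the set $\mathfrak D$ of all Ferrers diagrams. It preserves cardinalities and symmetric differences, so $|\mD_1\triangle\mD_2|=1$ if and only if $|\mD_1^\top\triangle\mD_2^\top|=1$. Hence it is an automorphism of the Hasse diagram of Young's lattice. Since the orientation of each edge in $\mathfrak G_d$ is determined only by comparing cardinalities and $\nu_{\min}$ values, both of which are transpose-invariant, transposition is in fact a digraph automorphism of $\mathfrak G_d$.

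The conclusion follows from this automorphism. Suppose $(\mD,d)$ is reducible, so there is some $\mD'$ with $\mD'\stackrel{d}{\longrightarrow}\mD$. Then $\mD'^\top\stackrel{d}{\longrightarrow}\mD^\top$, so $(\mD^\top,d)$ is reducible. Applying the same argument to $\mD^\top$ and using $(\mD^\top)^\top=\mD$ gives the converse. Taking contrapositives yields the stated equivalence for irreducibility.

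Alternatively, one can argue directly from Proposition \ref{prop:characterization_irreducible}. Transposition gives bijections $\mA(\mD)\to\mA(\mD^\top)$ and $\mR(\mD)\to\mR(\mD^\top)$, via $P=(x,y)\mapsto(y,x)$, and satisfies $(\mD\cup\{P\})^\top=\mD^\top\cup\{P^\top\}$. Combined with the invariance of $\nu_{\min}$, this shows that conditions (1) and (2) transfer verbatim between $\mD$ and $\mD^\top$. I expect no real obstacle; the only point needing care is the index reversal $j\leftrightarrow d-1-j$ in the $\nu_j$ identity, which I would verify once using the deletion description in the Remark following Theorem \ref{thm:singES_bound}.
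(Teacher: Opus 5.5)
Your proposal is correct and follows the paper's route: the paper proves this in one line by invoking Observation \ref{obs: one reducible from other}(2), i.e.\ that transposition preserves the reduction relation because $\nu_j(\mD,d)=\nu_{d-1-j}(\mD^\top,d)$. Your argument spells out that observation and the resulting digraph automorphism of $\mathfrak G_d$ in more detail, and the index reversal $j\leftrightarrow d-1-j$ you flag checks out directly from the row/column deletion description.
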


\begin{proof}
    It directly follows from Observation \ref{obs: one reducible from other}(2).
\end{proof}

\begin{lemma}\label{lem:irreducible_empty}
    Let $(\mD,d)$ be an irreducible pair with $\nu_{\min}(\mD,d)=0$. Then $\mD=\emptyset$.
\end{lemma}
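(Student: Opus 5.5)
We argue by contradiction using the characterization of irreducibility in Proposition~\ref{prop:characterization_irreducible}. Suppose $(\mD,d)$ is irreducible with $\nu_{\min}(\mD,d)=0$ but $\mD\neq\emptyset$. Then $\mR(\mD)$ is non-empty, so we may pick a removable point $P\in\mR(\mD)$. Condition (2) of Proposition~\ref{prop:characterization_irreducible} then forces
\[
\nu_{\min}(\mD\setminus\{P\},d)=\nu_{\min}(\mD,d)-1=-1 .
\]

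This is impossible. Each $\nu_j(\mD',d)=\sum_{i>j}\max\{0,c_i-d+j\}$ is a sum of non-negative terms, so $\nu_{\min}(\mD',d)\ge 0$ for every Ferrers diagram $\mD'$. (One could also quote Proposition~\ref{prop: reduction}, which allows only the values $0$ or $-1$ here; non-negativity rules out $-1$.) The contradiction shows that $\mD=\emptyset$.

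The only care needed concerns degenerate cases. First, if $|\mD|=1$, then $\mD\setminus\{P\}=\emptyset$. The empty diagram is still a Ferrers diagram, and all its $\nu_j$ vanish, so the argument does not actually need the hypothesis $|\mD|\ge 2$ of Proposition~\ref{prop: reduction}; we use only the definition of $\nu_j$. Second, the case $d>n$ needs no separate treatment, since non-negativity of $\nu_j$ holds for any $d$. There is no real obstacle beyond these checks.
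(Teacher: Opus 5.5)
Your proof is correct and is essentially the paper's argument. The paper builds a full chain $\emptyset=\mD_0\subset\cdots\subset\mD_s=\mD$ along which $\nu_{\min}$ stays $0$, but the only edge needed to contradict irreducibility is the last one, $\mD_{s-1}\stackrel{d}{\longrightarrow}\mD$, which is exactly your one-point removal (your non-negativity step is the same fact, $0\le\nu_{\min}(\mD\setminus\{P\},d)\le\nu_{\min}(\mD,d)=0$).
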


\begin{proof}
    If $(\mD,d)$ is an irreducible pair with $\nu_{\min}(\mD,d)=0$, then $\emptyset \subset \mD$ and $\nu_{\min}(\mD,d)=0=\nu_{\min}(\emptyset,d)$. Thus, we can find a chain of Ferrers diagrams $\emptyset =\mD_0 \subset \mD_1\subset \ldots \subset \mD_{s-1}\subset \mD_s= \mD$, where $|\mD_i|=i$ and $|\mD_i\setminus \mD_{i-1}|=1$ for every $i \in [s]$. Moreover, $\nu_{\min}(\mD_i,d)=0$, and hence
    $$ \emptyset=\mD_0\stackrel{d}{\longrightarrow} \mD_1 \stackrel{d}{\longrightarrow} \mD_2 \stackrel{d}{\longrightarrow} \cdots \stackrel{d}{\longrightarrow} \mD_{s-1} \stackrel{d}{\longrightarrow} \mD_s=\mD.$$
\end{proof}

\begin{lemma}\label{lem:rectangle_irred}
    Let $a,b,d \in \N$. Then, $([a]\times [b],d)$ is irreducible if and only if $a=b\ge d$.
\end{lemma}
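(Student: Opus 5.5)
We will use the characterization of irreducibility in Proposition~\ref{prop:characterization_irreducible}. By Lemma~\ref{lem:adjoint_irred}, we may swap the roles of $a$ and $b$ whenever convenient. For the rectangle $\mD=[a]\times[b]$, i.e.\ $b$ columns of height $a$, the remark following Theorem~\ref{thm:singES_bound} (delete the first $j$ columns and the first $d-1-j$ rows) gives
\[
\nu_j([a]\times[b],d)=(b-j)^+\,(a-d+1+j)^+,\qquad 0\le j\le d-1 .
\]
The diagram has exactly one removable point, $(a,b)$, and two addible points, $(a+1,1)$ and $(1,b+1)$. So only three local checks are needed in each case. I assume $d\ge 2$ throughout: for $d=1$ we have $\nu_{\min}=|\mD|$, so the $d=1$ case depends on the indexing convention for $\nu_j$ and would be checked separately against that convention.

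\emph{Small rectangles.} Suppose $\min(a,b)<d$. If $b<d$, the term $j=b$ vanishes. If $a<d$, the term $j=0$ vanishes, since all $a$ rows are deleted. In either case $\nu_{\min}=0$ while the rectangle is nonempty, so Lemma~\ref{lem:irreducible_empty} shows it is reducible. Hence we may assume $a,b\ge d$. Then every factor is positive, and $j\mapsto(b-j)(a-d+1+j)$ is concave, so the minimum over $0\le j\le d-1$ is attained at an endpoint. A direct computation gives
\[
\nu_0-\nu_{d-1}=b(a-d+1)-a(b-d+1)=(a-b)(d-1).
\]

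\emph{Case $a=b\ge d$.} Here $\nu_0=\nu_{d-1}=\nu_{\min}$. We verify the two conditions of Proposition~\ref{prop:characterization_irreducible}.
\begin{itemize}
\item Adding $(a+1,1)$ lies in column $1$, which is deleted for every $j\ge1$. So only $\nu_0$ increases, and $\nu_{\min}$ is unchanged because $\nu_{d-1}$ still realizes it.
\item Adding $(1,b+1)$ lies in row $1$, which is deleted for every $j\le d-2$. So only $\nu_{d-1}$ increases, and $\nu_0$ still realizes the minimum.
\item Removing $(a,b)$: since $a>d-1-j$ and $b>j$ for every $j$, this point survives every deletion. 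So every $\nu_j$ drops by one, and $\nu_{\min}$ drops by one.
\end{itemize}
Both conditions hold, so the pair is irreducible.

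\emph{Case $a\neq b$.} By Lemma~\ref{lem:adjoint_irred} assume $a>b\ge d$. Then $\nu_{d-1}<\nu_0$, and by concavity $\nu_{d-1}$ is the unique strict minimum. Adding $(1,b+1)$ increases only $\nu_{d-1}$, so $\nu_{\min}$ increases by one. This violates condition (1) of Proposition~\ref{prop:characterization_irreducible}; equivalently, $\mD\cup\{(1,b+1)\}\stackrel{d}{\longrightarrow}\mD$. Hence the rectangle is reducible.

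The only delicate points are the bookkeeping of which $\nu_j$ each added or removed point affects, and the strictness of the minimum in the unbalanced case. Concavity of the product in $j$ handles the latter. Everything else is routine.
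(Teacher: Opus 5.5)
Your proof is correct and is essentially the paper's argument. The ingredients match: the same three local checks at $(a,b)$, $(1,b+1)$ and $(a+1,1)$ when $a=b\ge d$; Lemma~\ref{lem:irreducible_empty} when $\min(a,b)<d$; and, when $a>b\ge d$, adding the point that raises the strict minimum $\nu_{d-1}$ --- where your choice $(1,b+1)$ is the right one, whereas the paper's text adds $(a+1,1)$, which only changes $\nu_0$.

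Your restriction to $d\ge 2$ is genuinely necessary, not a matter of indexing convention: for $d=1$ one has $\nu_{\min}(\mD,1)=|\mD|$, so adding $(2,1)$ shows that $([1]\times[1],1)$ is reducible, and the statement (whose proof in the paper also tacitly uses $d\ge 2$) fails there.
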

\begin{proof}
    Let $\mD=[a]\times[b]$. We have $\mA(\mD)=\{(1,b+1),(a+1,1)\}$ and $\mR(\mD)=\{(a,b)\}$. 
        It is easy to see that if $a=b\ge d$, then $(\mD,d)$ is irreducible, since \begin{align*}
        \nu_{\min}(\mD\setminus\{(a,a)\},d)&=a(a-d+1)-1=\nu_{\min}(\mD,d)-1, \\
        \nu_{\min}(\mD\cup\{(1,a+1)\},d)&=\nu_0(\mD\cup\{(1,a+1)\},d)=a(a-d+1)=\nu_{\min}(\mD,d), \\
        \nu_{\min}(\mD\cup\{(a+1,1)\},d)&=\nu_{d-1}(\mD\cup\{(a+1,1)\},d)=a(a-d+1)=\nu_{\min}(\mD,d)
    \end{align*} 

    On the other hand, if $a<d$, then $\nu_0(\mD,d)=0=\nu_{\min}(\mD,d)$, and hence $(\mD,d)$ cannot be irreducible. The same holds if $b<d$. Hence, assume $a,b\ge d$ but $a\neq b$, and without loss of generality suppose that $a>b$.  In this case one has
    $\nu_{\min}(\mD,d)=\nu_{d-1}(\mD,d)=a(b-d+1)$. However, it also holds that $\nu_{d-1}(\mD\cup\{(a+1,1)\},d)=a(b-d+1)+1=\nu_{\min}(\mD,d)+1$, and thus $(\mD,d)$ is not irreducible, showing the claim. 
\end{proof}

Before proceeding with the main results, we first need to fix some notation, which we will need in the following, as done in \cite[Section 4.3]{neri2023proof}.
% \begin{align*}
% \nu_j(\mD,d) & = \sum_{i=1}^{n-j}\max\{0,c_i-d+1+j\}, 
% \end{align*}
Let us consider the following subsets of $[n]^2$: 
\begin{align*}
\mS_{n,d,j}& =\{d-j,\ldots, n\}\times\{j+1,\ldots,n\} , \\
 \mB_{n,d-1}&=\{d-1,\ldots,n\}\times \{d-1,\ldots,n\}.
\end{align*}
\begin{observation}\label{obs: nu is D cap S}
With the notation above, observe that, for any Ferrers diagram $\mD$ of order $n$, one has
$$\nu_j(\mD,d)=| \mD\cap \mS_{n,d,j}|.$$  
\end{observation}

{\begin{example}[See {\cite[Example 4.18]{neri2023proof}}]\label{ex:STL}
    For $n=8$ and $d=4$, the orange areas in the first figure below represents the sets $\mS_{8,4,j}$ for $j \in \{0,\ldots,3\}$.  
    The green area in the second figure represents instead the set $\mB_{8,3}$.
\begin{center}
    \begin{tikzpicture}[scale=0.5]

\draw[help lines, very thick, orange, fill=orange!25, fill opacity=0.5] (0,-2) -- (0,-7) -- (8,-7)--(8,-2)--(0,-2);
\draw[help lines, very thick, orange, fill=orange!25, fill opacity=0.5] (1,-1) -- (1,-7) -- (8,-7)--(8,-1)--(1,-1);
\draw[help lines, very thick, orange, fill=orange!25, fill opacity=0.5] (2,0) -- (2,-7) -- (8,-7)--(8,0)--(2,0);
\draw[help lines, very thick, orange, fill=orange!25, fill opacity=0.5] (3,1) -- (3,-7) -- (8,-7)--(8,1)--(3,1);

\draw[help lines,  very thick, draw=orange!90!white] (0,-2) -- (0,-7) -- (8,-7)--(8,-2)--(0,-2);
\draw[help lines,  very thick, draw=orange!90!black] (1,-1) -- (1,-7) -- (8,-7)--(8,-1)--(1,-1);
\draw[help lines,  very thick, draw=orange!70!black] (2,0) -- (2,-7) -- (8,-7)--(8,0)--(2,0);
\draw[help lines,  very thick, draw=orange!50!black] (3,1) -- (3,-7) -- (8,-7)--(8,1)--(3,1);

\draw[]  (0,-2) node[left] {$\mS_{8,4,0}$};
\draw[]  (1,-1) node[left] {$\mS_{8,4,1}$};
\draw[]  (2,0) node[left] {$\mS_{8,4,2}$};
\draw[]  (3,1) node[left] {$\mS_{8,4,3}$};

\foreach \x in {0,...,7}
    \foreach \y in {0,...,7} 
        \draw[black,fill=black] (\x+0.5,0.5-\y) circle (0.1cm);
\end{tikzpicture} \qquad \qquad 
    \begin{tikzpicture}[scale=0.5]
\draw[help lines, very thick, green, fill=green!15] (2,-1) -- (2,-7) -- (8,-7)--(8,-1)--(2,-1);
\draw[]  (2,-1) node[left] {$\mB_{8,3}$};

\foreach \x in {0,...,7}
    \foreach \y in {0,...,7} 
        \draw[black,fill=black] (\x+0.5,0.5-\y) circle (0.1cm);
\end{tikzpicture}
\end{center}
\end{example}
}
\begin{lemma}\label{lem: if }
    Let $\mD$ be a Ferrers diagram of order $n$, and assume that $(\mD,d)$ is irreducible. If $(i_1,\ell_1),(i_2,\ell_2) \in \mD\cap \mB_{n,d-1}$ with $i_1<i_2$, $\ell_2<\ell_1$. Then $(i_2,\ell_1)\in\mD$.
\end{lemma}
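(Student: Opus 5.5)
The strategy is to argue by contradiction, using only condition (1) of Proposition~\ref{prop:characterization_irreducible}. The idea is to exhibit an addible point of $\mD$ that lies in every set $\mS_{n,d,j}$. Adding such a point raises every $\nu_j(\mD,d)$ by one, and hence raises $\nu_{\min}(\mD,d)$, which irreducibility forbids.

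So suppose $(i_2,\ell_1)\notin\mD$, and let $(r_1,r_2,\ldots)$ be the row lengths of $\mD$.

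\begin{enumerate}[label=(\roman*)]
\item Since $(i_1,\ell_1)\in\mD$ we have $r_{i_1}\ge \ell_1$. Since $(i_2,\ell_2)\in\mD$ but $(i_2,\ell_1)\notin\mD$, we have $\ell_2\le r_{i_2}<\ell_1$.
\item Because $r_{i_1}>r_{i_2}$ and the sequence $(r_i)$ is non-increasing, there is an index $x$ with $i_1\le x\le i_2-1$ and $r_x>r_{x+1}$.
\item For this $x$, the point $Q:=(x+1,\,r_{x+1}+1)$ is addible, by the characterization of $\mA(\mD)$. If $Q$ falls outside $[n]^2$, we simply regard $\mD$ as a diagram of order $n+1$; the quantities $\nu_j$ do not depend on the chosen order.
\end{enumerate}

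Next we locate $Q$. Both given points lie in $\mB_{n,d-1}$, so $i_1\ge d-1$ and $\ell_2\ge d-1$. This gives $x+1\ge i_1+1\ge d$, and $r_{x+1}+1\ge r_{i_2}+1\ge \ell_2+1\ge d$. Hence $Q\in\{d,d+1,\ldots\}^2$.

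For every $0\le j\le d-1$ we have $d-j\le d$ and $j+1\le d$. Therefore $Q\in\mS_{N,d,j}$ for all such $j$, where $N$ is the order in use. By Observation~\ref{obs: nu is D cap S}, $\nu_j(\mD\cup\{Q\},d)=\nu_j(\mD,d)+1$ for every $j$. It follows that $\nu_{\min}(\mD\cup\{Q\},d)=\nu_{\min}(\mD,d)+1$. This contradicts condition (1) of Proposition~\ref{prop:characterization_irreducible}, and so $(i_2,\ell_1)\in\mD$.

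The only delicate step is choosing the addible point so that it lies in all the $\mS_{N,d,j}$ simultaneously. Both coordinates of $Q$ must be at least $d$, not merely $d-1$. This is exactly why we take the addible point just below a row drop strictly between rows $i_1$ and $i_2$, rather than a removable point. A removable point $(x,r_x)$ could sit in row $d-1$ and so miss $\mS_{N,d,0}$. The order-$n$ issue is harmless: irreducibility quantifies over diagrams of any order, and Proposition~\ref{prop: irred and n-irred equivalent} also shows that staying within order $n$ would suffice.
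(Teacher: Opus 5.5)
Your proof is correct and follows the paper's argument. Assuming $(i_2,\ell_1)\notin\mD$, you produce an addible point in $\{d,d+1,\ldots\}^2$, which lies in every $\mS_{n,d,j}$ and so raises $\nu_{\min}(\mD,d)$, contradicting irreducibility; the only difference is that you make explicit, via a row-length drop between rows $i_1$ and $i_2$, the existence of the addible point that the paper simply asserts. Your worry about $Q$ leaving $[n]^2$ is moot, since $r_{x+1}+1\le r_x\le n$.
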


\begin{proof}
   Assume on the contrary that $(i_2,\ell_1)\notin \mD$. Then, there must be an addible point $P\in \mA(\mD)$ belonging to $\{i_1+1,i_1+2,\ldots,i_2\}\times \{\ell_2+1,\ell_2+2,\ldots,\ell_1\}$. Since $P\in \mB_{n,d}$, then for every $j\in\{0,\ldots,d-1\}$ we have
   $$\nu_j(\mD\cup\{P\},d)=\nu_j(\mD,d)+1,$$
   and hence $\nu_{\min}(\mD\cup\{P\},d)=\nu_{\min}(\mD,d)+1$,
   implying that $\mD\cup\{P\}\stackrel{d}{\longrightarrow} \mD$  and contradicting the hypothesis of irreducibility.
\end{proof}

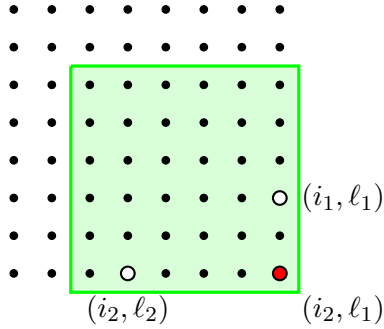
\begin{figure}[h]
\centering
    \begin{tikzpicture}[scale=0.5]
\draw[help lines, very thick, green, fill=green!15] (2,-1) -- (2,-7) -- (8,-7)--(8,-1)--(2,-1);
\foreach \x in {0,...,7}
    \foreach \y in {0,...,7} 
        \draw[black,fill=black] (\x+0.5,0.5-\y) circle (0.1cm);

\draw[black, thick, fill=white] (3.5,-6.5) circle (1.8mm); 
\draw[black, thick, fill=white] (7.5,-4.5) circle (1.8mm); 
\draw[black, thick, fill=red] (7.5,-6.5) circle (1.8mm); 

 \draw[] (3.5,-6.8) node[below] {$(i_2,\ell_2)$};
 \draw[] (7.8,-4.5) node[right] {$(i_1,\ell_1)$};
 \draw[] (7.8,-6.8) node[below right] {$(i_2,\ell_1)$};
\end{tikzpicture}
  \centering
    \caption{Visualization of Lemma \ref{lem: if }, with $\mB_{n,d-1}$ represented by the green area, the points $(i_1,\ell_1)$, $(i_2,\ell_2)$ in white and $(i_2,\ell_1)$ in red.}
 %   \label{fig:placeholder}
\end{figure}

As a consequence, if $(\mD,d)$ is irreducible and $\mD\cap \mB_{n,d-1} \neq \emptyset$, then \[
\mD\cap \mB_{n,d-1} = \{d-1,\ldots,a\} \times \{d-1,\ldots,b\}
\]
for some $a \geq d-1$ and $b \geq d-1$. This motivates us to define the  following form.

\begin{definition}
We say that a Ferrers diagram pair $(\mD,d)$, with $\mD$ of order $n$, is in $\boldsymbol{(a,b)}$\textbf{-standard form} with integers $a \geq d-1$ and $b \geq d-1$, if 
\[
\mD\cap \mB_{n,d-1} = \{d-1,\ldots,a\} \times \{d-1,\ldots,b\}.
\]
\begin{minipage}{0.6\textwidth} Note that if a Ferrers diagram pair with $\mD\cap \mB_{n,d-1} \neq \emptyset$ is in $(a,b)$-standard form, these $a$ and $b$ are clearly unique. For a pair $(\mD,d)$ in $(a,b)$-standard form we define the (transposed) subsets
\begin{itemize}
    \item $X = X(\mD,d) := (\mD \cap ([d-2] \times \{b+1,\ldots,n\}))^\top$;
	\item $Y = Y(\mD,d) := \mD \cap (\{a+1,\ldots,n\} \times [d-2])$.  
\end{itemize}
With these subsets defined, we have the decomposition $\mD = ([a] \times [b]) \sqcup X^\top \sqcup Y$.
\end{minipage}
\hfill \begin{minipage}{0.35\textwidth}
\begin{tikzpicture}[scale=0.55]
				
				%\draw[fill=white!80!red,draw=black!40!red] (0,0) rectangle ++(3,3);

                \draw[help lines, draw=black!40!red, fill=white!80!red] (0,0) -- (1,0) --(1,0.5) -- (2.5,0.5) -- (2.5,1) -- (3,1)--(3,3)--(0,3)--(0,0);
				\draw[fill=white!80!green,draw=black!40!green] (0,3) rectangle ++(6,5);
			
             \draw[help lines, draw=black!40!blue, fill=white!80!blue] (6,5) -- (6.5,5) --(6.5,5.5) -- (7.5,5.5) -- (7.5,7) -- (8,7)--(8,8)--(6,8)--(6,5);
				
				\draw (1.5,1.5) node[] {$Y$};
				\draw (3,5.5) node[] {$[a] \times [b]$};
				\draw (7,7) node[] {$X^\top$};
                \draw (5.1,6.5) node[] {\tiny{$d-2$}\Large{\textcolor{blue}{$\Bigg\{$}}};
                 \draw (1.5,3.7) node[] {\tiny{$d-2$}};
                \draw (1.5,3.3) node[] {\Large{\rotatebox[origin=c]{-90}{\textcolor{red}{$\Bigg\{$}}}};

				\end{tikzpicture} 
\end{minipage}
\end{definition}

\begin{remark}
Note that $X$ and $Y$ are shifts of Ferrers diagrams, i.e. a set of the form $\{v + p \ \mid \ p \in \mD\}$ for some $v \in \N^2$ and Ferrers diagram $\mD$. The definitions of addible and removable points are naturally extended to shifted Ferrers diagrams. Also, for every $i \in [d-1]$, we can define $c_i(X)$ (respectively, $c_i(Y)$) as for classical Ferrers diagrams, that is, $c_i(X)=|X\cap (\N \times \{i\})|$ (respectively, $c_i(Y)=|Y\cap (\N \times \{i\})|$), and we have $c_{d-1}(X) = c_{d-1}(Y) = 0$ by definition.
   
\end{remark}

\begin{figure}[h]
	\begin{center}
		\begin{tikzpicture}[scale=0.5]

\draw[fill=white!80!gray,draw=black!40!gray] (3.5,-3.5) rectangle ++(7,-7); 
\draw[fill=white!80!green,draw=black!40!green] (0.5,-0.5) rectangle ++(6,-5);    \draw[help lines, fill=white!80!blue,draw=black!40!blue] (6.5,-0.5) -- (10.5,-0.5) -- (10.5,-1.5) -- (9.5, -1.5) -- (9.5, -2.5) -- (7.5, -2.5) -- (7.5, -3.5) -- (6.5, -3.5) -- (6.5, -0.5);    \draw[fill=white!80!red,draw=black!40!red] (0.5,-5.5)--(3.5,-5.5)--(2.5,-5.5)--(2.5,-6.5)--(1.5,-6.5)--(1.5,-8.5)--(0.5,-8.5)--(0.5,-5.5);

\draw[draw=black!40!gray] (3.5,-3.5) rectangle ++(7,-7); 

\foreach \y in {1,...,8} 
    \draw[black,fill=black] (1,-\y) circle (0.1cm);
\foreach \y in {1,...,6} 
    \draw[black,fill=black] (2,-\y) circle (0.1cm);
\foreach \y in {1,...,5} 
    \draw[black,fill=black] (3,-\y) circle (0.1cm);
\foreach \y in {1,...,5} 
    \draw[black,fill=black] (4,-\y) circle (0.1cm);
\foreach \y in {1,...,5} 
    \draw[black,fill=black] (5,-\y) circle (0.1cm);
\foreach \y in {1,...,5} 
    \draw[black,fill=black] (6,-\y) circle (0.1cm);
\foreach \y in {1,...,3} 
    \draw[black,fill=black] (7,-\y) circle (0.1cm);
\foreach \y in {1,...,2} 
    \draw[black,fill=black] (8,-\y) circle (0.1cm);
\foreach \y in {1,...,2} 
    \draw[black,fill=black] (9,-\y) circle (0.1cm);
\draw[black,fill=black] (10,-1) circle (0.1cm);
\draw[] (2.3,-7.5) node[] {$Y$};
\draw[] (9.8,-2.9) node[] {$X^\top$};
\draw[] (3.5,-2.5) node[] {\scalebox{0.8}{$[a]\times[b]$}};
\draw[] (5,-4.55) node[] {\scalebox{0.8}{$\mD \cap \mB_{n,d-1}$}};
\draw[] (7,-7) node[] {\scalebox{0.8}{$\mB_{n,d-1}$}};

\end{tikzpicture}\text{}\qquad  \begin{tikzpicture}[scale=0.5]   \draw[fill=white!80!red,draw=black!40!red] (0.5,-5.5)--(3.5,-5.5)--(2.5,-5.5)--(2.5,-6.5)--(1.5,-6.5)--(1.5,-8.5)--(0.5,-8.5)--(0.5,-5.5);
\foreach \y in {6,...,8} 
    \draw[black,fill=black] (1,-\y) circle (0.1cm);
\foreach \y in {6,...,6} 
    \draw[black,fill=black] (2,-\y) circle (0.1cm);
    
\draw[] (2.3,-7.5) node[] {$Y$};

\end{tikzpicture}\text{}\qquad \begin{tikzpicture}[scale=0.5]   \draw[fill=white!80!blue,draw=black!40!blue] (6.5,-0.5) -- (9.5,-0.5) -- (9.5,-1.5) -- (8.5,-1.5) -- (8.5,-3.5) -- (7.5,-3.5) -- (7.5,-4.5) -- (6.5,-4.5) -- (6.5,-0.5);

\foreach \y in {1,...,4} 
    \draw[black,fill=black] (7,-\y) circle (0.1cm);
\foreach \y in {1,...,3} 
    \draw[black,fill=black] (8,-\y) circle (0.1cm);
\foreach \y in {1,...,1} 
    \draw[black,fill=black] (9,-\y) circle (0.1cm);

\draw[] (9.3,-3) node[] {$X$};
		\end{tikzpicture} 
        
	\end{center}
	\caption{On the left a representation of the Ferrers diagram pair $(\mD,d)$, with  $\mD = (8,6,5,5,5,5,3,2,2,1)$ of order $n=10$ and $d=5$. The pair is in $(a,b)$-standard form with $(a,b)=(5,6)$. The rectangle $[a] \times [b]$ represented in green, $\mB_{n,d-1}$ in gray, and diagrams $Y,X^\top$ in red and blue. $Y$ and $X$ are shifts of the two Ferrers diagrams on the right, each at most $d-2$ columns, with $(c_1(Y), c_2(Y), c_3(Y))=(3,1,0)$ and $(c_1(X), c_2(X), c_3(X)) = (4,3,1)$.}
\end{figure}
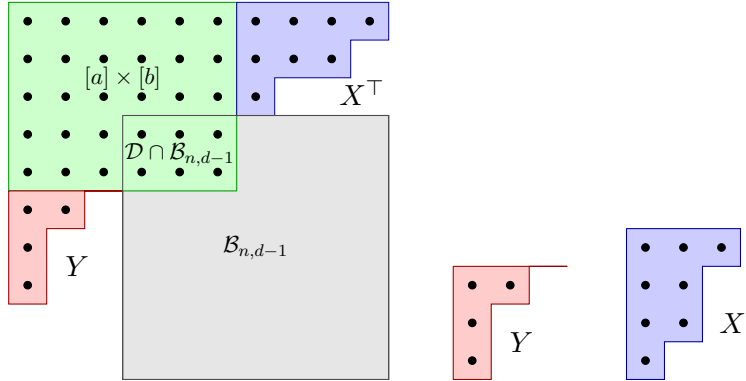

The first thing that we must observe is that we can identify addible and removable points of a Ferrers diagram in standard form, in terms of the addible and removable points of its $X$ and $Y$ components.

\begin{lemma}\label{lem: addible and removible standard form}
Let $(\mD,d)$ be a pair with $\mD\cap \mB_{n,d-1} \neq \emptyset$ in $(a,b)$-standard form. Then
\begin{enumerate}[label = (\arabic*)]
    \item $\mR(\mD)=\mR(X(\mD,d)^\top)\sqcup\mR(Y(\mD,d))\sqcup\{(a,b)\}$.
    \item $\mA(\mD)=\mA(X(\mD,d)^\top)\sqcup\mA(Y(\mD,d))$.
\end{enumerate}
\end{lemma}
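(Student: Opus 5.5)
We use the decomposition $\mD = ([a]\times[b]) \sqcup X^\top \sqcup Y$ together with the two characterizations of removable and addible points in terms of row lengths $r_i$ and column heights $c_j$. The key structural fact comes from the standard form: $\mD\cap\mB_{n,d-1}=\{d-1,\ldots,a\}\times\{d-1,\ldots,b\}$. This says that rows $d-1,\ldots,a$ have length exactly $b$ once restricted to columns $\ge d-1$, so $r_i=b$ for $d-1\le i\le a$. Likewise $c_j=a$ for $d-1\le j\le b$. Moreover, $X^\top$ lives in rows $[d-2]$ and columns $>b$, while $Y$ lives in columns $[d-2]$ and rows $>a$. Thus $r_i\ge b$ for $i\le d-2$, $r_i\le d-2$ for $i>a$, $c_j\ge a$ for $j\le d-2$, and $c_j\le d-2$ for $j>b$.

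For (1), we classify a removable point $P=(i,r_i)$ with $r_i>r_{i+1}$ according to the position of the row $i$.
\begin{itemize}
\item If $i\le d-2$ and $r_i>b$, then $P\in X^\top$. Since $r_{i+1}\ge b$ whenever $i+1\le a$, the condition $r_i>r_{i+1}$ is exactly the condition for $P$ to be removable in the shifted diagram $X^\top$, whose row lengths are $r_i-b$ for $i\le d-2$ and $0$ afterwards.
\item If $i\le d-2$ and $r_i=b$, then $r_{i+1}=b$ as well, because $i+1\le d-1\le a$. Hence no removable point arises.
\item If $d-1\le i<a$, then $r_i=r_{i+1}=b$, so again no removable point arises.
\item If $i=a$, then $r_a=b>d-2\ge r_{a+1}$, which yields exactly the point $(a,b)$.
\item If $i>a$, then $P\in Y$, and $P$ is removable in $\mD$ if and only if it is removable in $Y$, since the rows of $Y$ are the rows of $\mD$ beyond $a$.
\end{itemize}
Conversely, each removable point of $X^\top$ or $Y$ satisfies the corresponding row inequality in $\mD$, since those inequalities coincide with the ones in the component. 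The three sets are pairwise disjoint because they lie in disjoint regions of $\N^2$.

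For (2), we argue similarly, but one subtlety has to be checked. An addible point $(i,r_i+1)$ requires $i=1$ or $r_{i-1}>r_i$.
\begin{itemize}
\item For $i\le d-2$, the point lies in columns $>b$. It is addible for $\mD$ exactly when it is addible for the shifted diagram $X^\top$, where addibility is taken relative to the shift, i.e.\ the corner region is $[d-2]\times\{b+1,\ldots\}$.
\item For $d-1\le i\le a$, we have $r_i=b$. If $i=d-1$, then $r_{d-2}\ge b$, and the point $(d-1,b+1)$ is addible only if $r_{d-2}>b$. In that case it is the addible point of $X^\top$ at row $d-1$, i.e.\ directly below $X^\top$, and it counts as an addible point of the shifted diagram $X^\top$ because $c_{d-1}(X)=0$ in the shifted region. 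If $r_{d-2}=b$, the point is not addible. For $d\le i\le a$ we have $r_{i-1}=r_i$, so no addible point arises.
\item For $i=a+1$, the point $(a+1,r_{a+1}+1)$ is addible (as $r_a=b>r_{a+1}$). It lies in column $\le d-1$, and it is exactly the addible point of $Y$ in its first row.
\item For $i>a+1$, addibility in $\mD$ coincides with addibility in $Y$.
\end{itemize}

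The main obstacle is the bookkeeping at these boundary rows and columns: the row $d-1$ and column $b+1$, and the column $d-1$ and row $a+1$. Here one must fix the convention that addible points of the shifted diagrams $X^\top$ and $Y$ are taken within the ambient strips $[d-1]\times\{b+1,\ldots\}$ and $\{a+1,\ldots\}\times[d-1]$. This is what makes the point $(d-1,b+1)$, when present, belong to $\mA(X^\top)$, and makes the union disjoint. Finally, by Lemma~\ref{lem:adjoint_irred}-style symmetry (transposition swaps the roles of $X$ and $Y$), it suffices to check the $Y$-side carefully and deduce the $X^\top$-side by transposing.
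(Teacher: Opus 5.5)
Your proof is correct and follows the same route as the paper: you use the decomposition $\mD=([a]\times[b])\sqcup X^\top\sqcup Y$ together with the shifted-diagram convention at the boundary row/column $d-1$ (the points $(d-1,b+1)$ and $(a+1,r_{a+1}+1)$), only carried out as an explicit row-by-row check of the $r_i$-characterizations where the paper is terse. The one blemish is the degenerate case $d=2$, where your row-$(d-1)$ bullet refers to the nonexistent $r_0$; there $(1,b+1)$ is addible simply because it lies in row $1$, and it is the corner addible point of the empty shifted diagram $X^\top$.
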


\begin{proof}
Let $X = X(\mD,d)$ and $Y = Y(\mD,d)$.
  Statement (1) follows directly from the decomposition $\mD = X^\top \sqcup Y \sqcup ([a] \times [b]) $, noting that $\mR([a] \times [b]) = \{(a,b)\}$. We have $(a,b) \in \mR(\mD)$ because $a,b \geq d-1$ and $c_{d-1}(X) = c_{d-1}(Y) = 0$.

  Statement (2) is shown as follows: let $P \in \mA(\mD)$. First, since $a,b \geq d-1$ and $c_{d-1}(X) = c_{d-1}(Y) = 0$, we have $P \in \{a+1,\ldots,n+1\} \times [d-1]$ or $P \in [d-1] \times \{b+1,\ldots,n+1\}$, so $P \in \mA(Y)$ or $P \in \mA(X^\top)$. 
  Conversely, if $P \in \mA(Y)$, then $P$ is always an addible point of $\mD$. Note that this also holds in the edge case $Y = \emptyset$ and $P = (a+1,1)$, and in the edge case $c_{d-2}(Y)>0$ and $P=(a+1,d-1)$, because $b \geq d-1$. This holds analogously for $P \in \mA(X^\top)$. 
\end{proof}

We now present one of the main results of this paper, which classifies the irreducible Ferrers diagram pairs $(\mD,d)$ under the assumption that $\mD\cap \mB_{n,d-1}\neq \emptyset$, that is, when we have an $(a,b)$-standard form for $(\mD,d)$. This classification states that we only need to look at the values $\nu_{j}(\mD,d)$ in order to determine if $(\mD,d)$ is irreducible. Or, equivalently, we only need that $c_i(X)$ and $c_i(Y)$ satisfy certain inequalities.

\newpage

\begin{theorem}[Main classification theorem]\label{thm:main}
Let $(\mD,d)$ be a pair with $\mD\cap \mB_{n,d-1} \neq \emptyset$. Then the following are equivalent:
\begin{enumerate}[label = (\arabic*)]
\item  $(\mD,d)$ is irreducible.
\item  $(\mD,d)$ is in $(a,b)$-standard form and it satisfies 
\begin{itemize}
    \item $\nu_0(\mD,d) = \nu_{d-1}(\mD,d)$; %equivalent nu_0 = nu_{d-1} 
    % \textcolor{blue}{in general case, add $\geq 1$?}
    \item $\nu_j(\mD,d) \geq \nu_0(\mD,d) $ for all $j \in [d-2]$; 
    \item (Only when $a=b=d-1$): there exists a $j \in [d-2]$ for which  $\nu_j(\mD,d) = \nu_0(\mD,d)$.
\end{itemize}
\item  $(\mD,d)$ is in $(a,b)$-standard form with $X:=X(\mD,d)$ and $Y:=Y(\mD,d)$, and it satisfies
\begin{itemize}
    \item $\card{Y} - \card{X}= (b-a)(d-1)$; 
     \item  For all $j \in [d-2]$:
     \[
     j(b-a+d-1-j) +  \sum_{i = 1}^{j} c_{d-i} (X)  - \sum_{i = 1}^{j}c_i(Y) \geq 0;
     \]
     \item (Only when $a=b=d-1$): there exists a $j \in [d-2]$ for which equality holds above.
\end{itemize}
\end{enumerate}
\end{theorem}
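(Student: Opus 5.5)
The plan is to first compute every $\nu_j(\mD,d)$ explicitly for a pair in $(a,b)$-standard form, using Observation \ref{obs: nu is D cap S} together with the decomposition $\mD=([a]\times[b])\sqcup X^\top\sqcup Y$. Intersecting each piece with $\mS_{n,d,j}$ gives
\[
\nu_j(\mD,d)=(a-d+1+j)(b-j)+\sum_{i=1}^{j}c_{d-i}(X)+\card{Y}-\sum_{i=1}^{j}c_i(Y),
\]
where we use $c_{d-1}(X)=0$ and the fact that the rows of $X^\top$ are indexed by the columns of $X$.

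With this formula, the equivalence (2)$\Leftrightarrow$(3) is pure algebra. First, $\nu_{d-1}-\nu_0=a(b-d+1)+\card{X}-(a-d+1)b-\card{Y}$, which vanishes exactly when $\card{Y}-\card{X}=(b-a)(d-1)$. Second, the difference $\nu_j-\nu_0$ is precisely the expression $j(b-a+d-1-j)+\sum_{i\le j}c_{d-i}(X)-\sum_{i\le j}c_i(Y)$ appearing in (3). Standard form itself is forced in (1)$\Rightarrow$(2) by Lemma \ref{lem: if }.

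Next I would record which $\nu_j$ each elementary move changes; throughout I use Lemma \ref{lem: addible and removible standard form} and Proposition \ref{prop:characterization_irreducible}.
\begin{itemize}
\item A point of $Y$ (or an addible point of $Y$) in column $i\le d-1$ lies in $\mS_{n,d,j}$ exactly for $j<i$. In particular, the addible point of $Y$ in column $1$ affects only $\nu_0$.
\item Symmetrically, by transposition (Observation \ref{obs: one reducible from other}(2)), a point of $X^\top$ coming from column $i$ of $X$ lies in $\mS_{n,d,j}$ exactly for $j>d-1-i$. The addible point of $X^\top$ from column $1$ of $X$ affects only $\nu_{d-1}$.
\item The corner $(a,b)$ lies in $\mS_{n,d,j}$ iff $a\ge d-j$ and $b\ge j+1$. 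So it lies in all of them unless $a=d-1$ (then it misses $j=0$) or $b=d-1$ (then it misses $j=d-1$).
\end{itemize}
Irreducibility then says two things. For each addible point, some minimizer of $\nu$ is untouched by it. For each removable point, some minimizer is touched by it.

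For (2)$\Rightarrow$(1), assume $\nu_0=\nu_{d-1}=\nu_{\min}$. Every addible point misses $\nu_0$ or $\nu_{d-1}$, so $\nu_{\min}$ is unchanged. Every removable point of $Y$ touches $\nu_0$ and every removable point of $X^\top$ touches $\nu_{d-1}$, so $\nu_{\min}$ drops by one. The corner $(a,b)$ touches $\nu_0$ or $\nu_{d-1}$ unless $a=b=d-1$. In that case it touches exactly $\nu_1,\dots,\nu_{d-2}$, and the third bullet of (2) supplies a minimizer among these. The same corner computation gives the necessity of the third bullet in (1)$\Rightarrow$(2).

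The main obstacle is (1)$\Rightarrow$(2), namely showing that $\nu_0$ and $\nu_{d-1}$ both attain $\nu_{\min}$. The column-$1$ addible points already show that neither $\nu_0$ nor $\nu_{d-1}$ is the \emph{unique} minimizer; what remains is to show each is a minimizer at all. Let $M$ be the set of minimizers, and suppose $0\notin M$; set $k=\min M>0$. Since $\nu_k<\nu_{k-1}$, the formula gives $(b-a+d-2k)+c_{d-k}(X)-c_k(Y)<0$. I would then apply Proposition \ref{prop:characterization_irreducible} to boundary points of $Y$ and $X^\top$ in the relevant columns. If $c_k(Y)>c_{k+1}(Y)$, there is a removable point of $Y$ in column $k$, which touches only $j<k$, and none of these lie in $M$; this is a contradiction. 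Otherwise, I would propagate the equalities $c_k(Y)=c_{k+1}(Y)=\cdots$, together with the mirror statements for $\max M$ on the $X$ side, until they contradict the negativity of $\nu_k-\nu_{k-1}$. The corner $(a,b)$ should be used when $Y$ or $X$ is empty. By transposition (Lemma \ref{lem:adjoint_irred}) it suffices to treat $0\notin M$. Making this propagation argument airtight is where I expect most of the work to lie.
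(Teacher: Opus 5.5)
\textbf{There is a genuine gap, at the central step of (1)$\Rightarrow$(2).} Several parts of your plan are sound and match the paper. Your closed formula for $\nu_j$ is equivalent to Proposition~\ref{prop:nui-nui-1}, so (2)$\Leftrightarrow$(3) is immediate. Standard form is forced by the lemma on points of $\mD\cap\mB_{n,d-1}$. Your bookkeeping of which $\nu_j$ each addible or removable point touches proves (2)$\Rightarrow$(1), and the corner $(a,b)$ gives necessity of the third bullet, exactly as in the paper.

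The unproven step is showing $0\in M$. You leave it as a sketch, and the sketched continuation does not work, for three reasons.
\begin{enumerate}
\item Nothing forces $c_{k+1}(Y)=c_{k+2}(Y)=\cdots$. A removable point of $Y$ in a column $\ell>k$ lies in $\mS_{n,d,k}$ with $k\in M$. Removing it does lower $\nu_{\min}$, so irreducibility imposes no constraint on it.
\item Statements anchored at $\max M$ on the $X$ side are vacuous whenever $d-1\in M$. You cannot exclude $d-1\in M$ after reducing to $0\notin M$.
\item The contradiction is not with $\nu_k-\nu_{k-1}<0$. That negativity is what drives the argument.
\end{enumerate}
The corner $(a,b)$ plays no role in this step.

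\textbf{The missing ingredient} is an \emph{addible} point of $X^\top$ anchored at the same index $k=\min M$. Let $1\le k\le d-2$ and suppose $c_{d-k-1}(X)>c_{d-k}(X)$. Then $P=(d-k,\,b+c_{d-k}(X)+1)$ is addible and lies in $\mS_{n,d,j}$ exactly for $j\ge k$. Adding $P$ raises every $\nu_j$ with $j\in M$, hence raises $\nu_{\min}$, which contradicts irreducibility. So $c_{d-k-1}(X)=c_{d-k}(X)$. Combining this with your $c_k(Y)=c_{k+1}(Y)$, your formula gives
\[
(\nu_{k+1}-\nu_k)-(\nu_k-\nu_{k-1}) = -2+\bigl(c_{d-k-1}(X)-c_{d-k}(X)\bigr)-\bigl(c_{k+1}(Y)-c_k(Y)\bigr) = -2 .
\]
Hence $\nu_{k+1}<\nu_k=\nu_{\min}$, a contradiction. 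If instead $k=d-1$, then $M=\{d-1\}$. Your own column-$1$ addible point $(1,\,b+c_1(X)+1)$, which touches only $\nu_{d-1}$, already rules this out.

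\textbf{Comparison with the paper.} The paper runs the same mechanism globally. Lemmas~\ref{lem:cx=0} and~\ref{lem:cy=} give $c_{d-i}(X)=0$ and $c_i(Y)=c_1(Y)$ for all $i\in[k+1]$. Proposition~\ref{prop: inf decending chain nu_j} turns this into a strictly descending chain that pushes $k$ up to $d-1$. There $X=Y=\emptyset$, and Lemma~\ref{lem:rectangle_irred} gives the contradiction. Once completed as above, your local two-point version is slightly shorter and avoids the rectangle lemma.
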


\subsection{Proof of the main classification theorem}

For the proof of Theorem \ref{thm:main}, we start out by proving a few auxiliary results that hold for irreducible pairs $(\mD,d)$ with $\mD\cap \mB_{n,d-1} \neq \emptyset$ in $(a,b)$-standard form. In these proofs we often use the characterization of irreducible pairs from Proposition $\ref{prop:characterization_irreducible}$.

\begin{proposition}\label{prop:nui-nui-1}
Let $(\mD,d)$ be a pair with $\mD\cap \mB_{n,d-1} \neq \emptyset$ in $(a,b)$-standard form. Then for all $i \in [d-1]$:
\[
\nu_{i}(\mD,d)-\nu_{i-1}(\mD,d) = b-a + d - 2i + c_{d-i}(X)-c_{i}(Y).
\]
In particular, for all $j \in [d-1]$:
\[
\nu_{j}(\mD,d)-\nu_{0}(\mD,d) = j(b-a+d-1-j) +  \sum_{i = 1}^{j} c_{d-i} (X)  - \sum_{i = 1}^{j}c_i(Y).
\]
\end{proposition}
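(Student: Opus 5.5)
The plan is to compute each consecutive difference $\nu_i(\mD,d)-\nu_{i-1}(\mD,d)$ directly from Observation~\ref{obs: nu is D cap S}, $\nu_j(\mD,d)=|\mD\cap\mS_{n,d,j}|$, and then telescope to get the second formula.

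First I compare the two regions. Passing from $\mS_{n,d,i-1}=\{d-i+1,\ldots,n\}\times\{i,\ldots,n\}$ to $\mS_{n,d,i}=\{d-i,\ldots,n\}\times\{i+1,\ldots,n\}$ adds the row segment $\{d-i\}\times\{i+1,\ldots,n\}$ and removes the column segment $\{d-i+1,\ldots,n\}\times\{i\}$. These two segments are disjoint from each other and from the common part. Hence
\[
\nu_i(\mD,d)-\nu_{i-1}(\mD,d)=\bigl|\mD\cap(\{d-i\}\times\{i+1,\ldots,n\})\bigr|-\bigl|\mD\cap(\{d-i+1,\ldots,n\}\times\{i\})\bigr|.
\]

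Next I count each term using the decomposition $\mD=([a]\times[b])\sqcup X^\top\sqcup Y$ of the $(a,b)$-standard form.

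For the row term, take $i\in[d-1]$, so that $d-i\in[d-1]$ and $d-i\le a$. Then row $d-i$ of $\mD$ consists of the $b$ points of the rectangle together with the points of $X^\top$ in that row. The points of $X^\top$ lie in columns $b+1,\ldots,n$, and their number equals $c_{d-i}(X)$ by the definition of $X$ as a transpose. For $i=1$ this uses $c_{d-1}(X)=0$, which agrees with $X^\top\subseteq[d-2]\times\{b+1,\ldots,n\}$. Since $i\le d-1\le b$, restricting to columns $i+1,\ldots,n$ discards exactly $i$ rectangle points and no points of $X^\top$. The row term is therefore $b-i+c_{d-i}(X)$.

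For the column term, $i\le d-1\le b$, so column $i$ of $\mD$ consists of rows $1,\ldots,a$ from the rectangle plus $c_i(Y)$ points of $Y$ below row $a$. As $d-i\le a$, restricting to rows $d-i+1,\ldots,n$ leaves $a-(d-i)+c_i(Y)$ points. Here $c_{d-1}(Y)=0$ handles the case $i=d-1$.

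Subtracting the two counts gives $b-a+d-2i+c_{d-i}(X)-c_i(Y)$, which is the first formula.

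Finally, summing over $i=1,\ldots,j$ telescopes the left-hand side to $\nu_j(\mD,d)-\nu_0(\mD,d)$. On the right, $\sum_{i=1}^j(b-a+d-2i)=j(b-a+d)-j(j+1)=j(b-a+d-1-j)$, which gives the second formula.

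The only real obstacle is bookkeeping at the boundary: checking that $d-i\le a$ and $i\le b$ for all $i\in[d-1]$, so that the rectangle contributes full segments, and that the convention $c_{d-1}(X)=c_{d-1}(Y)=0$ covers the edge cases $i=1$ and $i=d-1$. Both follow from $a,b\ge d-1$.
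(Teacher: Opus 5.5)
Your proposal is correct and follows essentially the same route as the paper: you write $\nu_i-\nu_{i-1}$ as the count of $\mD$ on the added row segment $\{d-i\}\times\{i+1,\ldots,n\}$ minus the count on the removed column segment $\{d-i+1,\ldots,n\}\times\{i\}$, obtain $(b-i)+c_{d-i}(X)$ and $(a-d+i)+c_i(Y)$ from the decomposition $\mD=([a]\times[b])\sqcup X^\top\sqcup Y$, and then telescope. Your explicit check that $d-i\le a$ and $i\le b$ (from $a,b\ge d-1$), and that $c_{d-1}(X)=c_{d-1}(Y)=0$ covers the boundary cases, fills in bookkeeping that the paper leaves implicit.
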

\begin{proof}
First, let us recall that $\nu_i(\mD,d)=| \mD\cap \mS_{n,d,i}|$ where $\mS_{n,d,i} =\{d-i,\ldots, n\}\times\{i+1,\ldots,n\}$ (Observation \ref{obs: nu is D cap S}). Then also $\nu_i(\mD,d)=| \mD\cap  (\mS_{n,d,i} \setminus  \mS_{n,d,i-1})| + | \mD\cap  (\mS_{n,d,i} \cap  \mS_{n,d,i-1})|$, and thus
\[
\nu_{i}(\mD,d)-\nu_{i-1}(\mD,d) = |\mD\cap (\mS_{n,d,i} \setminus  \mS_{n,d,i-1})| - |\mD\cap (\mS_{n,d,i-1} \setminus  \mS_{n,d,i})|.
\]
The terms on the right-hand side are equal to
\begin{align*}
|\mD\cap (\mS_{n,d,i} \setminus  \mS_{n,d,i-1})| &= |\mD\cap (\{d-i\} \times \{i+1,\ldots,n\})| = (b - i) + c_{d-i}(X),\\
|\mD\cap (\mS_{n,d,i-1} \setminus  \mS_{n,d,i})| &= |\mD\cap (\{d-i+1,\ldots,n\})\times\{i\})| = (a - d + i) + c_{i}(Y).
\end{align*}
And thus $\nu_{i}(\mD,d)-\nu_{i-1}(\mD,d) = b-a + d - 2i + c_{d-i}(X)-c_{i}(Y)$. The last equation in the Proposition follows directly from the sum $\sum_{i=1}^j (\nu_{i}(\mD,d)-\nu_{i-1}(\mD,d)) = \nu_{j}(\mD,d)-\nu_{0}(\mD,d)$.
%\todo[]{maybe re-use picture if not too much work}
\end{proof}

\begin{lemma}\label{lem:cx=0} Let $(\mD,d)$ be an irreducible pair with $\mD\cap \mB_{n,d-1} \neq \emptyset$ in $(a,b)$-standard form.
Let $j \in \{0,\ldots,d-2\}$. If for all $i \in \{0,\ldots,j-1\}$ the inequality $\nu_i(\mD,d) > \nu_j(\mD,d)$ holds, then $c_{d-2-j+1}(X) = \ldots = c_{d-2}(X) = 0$.   
\end{lemma}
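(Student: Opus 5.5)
The plan is to argue by contradiction using the characterization of irreducibility in Proposition~\ref{prop:characterization_irreducible}. Suppose some $c_k(X)>0$ with $k\in\{d-1-j,\ldots,d-2\}$. By Lemma~\ref{lem: addible and removible standard form}, the removable points of $X^\top$ are removable points of $\mD$, and the addible points of $X^\top$ are addible points of $\mD$. Concretely, I would take $r^\ast$ to be the largest row index in $[d-2]$ such that row $r^\ast$ of $\mD$ extends beyond column $b$; by assumption $r^\ast\ge d-1-j$. Let $P$ be the last point of row $r^\ast$. Row $r^\ast+1$ has length exactly $b$ (either it lies in $\{d-1,\ldots,a\}$, or it lies in $[d-2]$ and carries no point of $X^\top$ by maximality). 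Hence $P\in\mR(\mD)$, and the point $Q=(r^\ast+1,b+1)$ is addible. It lies either in the region of $X^\top$ or, when $r^\ast=d-2$, in $\mB_{n,d-1}$.

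The key step is the bookkeeping of which quantities $\nu_i(\mD,d)$ change, via Observation~\ref{obs: nu is D cap S}. A point $(r,\ell)$ with $\ell>b\ge d-1$ always satisfies the column condition of $\mS_{n,d,i}$. Hence it lies in $\mS_{n,d,i}$ exactly when its row condition holds, so membership is governed by a threshold in $i$. I would compute two threshold sets.
\begin{itemize}
\item $I_P$, the set of indices $i$ for which $\nu_i$ drops by one when $P$ is removed.
\item $I_Q\supseteq I_P$, the set of indices for which $\nu_i$ rises by one when $Q$ is added.
\end{itemize}
Since $Q$ sits one row below $P$, $I_Q$ should be $I_P$ together with exactly one extra boundary index. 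Condition (2) of Proposition~\ref{prop:characterization_irreducible} forces every minimizer of $\nu_\bullet(\mD,d)$ to lie in $I_P$. Condition (1) forces some minimizer to lie outside $I_Q$. Together these would already be contradictory unless the single boundary index of $I_Q\setminus I_P$ is involved. This is where I expect the hypothesis $\nu_i(\mD,d)>\nu_j(\mD,d)$ for $i<j$ to enter: the constraint $r^\ast\ge d-1-j$ places the threshold relative to $j$, so that the relevant indices lie in $\{0,\ldots,j\}$. On that range, the hypothesis says that $j$ is the strict minimizer of $\nu$ restricted to $\{0,\ldots,j\}$. That should rule out the only escape, namely a minimizer sitting exactly on the boundary.

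If the single-point argument does not close, my fallback is an induction on $j$. The case $j=0$ is vacuous. Assuming the claim for $j-1$ gives $c_{d-j}(X)=\cdots=c_{d-2}(X)=0$, so only column $d-1-j$ of $X$ needs treatment, and the hypothesis for $j$ contains the one for $j-1$. In that situation $r^\ast=d-1-j$ exactly, so the threshold indices are pinned down explicitly, and Proposition~\ref{prop:nui-nui-1} gives $\nu_j-\nu_{j-1}$ in closed form to compare with the strict inequality $\nu_{j-1}>\nu_j$.

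The main obstacle is getting the direction of the thresholds right, i.e.\ matching the index $i$ in $\mS_{n,d,i}=\{d-i,\ldots,n\}\times\{i+1,\ldots,n\}$ with the row of $P$. Points in the top $d-2$ rows are excluded from $\nu_i$ for small $i$ (where many rows are deleted), so removing $P$ only lowers $\nu_i$ for $i$ above a threshold. The proof must then use the hypothesis on small indices $i<j$ precisely to guarantee that some minimizer avoids $I_P$. Before writing details, I would check this carefully on the example figure with $d=5$.
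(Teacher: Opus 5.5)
You have the right object: the addible point $Q=(r^\ast+1,b+1)$ directly below the last row of $X^\top$ is exactly the point the paper adds. Your description of $I_Q$ as $I_P$ plus one boundary index is also correct (explicitly, $I_P=\{d-r^\ast,\ldots,d-1\}$ and $I_Q=\{d-1-r^\ast,\ldots,d-1\}$).

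The step meant to produce the contradiction, however, is wrong. Condition (2) of Proposition~\ref{prop:characterization_irreducible} does not force \emph{every} minimizer of $\nu_\bullet(\mD,d)$ into $I_P$. Removing $P$ lowers $\nu_{\min}$ as soon as \emph{some} minimizer lies in $I_P$. With your quantifier, ``all minimizers in $I_P\subseteq I_Q$'' and ``some minimizer outside $I_Q$'' would clash for every irreducible pair with $X\neq\emptyset$, without any use of the hypothesis. You would then be proving that $X(\mD,d)=\emptyset$ always, which is false. For $(\mathcal G_4,3)$ one has $a=b=3$, $X^\top=\{(1,4)\}$, $\nu_\bullet=(4,4,4)$, $r^\ast=1$, $I_P=\{2\}$ and $I_Q=\{1,2\}$, and the pair is irreducible.

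Your last paragraph then gives the hypothesis the wrong job. It does not produce a minimizer outside $I_P$: it only confines the minimizers to $\{j,\ldots,d-1\}$, a set that contains $d-1\in I_P$. What it actually does is put all minimizers \emph{inside} $I_Q$.

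The repair is to discard $P$ and condition (2) altogether and use only $Q$ with condition (1); this is precisely the paper's proof.
\begin{itemize}
\item Since $\nu_i(\mD,d)>\nu_j(\mD,d)\ge\nu_{\min}(\mD,d)$ for all $i<j$, every minimizer lies in $\{j,\ldots,d-1\}$.
\item Since $r^\ast\ge d-1-j$, this set is contained in $I_Q$.
\item Adding $Q$ therefore raises every minimizing $\nu_i$ by one, while all other $\nu_i$ are already at least $\nu_{\min}(\mD,d)+1$.
\item Hence $\nu_{\min}(\mD\cup\{Q\},d)=\nu_{\min}(\mD,d)+1$, i.e.\ $\mD\cup\{Q\}\stackrel{d}{\longrightarrow}\mD$, contradicting irreducibility.
\end{itemize}
There is no boundary case to rule out, and the fallback induction via Proposition~\ref{prop:nui-nui-1} is unnecessary.
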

		
\begin{proof}
Suppose not. Let $\ell \in \{d-2-j+1,\ldots,d-2\}$ the largest index such that $c_{\ell}(X) > 0$. Then the point $P = (\ell+1, a + 1)$  is an addible point for $\mD$.  Then for $\mD' = \mD \cup \{P\}$, we have 
\[
\nu_{k}(\mD',d) = \nu_{k}(\mD,d) + 1
\]
for all $k \geq d-2-\ell+1$. In particular, $\nu_{k}(\mD',d) = \nu_{k}(\mD,d) + 1$ for all $k \geq j$. By the assumption on $\nu_j(\mD,d)$, this implies that $\nu_{\min}(\mD',d) = \nu_{\min}(\mD,d) + 1$, contradicting the irreducibility hypothesis.
\end{proof}
		
\begin{lemma}\label{lem:cy=} Let $(\mD,d)$ be an irreducible pair with $\mD\cap \mB_{n,d-1} \neq \emptyset$ in $(a,b)$-standard form. Let $j \in \{0,\ldots,d-2\}$.  If for all $i \in \{0,\ldots,j-1\}$ the inequality $\nu_i(\mD,d) > \nu_j(\mD,d)$ holds, then $c_{1}(Y) = \ldots = c_{j+1}(Y)$.   
\end{lemma}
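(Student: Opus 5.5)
We argue by contradiction, using the characterization of irreducibility in Proposition~\ref{prop:characterization_irreducible}(2): we will exhibit a removable point whose deletion does not lower $\nu_{\min}$. For $j=0$ the statement is empty, so assume $j\ge 1$. Suppose the chain $c_1(Y)=\cdots=c_{j+1}(Y)$ fails. Since $c_1(Y)\ge c_2(Y)\ge\cdots$, there is an index $k\in[j]$ with $c_k(Y)>c_{k+1}(Y)$. In particular $c_k(Y)>0$.

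The first step is to locate the point to remove. Recall that $Y$ occupies rows $a+1,a+2,\ldots$ in the columns $1,\ldots,d-2$. The inequality $c_k(Y)>c_{k+1}(Y)$ means that $Q:=(a+c_k(Y),k)$ is a removable point of the shifted diagram $Y$. By Lemma~\ref{lem: addible and removible standard form}(1), $Q$ is then a removable point of $\mD$. Set $\mD':=\mD\setminus\{Q\}$.

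The second step is to compute how each $\nu_i$ changes, using Observation~\ref{obs: nu is D cap S}. We have $Q\in\mS_{n,d,i}=\{d-i,\ldots,n\}\times\{i+1,\ldots,n\}$ if and only if $k\ge i+1$ and $a+c_k(Y)\ge d-i$. The second condition always holds, because $a\ge d-1$ and $c_k(Y)\ge 1$. Hence
\[
\nu_i(\mD',d)=\nu_i(\mD,d)-1 \text{ for } 0\le i\le k-1,
\qquad
\nu_i(\mD',d)=\nu_i(\mD,d) \text{ for } k\le i\le d-1 .
\]

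The third step is to show $\nu_{\min}(\mD',d)=\nu_{\min}(\mD,d)$. Every index $i\le k-1$ satisfies $i\le j-1$, so by hypothesis $\nu_i(\mD,d)>\nu_j(\mD,d)$. Since these are integers,
\[
\nu_i(\mD',d)=\nu_i(\mD,d)-1\ge \nu_j(\mD,d)\ge\nu_{\min}(\mD,d).
\]
All other $\nu_i$ are unchanged. In particular $\nu_j$ is unchanged, since $j\ge k$, and so is every index realizing the minimum, which might lie beyond $j$. Therefore $\nu_{\min}(\mD',d)=\nu_{\min}(\mD,d)$. This contradicts Proposition~\ref{prop:characterization_irreducible}(2), which forces $\nu_{\min}(\mD\setminus\{Q\},d)=\nu_{\min}(\mD,d)-1$ for every removable point $Q$ of an irreducible pair.

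The only delicate point is the bookkeeping of which sets $\mS_{n,d,i}$ contain $Q$. This relies on $a\ge d-1$, so that every point of $Y$ lies in row at least $d$, and on choosing $k\le j$ so that only indices strictly below $j$ lose a point. This step mirrors the argument of Lemma~\ref{lem:cx=0}, with a removable point in place of an addible one.
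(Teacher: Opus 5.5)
Your proof is correct and is essentially the paper's argument. You choose $k\in[j]$ with $c_k(Y)>c_{k+1}(Y)$, delete the removable point at the bottom of column $k$ of $Y$, note that only the $\nu_i$ with $i<k\le j$ lose a point (these stay $\ge \nu_j(\mD,d)$ by the strict hypothesis), and conclude that $\nu_{\min}$ is unchanged, contradicting Proposition~\ref{prop:characterization_irreducible}(2). Your coordinates $(a+c_k(Y),k)$ for that point are the correct ones; the paper's proof writes $(b+c_\ell(Y),\ell)$, which appears to be a typo.
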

		
\begin{proof}
Suppose not. If $Y = \emptyset$ we are done, so consider $Y \neq \emptyset$.  Let $\ell \in [j]$ be an index such that $c_{\ell}(Y) > c_{\ell+1}(Y)$. Then the point $P = (b+c_{\ell}(Y), \ell)$  is a removable point for $\mD$.  Therefore, for $\mD' = \mD \setminus \{P\}$, we have 
\[
\nu_{k}(\mD',d) = \nu_{k}(\mD,d)
\]
for all $k \geq \ell$. In particular, $\nu_{k}(\mD',d) = \nu_{k}(\mD,d)$ for all $k \geq j$.  By the assumption on $\nu_j(\mD,d)$, this implies that  $\nu_{\min}(\mD',d) = \nu_{\min}(\mD,d)$, contradicting the irreducibility hypothesis.
\end{proof}
		
\begin{proposition}\label{prop: inf decending chain nu_j}
Let $(\mD,d)$ be an irreducible pair with $\mD\cap \mB_{n,d-1} \neq \emptyset$ in $(a,b)$-standard form.  Let $j \in \{0,\ldots,d-2\}$.  If for all $i \in \{0,\ldots,j-1\}$ the inequality $\nu_i(\mD,d) > \nu_j(\mD,d)$ holds, then also
\[
\nu_j(\mD,d) > \nu_{j+1}(\mD,d).
\]
In particular, for all $i \in \{0,\ldots,j\}$ the inequality $\nu_i(\mD,d) > \nu_{j+1}(\mD,d)$ holds.
\end{proposition}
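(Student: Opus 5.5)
The plan is to use the difference formula of Proposition~\ref{prop:nui-nui-1} together with Lemmas~\ref{lem:cx=0} and~\ref{lem:cy=}, and then close with a single addition or removal of a point that contradicts irreducibility via Proposition~\ref{prop:characterization_irreducible}. Write $\nu_k$ for $\nu_k(\mD,d)$, $X=X(\mD,d)$, $Y=Y(\mD,d)$. The final sentence of the statement is immediate once $\nu_j>\nu_{j+1}$ is known, since then $\nu_i>\nu_j>\nu_{j+1}$ for $i<j$. So the whole task is the strict inequality $\nu_j>\nu_{j+1}$.

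\emph{Step 1 (reduce to an inequality in $a,b,c$).} Proposition~\ref{prop:nui-nui-1} with $i=j+1$ gives
\[
\nu_{j+1}-\nu_j = b-a+d-2j-2 + c_{d-j-1}(X) - c_{j+1}(Y).
\]
Lemma~\ref{lem:cx=0} applies under our hypothesis and gives $c_{d-1-j}(X)=\cdots=c_{d-2}(X)=0$; for $j\ge 1$ this kills the $X$-term. For $j=0$ the term is $c_{d-1}(X)=0$ by definition. Lemma~\ref{lem:cy=} gives $c_1(Y)=\cdots=c_{j+1}(Y)=:c$. Hence
\[
\nu_{j+1}-\nu_j=b-a+d-2j-2-c,
\]
and it suffices to show $c>b-a+d-2j-2$.

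\emph{Step 2 (contradiction from irreducibility).} Suppose instead that $\nu_{j+1}\ge\nu_j$. I would exhibit a removable point whose deletion does not lower $\nu_{\min}$, contradicting condition~(2) of Proposition~\ref{prop:characterization_irreducible}; or, dually, an addible point whose insertion raises $\nu_{\min}$, contradicting condition~(1).

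\begin{itemize}
\item \emph{Removal candidate.} Let $\ell\ge j+1$ be the largest column index with $c_\ell(Y)=c$. The removable point in column $\ell$ is $(a+c,\ell)$ if $c>0$, and $(a,b)$ if $c=0$. Since $a+c\ge d-1\ge d-k$, the point $(a+c,\ell)$ lies in $\mS_{n,d,k}$ exactly when $k\le \ell-1$, so deleting it lowers only $\nu_0,\dots,\nu_{\ell-1}$.
\item \emph{Addition candidate.} Let $r\le d-1-j$ be the smallest row of length exactly $b$; these lengths are known from the vanishing of the $X$-columns above. The point $(r,b+1)$ is addible and lies in $\mS_{n,d,k}$ exactly when $k\ge d-r\ge j+1$. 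So inserting it raises only the $\nu_k$ with $k\ge j+1$.
\end{itemize}

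Irreducibility therefore forces two things. From the addition, some index $k<d-r$ attains $\nu_{\min}$. From the removal, every index attaining $\nu_{\min}$ lies below $\ell$, unless we are in the case $c=0$ where the point removed is $(a,b)$.

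\emph{Step 3 (combining the constraints).} The remaining step is to combine these with the hypotheses $\nu_i>\nu_j$ for $i<j$ and $\nu_{j+1}\ge\nu_j$, and with the explicit increments for $\nu_{k+1}-\nu_k$ beyond $j+1$ from Proposition~\ref{prop:nui-nui-1}. The aim is to show that $\nu_{\min}$ is forced to equal $\nu_j$ and to be attained at an index on the wrong side of one of the moves. The expected route is a case split on whether $c=0$ and on the position of $\ell$ relative to $j+1$.

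The main obstacle is this bookkeeping. One has to control the indices $k>j+1$ at which $\nu_k$ might dip below $\nu_j$. I would handle this by choosing, among all such ``dips'', the first index $k^\ast>j$ with $\nu_{k^\ast}<\nu_j$. The hope is to show, again via Lemma~\ref{lem:cx=0}, that the $X$-columns up to $d-1-k^\ast$ vanish, so that the addition point above can be taken in row $\le d-1-k^\ast$. That point then raises every index from $k^\ast$ on, while $\nu_0,\dots,\nu_{k^\ast-1}$ all stay at least $\nu_j>\nu_{k^\ast}$, which is the desired contradiction.
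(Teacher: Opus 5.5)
Your Step~1 is the paper's starting point, but you then miss the paper's one-line finish. Steps~2--3, which replace it, are an unfinished search for a contradiction, so there is a genuine gap.

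\textbf{The missing idea.} Apply Proposition~\ref{prop:nui-nui-1} a second time, at $i=j$; this needs $j\ge 1$. The lemma outputs you already have give $c_{d-j}(X)=0$ (Lemma~\ref{lem:cx=0} for $j\ge 2$, by definition for $j=1$) and $c_j(Y)=c$. Hence
\[
\nu_j-\nu_{j-1}=b-a+d-2j-c, \qquad \nu_{j+1}-\nu_j=(\nu_j-\nu_{j-1})-2 .
\]
The hypothesis at $i=j-1$ says $\nu_j-\nu_{j-1}<0$, i.e.\ $c>b-a+d-2j$. This is stronger than your target $c>b-a+d-2j-2$, and it is the entire proof. (The paper's display writes $+c_1(Y)$ where it should be $-c_1(Y)$, but the identity used is the one above.) The essential point is that the hypothesis must be used directly at $i=j-1$, not only as input to the two lemmas.

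\textbf{The case $j=0$.} You explicitly include $j=0$ in Step~1, but there the statement is false. For $a\ge d$ the pair $([a]\times[a],d)$ is irreducible (Lemma~\ref{lem:rectangle_irred}) and in $(a,a)$-standard form. Yet
\[
\nu_1-\nu_0=(a-1)(a-d+2)-a(a-d+1)=d-2\ge 0 ;
\]
for $a=d=3$ one gets $(\nu_0,\nu_1,\nu_2)=(3,4,3)$. So your target $c_1(Y)>b-a+d-2$ cannot be reached at $j=0$. The proposition has to be read for $j\in[d-2]$, which is the only range used in Corollary~\ref{cor:irreducible implies nu_0 = nu_min}.

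\textbf{Why Step~3 cannot work as planned.} Nothing in it depends on $j\ge 1$, so if it worked it would also prove the false case $j=0$. Concretely, it breaks in two places.
\begin{itemize}
\item If no dip $k^\ast>j$ with $\nu_{k^\ast}<\nu_j$ exists, then $\nu_j$ is a minimizer, and neither of your moves contradicts irreducibility. Deleting your removable point (either $(a+c,\ell)$ with $\ell\ge j+1$, or $(a,b)$) lowers $\nu_j$. Adding $(r,b+1)$ leaves $\nu_j$ unchanged.
\item If a dip $k^\ast\le d-2$ exists, there is an off-by-one. A point $(r,b+1)$ with $r\le d-1-k^\ast$ lies in $\mS_{n,d,k}$ only for $k\ge d-r\ge k^\ast+1$. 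So it does not raise $\nu_{k^\ast}$, and no contradiction follows.
\end{itemize}
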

\begin{proof}
 By Lemmata \ref{lem:cx=0} and \ref{lem:cy=} we have $c_{d-i}(X) = 0$  and  $c_{i}(Y) = c_1(Y)$ for all $i \in [j+1]$.
Hence, by Proposition \ref{prop:nui-nui-1}, we get
\[
\nu_{j+1}(\mD,d)-\nu_{j}(\mD,d) = b-a + d - 2(j+1) + c_{1}(Y) = 	\nu_{j}(\mD,d)-\nu_{j-1}(\mD,d) - 2 < 0.
\]
\end{proof}
		
\begin{corollary}\label{cor:irreducible implies nu_0 = nu_min}
Let $(\mD,d)$ be an irreducible pair with $\mD \cap \mB_{n,d-1} \neq \emptyset$.  Then 
\[
\nu_0(\mD,d) = \nu_{\min}(\mD,d) = \nu_{d-1}(\mD,d).
\]
Moreover, if $(\mD,d)$ is in $(d-1,d-1)$-standard form, then there exists a $j \in [d-2]$ for which  $\nu_j(\mD,d) = \nu_0(\mD,d)$. 
\end{corollary}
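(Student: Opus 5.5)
First, by Lemma \ref{lem: if }, the irreducible pair $(\mD,d)$ with $\mD\cap\mB_{n,d-1}\neq\emptyset$ is in $(a,b)$-standard form. This makes Proposition \ref{prop:nui-nui-1} and Lemmata \ref{lem:cx=0}, \ref{lem:cy=} and Proposition \ref{prop: inf decending chain nu_j} available.

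The plan for showing $\nu_0(\mD,d)=\nu_{\min}(\mD,d)$ is by contradiction using the descending-chain mechanism. Suppose $\nu_0(\mD,d)>\nu_{\min}(\mD,d)$, and let $j_0\ge1$ be the smallest index attaining the minimum. Then $\nu_i(\mD,d)>\nu_{j_0}(\mD,d)$ for all $i<j_0$, so the hypothesis of Proposition \ref{prop: inf decending chain nu_j} holds at $j=j_0$ whenever $j_0\le d-2$. It yields $\nu_{j_0+1}(\mD,d)<\nu_{j_0}(\mD,d)$, contradicting minimality. Hence the only remaining case is $j_0=d-1$, i.e. $\nu_i(\mD,d)>\nu_{d-1}(\mD,d)$ for all $i<d-1$. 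This case must be killed by a direct augmentation argument. The addible point $(a+1,1)\in\mA(Y)\subseteq\mA(\mD)$ (or the bottom point of column $1$ if $Y\neq\emptyset$, namely $(a+c_1(Y)+1,1)$) lies in row $\ge d$ and column $1$. Adding it increases $\nu_0$ by one and leaves $\nu_k$ unchanged for $k\ge1$? That goes the wrong way, so instead we use the transpose. By Lemma \ref{lem:adjoint_irred} and Observation \ref{obs: one reducible from other}(2), $\nu_j(\mD^\top,d)=\nu_{d-1-j}(\mD,d)$, and $(\mD^\top,d)$ is irreducible in $(b,a)$-standard form. The condition "$\nu_{d-1}$ is the strict unique minimum" for $\mD$ becomes "$\nu_0$ is the strict unique minimum" for $\mD^\top$. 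A symmetric argument on $\mD$ excludes the minimum being attained only at $0$: apply Proposition \ref{prop: inf decending chain nu_j} with $j=0$, where the hypothesis is vacuous. This gives $\nu_0>\nu_1$, so $0$ is never the \emph{smallest} minimizing index unless... wait, this gives $\nu_0(\mD,d)>\nu_1(\mD,d)$ outright, which contradicts what we want.

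So I must re-read Proposition \ref{prop: inf decending chain nu_j} more carefully. Its proof uses $\nu_j-\nu_{j-1}$, so it is meaningful only for $j\ge1$, and I will treat the $j=0$ case as excluded (vacuous hypothesis with $j-1=-1$ undefined). With that reading, the argument above shows that the smallest minimizing index $j_0$ is either $0$ or $d-1$. Applying the same statement to $\mD^\top$ shows that the largest minimizing index is either $d-1$ or $0$ (translated back). If $j_0=d-1$, then $\nu_{d-1}$ is the unique minimum of $\mD$, so $\nu_0$ is the unique minimum of $\mD^\top$. The transposed argument then gives that the largest minimizing index of $\mD$ is $0$, contradicting uniqueness at $d-1$. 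Hence $j_0=0$, and symmetrically the largest minimizing index is $d-1$, so $\nu_0=\nu_{\min}=\nu_{d-1}$.

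For the moreover part, take $a=b=d-1$, so $\mD=[d-1]^2\sqcup X^\top\sqcup Y$. Assume $\nu_j>\nu_0$ for all $j\in[d-2]$. The removable point $(d-1,d-1)$ lies in every $\mS_{n,d,j}$ only for $j$ with $d-j\le d-1$ and $j+1\le d-1$, i.e. $j\in[d-2]$, not for $j=0,d-1$. Removing it therefore keeps $\nu_0$ and $\nu_{d-1}$ unchanged and lowers the others by one, which stays $\ge\nu_0$. So $\nu_{\min}$ is unchanged, contradicting Proposition \ref{prop:characterization_irreducible}(2). The main obstacle is the endpoint bookkeeping in the first part, namely pinning down exactly which index range Proposition \ref{prop: inf decending chain nu_j} legitimately covers and making the transpose symmetry argument airtight.
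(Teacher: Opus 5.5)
Your treatment of the ``moreover'' part is correct and coincides with the paper's. You are also right to distrust Proposition~\ref{prop: inf decending chain nu_j} at $j=0$: for $[a]\times[a]$ with $a\ge d\ge 3$ one has $\nu_1-\nu_0=d-2>0$. The paper only applies that proposition with $j\ge 1$.

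The gap is in excluding the endpoint case $j_0=d-1$, i.e.\ $\nu_{d-1}(\mD,d)<\nu_k(\mD,d)$ for all $k\le d-2$. Your transposition step contains a translation error. If $\nu_{d-1}$ is the strict unique minimum for $\mD$, then $\nu_0$ is the strict unique minimum for $\mD^\top$. So the smallest minimizing index of $\mD^\top$ is $0$, and this translates back to the \emph{largest} minimizing index of $\mD$ being $d-1$, not $0$. Nothing is contradicted. The descending-chain proposition (for $j\ge1$) and its transpose only tell you that the smallest and the largest minimizing indices both lie in $\{0,d-1\}$. That is perfectly compatible with a strict unique minimum at $d-1$, or, transposed, at $0$. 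Since these two situations are exchanged by $\mD\mapsto\mD^\top$, symmetry alone can never rule them out.

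So you need an extra ingredient for this case.
\begin{itemize}
\item \textbf{The paper's route.} The proofs of Lemmas~\ref{lem:cx=0} and~\ref{lem:cy=} work verbatim with $j=d-1$, giving $X=Y=\emptyset$. Hence $\mD=[a]\times[b]$, Lemma~\ref{lem:rectangle_irred} forces $a=b$, and then $\nu_0=\nu_{d-1}$, contradicting strictness.
\item \textbf{A shorter fix.} Use the augmentation you started and then dropped, but on the correct side. The first-row addible point $P=(1,r_1+1)$ has column $r_1+1\ge d$, because $(d-1,d-1)\in\mD$. So $P$ survives deleting the first $k$ columns for every $k\le d-1$. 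It survives deleting the first $d-1-k$ rows only for $k=d-1$. Thus adding $P$ raises $\nu_{d-1}$ alone by one, hence raises $\nu_{\min}$, contradicting Proposition~\ref{prop:characterization_irreducible}(1). Equivalently, apply your column-$1$ point to $\mD^\top$, which is irreducible by Lemma~\ref{lem:adjoint_irred}.
\end{itemize}
With this case excluded, the smallest minimizing index is $0$. Applying the same conclusion to $\mD^\top$ then gives $\nu_{d-1}(\mD,d)=\nu_{\min}(\mD,d)$.
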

\begin{proof}
 It is enough to show that $\nu_{\min}(\mD,d)=\nu_0(\mD,d)$. Indeed, by taking the adjoint diagram, we get also $\nu_{\min}(\mD,d)=\nu_{\min}(\mD^\top,d)=\nu_0(\mD^\top,d)=\nu_{d-1}(\mD,d)$, where the second inequality follows from Lemma \ref{lem:adjoint_irred}.

Assume on the contrary that  $\nu_{\min}(\mD,d)<\nu_0(\mD,d)$, and let $j$ be the minimum index in $[d-1]$ such that $\nu_{\min}(\mD,d)=\nu_j(\mD,d)$. Then, by Proposition \ref{prop: inf decending chain nu_j}, we must have $j = d-1$ and thus
$\nu_{\min}(\mD,d) = \nu_{d-1}(\mD,d)<\nu_k(\mD,d)$ for every $k \in \{0,\ldots,d-2\}$. Hence, by Lemmata \ref{lem:cx=0} and \ref{lem:cy=} we get $c_1(X)=0$ and $c_{1}(Y)=c_{d-1}(Y) = 0$, so $Y=X=\emptyset$ and $\mD$ must be a rectangle, that is, $\mD=[a]\times[b]$. However, by Lemma \ref{lem:rectangle_irred} we must have $a=b$, but in this case it holds $\nu_0([a]\times [a],d)=a(a-d+1)=\nu_{d-1}([a]\times[a])$, leading to a contradiction, showing the first part of the statement.

\medskip 

Now assume that $(\mD,d)$ is in $(d-1,d-1)$-standard form. Recall that by Lemma \ref{lem: addible and removible standard form}, $P:=(d-1,d-1)\in \mR(\mD)$. Suppose on the contrary that for every $j \in [d-2]$ we have $\nu_j(\mD,d)>\nu_{\min}(\mD,d)$. Then, we have
$$\nu_j(\mD\setminus\{P\},d)=\nu_j(\mD,d)-1 \geq \nu_{\min}(\mD,d), \quad \mbox{ for each } j \in [d-2],$$
and 
$$\nu_i(\mD\cup \{P\},d)=\nu_i(\mD,d)=\nu_{\min}(\mD,d),\quad \mbox{ for each } i \in \{0,d-1\}.$$
By Proposition \ref{prop:characterization_irreducible}, this contradicts the hypothesis of irreducibility of $(\mD,d)$.
\end{proof}

\begin{proposition} \label{prop: nu_0 = nu_min implies irreducible}
Let $(\mD,d)$ be a pair with $\mD\cap \mB_{n,d-1}\neq \emptyset$ in $(a,b)$-standard form. 
\begin{enumerate}[label = (\arabic*)]
    \item Assume that $(a,b)\neq(d-1,d-1)$. If $\nu_0(\mD,d) = \nu_{\min}(\mD,d) = \nu_{d-1}(\mD,d)$, then $(\mD,d)$ is irreducible.
    \item Assume that $(a,b)=(d-1,d-1)$.
    If $\nu_0(\mD,d) = \nu_{\min}(\mD,d) = \nu_{d-1}(\mD,d)$ and there exists $j\in[d-2]$ such that $\nu_j(\mD,d)=\nu_{\min}(\mD,d)$, then $(\mD,d)$ is irreducible.
\end{enumerate}

\end{proposition}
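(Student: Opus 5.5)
We verify the two conditions of Proposition \ref{prop:characterization_irreducible} directly. By Lemma \ref{lem: addible and removible standard form}, the addible points of $\mD$ are exactly those of $X^\top$ and $Y$. The removable points are those of $X^\top$ and $Y$, together with the corner $(a,b)$. So there are only five types of points to examine.

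The single tool is Observation \ref{obs: nu is D cap S}: adding or removing a point $P=(r,\ell)$ changes $\nu_j(\mD,d)$ by exactly one precisely when $P\in\mS_{n,d,j}$, that is, when $r\ge d-j$ and $\ell\ge j+1$. Otherwise $\nu_j(\mD,d)$ is unchanged. Throughout we write $\nu:=\nu_{\min}(\mD,d)=\nu_0(\mD,d)=\nu_{d-1}(\mD,d)$.

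\textbf{Addible points.} Take $P=(r,\ell)\in\mA(Y)$, so $r\ge a+1\ge d$ and $\ell\le d-2$.
\begin{itemize}
\item The row condition $r\ge d-j$ holds for every $j$, so $P\in\mS_{n,d,j}$ exactly when $j\le \ell-1$.
\item Since $\ell\le d-2$, the index $j=d-1$ is never among these, so $\nu_{d-1}$ is unchanged by adding $P$.
\item Adding a point never decreases any $\nu_j$. Hence $\nu_{\min}(\mD\cup\{P\},d)$ is at least $\nu$ and at most $\nu_{d-1}(\mD\cup\{P\},d)=\nu$, so it equals $\nu$.
\end{itemize}
For $P\in\mA(X^\top)$ the transposed argument applies: $P$ lies in columns $\ge b+1\ge d$ and rows $\le d-2$, so $\nu_0$ is unchanged. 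Alternatively, apply the $Y$ case to $\mD^\top$ using $\nu_j(\mD,d)=\nu_{d-1-j}(\mD^\top,d)$.

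\textbf{Removable points.} Removing a point lowers each $\nu_j$ by at most one. Therefore it suffices to exhibit, for each removable point, one index $j$ with $\nu_j(\mD,d)=\nu$ whose $\nu_j$ drops.
\begin{itemize}
\item For $P=(r,\ell)\in\mR(Y)$ we have $\ell\ge1$ and $r\ge d$, so $P\in\mS_{n,d,0}$ and $\nu_0$ drops.
\item For $P\in\mR(X^\top)$, symmetrically, $\nu_{d-1}$ drops.
\item For the corner $P=(a,b)$, the condition $P\in\mS_{n,d,j}$ reads $d-a\le j\le b-1$.
  \begin{itemize}
  \item If $a\ge d$, then $j=0$ qualifies.
  \item If $b\ge d$, then $j=d-1$ qualifies, because $d-a\le 1$ when $a\ge d-1$.
  \end{itemize}
  This settles part (1).
\item In part (2), where $a=b=d-1$, the admissible indices are exactly $j\in[d-2]$. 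The extra hypothesis supplies such a $j$ with $\nu_j(\mD,d)=\nu$, and that $\nu_j$ drops.
\end{itemize}
In every case $\nu_{\min}(\mD\setminus\{P\},d)=\nu-1$.

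Both conditions of Proposition \ref{prop:characterization_irreducible} therefore hold, and $(\mD,d)$ is irreducible. The only delicate point is the corner $(a,b)$, which is exactly where the dichotomy between (1) and (2) arises. The rest is bookkeeping with the sets $\mS_{n,d,j}$. One should also note the implicit facts $a,b\ge d-1$, and that $X$ and $Y$ occupy only the first $d-2$ rows and columns respectively.
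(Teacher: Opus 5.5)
Your proof is correct and follows essentially the same route as the paper's: you verify the two conditions of Proposition~\ref{prop:characterization_irreducible}, using that addible points of $Y$ (resp.\ $X^\top$) leave $\nu_{d-1}$ (resp.\ $\nu_0$) unchanged and that each removable point lowers some $\nu_j$ attaining the minimum, with the corner $(a,b)$ the only place where the extra hypothesis of part (2) is needed. One small slip: an addible point of $Y$ can lie in column $d-1$ (namely $(a+1,d-1)$ when $c_{d-2}(Y)>0$, as noted in Lemma~\ref{lem: addible and removible standard form}), so your bound $\ell\le d-2$ should read $\ell\le d-1$ (and symmetrically rows $\le d-1$ for $X^\top$); this still excludes $P\in\mS_{n,d,d-1}$ (resp.\ $P\in\mS_{n,d,0}$), so the argument is unaffected.
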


\begin{proof}
Let us first assume that $(a,b)\neq(d-1,d-1)$ and, without loss of generality, that $a \geq d$. 

Let $P \in \mA(\mD)$ be an addible point. By Lemma \ref{lem: addible and removible standard form}, we can assume without loss of generality that $P \in \mA(X)$. Then $\nu_0(\mD \cup \{P\},d) = \nu_0(\mD,d)$ and thus  $\nu_{\min}(\mD \cup \{P\},d) = \nu_{\min}(\mD,d)$. 

Similarly, let $Q \in \mR(\mD)$ be a removable point. If $Q = (a,b)$, then since $a \geq d$ we have $\nu_0(\mD \setminus \{Q\}, d) = \nu_0(\mD, d) - 1$.   Else, by Lemma \ref{lem: addible and removible standard form} we can assume w.l.o.g. that $Q \in \mR(X)$ and we find $\nu_{d-1}(\mD \setminus \{Q\}, d) = \nu_{d-1}(\mD, d) - 1$. For both assumptions on $Q$ we thus get $\nu_{\min}(\mD \setminus \{Q\}, d) = \nu_{\min}(\mD, d) - 1$. Irreducibility now follows from Proposition \ref{prop:characterization_irreducible}.\\ 

Secondly, let $(a,b) = (d-1,d-1)$. By the same arguments as above we have   $\nu_{\min}(\mD \cup \{P\},d) = \nu_{\min}(\mD,d)$ for any addible point $P \in \mA(\mD)$ and $\nu_{\min}(\mD \setminus \{Q\}, d) = \nu_{\min}(\mD, d) - 1$ for any removable point  $Q \in \mR(X)\cup \mR(Y)$. In the remaining case when $Q = (d-1,d-1)$, we need the existence of $j \in [d-2]$ with $\nu_j(\mD,d) = \nu_{\min}(\mD,d)$. Then  $\nu_{\min}(\mD \setminus \{Q\},d) = \nu_j(\mD \setminus \{Q\},d) =  \nu_j(\mD,d)-1 = \nu_{\min}(\mD,d)-1$. Again irreducibility follows from Proposition \ref{prop:characterization_irreducible}.
\end{proof}

We are now ready to put everything together and give a proof of Theorem \ref{thm:main}.

\begin{proof}[Proof of Theorem \ref{thm:main}]
The equivalence $(1) \Leftrightarrow (2)$ follows directly from Corollary \ref{cor:irreducible implies nu_0 = nu_min} and Proposition \ref{prop: nu_0 = nu_min implies irreducible}. It remains to show the equivalence $(2) \Leftrightarrow (3)$. This equivalence is a direct consequence of Proposition \ref{prop:nui-nui-1}, since it implies that
$\nu_j(\mD,d)\geq \nu_0(\mD,d)$ if and only if
$$j(b-a+d-1-j) +  \sum_{i = 1}^{j} c_{d-i} (X)  - \sum_{i = 1}^{j}c_i(Y)\ge 0,$$
and $\nu_0(\mD,d)=\nu_{d-1}(\mD,d)$ if and only if 
$$0=(d-1)(b-a) +  \sum_{i = 1}^{d-1} c_{d-i} (X)  - \sum_{i = 1}^{d-1}c_i(Y)=(d-1)(b-a) +|X|-|Y|.$$
% \todo[inline]{show 2 equiv 3}
\end{proof}

\subsection{Considerations about small diagrams}
We want to formalize some considerations when we want to study irreducible Ferrers diagrams such that $\mD\cap \mB_{n,d-1}=\emptyset$. In this case, Theorem \ref{thm:main} is not valid, and indeed there might be some Ferrers diagrams satisfying condition 2. but which are not irreducible. In the sequel we try to study this case. 

Consider triangular diagrams. Let $\mT_n$ be the triangular $n\times n$ Ferrers diagram, given by $\mT_n=(n,n-1,\ldots,2,1)$, and let $d \in \mathbb N$.

\begin{lemma}\label{lem:irreducible_contains_triangle}
    Let $\mD$ be a Ferrers diagram. Then, $\nu_{\min}(\mD,d)=0$ if and only if $\mD\not\supseteq \mT_d$. In particular, if $\mD\neq \emptyset$ and the  pair $(\mD,d)$ is irreducible, then, $\mD\supseteq \mT_{d}$. 
\end{lemma}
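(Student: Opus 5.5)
Since $\nu_{\min}(\mD,d)=\min_j \nu_j(\mD,d)$ and, by Observation \ref{obs: nu is D cap S}, $\nu_j(\mD,d)=|\mD\cap\mS_{n,d,j}|$, the quantity $\nu_{\min}(\mD,d)$ vanishes if and only if there is some $j\in\{0,\ldots,d-1\}$ with $\mD\cap\mS_{n,d,j}=\emptyset$. We take $n$ large enough that $\mD\subseteq[n]^2$ and $n\ge d$. Because $\mD$ is a Ferrers diagram (a down-set) and $\mS_{n,d,j}=\{d-j,\ldots,n\}\times\{j+1,\ldots,n\}$ is an up-set, we have $\mD\cap\mS_{n,d,j}=\emptyset$ if and only if the corner point $(d-j,j+1)\notin\mD$. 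Indeed, any point of $\mD\cap\mS_{n,d,j}$ dominates $(d-j,j+1)$, so by the Ferrers property this corner would lie in $\mD$; the converse direction is trivial.

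It follows that $\nu_{\min}(\mD,d)=0$ if and only if some point $(d-j,j+1)$, with $0\le j\le d-1$, is missing from $\mD$. The points $(d-j,j+1)$ are exactly the antidiagonal points $(x,y)$ with $x+y=d+1$, i.e. the removable corners of $\mT_d$. A Ferrers diagram contains $\mT_d=\{(x,y): x+y\le d+1\}$ if and only if it contains all of these antidiagonal points, since every point of $\mT_d$ is dominated by one of them. Combining these two equivalences yields $\nu_{\min}(\mD,d)=0 \iff \mD\not\supseteq\mT_d$.

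For the second part, suppose $\mD\neq\emptyset$ and $(\mD,d)$ is irreducible. If $\mD\not\supseteq\mT_d$, then $\nu_{\min}(\mD,d)=0$ by the first part, and Lemma \ref{lem:irreducible_empty} forces $\mD=\emptyset$, a contradiction. Hence $\mD\supseteq\mT_d$.

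The only step requiring care is the down-set/up-set argument reducing emptiness of $\mD\cap\mS_{n,d,j}$ to a single corner point, and this is immediate from the definitions. There is no real obstacle; the only point to handle is the choice of $n$, which must satisfy $n\ge d$ so that the corners $(d-j,j+1)$ lie in $[n]^2$. If $\mD$ has smaller proper order, we simply regard it as a diagram of order $\max(n,d)$, which does not change the values $\nu_j(\mD,d)$.
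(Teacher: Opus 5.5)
Your proof is correct and follows the paper's argument. Both reduce $\nu_j(\mD,d)=0$ to the corner $(d-j,j+1)\notin\mD$ via $\nu_j(\mD,d)=|\mD\cap\mS_{n,d,j}|$, and both settle the second part with Lemma \ref{lem:irreducible_empty}; you additionally spell out the down-set/up-set step and the antidiagonal characterization of $\mD\supseteq\mT_d$, which the paper leaves implicit.
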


\begin{proof}
     The first part of the statement follows from the fact that $\nu_j(\mD,d)=0$ if and only if $(d-j,j+1)\notin\mD$. This is due to the fact that $\nu_j(\mD,d)$ coincides with the size of $\mD\cap\mS_{n,d,j}$, which is empty if and only if $(d-j,j+1)\notin\mD$. Moreover, by Lemma \ref{lem:irreducible_empty}, the only irreducible pair $(\mD,d)$ with $\nu_{\min}(\mD,d)=0$ is $(\emptyset,d)$, contradicting the hypothesis that $\mD\neq \emptyset$.
\end{proof}

A first consequence of Lemma \ref{lem:irreducible_contains_triangle} is that, when constructing the $d$-Young digraph $\mathfrak G_d$ and the one $\mathfrak G_{d,n}$ of order $n$ for determining the irreducible pairs, we can restrict ourselves to the sublattices $\mathfrak D/\mT_d$ and $\mathfrak D/\mT_d$ of the Young lattice consisting of all the Ferrers diagrams containing $\mT_d$. More precisely, define
$$\mathfrak D/\mT_d:=\{\mD \in \mathfrak D\,:\, \mD\supseteq \mT_d\}$$
$$\mathfrak D_n/\mT_d:=\{\mD \in \mathfrak D_n\,:\, \mD\supseteq \mT_d\},$$
and let $\mathfrak G_d/\mT_d$ and $\mathfrak G_{d,n}/\mT_d$ be 
the induced subdigraphs of $\mathfrak G_d$ and $\mathfrak G_{d,n}$ on $\mathfrak D$ and $\mathfrak D_n$, respectively.

\begin{proposition}
    Let $\mD$ be a nonempty Ferrers diagram of order $n$. Then $(\mD,d)$ is irreducible if and only if it is a source in $\mathfrak G_{d,n}/\mT_d$.
\end{proposition}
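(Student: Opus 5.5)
The plan is to prove the two implications separately, relying on Proposition \ref{prop: irred and n-irred equivalent}, Lemma \ref{lem:irreducible_contains_triangle}, and the fact that $\mD\mapsto\nu_{\min}(\mD,d)$ is monotone under inclusion. Each $\nu_j(\mD,d)=|\mD\cap\mS_{n,d,j}|$ is monotone, hence so is their minimum. Throughout, $\mD$ is nonempty of order $n$.

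($\Rightarrow$) Assume $(\mD,d)$ is irreducible. By Lemma \ref{lem:irreducible_contains_triangle}, and since $\mD\neq\emptyset$, we have $\mD\supseteq\mT_d$, so $\mD$ is a vertex of $\mathfrak G_{d,n}/\mT_d$. Irreducibility means no diagram $\mD'$ at all satisfies $\mD'\stackrel{d}{\longrightarrow}\mD$. In particular no such $\mD'$ exists inside the induced subdigraph, so $\mD$ is a source there.

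($\Leftarrow$) Assume $\mD\supseteq\mT_d$ is a source in $\mathfrak G_{d,n}/\mT_d$. By Proposition \ref{prop: irred and n-irred equivalent}, it suffices to show $\mD$ is a source of $\mathfrak G_{d,n}$. So suppose $\mD'\stackrel{d}{\longrightarrow}\mD$ with $\mD'$ of order $n$ and $|\mD\triangle\mD'|=1$, and show $\mD'\supseteq\mT_d$; this contradicts the source hypothesis.

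There are two cases.
\begin{itemize}
\item If $\mD'=\mD\cup\{P\}$, then $\mD'\supseteq\mD\supseteq\mT_d$ immediately.
\item If $\mD'=\mD\setminus\{P\}$, the edge $\mD'\stackrel{d}{\longrightarrow}\mD$ means, by the definition of reduction, that $\mD'$ is obtained from $\mD$ by removal and $\nu_{\min}(\mD',d)=\nu_{\min}(\mD,d)-1$. Hence $\nu_{\min}(\mD,d)=\nu_{\min}(\mD',d)+1$. Now recheck the direction: an edge $\mD'\to\mD$ where $\mD'$ is the smaller diagram requires $\nu_{\min}(\mD',d)=\nu_{\min}(\mD,d)$. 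So the case $\mD'=\mD\setminus\{P\}$ actually has $\nu_{\min}(\mD',d)=\nu_{\min}(\mD,d)$. Since $\mD\supseteq\mT_d$, Lemma \ref{lem:irreducible_contains_triangle} gives $\nu_{\min}(\mD,d)\geq 1$. Therefore $\nu_{\min}(\mD',d)\ge1$, and the same lemma gives $\mD'\supseteq\mT_d$.
\end{itemize}

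In both cases $\mD'$ lies in $\mathfrak D_n/\mT_d$, so the edge $\mD'\to\mD$ lies in the induced subdigraph, contradicting that $\mD$ is a source there.

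The only delicate point is the bookkeeping of edge directions in the removal case. There one must use that an incoming edge from a smaller diagram preserves $\nu_{\min}$, and hence preserves positivity of $\nu_{\min}$, which by Lemma \ref{lem:irreducible_contains_triangle} is equivalent to containing $\mT_d$.
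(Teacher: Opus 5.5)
Your proof is correct and is essentially the paper's argument. The paper likewise shows that any edge joining $\mathfrak D_n/\mT_d$ to a diagram outside it points outward, since removing a point that destroys $\mT_d$ drops $\nu_{\min}$ from $1$ to $0$; so a source of $\mathfrak G_{d,n}/\mT_d$ is a source of $\mathfrak G_{d,n}$, and irreducibility follows from Proposition~\ref{prop: irred and n-irred equivalent}. In the write-up, delete the first sentence of your removal case, which states the edge convention backwards before you correct it.
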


\begin{proof}
    Assume that $(\mD,d)$ is irreducible. Since $\mD\neq\emptyset$, by Lemma \ref{lem:irreducible_contains_triangle}, $\mD\supseteq \mT_d$, and thus $\mD \in \mathfrak D_n/\mT_d$. Moreover,  since $(\mD,d)$ is irreducible, $\mD$ is a source in $\mathfrak G_{d,n}$, and thus it must also be a source in the induced subgraph $\mathfrak G_{d,n}/\mT_d$.

    Let $\mD\in \mathfrak D_n/mT_d$, and $\mD'=\mD\setminus\{P\}$ be such that $\mD'\notin \mathfrak D_n/\mT_d$. Then, by Lemma \ref{lem:irreducible_contains_triangle}, we have $\nu_{\min}(\mD',d)=0$ and $\nu_{\min}(\mD,d)>0$, implying $\nu_{\min}(\mD,d)=1$. Therefore, $\mD\stackrel{d}{\longrightarrow}\mD'$. This shows that from all the edges connecting a Ferrers diagram $\mD\in\mathfrak D_n/\mT_d$ with a Ferrers diagram $\mD'\notin\mathfrak D_n/\mT_d$ are outgoing. Hence, any source $\mD$ of $\mathfrak G_{d,n}/\mT_d$ is also a source of $\mathfrak G_{d,n}$, which means that $(\mD,d)$ is irreducible.
\end{proof}

\begin{example}
    We consider the same setting of Example \ref{exa:n=d=3}, with $n=d=3$. The induced $d$-Young digraph $\mathfrak G_{3,3}/\mT_3$ obtained only restricting to the sublattice $\mathfrak D_3/\mT_3$ is given below.

     \bigskip 

\noindent\begin{tabular}{p{\textwidth}}
\adjustbox{scale=0.35,center}{
\input{TikZ_pictures/BulletsSubDigraph_n=3_d=3}
} \\ 
\\
\hline 
 \footnotesize{The irreducibility subdigraph $\mathfrak G_{3,3}/\mT_3$. The coloured diagrams are all the nonempty irreducible.} \\
\hline
\end{tabular}
\end{example}

\medskip

\begin{example}
We consider the same setting of Example \ref{exa:n=4}, with $n=4$ and $d\in\{3,4\}$. The induced $d$-Young digraphs $\mathfrak G_{3,4}/\mT_3$ and $\mathfrak G_{4,4}/\mT_4$ obtained only restricting to the sublattices $\mathfrak D_4/\mT_3$ and $\mathfrak D_4/\mT_4$ are given below.

 \bigskip 

\adjustbox{scale=0.8,center}{\begin{tabular}{c|c}
\input{TikZ_pictures/BulletsSubDigraph_n=4_d=3} 
 & 
\hspace*{-2cm}\input{TikZ_pictures/BulletsSubDigraph_n=4_d=4}

\\ 
\\
\hline 
 \multicolumn{2}{c}{\footnotesize{The irreducibility subdigraphs $\mathfrak G_{4,3}/\mT_3$ and $\mathfrak G_{4,4}/\mT_4$. The coloured diagrams are all the nonempty irreducible.}} \\
\hline
\end{tabular}}

\end{example}

\begin{example}\label{Exa:n=d=5}
    We consider the case $n=d=5$. The induced $5$-Young digraph $\mathfrak G_{5,5}/\mT_5$ obtained only restricting to the sublattice $\mathfrak D_5/\mT_5$ reveals all the  Ferrers diagrams $\mD$ of order $5$ such that $(\mD,5)$ is irreducible, as illustrated below.

     \bigskip 

\noindent\begin{tabular}{p{\textwidth}}
\adjustbox{scale=0.35,center}{
\input{TikZ_pictures/BulletsSubDigraph_n=5_d=5}

} \\ 
\\
\hline 
 \footnotesize{The irreducibility subdigraph $\mathfrak G_{5,5}/\mT_5$. The coloured diagrams are all the nonempty irreducible.} \\
\hline
\end{tabular}
\end{example}

As a consequence of Lemma \ref{lem:irreducible_contains_triangle}, the Ferrers diagrams $\mD$ such that $(\mD,d)$ is irreducible and not satisfying Theorem \ref{thm:main} -- that is, with $\mD\cap \mB_{n,d-1}=\emptyset$ -- are those such that $\mT_{d}\subseteq \mD$ and $\mD\not\supseteq [d-1]^2$. Among them, we have some of the triangular Ferrers diagrams, as the next result shows.

\begin{proposition}\label{prop:irreducible_triangular}
    The pair $(\mT_n,d)$ is irreducible if and only if 
    $d\leq n \leq 2d-3$. 
\end{proposition}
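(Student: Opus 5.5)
The plan is to compute every quantity $\nu_j(\mT_n,d)$ explicitly and then check the two conditions of Proposition~\ref{prop:characterization_irreducible} point by point. The key observation is that all the $\nu_j(\mT_n,d)$ coincide, so $\nu_{\min}$ is attained at every $j$.

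\textbf{Step 1: computing $\nu_j$.} We have $\mT_n=\{(r,c)\in\N^2 : r+c\le n+1\}$. By Observation~\ref{obs: nu is D cap S}, $\nu_j(\mT_n,d)$ counts the points of $\mT_n$ with $r\ge d-j$ and $c\ge j+1$. Substitute $r'=r-(d-j)+1$ and $c'=c-j$. These points then correspond bijectively to pairs $(r',c')\in\N^2$ with $r'+c'\le n-d+2$. Hence, for every $0\le j\le d-1$,
\[
\nu_j(\mT_n,d)=\binom{n-d+2}{2},
\]
which is independent of $j$.

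If $n<d$, then $\mT_n\not\supseteq \mT_d$. Lemma~\ref{lem:irreducible_contains_triangle} then gives $\nu_{\min}(\mT_n,d)=0$, and since $\mT_n\neq\emptyset$, Lemma~\ref{lem:irreducible_empty} shows that $(\mT_n,d)$ is reducible. From now on we assume $n\ge d$.

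\textbf{Step 2: removable points.} Since all $\nu_j$ are equal, removing a point $P$ lowers $\nu_{\min}$ by one exactly when $P$ lies in at least one set $\mS_{n,d,j}$. The removable points are $P_i=(i,n+1-i)$ for $i\in[n]$. We have $P_i\in\mS_{n,d,j}$ if and only if $d-i\le j\le n-i$. Intersecting with $0\le j\le d-1$, the admissible range is $[\max(0,d-i),\min(d-1,n-i)]$. This interval is nonempty whenever $n\ge d$ and $1\le i\le n$, because each lower bound is at most each upper bound. So condition (2) of Proposition~\ref{prop:characterization_irreducible} always holds for $n\ge d$.

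\textbf{Step 3: addible points.} Again because all $\nu_j$ are equal, adding a point $Q$ raises $\nu_{\min}$ exactly when $Q$ lies in every set $\mS_{n,d,j}$ for $0\le j\le d-1$. The addible points are $Q_i=(i,n+2-i)$ for $i\in[n+1]$. Now $Q_i$ lies in all $\mS_{n,d,j}$ if and only if $d-j\le i$ for all $j$, i.e. $i\ge d$, and $n+2-i\ge j+1$ for all $j\le d-1$, i.e. $i\le n+2-d$. So condition (1) fails exactly when some integer $i$ satisfies $d\le i\le n+2-d$, which happens if and only if $n\ge 2d-2$.

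Combining Steps 1–3, $(\mT_n,d)$ is irreducible if and only if $d\le n\le 2d-3$. The only delicate point is the direction of the logic in Steps 2 and 3: since $\nu_{\min}$ is attained at every $j$, a single drop suffices to lower the minimum, whereas every $\nu_j$ must increase for the minimum to increase. The remaining work is the routine index bookkeeping for membership in $\mS_{n,d,j}$.
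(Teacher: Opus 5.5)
Your proof is correct and is essentially the paper's argument: both rest on the fact that every $\nu_j(\mT_n,d)$ equals $\binom{n-d+2}{2}$, and then check Proposition~\ref{prop:characterization_irreducible} point by point. The differences are minor. For $n\ge 2d-2$ you exhibit directly an addible point $(i,n+2-i)$ with $d\le i\le n+2-d$ lying in every $\mS_{n,d,j}$, whereas the paper cites the lemma forcing $\mD\cap\mB_{n,d-1}$ to be a rectangle, whose proof uses the same addible-point mechanism. Your interval $[\max(0,d-i),\min(d-1,n-i)]$ treats the removable points more carefully than the paper's choice $j=i-1$ for the point in column $i$, which is out of range when $i>d$. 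You also dispose of the case $n<d$, which the paper's proof leaves implicit.
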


\begin{proof} 
    First, observe that if $n\ge 2d-2$, then $\mT_n\cap \mB_{n,d-1}=\{((j,n+1-i) )\,:\,d-1\le j\le  i\le n-d+2\}$. Hence,  $(\mT_n,d)$ is not in $(a,b)$-standard form, and thus $(\mT_n,d)$ cannot be irreducible, due to Lemma \ref{lem: if }. 
    
    Thus, assume now that $d\le n \le 2d-3$.   We use Proposition \ref{prop:characterization_irreducible} to show that in this case $(\mT_n,d)$ is irreducible. The addible and removable points of $\mT_n$ are given, respectively, by
    $$\mA(\mT_n)=\{(n+2-i,i)\,:\, i \in [n+1] \}, \quad \mR(\mT_n)=\{(n+1-i,i)\,:\, i \in [n]\}.$$
    Note that, for every triangular Ferrers diagram $\mT_n$ it holds that
    $$\nu_j(\mT_n,d)=\frac{(n-d+1)(n-d+2)}{2}.$$
    In particular, for every $P=(n+1-i,i)\in \mR(\mT_n)$, we have
    $$\nu_{\min}(\mT_n\setminus\{P\},d)=\nu_{i-1}(\mT_n\setminus\{P\},d)=\nu_i(\mT_n,d)-1=\nu_{\min}(\mT_n,d)-1.$$
    Moreover, let $P=(n+2-i,i)\in \mA(\mT_n)$. If $i\le d-1$, then $$\nu_{i}(\mT_n\cup\{P\},d)=|\mS_{n,d,i}\cap(\mT_n\cup\{P\})|=|\mS_{n,d,i}\cap \mT_n|=\nu_i(\mT_n,d)=\nu_{\min}(\mT_n,d).$$ If instead $i\ge d$, then $n+2-i\le n+2-d\le d-1$, and 
    $$\nu_{0}(\mT_n\cup\{P\},d)=|\mS_{n,d,0}\cap(\mT_n\cup\{P\})|=|\mS_{n,d,0}\cap \mT_n|=\nu_i(\mT_n,d)=\nu_{\min}(\mT_n,d).$$
    Thus, by Proposition \ref{prop:characterization_irreducible} we conclude.
\end{proof}

Note that the triangular Ferrers diagram $\mT_{2d-3}$ is the only one among those listed in Proposition \ref{prop:irreducible_triangular} that is included in Theorem \ref{thm:main}. Moreover, we remark that all the triangular Ferrers diagrams are \emph{strictly monotone}, and the Etzion-Silberstein conjecture has been proved to hold for them in \cite{neri2023proof}. 

\begin{remark}
     At this point, the reader might wonder whether an analogue of Theorem \ref{thm:main} holds also when $\mD\cap \mB_{n,d-1}=\emptyset$. First of all, in this case, it is difficult to uniquely define a notion of \emph{standard form}, since an analogue of Lemma \ref{lem: if } does not hold. If we try to omit this notion, we can also verify that condition (2) in Theorem \ref{thm:main} is not sufficient. If $\nu_{\min}(\mD,d)=0$, then this is clear. So, we might restrict ourselves to the diagrams for which $\nu_{\min}(\mD,d)>0$ (which is guaranteed when $\mD\cap \mB_{n,d-1}\neq \emptyset$). By observing the values $\nu_i$'s of the irreducible pairs in the digraph $\mathfrak G_{5,5}$ given in Example \ref{Exa:n=d=5}, we have that they are represented by\\
     \begin{table}[h]
       \centering
\begin{tabular}{|c||c|c|c|c|c|}
\hline
Irreducible $\mD$ ($d=5$)& $(5,5,5,5,5)$ & $(5,5,5,3,3)$ & $(5,5,3,3,2)$ & $(5,5,4,2,2)$ & $(5,4,3,2,1)$  \\ \hline
$(\nu_{0}, \ldots, \nu_4)$  & $(5,8,9,8,5)$ & $(3,4,5,4,3)$ & $(2,2,2,3,2)$ & $(2,3,2,2,2)$ & $(1,1,1,1,1)$ \\ \hline
\end{tabular} .
\end{table}

     While in this case, one might think that the condition $\nu_0(\mD,d)=\nu_{d-1}(\mD,d)=\nu_{\min}(\mD,d)$ is still necessary, it is clear that it cannot be sufficient, since the diagram $\mD=(5,5,4,3,2)$ with $d=5$ has $\nu_i$'s equal to $(2,3,3,3,2)$ but it is not irreducible. For larger order $n$, the condition is also not necessary, as exemplified by the irreducible diagram pair $\mD = (6,6,3,3,3)$ of order $n=6$ and $d = 5$, having $\nu_j$'s equal to $(4,3,3,4,3)$.
\end{remark}

\section{Polytopal characterization of irreducible Ferrers diagram pairs}\label{sec: polytopal char}

For given natural numbers $a,b,d$ with $\min(a,b) \geq d-1$, let us denote by $\irr_d^{(a,b)}$ the (finite) set of irreducible diagram pairs $(\mD,d)$ in $(a,b)$-standard form. In Section \ref{sec:classification} we gave a classification of the elements of this set in terms of the columns $X$ and $Y$ of their $(a,b)$-standard form. In this section, we complement this classification by providing an alternative description as sets of integer points in $\R^{2d-4}$ and $\R^{2d-3}$ satisfying systems of linear inequalities, in particular as polytopes. 
\begin{definition}[Polytope]
A subset $\wp \subset \R^N$ is called a \emph{bounded convex polytope} (or \emph{polytope} for short) if one of the following two
statements holds: 
\begin{enumerate}[label = (\arabic*)]
    \item $\wp$ is the convex hull of a finite set of points $V \subset \R^N$ ($V$-polytope);
\item  $\wp$ is the bounded intersection of finitely many closed halfspaces ($H$-polytope).
\end{enumerate}
By the main theorem on polytopes \cite[Thm 1.1]{ziegler1993lectures}, these two statements are equivalent.
\end{definition}

This original bridge between irreducible diagram pairs and integer points of polytopes is first explained in Subsection \ref{subsec: poly def}. Afterwards, we show in Subsection \ref{subsec: integral poly} that these classifying polytopes in $\R^{2d-3}$ are moreover \emph{integral} (i.e. all vertices have integer coordinates), making them suitable to study their integer points using Ehrhart polynomials (see \cite{beck2007computing,rehberg2025}). In particular, we show in that when $a \geq d$ or $b \geq d$, the set $\irr_d^{(a,b)}$ is in natural bijection with the integer points of a polytope $\poly_d^{(a,b)} \subset \R^{2d-4}$. Similarly, the set
\[
\irr_d^{\mu} = \bigcup_{\substack{(a,b)\in \N^2\\ \min(a,b) =\mu}} \irr_d^{(a,b)}
\]
with $3 \leq d \leq \mu$ is in natural bijection with the integer points of an \emph{integral} polytope $\poly_d \subset \R^{2d-3}$.

\subsection{Irreducible pairs as integer points of polytopes}\label{subsec: poly def}

\begin{definition}\label{def: Pdab constraints}
     Let $d$, $a$, $b$ be natural numbers satisfying $\min(a,b) \geq d-1$. We define the set $\poly_d^{(a,b)}\subseteq \R^{2d-4}$ as the set of $(x_1,\ldots,x_{d-2},y_1,\ldots,y_{d-2})\in \R^{2d-4}$ satisfying the constraints
     \begin{equation}\label{eq:poly1}\begin{cases} x_j-x_{j+1}\geq 0  &  j \in [d-3]\\
  y_j-y_{j+1}\geq 0  &  j \in [d-3]\\
x_j,y_j \geq 0 &  j \in [d-2]\\
		j(b-a+d-1-j) +  \sum\limits_{i = 0}^{j-2} x_{d-2-i}  - \sum\limits_{i = 1}^{j}y_i \geq 0 \;\; & j \in [d-2] \\
\sum\limits_{i=1}^{d-2}y_i - \sum\limits_{i=1}^{d-2}x_i = (b-a)(d-1).
 \end{cases}
 \end{equation}
Moreover, when $a=b=d-1$, we impose the extra constraint that there exists a $j \in [d-2]$ for which the equality $j(d-1-j) +  \sum\limits_{i = 0}^{j-2} x_{d-2-i}  - \sum\limits_{i = 1}^{j}y_i = 0$ holds. We will use the short-hand notation $\poly_d^{(a,b)}(\Z) := \poly_d^{(a,b)} \cap \Z^{2d-4}$ for the subset of integer points of $\poly_d^{(a,b)}$. 
\end{definition}

\begin{theorem}\label{thm bijection ab with polytope}
Let $d$, $a$, $b$ be natural numbers satisfying $\min(a,b) \geq d-1$. There is a bijection between the set $\irr_d^{(a,b)}$ and the set $\poly_d^{(a,b)}(\Z) $ of integer points given by
\begin{align*}
   \Psi_d^{(a,b)}: \irr_d^{(a,b)} \; &\to \qquad\qquad\qquad\qquad \poly_d^{(a,b)}(\Z) \\
    (\mD,d) \quad&\mapsto (c_1(X),\ldots,c_{d-2}(X),c_1(Y),\ldots,c_{d-2}(Y))
\end{align*}
with $X = X(\mD,d)$ and  $Y = Y(\mD,d)$.
\end{theorem}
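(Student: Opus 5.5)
The plan is to derive the bijection from condition (3) of Theorem \ref{thm:main}, after checking that the map is well defined, injective and surjective.

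\emph{Well-definedness.} Let $(\mD,d)\in\irr_d^{(a,b)}$. Since $(\mD,d)$ is in $(a,b)$-standard form, $\mD\cap\mB_{n,d-1}\neq\emptyset$, so Theorem \ref{thm:main}(3) applies. The sets $X$ and $Y$ are shifted Ferrers diagrams with at most $d-2$ columns. Hence their column heights are nonnegative integers with $c_1\ge c_2\ge\cdots\ge c_{d-2}$. This gives the monotonicity and nonnegativity constraints of \eqref{eq:poly1}.

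Writing $x_i=c_i(X)$ and $y_i=c_i(Y)$, the remaining constraints come from condition (3):
\begin{itemize}
\item The reindexing $\sum_{i=1}^{j}c_{d-i}(X)=\sum_{i=0}^{j-1}x_{d-1-i}$ drops the term $c_{d-1}(X)=0$, leaving $\sum_{i=0}^{j-2}x_{d-2-i}$. So the inequalities of Theorem \ref{thm:main}(3) are exactly the linear inequalities of \eqref{eq:poly1}.
\item $|X|=\sum_i x_i$ and $|Y|=\sum_i y_i$, so the equation $|Y|-|X|=(b-a)(d-1)$ is the last line of \eqref{eq:poly1}.
\item When $a=b=d-1$, the extra existence condition coincides with the extra equality constraint in Definition \ref{def: Pdab constraints}.
\end{itemize}
Thus $\Psi_d^{(a,b)}(\mD,d)\in\poly_d^{(a,b)}(\Z)$.

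\emph{Injectivity.} By the decomposition $\mD=([a]\times[b])\sqcup X^\top\sqcup Y$, the diagram $\mD$ is determined by $a$, $b$ and the column heights of $X$ and $Y$. We also need the order $n$ not to matter. This holds because standard form, $X$, $Y$ and all $\nu_j$ are independent of the chosen ambient order: Proposition \ref{prop: irred and n-irred equivalent} and the intrinsic definition of irreducibility allow us to treat diagrams as subsets of $\N^2$. So two pairs with the same image coincide.

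\emph{Surjectivity.} Given an integer point $(x,y)$, we define:
\begin{itemize}
\item $Y$ as the shifted Ferrers diagram in rows $a+1,\ldots$ with column heights $y_1\ge\cdots\ge y_{d-2}$ in columns $1,\ldots,d-2$;
\item $X^\top$ analogously, in columns $b+1,\ldots$ with row lengths $x_i$ in rows $1,\ldots,d-2$;
\item $\mD=([a]\times[b])\cup X^\top\cup Y$.
\end{itemize}
We must check three things.
\begin{enumerate}
\item $\mD$ is a Ferrers diagram. The pieces attach below the full columns $1,\ldots,d-2$ of length $a$ and to the right of the full rows of length $b$, and they are monotone. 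Since $a,b\ge d-1$, they never meet each other.
\item $\mD\cap\mB_{n,d-1}=\{d-1,\ldots,a\}\times\{d-1,\ldots,b\}$. The added parts lie only in columns $\le d-2$ or rows $\le d-2$.
\item $X(\mD,d)$ and $Y(\mD,d)$ recover the prescribed column heights.
\end{enumerate}
Then the constraints translate back into condition (3) of Theorem \ref{thm:main}, so the pair is irreducible and in $\irr_d^{(a,b)}$. Its image is $(x,y)$.

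The main obstacle is bookkeeping rather than conceptual: the index shift between $\sum_{i=1}^j c_{d-i}(X)$ and $\sum_{i=0}^{j-2}x_{d-2-i}$, and verifying that the reconstructed diagram really is in $(a,b)$-standard form. In particular, no point of $X^\top$ or $Y$ may fall into $\mB_{n,d-1}$, which relies on $c_{d-1}(X)=c_{d-1}(Y)=0$.
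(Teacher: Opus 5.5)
Your proposal is correct and follows the same route as the paper. Well-definedness comes from condition (3) of Theorem \ref{thm:main}, after the reindexing that uses $c_{d-1}(X)=0$. The inverse rebuilds $\mD=([a]\times[b])\sqcup X^\top\sqcup Y$ from the prescribed column heights and applies Theorem \ref{thm:main} once more. Your extra checks (that the reconstructed set is a Ferrers diagram in $(a,b)$-standard form with the intended $X$ and $Y$, and explicit injectivity) spell out details the paper leaves implicit.
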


\begin{proof}
 First, for any irreducible diagram pair $(\mD,d) \in \irr_d^{(a,b)}$, Theorem \ref{thm:main} ensures that $\Psi_d^{(a,b)}(\mD,d)$ satisfies the constraints of Definition \ref{def: Pdab constraints}, so $\Psi_d^{(a,b)}$ is well-defined. 

In the other direction, any vector  $v = (x_1,\ldots,x_{d-2},y_1,\ldots,y_{d-2}) \in \poly_d^{(a,b)} (\Z) $ determines uniquely two shifted Ferrers diagrams $X,Y$ by setting their column heights as $c_i(X) = x_i$ and $c_i(Y) = y_i$ for each $i \in [d-2]$. We naturally obtain a Ferrers diagram $\mD := ([a] \times [b]) \sqcup X^\top \sqcup Y$, and again by Theorem \ref{thm:main}, $(\mD,d)$ is irreducible. Hence we have a well-defined inverse map $(\Psi_d^{(a,b)})^{-1}: \poly_d^{(a,b)}(\Z) \to \irr_d^{(a,b)} : v \mapsto (\mD,d)$. 
\end{proof}

\begin{observation}\label{obs: min a b only depends on difference}
Note that, when $a\geq d$ or $b \geq d$, the sets  $\poly_d^{(a,b)}$ and $\poly_d^{(a,b)}(\Z)$ only depend on the difference $\Delta := b-a$. So for fixed $\Delta \in \Z$, all sets of the form $\irr_d^{(a, a + \Delta)}$ with $a\geq d$ or $a + \Delta \geq d$ (and of course $\min(a,a+\Delta) \geq d-1$) are naturally in bijection. More precisely, we have that $\poly_d^{(a',a'+\Delta)}= \poly_d^{(a,a+\Delta)}$, the map $$\Psi^{(a,a+\Delta)}_d\circ (\Psi^{(a',a'+\Delta)}_d)^{-1}: \poly_d^{(a',a'+\Delta)}(\Z)\longrightarrow \poly_d^{(a,a+\Delta)}(\Z)$$ is the identity map, and $$ (\Psi^{(a',a'+\Delta)}_d)^{-1}\circ \Psi^{(a,a+\Delta)}_d: \irr_d^{(a, a + \Delta)} \longrightarrow \irr_d^{(a', a' + \Delta)} $$
maps bijectively any irreducible pair $(\mD,d)$ in $(a,a+\Delta)$-standard form to an irreducible pair $(\mD',d)$ in $(a',a'+\Delta)$-standard form with $X(\mD,d) = X(\mD',d)$ and $Y(\mD,d) = Y(\mD',d)$.
\end{observation}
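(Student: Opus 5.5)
Reading the constraints in \eqref{eq:poly1}, the only places where $a$ and $b$ appear are the terms $j(b-a+d-1-j)$ and $(b-a)(d-1)$, which depend only on $\Delta = b-a$. The one exception is the extra existential constraint, and Definition~\ref{def: Pdab constraints} imposes it only when $a=b=d-1$. So the plan is simply to show that this extra constraint is never active under the hypothesis $a\ge d$ or $b\ge d$, and then read off all the claims.

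\textbf{Equality of the polytopes.} First, if $a\ge d$ or $b\ge d$, then $(a,b)\neq(d-1,d-1)$. Likewise, if $a'\ge d$ or $a'+\Delta\ge d$, then $(a',a'+\Delta)\neq(d-1,d-1)$. Hence both $\poly_d^{(a,a+\Delta)}$ and $\poly_d^{(a',a'+\Delta)}$ are cut out by exactly the system \eqref{eq:poly1} with $b-a$ replaced by $\Delta$. These are literally the same set of inequalities and the same equation, so the two subsets of $\R^{2d-4}$ coincide, and therefore so do their integer points.

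\textbf{The composed maps.} By Theorem~\ref{thm bijection ab with polytope}, both $\Psi_d^{(a,a+\Delta)}$ and $\Psi_d^{(a',a'+\Delta)}$ are bijections onto this common set of integer points. Consequently $\Psi^{(a,a+\Delta)}_d\circ(\Psi^{(a',a'+\Delta)}_d)^{-1}$ and $(\Psi^{(a',a'+\Delta)}_d)^{-1}\circ\Psi^{(a,a+\Delta)}_d$ are well-defined bijections. The first composition is a map between identical sets.

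To see that it is the identity, recall the explicit inverse from the proof of Theorem~\ref{thm bijection ab with polytope}. A vector $v$ is sent to $\mD=([a']\times[a'+\Delta])\sqcup X^\top\sqcup Y$, where $c_i(X)=x_i$ and $c_i(Y)=y_i$. Applying $\Psi^{(a,a+\Delta)}_d$ to a diagram simply reads off the column heights of its $X$ and $Y$. Hence, for $(\mD,d)\in\irr_d^{(a,a+\Delta)}$, the image $(\mD',d)$ has the same column-height vectors for $X$ and $Y$. The shifted Ferrers diagrams $X,Y$ are determined by these column heights, so $X(\mD',d)=X(\mD,d)$ and $Y(\mD',d)=Y(\mD,d)$. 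Applying $\Psi$ after $\Psi^{-1}$ returns $v$ unchanged, which gives the identity claim.

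\textbf{Main obstacle.} There is no real obstacle here. The only point requiring care is to confirm that the hypothesis excludes the degenerate case $(d-1,d-1)$ on both sides, so that the extra existential constraint never appears for either pair.
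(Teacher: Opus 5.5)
Your proposal is correct and follows the same reasoning that the paper's observation leaves implicit. Outside the excluded case $(a,b)=(d-1,d-1)$, the system \eqref{eq:poly1} involves $a,b$ only through $b-a$, and the bijection on irreducible pairs is read off from the explicit inverse in Theorem~\ref{thm bijection ab with polytope}.

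Two imprecisions are inherited from the statement itself:
\begin{itemize}
\item For $a\neq a'$ the composite $\Psi^{(a,a+\Delta)}_d\circ(\Psi^{(a',a'+\Delta)}_d)^{-1}$ is not literally defined, so it is not a ``well-defined bijection'' as you write. Its inner map lands in $\irr_d^{(a',a'+\Delta)}$, which is disjoint from $\irr_d^{(a,a+\Delta)}$. The ``identity'' claim is best read as $\Psi^{(a',a'+\Delta)}_d\circ\Phi=\Psi^{(a,a+\Delta)}_d$ for $\Phi:=(\Psi^{(a',a'+\Delta)}_d)^{-1}\circ\Psi^{(a,a+\Delta)}_d$.
\item The equalities $X(\mD,d)=X(\mD',d)$ and $Y(\mD,d)=Y(\mD',d)$ hold only up to translation: the column heights agree, but the offsets differ ($b$ versus $a'+\Delta$, and $a$ versus $a'$). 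They do not hold as subsets of $\N^2$, so ``determined by these column heights'' needs this qualification.
\end{itemize}
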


\begin{example}\label{ex: d=4 and delta=1}
Let $d = 4$ and $\Delta = 1$. The maps $\Psi_4^{(3,4)}$ and $\Psi_4^{(4,5)}$ give natural bijections between the sets of irreducible pairs $\irr_4^{(3,4)}$, $\irr_4^{(4,5)}$ and the set $\poly_4^{(3,4)}=\poly_4^{(4,5)}$ of integer points. The induced bijection between $\irr_4^{(3,4)}$ and $\irr_4^{(4,5)}$ is represented in the following figure, with corresponding diagram pairs and integer points below each other.\\

\ytableausetup{baseline}
\noindent
$\irr_4^{(3,4)}$ \quad  \scalebox{0.7}{\ydiagram[*(white!80!green) \bullet]{4,4,4}  *[*(white!80!red) \bullet] {0,0,0,1,1,1}
\qquad \qquad  
\ydiagram[*(white!80!green) \bullet]{4,4,4}  *[*(white!80!red) \bullet] {0,0,0,2,1}
\qquad \qquad
\ydiagram[*(white!80!blue) \bullet]
{4+1} *[*(white!80!green) \bullet]{4,4,4} *[*(white!80!red) \bullet] {0,0,0,2,1,1}
\qquad \qquad   
\ydiagram[*(white!80!blue) \bullet]
{4+1} *[*(white!80!green) \bullet]{4,4,4} *[*(white!80!red) \bullet] {0,0,0,2,2} 
\qquad \qquad  
\ydiagram[*(white!80!blue) \bullet]
{4+1, 4+1} *[*(white!80!green) \bullet]{4,4,4} *[*(white!80!red) \bullet] {0,0,0,2,2,1} 
} 
\text{ }\\
\text{ }\\
\text{ }\\
$\irr_4^{(4,5)}$ \ \ \scalebox{0.7}{\ydiagram[*(white!80!green) \bullet]{5,5,5,5}  *[*(white!80!red) \bullet] {0,0,0,0,1,1,1}
\qquad  \ 
\ydiagram[*(white!80!green) \bullet]{5,5,5,5}  *[*(white!80!red) \bullet] {0,0,0,0,2,1}
\qquad \
\ydiagram[*(white!80!blue) \bullet]
{5+1} *[*(white!80!green) \bullet]{5,5,5,5} *[*(white!80!red) \bullet] {0,0,0,0,2,1,1}
\qquad \    
\ydiagram[*(white!80!blue) \bullet]
{5+1} *[*(white!80!green) \bullet]{5,5,5,5} *[*(white!80!red) \bullet] {0,0,0,0,2,2} 
\qquad \ 
\ydiagram[*(white!80!blue) \bullet]
{5+1, 5+1} *[*(white!80!green) \bullet]{5,5,5,5} *[*(white!80!red) \bullet] {0,0,0,0,2,2,1} 
} 
\text{ }\\
\text{ }\\
\text{ }\\
$\poly_4^{(a,a+1)}(\Z)$ \ $(0,0,3,0)$ \qquad  \ $(0,0,2,1)$ \qquad \  $(1,0,3,1)$ \qquad \qquad  $(1,0,2,2)$ \qquad \qquad   $(1,1,3,2)$.\\ 
  
   % \label{fig:AGEF}

\end{example}

\begin{observation}
The transposition $\mD \mapsto \mD^\top$ naturally induces an involution $\irr_d^{(a,b)} \to  \irr_d^{(b,a)} : (\mD,d) \mapsto (\mD^\top,d)$. Under the map $\Psi_d^{(a,b)}$ of Theorem \ref{thm bijection ab with polytope}, this gives us an involution $\poly_d^{(a,b)}(\Z) \to  \poly_d^{(b,a)}(\Z): (x_1,\ldots,x_{d-2},y_1,\ldots,y_{d-2}) \mapsto (y_1,\ldots,y_{d-2},x_1,\ldots,x_{d-2})$ .    
\end{observation}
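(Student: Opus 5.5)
The plan is to check three things directly from the definitions: that transposition sends $\irr_d^{(a,b)}$ into $\irr_d^{(b,a)}$, that it swaps the roles of $X$ and $Y$, and then to transport this through the bijections of Theorem \ref{thm bijection ab with polytope}.

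\emph{Step 1 (irreducibility).} Let $(\mD,d)\in\irr_d^{(a,b)}$ with $\mD$ of order $n$. By Lemma \ref{lem:adjoint_irred}, $(\mD^\top,d)$ is irreducible, and $\mD^\top$ is again of order $n$.

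\emph{Step 2 (standard form).} The box $\mB_{n,d-1}=\{d-1,\ldots,n\}^2$ is invariant under $(x,y)\mapsto(y,x)$. Hence
\[
\mD^\top\cap\mB_{n,d-1}=(\mD\cap\mB_{n,d-1})^\top=\{d-1,\ldots,b\}\times\{d-1,\ldots,a\}.
\]
So $(\mD^\top,d)$ is in $(b,a)$-standard form, and $(\mD,d)\mapsto(\mD^\top,d)$ is a well-defined map $\irr_d^{(a,b)}\to\irr_d^{(b,a)}$. It is an involution because $(\mD^\top)^\top=\mD$, and the same construction with $a,b$ swapped gives its inverse. In particular it is a bijection.

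\emph{Step 3 (exchange of $X$ and $Y$).} This is the only place requiring care with the transposed conventions in the definition of $X$ and $Y$. With the roles of $a,b$ exchanged, the definitions give
\[
X(\mD^\top,d)=\bigl(\mD^\top\cap([d-2]\times\{a+1,\ldots,n\})\bigr)^\top=\mD\cap(\{a+1,\ldots,n\}\times[d-2])=Y(\mD,d),
\]
\[
Y(\mD^\top,d)=\mD^\top\cap(\{b+1,\ldots,n\}\times[d-2])=\bigl(\mD\cap([d-2]\times\{b+1,\ldots,n\})\bigr)^\top=X(\mD,d).
\]
These are equalities of subsets of $\N^2$, so all column counts $c_i$ agree. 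Therefore
\[
\Psi_d^{(b,a)}(\mD^\top,d)=(c_1(Y),\ldots,c_{d-2}(Y),c_1(X),\ldots,c_{d-2}(X)),
\]
where $X=X(\mD,d)$ and $Y=Y(\mD,d)$.

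\emph{Step 4 (conclusion).} Conjugating the transposition bijection by $\Psi_d^{(a,b)}$ and $\Psi_d^{(b,a)}$, which are bijections by Theorem \ref{thm bijection ab with polytope}, gives the map
\[
\Psi_d^{(b,a)}\circ(\cdot)^\top\circ(\Psi_d^{(a,b)})^{-1}:\poly_d^{(a,b)}(\Z)\to\poly_d^{(b,a)}(\Z).
\]
By Step 3, this map sends $(x,y)\mapsto(y,x)$. It is an involution because the transposition is one. No separate check of the extra constraint for $a=b=d-1$ is needed, since membership in $\poly_d^{(b,a)}(\Z)$ is guaranteed by the bijection.
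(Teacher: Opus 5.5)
Your proof is correct and is essentially the argument the paper leaves implicit in this Observation, which it states without proof: irreducibility is preserved by Lemma \ref{lem:adjoint_irred}, transposition swaps the standard-form parameters $a$ and $b$ and the roles of $X$ and $Y$, and the bijections of Theorem \ref{thm bijection ab with polytope} turn this into the coordinate swap. Your explicit check that $X(\mD^\top,d)=Y(\mD,d)$ and $Y(\mD^\top,d)=X(\mD,d)$ is the one step the paper does not spell out, and you carry it out correctly.
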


Note that when $a \geq d$ or $b \geq d$ (and $\min(a,b)\geq d-1$), the set $\poly_d^{(a,b)}$ is the intersection of finitely many closed half-spaces as it is the solution space of a system of linear (in)equalities. We will now show that $\poly_d^{(a,b)}$ is bounded, making it a convex polytope.

\begin{lemma}\label{lem: b-a bounded}
Let  $\min(a,b)\geq d-1$ and  $(x_1,\ldots,x_{d-2},y_1,\ldots,y_{d-2}) \in \poly_d^{(a,b)}$. Then $0 \leq x_1 \leq a-b + d-2$ and $0 \leq y_1 \leq b-a + d-2$.  
\end{lemma}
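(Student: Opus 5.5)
The plan is to read both bounds straight off the defining constraints \eqref{eq:poly1} of $\poly_d^{(a,b)}$, with no appeal to the diagrams themselves. The lower bounds $x_1\ge 0$ and $y_1\ge 0$ are among the nonnegativity constraints $x_j,y_j\ge 0$, so only the two upper bounds need an argument. The bound on $y_1$ comes from the first inequality in the family, and the bound on $x_1$ from the last one combined with the equality constraint.

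For $y_1$, specialize the inequality family to $j=1$. The sum $\sum_{i=0}^{j-2}x_{d-2-i}$ is then empty, so the constraint reads
\[
(b-a+d-2) - y_1 \ge 0 .
\]
This is exactly $y_1\le b-a+d-2$.

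For $x_1$, take $j=d-2$. The $x$-sum becomes $\sum_{i=0}^{d-4}x_{d-2-i}=x_2+\cdots+x_{d-2}=\sum_{i=1}^{d-2}x_i - x_1$. Next use the equality constraint to write $\sum_{i=1}^{d-2}y_i=\sum_{i=1}^{d-2}x_i+(b-a)(d-1)$. Substituting both into the $j=d-2$ inequality, the full sums of the $x_i$ cancel, leaving
\[
(d-2)(b-a+1) - x_1 - (b-a)(d-1) \ge 0 .
\]
This simplifies to $x_1\le d-2-(b-a)=a-b+d-2$.

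The only point needing care is the degenerate case $d=3$, where $j=1=d-2$ and the $x$-sum is empty. The same computation still applies, since there $\sum_i x_i - x_1 = 0$. The extra equality condition imposed when $a=b=d-1$ only shrinks the set, so it does not affect the bounds. I expect no real obstacle here.
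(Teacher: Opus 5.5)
Your argument is correct. For $y_1$ it is exactly the paper's: specialize the constraint family to $j=1$.

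For $x_1$ the routes differ slightly. The paper uses transposition: it asserts that the swapped point $(y_1,\ldots,y_{d-2},x_1,\ldots,x_{d-2})$ lies in $\poly_d^{(b,a)}$, and reads off $(a-b+d-2)-x_1\ge 0$ from the $j=1$ constraint there. You instead take the $j=d-2$ constraint of $\poly_d^{(a,b)}$ and eliminate $\sum_i y_i$ with the equality constraint.

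These give the same inequality. In general, the $j$-th constraint of $\poly_d^{(b,a)}$ at the swapped point agrees with the $(d-1-j)$-th constraint of $\poly_d^{(a,b)}$ modulo the equality constraint. This is the linear form of $\nu_j(\mD^\top,d)=\nu_{d-1-j}(\mD,d)$ combined with $\nu_0=\nu_{d-1}$.

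The paper's version is shorter and shows why the two bounds are mirror images. However, the transpose involution was only established earlier for integer points, by transposing diagrams, while the lemma concerns arbitrary real points of $\poly_d^{(a,b)}$. Your direct computation avoids this and is self-contained.

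Your handling of $d=3$, where the $x$-sum is empty, is correct. So is your remark that the extra condition for $a=b=d-1$ only shrinks the set.
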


\begin{proof}
The linear inequality $j(b-a+d-1-j) +  \sum_{i = 0}^{j-2} x_{d-2-i}  - \sum_{i = 1}^{j}y_i \geq 0$ for $j = 1$ reduces to $b-a+d-2  - y_1 \geq 0$. With the same argument for  the `transpose'  $(y_1,\ldots,y_{d-2},x_1,\ldots,x_{d-2}) \in \poly_d^{(b,a)}$ we obtain $a-b+d-2  - x_1 \geq 0$.
\end{proof}

\begin{corollary}\label{cor: b-a bounded}
If $\poly_d^{(a,b)}$ is non-empty, then $|b-a| \leq d-2$.    
\end{corollary}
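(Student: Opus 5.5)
This follows directly from Lemma \ref{lem: b-a bounded}, so the plan is essentially a two-line deduction. Let $v=(x_1,\ldots,x_{d-2},y_1,\ldots,y_{d-2})$ be any point of $\poly_d^{(a,b)}$, which exists because the polytope is non-empty.

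By Lemma \ref{lem: b-a bounded}, $v$ satisfies
\[
0 \le x_1 \le a-b+d-2 \quad\text{and}\quad 0 \le y_1 \le b-a+d-2 .
\]
From the first chain we read off $a-b+d-2\ge 0$, that is, $b-a\le d-2$. From the second chain we read off $b-a+d-2\ge 0$, that is, $a-b\le d-2$. Together these give $|b-a|\le d-2$.

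The only point needing care is the degenerate case $d=2$. There the vectors $(x_i)$ and $(y_i)$ are empty, so Lemma \ref{lem: b-a bounded} has no coordinate $x_1$ or $y_1$ to speak of. In that case the only defining condition left in Definition \ref{def: Pdab constraints} is the equality constraint, which reads $0=(b-a)(d-1)=b-a$. Hence $|b-a|=0=d-2$ directly.

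For $d\ge 3$ I would also check that the $j=1$ inequality is genuinely part of the system, so that the lemma applies to every point of the polytope and not only to integer points. This is immediate, since the lemma is stated for real points of $\poly_d^{(a,b)}$. I expect no real obstacle here.
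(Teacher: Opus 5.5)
Your argument is correct and is the intended one: the paper states this corollary without a separate proof, as an immediate consequence of Lemma~\ref{lem: b-a bounded}. The bounds on $x_1$ and $y_1$ at any point of the set give $a-b+d-2\ge 0$ and $b-a+d-2\ge 0$. Your separate treatment of the degenerate case $d=2$, where the equality constraint alone forces $a=b$, is a harmless extra check that the paper does not spell out.
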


\begin{corollary}
For any natural numbers $d$, $a$, $b$ with $\min(a,b)\geq d-1$, the set $\poly_d^{(a,b)}$ is contained in the hypercube $[0,2d-4]^{2d-4}$ and hence bounded. As direct consequences, $\poly_d^{(a,b)}(\Z)$ is finite, and $\poly_d^{(a,b)}$ is a bounded convex polytope when $a\geq d$ or $b\geq d$.
\end{corollary}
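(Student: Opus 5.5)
The claim has three parts: $\poly_d^{(a,b)}\subseteq[0,2d-4]^{2d-4}$, finiteness of $\poly_d^{(a,b)}(\Z)$, and that $\poly_d^{(a,b)}$ is a bounded convex polytope when $a\ge d$ or $b\ge d$. I will obtain all three from Lemma~\ref{lem: b-a bounded} and Corollary~\ref{cor: b-a bounded}, together with the monotonicity and nonnegativity constraints in \eqref{eq:poly1}.

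\textbf{Containment in the hypercube.} Take a point $(x_1,\ldots,x_{d-2},y_1,\ldots,y_{d-2})\in\poly_d^{(a,b)}$. The constraints $x_j-x_{j+1}\ge 0$ and $x_j\ge 0$ give
\[
0\le x_{d-2}\le\cdots\le x_1 ,
\]
and the same holds for the $y_j$. By Lemma~\ref{lem: b-a bounded}, $x_1\le a-b+d-2$ and $y_1\le b-a+d-2$. In particular the lemma forces $|b-a|\le d-2$ whenever the set is nonempty, which is Corollary~\ref{cor: b-a bounded}. Hence
\[
x_1\le 2d-4 \quad\text{and}\quad y_1\le 2d-4,
\]
and therefore every coordinate lies in $[0,2d-4]$. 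If the set is empty, containment is trivial. In the case $a=b=d-1$ the extra existential equality constraint only shrinks the set, so containment still holds there.

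\textbf{Finiteness.} A bounded subset of $\R^{2d-4}$ contains only finitely many integer points, so $\poly_d^{(a,b)}(\Z)$ is finite.

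\textbf{Polytope structure.} Suppose $a\ge d$ or $b\ge d$. Then the extra constraint is absent, and $\poly_d^{(a,b)}$ is exactly the solution set of the finite linear system \eqref{eq:poly1}. Each equality in that system can be written as two closed halfspaces, so the set is a finite intersection of closed halfspaces. Since it is also bounded by the first part, it is an $H$-polytope.

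\textbf{Main obstacle.} The only subtle point is the case $a=b=d-1$. There the set is a finite union of polytopes, each given by adding one of the equalities, and it need not be convex. This is why the polytope conclusion is stated only for $a\ge d$ or $b\ge d$. Boundedness and finiteness are unaffected in this case.
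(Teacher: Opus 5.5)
Your proposal is correct and follows essentially the same route as the paper: you bound $x_1$ and $y_1$ via Lemma~\ref{lem: b-a bounded} and use $|b-a|\le d-2$ to get $x_1,y_1\le 2d-4$. You then extend this bound to all coordinates through the monotonicity and nonnegativity constraints, after which finiteness and the $H$-polytope conclusion for $a\ge d$ or $b\ge d$ are immediate. Your remarks on the case $a=b=d-1$ (the extra existential constraint only shrinks the set, and the result may be non-convex) are accurate and simply make explicit what the paper leaves implicit.
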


\begin{proof}
    Combining the results above we deduce that if $(x_1,\ldots,x_{d-1},y_1,\ldots,y_{d-2}) \in \poly_d^{(a,b)}$, then $x_1 \leq a-b +d-2 \leq 2d-4$ and $y_1 \leq b-a + d-2 \leq 2d-4$. Since $x_1 \geq x_2 \geq \ldots \geq x_{d-2}$ and $y_1 \geq y_2 \geq \ldots \geq y_{d-2}$, this means that $v \in [0,2d-4]^{2d-4}$.
\end{proof}

\begin{example}
Let us look at the polytope $\poly_4^{(a,a)}$ (i.e. $d=4$ and $b-a=0$) with $a \geq 4$. This polytope lies in the 3-dimensional hyperplane $H = \{(x_1,x_2,y_1,y_2) \in \R^4 \ \mid \ x_1+x_2-y_1-y_2=0\}$, and thus we can make a 3-dimensional plot of $\poly_4^{(a,a)}$ using an othogonal basis $(1,1,1,1), (1,-1,0,0),(0,0,1,-1)$ of $H$.
 
\begin{figure}[H]
	\centering
		\tdplotsetmaincoords{70}{120} %70,125
		\begin{tikzpicture}[tdplot_main_coords, xscale =2.5, yscale = 2.5]
		\draw[thick,->] (0,0,0)--(1.2,0,0) node[anchor=north east]{\scalebox{0.8}{$(1,-1,0,0)$}};
		\draw[thick,->] (0,0,0)--(0,1.2,0) node[anchor=north west]{\scalebox{0.8}{$(0,0,1,-1)$}};
		\draw[thick,->, opacity=0.5] (0,0,0)--(0,0,2.5) ;

        \coordinate (A) at (0,0,0);
        \coordinate (B) at (0.5,0.5,0.5);
        \coordinate (C) at (1,1,1);
        \coordinate (D) at (0,0,1);
        \coordinate (E) at (0,0,2);
        \coordinate (F) at (1,0,1);
        \coordinate (G) at (0,1,1);
        \coordinate (H) at (0.5,0.5,1.5);

        \fill[fill=black] (D) circle (1pt) node[above left] {(1,1,1,1)};
        
        \draw[opacity=0.3] (D)--(C);
        
	\filldraw[        draw=blue,        fill=blue!50, opacity=0.1    ] (A)-- (E)   -- (C)-- cycle;
       \filldraw[        draw=blue,        fill=blue!50, opacity=0.1    ]          (D)            -- (F)            -- (C)            -- (G)-- cycle;
        \filldraw[draw=black, fill=blue!50, opacity=0.2 ]          (A)-- (C) -- (F) -- cycle;
        \filldraw[draw=black, fill=purple!50, opacity=0.2]          (A)-- (C) -- (G) -- cycle;
	\filldraw[draw=black, fill=green!50, opacity=0.0 ]          (A)-- (E) -- (F) -- cycle;
	\filldraw[draw=black, fill=orange!50, opacity=0.0 ]          (A)-- (E) -- (G) -- cycle;
        \filldraw[draw=black, fill=red!50, opacity=0.3 ]          (E)-- (C) -- (F) -- cycle; %purple
        \filldraw[draw=black, fill=orange!50, opacity=0.2 ]          (E)-- (C) -- (G) -- cycle;

        \draw[] (A) --(C);
        \draw[] (A) --(F);
        \draw[] (A) --(G);
        \draw[] (E) --(C);
        \draw[] (E) --(F);
        \draw[] (E) --(G);
        \draw[] (C) --(F);
        \draw[] (C) --(G);

\fill[fill=black] (A) circle (1pt) node[below] {(0,0,0,0)};
        \fill[fill=black] (B) circle (1pt) node[below right] {(1,0,1,0)};
        \fill[fill=black] (C) circle (1pt) node[below right] {(2,0,2,0)};

        \fill[fill=black] (E) circle (1pt) node[above] {(2,2,2,2)};

        \fill[fill=black] (F) circle (1pt) node[left] {(2,0,1,1)};
        \fill[fill=black] (G) circle (1pt) node[right] {(1,1,2,0)};
        \fill[fill=black] (H) circle (1pt) node[above right] {(2,1,2,1)};
        
\end{tikzpicture}
\text{}\\
\caption{The polytope $\poly_4^{(a,a)}$ with $a \geq 4$ and integer points indicated by black dots.}
\end{figure}
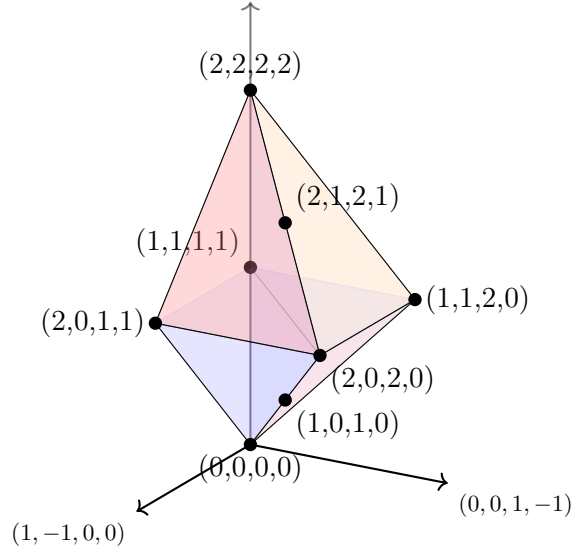

    \begin{figure}[H]
	\centering
		\tdplotsetmaincoords{70}{120} %70,125
		\begin{tikzpicture}[tdplot_main_coords, xscale =2.5, yscale = 2.5]
		\draw[thick,->] (0,0,0)--(1.2,0,0) node[anchor=north east]{\scalebox{0.8}{$(1,-1,0,0)$}};
		\draw[thick,->] (0,0,0)--(0,1.2,0) node[anchor=north west]{\scalebox{0.8}{$(0,0,1,-1)$}};
		\draw[thick,->, opacity=0.5] (0,0,0)--(0,0,2.5) ;

        \coordinate (C) at (1,1,1);
        \coordinate (E) at (0,0,2);
        \coordinate (F) at (1,0,1);
        \coordinate (G) at (0,1,1);
        \coordinate (H) at (0.5,0.5,1.5);

        \filldraw[draw=black, fill=red!50, opacity=0.3 ]          (E)-- (C) -- (F) -- cycle; %purple
        \filldraw[draw=black, fill=orange!50, opacity=0.2 ]          (E)-- (C) -- (G) -- cycle;

        \draw[] (E) --(C);
        \draw[] (E) --(F);
        \draw[] (E) --(G);
        \draw[] (C) --(F);
        \draw[] (C) --(G);

        \fill[fill=black] (C) circle (1pt) node[below right] {(2,0,2,0)};

        \fill[fill=black] (E) circle (1pt) node[above] {(2,2,2,2)};

        \fill[fill=black] (F) circle (1pt) node[left] {(2,0,1,1)};
        \fill[fill=black] (G) circle (1pt) node[right] {(1,1,2,0)};
        \fill[fill=black] (H) circle (1pt) node[above right] {(2,1,2,1)};
        
\end{tikzpicture}
\text{}\\
\caption{The set $\poly_4^{(3,3)}$ with integer points indicated by black dots. Since $a = b = d-1$, this set is not a convex polytope.}
\end{figure}
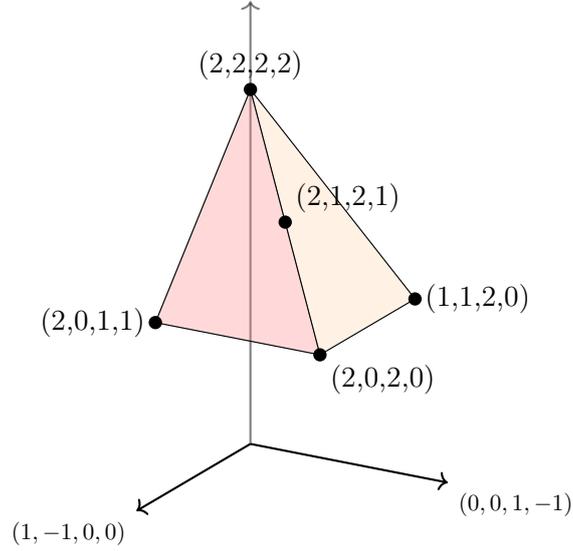
\end{example}

For given natural numbers $d, \mu$ satisfying $3 \leq d \leq \mu$, let us denote by $\irr_d^{\mu}$ the union
\[
\bigcup_{\substack{(a,b)\in \N^2\\ \min(a,b) =\mu}} \irr_d^{(a,b)},
\]
i.e. the set of irreducible diagram pairs $(\mD,d)$ in $(a,b)$-standard form for some $a,b$ satisfying $\min(a,b)=\mu$. By Corollary \ref{cor: b-a bounded}, $\irr_d^{(a,b)}$ is only non-empty whenever $\max(a,b) \leq \min(a,b)+(d-2)=\mu+d-2$, so $\irr_d^{\mu}$ is the union of a finite number of non-empty finite sets. 

 \newpage
 
\begin{definition}\label{def: Pd constraints}
     Let $d \geq 3$ be a natural number. We define the set $\poly_d\subseteq \R^{2d-3}$ as the set of $(x_1,\ldots,x_{d-2},y_1,\ldots,y_{d-2}, z)\in \R^{2d-3}$ satisfying the constraints
     \begin{equation}\label{eq:poly2}\begin{cases} x_j-x_{j+1}\geq 0  &  j \in [d-3]\\
  y_j-y_{j+1}\geq 0  &  j \in [d-3]\\
x_j,y_j \geq 0 &  j \in [d-2]\\
		j(z+d-1-j) +  \sum\limits_{i = 0}^{j-2} x_{d-2-i}  - \sum\limits_{i = 1}^{j}y_i \geq 0 \;\; & j \in [d-2] \\
\sum\limits_{i=1}^{d-2}y_i - \sum\limits_{i=1}^{d-2}x_i = z(d-1)
 \end{cases}.
 \end{equation}
Similarly as before, we will use the short-hand notation $\poly_d(\Z) := \poly_d \cap \Z^{2d-3}$ for the subset of integer points of $\poly_d$. 
\end{definition}

\begin{observation}\label{obs: polytope is union slices constant z}
According to the definitions of $\poly_d$ and $\poly_d^{(a,b)}$, one can derive 
$$\poly_d=\bigcup_{\substack{(a,b)\in\R^2\\ \min(a,b)=d}}\poly_d^{(a,b)}\times \{b-a\} = \left(\bigcup_{z \in\R_{\geq 0}}\poly_d^{(d,d+z)}\times \{z\}\right) \cup \left(\bigcup_{z \in\R_{< 0}}\poly_d^{(d-z,d)}\times \{z\}\right),$$
where we have extended the definition of $\poly_d^{(a,b)}$ to pairs $(a,b)\in \R^2$. Thus, restricting to its integer set $\poly_d(\Z)$, one has
 \[
\poly_d(\Z) =  \bigcup_{\substack{(a,b)\in \N^2\\ \min(a,b) = d}} \poly_d^{(a,b)}(\Z) \times \{b-a\} = \left(\bigcup_{z \in\Z_{\geq 0}}\poly_d^{(d,d+z)}\times \{z\}\right) \cup \left(\bigcup_{z \in\Z_{< 0}}\poly_d^{(d-z,d)}\times \{z\}\right).
\]
\end{observation}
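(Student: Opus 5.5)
The plan is to verify both displayed identities by comparing the defining systems \eqref{eq:poly1} and \eqref{eq:poly2} slice by slice in the last coordinate $z$.

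Fix a real number $z$. A point $(v,z)=(x_1,\ldots,x_{d-2},y_1,\ldots,y_{d-2},z)$ lies in $\poly_d$ exactly when $v$ satisfies \eqref{eq:poly2} with $z$ held fixed. Now take any $(a,b)\in\R^2$ with $b-a=z$, and substitute $b-a=z$ into \eqref{eq:poly1}. The constraints become literally those of \eqref{eq:poly2}: the monotonicity and nonnegativity conditions are identical, the $j$-th inequality $j(b-a+d-1-j)+\cdots\geq 0$ turns into $j(z+d-1-j)+\cdots\geq 0$, and the equality turns into $\sum y_i-\sum x_i=z(d-1)$. So the slice of $\poly_d$ at height $z$ is $\poly_d^{(a,b)}\times\{z\}$.

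The only point needing care is the extra existential constraint in Definition \ref{def: Pdab constraints}. It is imposed only when $a=b=d-1$. The hypothesis $\min(a,b)=d\geq d$ excludes that case, so every $\poly_d^{(a,b)}$ appearing in the union is cut out purely by \eqref{eq:poly1}. This is the one step I would state explicitly; the rest is substitution. In the same spirit, Observation \ref{obs: min a b only depends on difference} confirms that, once $\min(a,b)\geq d$, the set $\poly_d^{(a,b)}$ depends only on $b-a$. Hence the slice does not depend on which representative pair is used.

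For each real $z$ there is exactly one pair with $\min(a,b)=d$ and $b-a=z$: namely $(d,d+z)$ if $z\geq 0$ and $(d-z,d)$ if $z<0$. Conversely, every such pair gives some $z=b-a$. Taking the union over $z$ of the slices then yields both expressions in the first display.

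For the integer version, intersect both sides with $\Z^{2d-3}$. A point $(v,z)$ is integral iff $z\in\Z$ and $v\in\Z^{2d-4}$. When $z\in\Z$, the corresponding pair $(a,b)$ lies in $\N^2$ with $\min(a,b)=d$. This gives the second display, with the slices written as $\poly_d^{(\cdot,\cdot)}(\Z)$.
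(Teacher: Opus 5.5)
Your proof is correct and is the same argument the paper intends (the paper simply says the identities follow from the definitions). Substituting $z=b-a$ turns the constraints defining $\poly_d^{(a,b)}$ literally into those of $\poly_d$; the condition $\min(a,b)=d$ rules out the extra existential constraint for $a=b=d-1$; and each $z$ determines a unique pair $(a,b)$, which lies in $\N^2$ exactly when $z\in\Z$. You are also right to read the slices in the right-hand side of the integer display as $\poly_d^{(\cdot,\cdot)}(\Z)$.
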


Using Theorem \ref{thm bijection ab with polytope}, we can also link the polytope $\poly_d$ with the set of irreducible Ferrers diagram pairs $\irr_d^{\mu}$.

\begin{theorem}\label{thm bijection mu with polytope}
Let $d$, $\mu$ be natural numbers satisfying $3 \leq d \leq \mu$. There is a bijection between the set $\irr_d^{\mu}$ and the set $\poly_d(\Z) $ of integer points given by
\begin{align*}
   \Psi_d^{\mu}: \irr_d^{\mu} \; &\longrightarrow \qquad\qquad\qquad\qquad \poly_d(\Z) \\
    (\mD,d) &\longmapsto (c_1(X),\ldots,c_{d-2}(X),c_1(Y),\ldots,c_{d-2}(Y), b-a)
\end{align*}
with $X = X(\mD,d)$,  $Y = Y(\mD,d)$ and $(\mD,d)$ in $(a,b)$-standard form.
\end{theorem}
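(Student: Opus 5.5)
The plan is to build $\Psi_d^{\mu}$ by gluing the bijections $\Psi_d^{(a,b)}$ of Theorem \ref{thm bijection ab with polytope} over all pairs $(a,b)$ with $\min(a,b)=\mu$, and then to match the resulting union of slices with $\poly_d(\Z)$ using Observation \ref{obs: polytope is union slices constant z} and Observation \ref{obs: min a b only depends on difference}.

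First, I will check that $\Psi_d^{\mu}$ is well defined. A pair $(\mD,d)\in\irr_d^{\mu}$ is in $(a,b)$-standard form for a unique pair $(a,b)$, since standard forms are unique when $\mD\cap\mB_{n,d-1}\neq\emptyset$. Hence the value $b-a$ is determined by $(\mD,d)$. Theorem \ref{thm bijection ab with polytope} gives $(c_i(X),c_i(Y))_i\in\poly_d^{(a,b)}(\Z)$. Since $\min(a,b)=\mu\ge d$, we have $a\ge d$ or $b\ge d$, so the extra constraint for $a=b=d-1$ never arises.

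Next, I will observe that the constraints of $\poly_d^{(a,b)}$ involve $(a,b)$ only through $b-a$. Substituting $z=b-a$ in Definition \ref{def: Pdab constraints} gives exactly the constraints of Definition \ref{def: Pd constraints}. Therefore
\[
\poly_d^{(a,b)}(\Z)\times\{b-a\}=\{(v,z)\in\poly_d(\Z):\ z=b-a\}.
\]
This identifies $\Psi_d^{\mu}(\mD,d)$ as an integer point of $\poly_d$.

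For bijectivity, I will decompose both sides according to $z=b-a$. For $\min(a,b)=\mu$, the map $(a,b)\mapsto b-a$ is a bijection onto $\Z$: take $(\mu,\mu+z)$ if $z\ge 0$ and $(\mu-z,\mu)$ if $z<0$. So $\irr_d^{\mu}=\bigsqcup_{z\in\Z}\irr_d^{(a_z,b_z)}$, and the union is disjoint because standard forms are unique. On the other side, $\poly_d(\Z)=\bigsqcup_{z\in\Z}\poly_d^{(a_z,b_z)}(\Z)\times\{z\}$ by the previous step. Observation \ref{obs: polytope is union slices constant z} states this with $\mu$ replaced by $d$, and Observation \ref{obs: min a b only depends on difference} allows the replacement. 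On the $z$-th pieces, $\Psi_d^{\mu}$ restricts to $\Psi_d^{(a_z,b_z)}$ followed by appending $z$, which is a bijection by Theorem \ref{thm bijection ab with polytope}. A map that is bijective piece by piece between two disjoint unions indexed by the same set is a bijection, which concludes the proof.

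The only delicate point is the bookkeeping about $\min(a,b)=\mu\ge d$. It is needed so that the $a=b=d-1$ exception, where $\poly_d^{(d-1,d-1)}$ carries an extra non-convex condition, is excluded. It is also what makes the slices independent of $\mu$.
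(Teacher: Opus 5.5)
Your proposal is correct and follows the paper's proof. Both arguments split $\irr_d^{\mu}$ and $\poly_d(\Z)$ into slices indexed by $z=b-a$ with $\min(a,b)=\mu$, using Observations \ref{obs: polytope is union slices constant z} and \ref{obs: min a b only depends on difference}, and then glue the bijections $\Psi_d^{(a,b)}$ of Theorem \ref{thm bijection ab with polytope}; your remarks that the slices are disjoint by uniqueness of the standard form, and that $\mu\ge d$ rules out the $a=b=d-1$ exception, spell out what the paper leaves implicit.
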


\begin{proof}
    By definition of $\irr_d^{\mu}$, it consists of a union of sets $\irr_d^{(a,b)}$ with $\min(a,b) = \mu$.    Similarly, by Observations \ref{obs: polytope is union slices constant z} and \ref{obs: min a b only depends on difference}, $\poly_d(\Z)$ consists of a union of sets $\poly_d^{(a,b)}(\Z) \times \{b-a\}$ with $\min(a,b) = \mu$. By Theorem \ref{thm bijection ab with polytope}, the map
    \begin{align*}
    \irr_d^{(a,b)} \; &\longrightarrow \poly_d^{(a,b)}(\Z)\times \{b-a\} \\
    (\mD,d) &\longmapsto (\Psi_d^{(a,b)}(\mD,d), b-a)
    \end{align*}
    is a bijection equal to the restriction of $\Psi_d^{\mu}$ to the set $\irr_d^{(a,b)}$, and thus $\Psi_d^{\mu}$ is a bijection.
\end{proof}

Furthermore, via a proof similar to that of Lemma \ref{lem: b-a bounded}, we deduce that the set $\poly_d$ is bounded and hence a convex polytope, as happens for $\poly_d^{(a,b)}$. 

\begin{lemma}
Let $3 \leq d \leq \mu$ and  $(x_1,\ldots,x_{d-2},y_1,\ldots,y_{d-2},z) \in \poly_d$. Then $0 \leq x_1 \leq -z + d-2$ and $0 \leq y_1 \leq z + d-2$, and so $|z| \leq d-2$.  
\end{lemma}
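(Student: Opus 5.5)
The plan is to mirror the proof of Lemma \ref{lem: b-a bounded}, using the $j=1$ instance of the inequality constraints in Definition \ref{def: Pd constraints} for $y_1$, and a "transpose" symmetry of $\poly_d$ for $x_1$. Nonnegativity of $x_1$ and $y_1$ is part of the defining constraints, so only the upper bounds need work.

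\emph{Step 1 (bound on $y_1$).} For $j=1$ the sum $\sum_{i=0}^{j-2}x_{d-2-i}$ is empty. The inequality therefore reads $(z+d-2) - y_1 \geq 0$, which gives $y_1 \leq z+d-2$ at once.

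\emph{Step 2 (bound on $x_1$).} I would show that the linear involution
\[
\tau:(x_1,\ldots,x_{d-2},y_1,\ldots,y_{d-2},z)\mapsto(y_1,\ldots,y_{d-2},x_1,\ldots,x_{d-2},-z)
\]
maps $\poly_d$ to itself. The monotonicity, nonnegativity and equality constraints are clearly preserved, since the equality just changes sign on both sides.

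For the $j$-th inequality, the cleanest route is to rewrite it with the closed form from Proposition \ref{prop:nui-nui-1}. Define formally
\[
f_j := j(z+d-1-j)+\sum_{i=1}^{j}x_{d-i}-\sum_{i=1}^{j}y_i,
\]
with $x_{d-1}:=0$, which is what appears in the constraint. Note that $f_{d-1}=0$ is exactly the equality constraint. Then show by direct computation that $f_{d-1-j}$ evaluated at $\tau(v)$ equals $f_j(v)$, using the equality constraint to cancel the totals. This is the algebraic counterpart of Observation \ref{obs: one reducible from other}(2), namely $\nu_j(\mD,d)=\nu_{d-1-j}(\mD^\top,d)$.

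Hence $\tau(v)\in\poly_d$, and Step 1 applied to $\tau(v)$ gives $x_1\leq -z+d-2$. If this index bookkeeping turns out messy, an alternative is to verify directly that the $j=d-2$ constraint, combined with the equality constraint, yields $x_1 \le -z+d-2$. Indeed, subtracting, $\sum_{i\le d-2} y_i - \sum_{i\ge 2} x_i$ is expressed via $z$ and $x_1$, and that route is elementary.

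\emph{Step 3 (bound on $z$).} From $0\le y_1\le z+d-2$ we get $z\ge -(d-2)$, and from $0\le x_1\le -z+d-2$ we get $z\le d-2$, so $|z|\le d-2$.

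The main obstacle is Step 2, specifically checking carefully that the $j=d-2$ inequality together with the equality gives exactly $x_1\le -z+d-2$. I expect it to work: $f_{d-1}-f_{d-2}=z+d-2(d-1)+x_1-y_{d-1}$ with $y_{d-1}=0$, so $0\le f_{d-2}=-(z-d+2+x_1)$, i.e.\ $x_1\le -z+d-2$. So the direct route suffices, and I would present that.
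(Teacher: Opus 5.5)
Your proof is correct and is essentially the paper's: the paper proves this lemma ``via a proof similar to that of Lemma \ref{lem: b-a bounded}''. There, the $j=1$ constraint gives $y_1\le z+d-2$, the transposition $(x,y,z)\mapsto(y,x,-z)$ gives the bound on $x_1$, and $|z|\le d-2$ then follows exactly as in your Step 3. Your preferred direct derivation of $x_1\le -z+d-2$ from the $j=d-2$ constraint plus the equality (where $f_{d-1}=0$ uses $x_{d-1}=y_{d-1}:=0$) is just that transposed $j=1$ constraint written out. It has the minor advantage of not requiring a check that the real polytope $\poly_d$, and not only its integer points, is invariant under the swap.
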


\begin{corollary}\label{cor: poly d bounded}
For any natural number $d \geq 3$, the set $\poly_d$ is contained in $[0, 2d-4]^{2d-4}\times [-d+2,d-2]$ and is therefore a bounded convex polytope. This also implies that that the integer point set $\poly_d(\Z)$ is finite.
\end{corollary}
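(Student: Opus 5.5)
The plan is to mirror the proof of Lemma \ref{lem: b-a bounded}, now keeping $z$ as a variable, and then use the definition of a polytope. Fix a point $v=(x_1,\ldots,x_{d-2},y_1,\ldots,y_{d-2},z)\in\poly_d$.

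\textbf{Bound on $y_1$.} The nonnegativity constraints give $x_j,y_j\ge 0$ at once. Specializing the family of inequalities in \eqref{eq:poly2} to $j=1$ kills both sums except $y_1$ (the $x$-sum $\sum_{i=0}^{-1}$ is empty), leaving $z+d-2-y_1\ge 0$. Hence $0\le y_1\le z+d-2$.

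\textbf{Bound on $x_1$.} For this I would use the symmetry of the system rather than a direct inequality. Set $v'=(y_1,\ldots,y_{d-2},x_1,\ldots,x_{d-2},-z)$. I want to check that $v'\in\poly_d$.

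The monotonicity, nonnegativity and equality constraints are clearly preserved by this swap. For the $j$-th inequality I would rewrite it in terms of differences, as in Proposition \ref{prop:nui-nui-1}: the $j$-th inequality says that $\nu_j-\nu_0\ge 0$ in the abstract linear model. The swap corresponds to transposition, which sends $\nu_j-\nu_0$ to $\nu_{d-1-j}-\nu_{d-1}$. Using the equality constraint $\nu_0=\nu_{d-1}$, this becomes the inequality with index $d-1-j$ for $v$.

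Concretely, summing the increments $z+d-2i+x_{d-i}-y_i$ for $i=1,\ldots,d-1$ gives $0$ by the equality constraint. The partial sums from $j+1$ to $d-1$ are the negatives of the partial sums from $1$ to $j$, and this yields the transposed inequality. This verification is the only step needing care, and a short index computation should suffice.

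\textbf{Conclusion.} Applying the bound on $y_1$ to $v'$ gives $0\le x_1\le -z+d-2$. Adding the two bounds gives $0\le 2d-4$; more importantly, $y_1\ge 0$ forces $z\ge -(d-2)$ and $x_1\ge 0$ forces $z\le d-2$, so $|z|\le d-2$.

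Then $x_1\le 2d-4$ and $y_1\le 2d-4$, and the monotonicity chains $x_1\ge\cdots\ge x_{d-2}\ge 0$ and $y_1\ge\cdots\ge y_{d-2}\ge 0$ place all coordinates in $[0,2d-4]$. So $\poly_d\subseteq[0,2d-4]^{2d-4}\times[-d+2,d-2]$.

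Since $\poly_d$ is defined by finitely many closed halfspaces (equalities being pairs of inequalities) and is bounded, it is an $H$-polytope. A bounded set contains finitely many lattice points, so $\poly_d(\Z)$ is finite.
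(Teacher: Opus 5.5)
Your proposal is correct and follows essentially the paper's route. The $j=1$ inequality gives $0\le y_1\le z+d-2$, the symmetry $(x,y,z)\mapsto(y,x,-z)$ gives $0\le x_1\le -z+d-2$, and monotonicity then yields the box, hence boundedness of $\poly_d$ and finiteness of $\poly_d(\Z)$. Your sketched invariance check does go through: the increment $z+d-2i+x_{d-i}-y_i$ evaluated at $v'$ is the negative of the increment with index $d-i$ at $v$, so the $j$-th inequality at $v'$ becomes the $(d-1-j)$-th at $v$ once the equality constraint is used. This makes explicit a step the paper only justifies through transposition of diagrams.
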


\begin{example}
    Let $d = 3$. The polytope $\poly_{3} \subset \R^3$ is defined as the set of vectors $(x_1,y_1,z)=(x,y,z)\in\R^3$ satisfying the constraints
$$\begin{cases}
x\geq 0 \\
y\geq 0 \\ 
		 z - x +1\geq 0 &  \\
y  - x - 2z = 0.
 \end{cases}$$

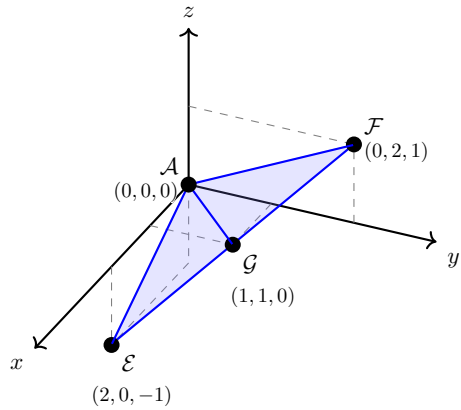
\begin{figure}[H]
\input{TikZ_pictures/Polytope_P3}
\caption{The polytope $\poly_3$, with integer points indicated with black dots, and labeled corresponding to the irreducible diagrams in $\irr_d^\mu$ for $\mu \geq d$.}
\end{figure}
The integer point set $\poly_3(\Z)$ consists of the four points $\{(0,0,0),(1,1,0), (2,0,-1),(0,2,1) \}$.  For $\mu=3$, the map $(\Psi_{3}^3)^{-1}$ sends these four points to the set of irreducible diagrams $\irr_{3}^3=\{\mA_3,\mG_4,\mE_5,\mF_5 \}$ respectively, shown here below.

\begin{figure}[H]
    \centering
\ytableausetup{baseline}
  \scalebox{0.7}{\ydiagram[*(white!80!green) \bullet]{3,3,3} 
\qquad \qquad  \ydiagram[*(white!80!blue) \bullet]
{3+1}
*[*(white!80!green) \bullet]{3,3,3} *[*(white!80!red) \bullet] {0,0,0,1}
\qquad \qquad      \ydiagram[*(white!80!blue) \bullet]
{3+2}
*[*(white!80!green) \bullet]{3,3,3,3} 
\qquad \qquad     \ydiagram[*(white!80!red) \bullet]
{0,0,0,0+1,0+1}
*[*(white!80!green) \bullet]{4,4,4}} 
    \caption{A graphical illustration of the $4$ Ferrers diagrams $\mA_3,\mG_4,\mE_5,\mF_5$, with their standard forms highlighted.}
   % \label{fig:AGEF}
\end{figure}

 Likewise, for $\mu=4$ the map $(\Psi_{3}^4)^{-1}$ sends the four points to the irreducible diagrams $\irr_{3}^4=\{\mA_4,\mG_5,\mE_6,\mF_6 \}$ respectively. 
\begin{figure}[H]
    \centering
\ytableausetup{baseline}
  \scalebox{0.7}{\ydiagram[*(white!80!green) \bullet]{4,4,4,4} 
\qquad \qquad  \ydiagram[*(white!80!blue) \bullet]
{4+1}
*[*(white!80!green) \bullet]{4,4,4,4} *[*(white!80!red) \bullet] {0,0,0,0,1}
\qquad \qquad      \ydiagram[*(white!80!blue) \bullet]
{4+2}
*[*(white!80!green) \bullet]{4,4,4,4,4} 
\qquad \qquad     \ydiagram[*(white!80!red) \bullet]
{0,0,0,0,0+1,0+1}
*[*(white!80!green) \bullet]{5,5,5,5}} 
    \caption{A graphical illustration of the $4$ Ferrers diagrams $\mA_4,\mG_5,\mE_6,\mF_6$, with their standard forms highlighted.}
   % \label{fig:AGEF}
\end{figure}

\end{example}

In Section \ref{sec: d=3} we will further investigate the existence of MFD codes for the above irreducible diagrams for $d=3$.\\

We end this section by connecting $\poly_d^{(a,b)}(\Z)$, for the special case $|b-a| = d-2 $, to the sequence \href{https://oeis.org/A007747}{A007747} in the OEIS \cite{oeis} counting the number of score sequences in a chess tournament.
Considering, without loss of generality, the case $a-b = d-2 $, the set $\poly_d^{(2d-2,d)}(\Z)$ is given by
$(x_1,\ldots,x_{d-2},0,\ldots,0)\in \Z^{2d-4}$ satisfying the constraints
     \begin{equation}\label{eq:poly chess x}\begin{cases} x_j-x_{j+1}\geq 0  &  j \in [d-3]\\
x_j \geq 0 &  j \in [d-2]\\
		\sum\limits_{i = 0}^{j-2} x_{d-2-i}  \geq j(j-1) \;\; & j \in [d-2] \\
\sum\limits_{i=1}^{d-2}x_i = (d-1)(d-2).
 \end{cases}
 \end{equation}
Let us now define $p_j := \left(\sum\limits_{i = 0}^{j-2} x_{d-2-i}\right)  - j(j-1) = \left(\sum\limits_{i = 2}^{j} x_{d-i}\right)  - j(j-1) \geq 0$  for $j \in [d-1]$. Then $p_{j+1}-p_{j} = x_{d-j-1}-2j$, and thus
\[
2p_{j}-(p_{j+1}-p_{j-1}) = x_{d-j}-x_{d-j-1}+2j-2(j-1) \leq 2.\\
\]
Therefore the tuples $(p_1,\ldots,p_{d-1})$ satisfy the constraints
\begin{equation}\label{eq:poly chess p}\begin{cases} 
2p_{j}-(p_{j+1}+p_{j-1}) \leq 2 &  j \in \{2,3,\ldots,d-2\}\\
		p_j \geq 0 \;\; & j \in [d-1] \\
p_1 = 0\\
p_{d-1} = 0.
 \end{cases}
 \end{equation}
The equations $p_j = \left(\sum\limits_{i = 0}^{j-2} x_{d-2-i}\right)  - j(j-1)$ for all $j$ imply a unimodular map between the polytope of tuples $(x_1, \ldots, x_{d-2}, 0) \in \R^{d-1}$ satisfying \eqref{eq:poly chess x}, and the polytope of tuples $(p_1, \ldots, p_{d-1}) \in \R^{d-1}$ satisfying \eqref{eq:poly chess p}. This unimodular map implies a bijection between the integer points of these polytopes. 
The sequence \href{https://oeis.org/A007747}{A007747} is defined as the number of tuples $(p_1,\ldots,p_{d-1}) \in \Z^{d-1}$ satisfying the system of equations \eqref{eq:poly chess p}, for $d\in\mathbb N$. For the reasons above, such sequence coincides with $\{|\poly_d^{(2d-2,d)}(\Z)|\}_{d\in\mathbb N}$. 
Since there does not yet seem to exist a closed-form formula for this sequence, we do not expect an easily obtainable closed-form formula for general $(a,b)$ for the number of points in $\poly_d^{(a,b)}(\Z)$ in terms of $d$.

\subsection{Integrality and vertex description of $\poly_d$}\label{subsec: integral poly}

In this section, we focus on the polytope $\poly_d$, for $d\geq 3$. We start by giving the following definitions, and refer the reader to \cite{ziegler1993lectures} for the general theory of polytopes.

\begin{definition}
Let $\wp\subset \R^n$ be an $H$-polytope defined by 
\[
\wp = \{x \in \R^n \mid H x\geq b\}
\]
for some $H \in \R^{m \times n}$ and $b \in \R^n$ with $m \geq n$. Then the set $\mathcal{V}(\wp)$ of \textbf{vertices} of $\wp$ is the set of points $v \in \wp$ satisfying
\[
H'v = b
\]
for some invertible submatrix $H' \in \R^{n \times n}$ of $H$.
\end{definition}

 \begin{definition}
 	Two polytopes $\wp_1 \subset \R^n$, $\wp_2 \subset \R^m$ are called \textbf{combinatorially isomorphic} or  \textbf{combinatorially equivalent} if they have isomorphic face lattices, i.e. there is a bijection between 
    $\mathcal{V}(\wp_1)$ and $\mathcal{V}(\wp_2)$, in such a way that the vertex sets of the faces of $\wp_1$ correspond via this bijection to the vertex sets of the faces of $\wp_2$, and induces an inclusion- and dimension-preserving bijection between the faces of the polytopes.
 	
 	The polytopes $\wp_1$ and $\wp_2$ are called \textbf{affinely/linearly/unimodularly isomorphic} if there is an affine/linear/unimodular map $\R^n \to \R^m$ that is a bijection between the points of $\wp_1$ and $\wp_2$. Here, a unimodular map is a linear map in $\operatorname{GL}_n(\Z)$ (so $n=m$), thus preserving $\Z^n \subset \R^n$.
 \end{definition}

\begin{definition}
   An \textbf{integral polytope} in $\R^n$ is a convex polytope whose vertices belong to $\Z^n$. In other words, an integral polytope coincides with the convex hull of points in $\Z^n$. 
\end{definition}

The importance of integral polytopes relies on the fact that some of its properties can be studied via Ehrhart theory: for instance, their volume,  number of vertices and number of integer points, are encoded by its Ehrhart polynomial; see e.g. \cite{beck2007computing, rehberg2025}. We state here our main result about the polytope $\poly_d$. 
\begin{theorem}\label{thm:main_poly}
    Let $d\geq 3$. The polytope $\poly_d$ is an integral polytope with $3^{d-2}$ vertices.
\end{theorem}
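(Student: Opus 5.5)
The plan is to find coordinates in which $\poly_d$ visibly resembles a product of $d-2$ triangles, and then count and check the vertices directly from the definition by tight constraint systems.

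\textbf{Counting constraints.} Set $x_{d-1}=y_{d-1}=0$. Define the slack functions
\[
f_j:=j(z+d-1-j)+\sum_{i=0}^{j-2}x_{d-2-i}-\sum_{i=1}^{j}y_i, \qquad j\in\{0,\ldots,d-1\}.
\]
By the same telescoping as in Proposition~\ref{prop:nui-nui-1}, $f_0=0$ and
\[
f_j-f_{j-1}=z+d-2j+x_{d-j}-y_j .
\]
The defining equation of $\poly_d$ is exactly $f_{d-1}=0$. The nonnegativity constraints $x_j,y_j\ge 0$ for $j<d-2$ are implied by the monotonicity constraints together with $x_{d-2},y_{d-2}\ge 0$. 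So $\poly_d$ is cut out of the hyperplane $\{f_{d-1}=0\}$, which has dimension $2d-4$, by $3(d-2)$ inequalities. For each $j\in[d-2]$ these form a block
\[
\alpha_j:\ x_{d-1-j}-x_{d-j}\ge 0,\qquad \beta_j:\ y_j-y_{j+1}\ge 0,\qquad \gamma_j:\ f_j\ge 0 .
\]
This matches the facet count $3(d-2)$ of a product of $d-2$ triangles in dimension $2(d-2)$.

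\textbf{Candidate vertices.} I will first check that the linear map sending $(x,y,z)$ on the hyperplane to the $3(d-2)$ slacks is injective, so that the slacks (suitably reduced) serve as coordinates. Then I will show that the vertices are exactly the $3^{d-2}$ points obtained by choosing, in every block $j$, one of $\alpha_j,\beta_j,\gamma_j$ to be slack and making the other two tight. For each choice $\sigma\in\{\alpha,\beta,\gamma\}^{d-2}$ the $2(d-2)$ tight equalities plus $f_{d-1}=0$ form a square system in $2d-3$ unknowns, and the step is in three parts.
\begin{itemize}
\item[(i)] The system is nonsingular. I would order the equations by $j$ and eliminate $x_{d-j},y_j$ successively using the difference formula for $f_j$, so that everything reduces to one linear equation in $z$.
\item[(ii)] The resulting point is integral. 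This needs the final coefficient of $z$ to be $\pm1$, or the right-hand side to be divisible by it. The worry is the factor $(d-1)$ multiplying $z$ in the equation.
\item[(iii)] The remaining $d-2$ inequalities hold at the point.
\end{itemize}
I expect (iii) to come out as explicit small formulas; the vertices listed for $d=4$ (such as $(2,2,2,2)$ and $(2,0,2,0)$) will serve as a sanity check.

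\textbf{The main obstacle: excluding other vertices.} I must show that no other basis gives a feasible vertex. Concretely, any feasible point where all three constraints of some block $j$ are tight should force a block where none is tight, or a dependency, so that it coincides with one of the $3^{d-2}$ points above. I would attack this with the recursion $f_j-f_{j-1}=z+d-2j+x_{d-j}-y_j$. Tightness of $\alpha_j,\beta_j$ relates consecutive increments, and with $f_{j-1}=f_j=0$ it gives $f_{j+1}-f_j=(f_j-f_{j-1})-2$, mirroring the strict descent argument of Proposition~\ref{prop: inf decending chain nu_j}. Propagating this, the $f$ become negative before reaching $f_{d-1}=0$, which contradicts feasibility. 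If the slack coordinates genuinely realize an affine isomorphism onto a product of simplices, both the count and the integrality follow at once. I would try that first, and fall back on the explicit basis enumeration above if the map is not unimodular.
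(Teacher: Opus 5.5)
Your setup is correct, and it is the paper's. After reversing the block index ($j\leftrightarrow d-1-j$), your slacks $(\alpha_j,\beta_j,f_j)$ are exactly the coordinates of the paper's polytope $\lpoly_d$.

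\textbf{The gap.} The content of the theorem is that each of the $3^{d-2}$ tight patterns gives a unique, feasible, integral point. Your (i)--(iii) only announce this, and both routes you sketch for it fail as stated.
\begin{itemize}
\item[$\cdot$] The slack map is an affine isomorphism (even lattice-preserving, see below), but its image is not a product of simplices. For $d=4$, the $2$-face $\{y_1=y_2=0\}$ of $\poly_4$ has vertices $(0,0,0,0,0)$, $(3,0,0,0,-1)$, $(3,3,0,0,-2)$, $(4,2,0,0,-2)$. This quadrilateral is not a parallelogram, so nothing follows ``at once''.
\item[$\cdot$] The elimination in (i) does not in general collapse to one equation in $z$. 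A block with $\gamma_j$ tight leaves one of $x_{d-1-j},y_{j+1}$ undetermined.
\item[$\cdot$] The pivots are not units. In slack coordinates the basis determinant of a pattern is $\prod_\ell(s_\ell+1)$, where $s_\ell$ are the lengths of the maximal runs of blocks in which $f_j$ is the free slack. This equals $d-1$ when every $f_j$ is free; in your coordinates that is the equation $(d-1)z=0$.
\end{itemize}
So integrality has to come from the specific right-hand side, which your plan never computes.

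\textbf{The missing idea.} It is already in your difference formula. Subtracting two consecutive instances gives, for all $j\in[d-2]$ with $f_0=f_{d-1}=0$,
\[
\alpha_j+\beta_j+2f_j-f_{j-1}-f_{j+1}=2 .
\]
So in slack coordinates $\poly_d$ is $\{(\alpha,\beta,f)\ge 0:\ \alpha+\beta+A_{d-2}f=2\mathbf 1\}$, which is the paper's $\lpoly_d$. Everything then follows in a few lines.
\begin{itemize}
\item[$\cdot$] A fully tight block gives $f_{j-1}+f_{j+1}=-2$, which is impossible. This is your ``main obstacle''; it is the easy step, and it needs neither $f_{j-1}=0$ nor any propagation.
\item[$\cdot$] Fix a pattern. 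On a maximal run of $s$ blocks where $f$ is free, $f$ has second difference $-2$ and vanishes just outside the run. Hence $f$ on the run is $b_s=(t(s+1-t))_{t\in[s]}$.
\item[$\cdot$] In every other block, the single free $\alpha_j$ or $\beta_j$ equals $2+f_{j-1}+f_{j+1}\ge 2$.
\end{itemize}
This gives uniqueness, strict feasibility (hence $3^{d-2}$ distinct vertices) and integrality in slack coordinates.

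Your worry about $(d-1)z$ then disappears, because the inverse of the slack map is integral. The $x_i$ and $y_i$ are partial sums of the $\alpha_j$ and $\beta_j$, and $z=f_1+y_1-(d-2)$. The paper instead eliminates $z$ and recovers its integrality on vertices from $(1,2,\ldots,k)A_k=(0,\ldots,0,k+1)$ with $k=d-2$. Your coordinates would make that step immediate, but only once the computation above is actually carried out.
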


The rest of the section will be dedicated to prove Theorem \ref{thm:main_poly}. We start by defining some auxiliary matrices and vectors, together with their properties, which will be useful for the proof.

\begin{definition}\label{def:Ak_Bk_bk} Let $k\in \N$. We define the following two integer-valued $k\times k$ matrices
\[
A_k := \begin{pmatrix}
2 & -1 & 0 & \cdots & 0 \\
-1 & 2 & -1 & \ddots & 0 \\
0 & -1 & 2 & \ddots & 0 \\
\vdots & \ddots & \ddots & \ddots & -1 \\
0 & \cdots & 0 & -1 & 2 \\
\end{pmatrix}, \qquad B_k:=\begin{pmatrix}
     i(k+1-j)
 \end{pmatrix}_{i,j\in[k]},
\]
and the integer-valued vector
\[b_{k}:=\begin{pmatrix}
        1\cdot k\\
    2\cdot(k-1)\\
    3\cdot(k-2)\\
    \vdots\\
    (k-1)\cdot 2\\
    k\cdot 1
\end{pmatrix}\in\Z^k.\]
\end{definition}

We start by highlighting some useful properties of the above mentioned objects that are easily proven by elementary calculations.

\begin{lemma}\label{lem:AkBkbk}
  Let $k\in \N$, and let $A_k,B_k,b_k$ be as in Definition \ref{def:Ak_Bk_bk}. The following hold.
\begin{enumerate}[label = (\arabic*)]
    \item $\det(A_k) = k+1$;
    \item $B_kA_k=(k+1)I_k=A_kB_k$;
    \item $B_k \scalebox{0.8}{$\begin{pmatrix}
    2 \\
\vdots\\2
\end{pmatrix}$} = (k+1)b_k$;\vspace{-0.3cm}
\item $A_k b_k =  \scalebox{0.8}{$\begin{pmatrix}
    2 \\
\vdots\\2
\end{pmatrix}$}$;\vspace{-0.1cm}
\item $\begin{pmatrix}
    1 & 2 &\cdots & k 
\end{pmatrix}A_k = \begin{pmatrix}
    0 &\cdots & 0 & k+1 
\end{pmatrix}$.
\end{enumerate}
\end{lemma}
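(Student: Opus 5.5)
The five identities of Lemma \ref{lem:AkBkbk} can be checked one by one by direct computation. The plan is to first establish (2), since (1) and (3) then follow almost for free, and to handle (4) and (5) by checking a single row or column of $A_k$ at a time.

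For (1), I will expand the tridiagonal determinant along the last row. Writing $D_k=\det(A_k)$, this gives the recurrence $D_k=2D_{k-1}-D_{k-2}$ with $D_1=2$ and $D_2=3$. Induction then yields $D_k=k+1$. Alternatively, (1) follows from (2): $\det(B_k)\det(A_k)=(k+1)^k$, but this alone does not pin down $\det(A_k)$, so I will use the recurrence.

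For (2), I compute the $(i,j)$ entry of $B_kA_k$. Using the convention $B_{i,0}=B_{i,k+1}=0$, which is consistent with the formula $i(k+1-j)$ at $j=k+1$ and by defining it at $j=0$, this entry is
\[
-B_{i,j-1}+2B_{i,j}-B_{i,j+1}.
\]
The symmetric formula is $B_{i,j}=\min(i,j)\,(k+1-\max(i,j))$; note the stated formula $i(k+1-j)$ is only valid for $i\le j$, so I must read $B_k$ as the symmetric Green's matrix. For fixed $i$, the map $j\mapsto B_{i,j}$ is piecewise linear in $j$, with slopes $(k+1-i)$ and $-i$, and vanishes at $j=0$ and $j=k+1$. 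Hence its discrete second difference is zero for $j\ne i$ and equals $(k+1-i)+i=k+1$ at $j=i$, giving $B_kA_k=(k+1)I_k$. Since $B_k$ and $A_k$ are both symmetric, transposing gives $A_kB_k=(k+1)I_k$ as well.

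The main care point is exactly this interpretation of $B_k$. If the entries $i(k+1-j)$ are taken literally for all $i,j$, then (2) fails: for instance, at $k=2$ the literal matrix is $\begin{pmatrix}2&1\\4&2\end{pmatrix}$, which is singular. So in the proof I will make explicit that $B_k=(\min(i,j)(k+1-\max(i,j)))_{i,j}$, the inverse Cartan-type matrix.

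For (4), I compute $(A_kb_k)_i=-(i-1)(k+2-i)+2i(k+1-i)-(i+1)(k-i)$. This is the negative second difference of the quadratic $f(t)=t(k+1-t)$, which is $2$. At the boundary rows $i=1$ and $i=k$ the computation is the same, because $f(0)=f(k+1)=0$.

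Then (3) follows from (4) and (2) by multiplying (4) on the left by $B_k$:
\[
B_k\,(2,\dots,2)^\top=B_kA_kb_k=(k+1)b_k.
\]

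For (5), the $j$-th entry of $(1,2,\dots,k)A_k$ is $-(j-1)+2j-(j+1)=0$ for $j<k$. For $j=k$ the last term $-(j+1)$ is absent, since there is no row $k+1$, so the entry is $-(k-1)+2k=k+1$.
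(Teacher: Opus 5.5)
Your proof is correct and takes essentially the paper's approach: the paper only says these identities follow from elementary calculations, and your checks supply them (tridiagonal recurrence for (1), discrete second differences for (2) and (4), (3) obtained from (2) and (4), and a column-by-column check for (5)). You are also right that $B_k$ must be read as the symmetric matrix $\bigl(\min(i,j)\,(k+1-\max(i,j))\bigr)_{i,j\in[k]}=(k+1)A_k^{-1}$: the literal entries $i(k+1-j)$ in Definition \ref{def:Ak_Bk_bk} are a typo that makes (2) and (3) false, and the symmetric matrix is exactly the block produced by the paper's own row operations on the third block of the system.
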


Let $d\geq 3$ and let us fix, from now on, $k:=d-2$. Recall that, with this notation, the polytope $\poly_d=\poly_{k+2}$ is the set of $(x_1,\ldots,x_{k},y_1,\ldots,y_{k},z)\in\R^{2k+1}$ satisfying the following system of inequalities.

\begin{equation}\label{eq:description_poly}
\text{}\poly_d: \hspace{-0cm}{\small\left(
\begin{array}{ccccc|ccccc|c}
1 & -1 & 0 & \cdots & 0 &  &  &  &  &  & 0 \\
0 & 1 & -1 & \ddots & \vdots &  &  &  &  &  & 0 \\
0 & 0 & 1 & \ddots & 0 &  &  & \scalebox{1.2}{{0}} &  &  & 0 \\
\vdots & \vdots & \ddots & \ddots & -1 &  &  &  &  &  & \vdots  \\
0 & 0 & \cdots & 0 & 1 &  &  &  &  &  &  0 \\
\hline
 &  &  &  &  & 0 & \cdots & 0 & 0 & 1 & 0 \\
 &  &  &  &  & 0 & \cdots & 0 & 1 & -1 & 0 \\
 &  & \scalebox{1.2}{{0}} &  &  & \vdots & \iddots & 1 & -1 & 0 & 0 \\
 &  &  &  &  & 0 & \iddots & \iddots & \iddots & \vdots & \vdots \\
 &  &  &  &  & 1 & -1 & 0 & \cdots & 0 & 0 \\
\hline
0 & 1 & 1 & \cdots & 1 & -1 & -1 & -1 & \cdots & -1 & k \\
0 & 0 & 1 & \ddots & \vdots & -1 & -1 & \iddots & \iddots & 0 & k-1 \\
0 & 0 & 0 & \ddots & 1 & -1 & \iddots & \iddots & \iddots & 0 & k-2 \\
\vdots & \vdots & \ddots & \ddots & 1 & \vdots & \iddots & \iddots & \iddots & \vdots & \vdots \\
0 & 0 & \cdots & 0 & 0 & -1 & 0 & 0 & \cdots & 0 & 1 \\
\hline
-1 & -1 & \cdots & -1 & -1 & 1 & 1 & 1 & \cdots & 1 & -(k+1) \\
1 & 1 & \cdots & 1 & 1 & -1 & -1 & -1 & \cdots & -1 & k+1 
\end{array} 
\right)
\left(
\begin{array}{c}
    x_1\\
    x_2\\
    \vdots\\
    x_{k}\\
    \hline
    y_1\\
    y_2\\
    \vdots\\
    y_{k}\\
    \hline
    z
\end{array}
\right) 
\geq 
\left(
\begin{array}{c}
    0\\
    0\\
    0\\
    \vdots\\
    0\\
    \hline
    0\\
    0\\
    0\\
    \vdots\\
    0\\
    \hline
    \\
    \\
    \\
    -b_{k}\\
    \\
    \\
    \\
    \hline
    0\\
    0\\
\end{array}
\right) }
\end{equation}

To prove Theorem \ref{thm:main_poly}, we first apply a unimodular map (i.e. a linear map in $\operatorname{GL}_{2k+1}(\Z)$) to the polytope $\poly_d$ and the corresponding system of inequalities.
Let $U \in \operatorname{GL}_{2k+1}(\Z)$ be the unimodular map  
\[
U := \left( 
\begin{array}{ccc|ccc|c}
1 & \cdots & 1 &  &  &  & 0 \\
 & \ddots & \vdots &  & \scalebox{1.2}{{0}} &  & \vdots \\
0 &  & 1 &  &  &  & 0 \\
\hline
 &  &  & 1 & \cdots & 1 & 0 \\
 & \scalebox{1.2}{{0}} &  & \vdots & \iddots &  & \vdots \\
 &  &  & 1 &  & 0 & 0 \\
 \hline
0 & \cdots & 0 & 0 & \cdots & 0 & 1 
\end{array}\right) .
\]
The transformed polytope $\poly_d'  := U^{-1}(\poly_d) = \{U^{-1}x \mid x \in \poly_d\}$ is then the solution set of the new system of inequalities \eqref{eq: new system after U}, obtained by multiplying the matrix in system \eqref{eq:description_poly} with $U$ on the right.

The map $U^{-1}$ preserves the integer points of $\poly_d$, in particular the vertices with integer coordinates. This is stated more precisely in the following lemma (see also \cite{beck2007computing, rehberg2025}).

\begin{lemma}
   The polytopes $\poly_d$ and $\poly_d'$ are unimodularly isomorphic. In particular:
   \begin{enumerate}[label = (\arabic*)]
       \item $\poly_d$ and $\poly_d'$ are linearly and combinatorially isomorphic, where $U$ induces an isomorphism between face lattices and thus a bijection between vertices;
       \item  $U$ induces a bijection $\poly_d'(\Z) \to \poly_d(\Z)$ between integer points of the polytopes;
       \item $\poly_d$ is an integral polytope if and only if $\poly_d'$ is.
   \end{enumerate}
\end{lemma}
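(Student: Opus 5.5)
This is a standard fact about unimodular maps, so the plan is to derive all three items from the single observation that $U$ and $U^{-1}$ are integer matrices. First we check that $U \in \operatorname{GL}_{2k+1}(\Z)$. The matrix $U$ is block diagonal, with three blocks:
\begin{itemize}
\item an upper triangular all-ones block, which has determinant $1$;
\item an anti-triangular all-ones block, which is a column permutation of a triangular all-ones matrix and so has determinant $\pm 1$;
\item the $1\times 1$ block $(1)$.
\end{itemize}
Hence $\det U = \pm 1$. By the adjugate formula, $U^{-1}$ is then also an integer matrix. Concretely, the inverse of the upper all-ones triangular block is the bidiagonal matrix with $1$ on the diagonal and $-1$ on the superdiagonal, and the middle block inverts similarly. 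Consequently, both $U$ and $U^{-1}$ map $\Z^{2k+1}$ into itself, so $U$ restricts to a bijection of $\Z^{2k+1}$.

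Next we identify $\poly_d'$ as an $H$-polytope. Write system \eqref{eq:description_poly} as $Hx \ge \beta$. Then
\[
x\in\poly_d \iff Hx\ge\beta \iff (HU)(U^{-1}x)\ge \beta,
\]
so $\poly_d' = U^{-1}(\poly_d) = \{w : HUw \ge \beta\}$, which is exactly the new system. Since $\poly_d$ is bounded (Corollary \ref{cor: poly d bounded}) and $U^{-1}$ is linear, $\poly_d'$ is bounded too, hence a polytope.

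For (1), an invertible linear map sends convex sets to convex sets and preserves affine dependence and dimension. It also maps supporting hyperplanes to supporting hyperplanes. So $w\mapsto Uw$ sends faces of $\poly_d'$ bijectively onto faces of $\poly_d$, preserving inclusion and dimension, and in particular sends vertices to vertices. In terms of the paper's vertex definition: $H'(Uw)=\beta'$ with $H'$ an invertible square submatrix of $H$ is equivalent to $(H'U)w=\beta'$, and $H'U$ is the corresponding (invertible) submatrix of rows of $HU$.

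For (2), if $w\in\poly_d'(\Z)$ then $Uw\in\poly_d\cap\Z^{2k+1}$. Conversely, $U^{-1}$ maps $\poly_d(\Z)$ into $\poly_d'(\Z)$ because it is an integer matrix. The two maps are mutually inverse, so $U$ gives the claimed bijection.

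For (3), combine (1) and (2): vertices correspond under $U$, and $U$, $U^{-1}$ both preserve integrality of points. Hence every vertex of $\poly_d$ is integral if and only if every vertex of $\poly_d'$ is.

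There is no real obstacle here. The only point needing a moment's care is verifying $\det U=\pm1$ for the anti-triangular middle block, which is immediate by reversing its column order.
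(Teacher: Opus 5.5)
Your proof is correct and is the standard argument the paper relies on. The paper states this lemma without a written proof and defers to the well-known fact, with references to Ehrhart-theory texts, that a map in $\operatorname{GL}_{2k+1}(\Z)$ is a linear bijection that preserves $\Z^{2k+1}$ and hence preserves face lattices, vertices and lattice points. Your explicit checks fill in exactly those routine details: that $\det U=\pm1$, that $\poly_d'=\{w : HUw\ge\beta\}$, and that invertible row-submatrices of $H$ correspond to those of $HU$.
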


\begin{equation}\label{eq: new system after U}
\text{} \hspace{-1.4cm}\poly_d':
{\scriptsize\left(
\begin{array}{ccccc|ccccc|c}
 &  &  &  &  &  &  &  &  &  & 0 \\
 &  &  &  &  &  &  &  &  &  & 0 \\
 &  & \multicolumn{2}{c}{{I_{k}}}   &  &  &   \multicolumn{2}{c}{\scalebox{1.2}{{0}}}  & &  & 0 \\
 &  &  &  &  &  &  &  &  &  & \vdots  \\
 &  &  &  &  &  &  &  &  &  &  0 \\
\hline
 &  &  &  &  &  &  &  &  &  & 0\\
 &  &  &  &  &  &  &  &   &  & 0\\
  & & \multicolumn{2}{c}{\scalebox{1.2}{{0}}}   &  &  &  \multicolumn{2}{c}{I_{k}} &   &  & 0\\
 & & &  &  &  &   & &  &  & \vdots\\
 &  &  &  &  &  &  &  &   &  & 0 \\
\hline
0 & 1 & 2 & \cdots & k-1 & -k & -(k-1) & \cdots & -2 & -1 & k\\
 0& 0 & \ddots & \ddots & \vdots & -(k-1) & -(k-1) & \ddots & -2  & -1 & k-1\\
 \vdots & & \ddots& \ddots & 2 & \vdots & \ddots & \ddots & \vdots & \vdots & \vdots\\
\vdots & & & 0 & 1 & -2 & -2  & \cdots& -2 & -1 & 2\\
0 & \cdots\phantom{\vdots} & \cdots & 0 & 0 & -1 & -1 & \cdots &  -1 & -1 & 1 \\
\hline
-1 & -2 & \cdots & -(k-1) & -k & k & k-1 & \cdots & 2 & 1 & -(k+1) \\
1 & 2 & \cdots & k-1 & k & -k & -(k-1)  & \cdots & -2 & -1 & k+1 
\end{array}\right) \left(
\begin{array}{c}
    x_1\\
    x_2\\
    \vdots\\
    x_{k}\\
    \hline
    y_1\\
    y_2\\
    \vdots\\
    y_{k}\\
    \hline
    z
\end{array}
\right) 
\geq 
\left(
\begin{array}{c}
    0\\
    0\\
    0\\
    \vdots\\
    0\\
    \hline
    0\\
    0\\
    0\\
    \vdots\\
    0\\
    \hline
     \\
    \\
    \\
    -b_{k}\\
    \\
    \\
    \\
    \hline
    0\\
    0\\
\end{array}
\right) }
\end{equation}

Next, we will perform several row operations to system \eqref{eq: new system after U}. The operations that preserve the solution set of the system of inequalities are
\begin{itemize}
    \item Scaling a row by a positive real number;
    \item Adding (a multiple of) a row corresponding to an equality to another row.
\end{itemize}
In system \eqref{eq: new system after U}, the bottom 2 rows correspond to the equality
\begin{equation}\label{eq: equality bottom rows}
\begin{pmatrix}
    1 & \cdots & k \mid -k &\cdots & -1 \mid k+1
\end{pmatrix}
\begin{pmatrix}
    x_1 & \cdots  &x_k \mid y_1 &\cdots & y_k \mid z
\end{pmatrix}^\top = 0.
\end{equation}

For $i \in [k]$, we multiply the $(2k + i)$-th row by the scalar $(k+1)$ and add $(k-i+1)$ times the equality \eqref{eq: equality bottom rows} to it. Thus, the polytope $\poly_d'$ can be equivalently described by the following system of inequalities.

\begin{equation}
\text{}\hspace{-0.4cm}\poly_d': {\small\left(\begin{array}{cccc|cccc|c}
& & & & & & & & 0\\
& & & & & & & & 0\\
& & I_{k} & & & 0 & & & 0\\
& & & & & & & & \vdots\\
& & & & & & & & 0\\
\hline 
& & & & & & & & 0\\
& & & & & & & & 0\\
& & 0 & & & I_{k} & & & 0\\
& & & & & & & & \vdots\\
& & & & & & & & 0\\
\hline
& & & & & & & & 0\\
& & & & & & & & 0\\
& & -B_{k} & & & -B_{k} & & & 0\\
& & & & & & & & \vdots\\
& & & & & & & & 0\\
\hline
-1 & -2 & \cdots & -k & k & k-1 & \cdots & 1 & -(k+1) \\
1 & 2 & \cdots & k & -k & -(k-1) & \cdots & -1 & k+1 \\
\end{array}\right)
\left(
\begin{array}{c}
    x_1\\
    x_2\\
    \vdots\\
    x_{k}\\
    \hline
    y_1\\
    y_2\\
    \vdots\\
    y_{k}\\
    \hline
    z
\end{array}
\right) 
\geq 
\left(
\begin{array}{c}
    0\\
    0\\
    0\\
    \vdots\\
    0\\
    \hline
    0\\
    0\\
    0\\
    \vdots\\
    0\\
    \hline
     \\
    \\
    -(k+1)b_{k}\\
    \\
    \\
    \hline
    0\\
    0\\
\end{array}
\right) }
\end{equation}

\newpage

\begin{lemma}\label{lem:transfer_poly_properties}
Let $m,n$ be two natural numbers and let $L$ be a linear map $L:\R^m \to \R^{n}$. Consider the injective linear map  $E:= \operatorname{Id}_m \oplus L : \R^m \to \R^m \oplus \R^{n} = \R^{m+n}$, with $\operatorname{Id}_m:\R^m \to \R^{m}$ the identity map. Then for any polytope $\wp \subset  \R^m$:
\begin{enumerate}[label = (\arabic*)]
    \item $\wp$ is linearly isomorphic to $E(\wp)$;
    \item $E(\mathcal V(\wp))=\mathcal V(E(\wp))$;
    \item If $E(\wp)$ is integral, then so is $\wp$;
    \item If $\wp$ is integral, then $E(\wp)$ is integral if and only if $L(\mathcal{V}(\wp)) \subset \Z^{n}$. 
\end{enumerate}
\end{lemma}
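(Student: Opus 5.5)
The plan is to treat $E$ as a linear isomorphism of $\R^m$ onto its image and transport everything along it, using the coordinate projection $\pi:\R^{m+n}\to\R^m$ onto the first $m$ coordinates as an explicit inverse. Since $E(x)=(x,L(x))$, we have $\pi\circ E=\operatorname{Id}_m$, and $E\circ\pi$ is the identity on $\operatorname{im}(E)=\{(x,L(x))\}$. For (1), I will observe that $E(\wp)$ is again a polytope: if $\wp=\operatorname{conv}(V)$ with $V$ finite, then linearity gives $E(\wp)=\operatorname{conv}(E(V))$. The restriction $E|_{\wp}:\wp\to E(\wp)$ is then a bijection given by a linear map, with inverse $\pi|_{E(\wp)}$, which is the claimed linear isomorphism.

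For (2), the vertex definition in the paper is phrased through tight subsystems of an $H$-description. I will instead use the standard equivalent characterization of vertices as extreme points (see \cite{ziegler1993lectures}), because extreme points transport cleanly under linear bijections.

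Suppose $v\in\mathcal V(\wp)$ and $E(v)=\lambda p+(1-\lambda)q$ with $p,q\in E(\wp)$ and $0<\lambda<1$. Applying $\pi$ gives $v=\lambda\pi(p)+(1-\lambda)\pi(q)$ with $\pi(p),\pi(q)\in\wp$. Extremality of $v$ forces $\pi(p)=\pi(q)=v$, and hence $p=E\pi(p)=E(v)=q$. The converse direction is symmetric, with the roles of $E$ and $\pi$ swapped. This yields $E(\mathcal V(\wp))=\mathcal V(E(\wp))$.

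Parts (3) and (4) then follow from (2) by looking at coordinates. Every vertex of $E(\wp)$ has the form $(v,L(v))$ with $v\in\mathcal V(\wp)$. If $E(\wp)$ is integral, then each $(v,L(v))\in\Z^{m+n}$, so $v=\pi(v,L(v))\in\Z^m$ and $\wp$ is integral. If $\wp$ is integral, the first $m$ coordinates of every vertex of $E(\wp)$ are already integers, so $E(\wp)$ is integral exactly when $L(v)\in\Z^n$ for all $v\in\mathcal V(\wp)$, i.e.\ when $L(\mathcal V(\wp))\subset\Z^n$.

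The only point needing care is the step in (2) that reconciles the paper's $H$-description definition of vertices with extreme points. If one prefers to avoid this, an $H$-description of $E(\wp)$ can be written directly: take the inequalities of $\wp$ in the first $m$ variables together with the $n$ equalities $w=L(x)$. One then checks that tight subsystems of full rank correspond, since the $n$ equality rows always have full rank in the $w$-variables. I expect this bookkeeping to be routine, so I do not anticipate a genuine obstacle.
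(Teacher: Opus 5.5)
The paper states this lemma without proof and uses it directly in the proof of Theorem~\ref{thm:main_poly}, so there is no argument of the authors to compare against. Your proof is correct and complete.

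The extreme-point route for (2) is the right one. For any $H$-description, with equalities written as pairs of inequalities, the paper's definition of $\mathcal V$ via an invertible tight subsystem is the standard rank characterization of extreme points.

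Your fallback check is also sound. At $(v,L(v))$ the tight rows form $\begin{pmatrix} H_T & 0\\ -L & I_n\end{pmatrix}$. Right multiplication by the invertible matrix $\begin{pmatrix} I_m & 0\\ L & I_n\end{pmatrix}$ turns it into $\begin{pmatrix} H_T & 0\\ 0 & I_n\end{pmatrix}$. Hence its rank is $\operatorname{rk}(H_T)+n$, which equals $m+n$ exactly when $\operatorname{rk}(H_T)=m$.

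One addition is worth making. Your argument only uses that $E$ and $\pi$ preserve convex combinations and that $E\circ\pi$ is the identity on the image of $E$. So it goes through verbatim when $L$ is affine rather than linear, with ``linearly'' replaced by ``affinely'' in (1).

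This matters for how the paper uses the lemma. The map $L_2$ defining $E_2$ is not linear: by Lemma~\ref{lem:AkBkbk}(3), $L_2(0)$ equals $b_k\neq 0$. So the lemma as literally stated does not cover the application to $E_2$ in the proof of Theorem~\ref{thm:main_poly}, but your proof does.
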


We will define two embeddings of the above form:\\
$$ E_1=\operatorname{Id}_{2k} \oplus L_1 :  \R^{2k} \longrightarrow  \R^{2k+1}, \qquad E_2=\operatorname{Id}_{2k} \oplus L_2 :  \R^{2k} \longrightarrow  \R^{3k},$$ 
with 
$$\begin{array}{rccc}
      L_1 : & \R^{2k} &\longrightarrow & \R \\
     & (x_1,\ldots, x_k, \ y_1, \ldots, y_k)& \longmapsto & 
     \dfrac{1}{k+1}\left(\displaystyle\sum\limits_{i=1}^k i (y_{k-i+1} - x_i) \right),
\end{array}    $$
\[
\begin{array}{rccc}
     L_2 : & \R^{2k} &\longrightarrow  & \R^{k}  \\
    & (x_1, \ldots, x_k, \ y_1,\ldots, y_k) & \longmapsto &\dfrac{1}{k+1}(2-x_1-y_1,\ldots,2-x_k-y_k)  B_k^\top .
\end{array} 
\]
We observe that $\poly_d'\subset\operatorname{Im}(E_1)$ and define the polytope $$\spoly_d:=E_1^{-1}(\poly_d')\subset \R^{2k},$$ 
so that $E_1(\spoly_d) = \poly_d'$. Moreover, we define the polytope $\lpoly_d$ as the image of $\spoly_d$ via the map $E_2$, that is,
\[
\lpoly_d:=E_2(\spoly_d) \subset \R^{3k}.
\]
In summary, the linearly isomorphic polytopes $\poly_d, \poly_d', \spoly_d, \lpoly_d$ are related to each other as follows:
\begin{center}
\begin{tikzcd}
& & \poly_d' \subset \R^{2k+1} \arrow[r, "U"] & \poly_d \subset \R^{2k+1} \\
\spoly_d \subset \R^{2k} \arrow[rru, "E_1"] \arrow[rrd, "E_2"'] &  & &\\
 &  & \lpoly_d \subset \R^{3k} &
\end{tikzcd}
\end{center}

Our strategy will be the following. We derive the desired properties on the polytope $\lpoly_d$, and then, we deduce the same properties on $\poly_d'$, and hence on $\poly_d$. This will be done with the help of Lemma \ref{lem:transfer_poly_properties}, which will allow us to first transfer these properties to the polytope $\spoly_d$ and then to $\poly_d'$.

The polytope $\spoly_d$ is described by the inequalities

\begin{equation}\label{eq:spoly_d}
\text{}\hspace{-0cm}\spoly_d: {\left(\begin{array}{ccc|ccc}
	& & & & &  \\
	& I_{k} & & & 0 & \\
	& & & & &   \\
	\hline 
	& & & & &   \\
	& 0 & & & I_{k} &  \\
	& & & & &   \\
	\hline
	& & & & &   \\
	& -B_k & & & -B_{k} &  \\
	& & & & &   \\
	\end{array}\right)
	\left(
	\begin{array}{c}
	x_1\\
	x_2\\
	\vdots\\
	x_{k}\\
	\hline
	y_1\\
	y_2\\
	\vdots\\
	y_{k} 
	\end{array}
	\right) 
	\geq 
	\left(
	\begin{array}{c}
	0\\
	\vdots\\
	0\\
	\hline
	0\\
	\vdots\\
	0\\
	\hline
	\\
	-(k+1)b_k\\
	\text{}
	\end{array}
	\right) }
\end{equation}

and $\lpoly_d$ is described by

\begin{equation}
\text{}\hspace{-0cm}\lpoly_d: {\left(\begin{array}{ccc|ccc|ccc}
	& & & & & & & & \\
	& I_{k} & & & 0 & & & 0 &\\
	& & & & & \\
	\hline 
	& & & & & \\
	& 0 & & & I_{k} & & & 0 & \\
	& & & & & \\
	\hline
	& & & & & \\
	& -B_k & & & -B_k & & & 0 & \\
	& & & & & \\
	\hline
	& & & & & \\
	& I_k & & & I_k  & & & A_k & \\
	& & & & & \\
	\hline
	& & & & & \\
	& -I_k & & & -I_k  & & & -A_k & \\
	& & & & & 
	\end{array}\right)
	\left(
	\begin{array}{c}
	x_1\\
	x_2\\
	\vdots\\
	x_{k}\\
	\hline
	y_1\\
	y_2\\
	\vdots\\
	y_{k} \\
	\hline
	z_1\\
	z_2\\
	\vdots\\
	z_{k}
	\end{array}
	\right) 
	\geq 
	\left(
	\begin{array}{c}
	0\\
	\vdots\\
	0\\
	\hline
	0\\
	\vdots\\
	0\\
	\hline
	\\
-(k+1)b_k\\
\text{}\\
	\hline
	2\\
	\vdots\\
	2\\
	\hline
	-2\\
	\vdots\\
	-2
	\end{array}
	\right). }
\end{equation}

By adding rows corresponding to equalities in the fourth block row of to the third block row, and using the equations in Lemma \ref{lem:AkBkbk}, we obtain the equivalent representation of $\lpoly_d$

\begin{equation}\label{eq:lpoly_d}
\text{}\hspace{-0cm}\lpoly_d:{\left(\begin{array}{ccc|ccc|ccc}
& & & & & & & & \\
& I_{k} & & & 0 & & & 0 &\\
& & & & & \\
\hline 
& & & & & \\
 & 0 & & & I_{k} & & & 0 & \\
& & & & & \\
\hline
& & & & & \\
& 0 & & & 0 & & & I_{k} & \\
& & & & & \\
\hline
& & & & & \\
& I_k & & & I_k  & & & A_k & \\
& & & & & \\
\hline
& & & & & \\
& -I_k & & & -I_k  & & & -A_k & \\
& & & & & 
\end{array}\right)
\left(
\begin{array}{c}
    x_1\\
    x_2\\
    \vdots\\
    x_{k}\\
    \hline
    y_1\\
    y_2\\
    \vdots\\
    y_{k} \\
        \hline
    z_1\\
    z_2\\
    \vdots\\
    z_{k}
\end{array}
\right) 
\geq 
\left(
\begin{array}{c}
    0\\
    \vdots\\
    0\\
    \hline
    0\\
    \vdots\\
    0\\
    \hline
    0\\
    \vdots\\
    0\\
    \hline
        2\\
    \vdots\\
    2\\
        \hline
        -2\\
    \vdots\\
    -2
\end{array}
\right).}
\end{equation}

\newpage

\begin{proposition}\label{prop;lpoly_integral}
    The set of vertices $\mathcal V(\lpoly_d)$ of the polytope $\lpoly_d$ consists of all the  vectors of the form $(x_1,\ldots,x_k,y_1,\ldots,y_k,z_1,\ldots,z_k)$ such that for each $i\in[k]$ exactly two out of $x_i,y_i$ and $z_i$ are equal to $0$, and
    \begin{equation}\label{eq:equality_lpoly}\left(\begin{array}{ccc|ccc|ccc}
& & & & & \\
& I_k & & & I_k  & & & A_k & \\
& & & & & 
\end{array}\right)
\left(
\begin{array}{c}
    x_1\\
    \vdots\\
    x_{k}\\
    \hline
    y_1\\
    \vdots\\
    y_{k} \\
        \hline
    z_1\\
    \vdots\\
    z_{k}
\end{array}
\right) 
=
\left(
\begin{array}{c}
        2\\
    \vdots\\
    2\\
\end{array}
\right).  
\end{equation}
    Moreover, $\lpoly_d$ is integral and has $3^k=3^{d-2}$ vertices.
\end{proposition}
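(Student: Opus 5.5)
The plan is to use the vertex definition directly on the last description \eqref{eq:lpoly_d} of $\lpoly_d$. The constraints there are nonnegativity $x,y,z\ge 0$ together with the $k$ equalities $x+y+A_kz=2\cdot\mathbf 1$, where $\mathbf 1$ is the all-ones vector, in $3k$ variables. A vertex is a feasible point at which $3k$ linearly independent constraints are tight. The $k$ equalities are always tight and independent, so at a vertex at least $2k$ further tight constraints must come from the coordinate hyperplanes. Hence at least $2k$ of the $3k$ coordinates vanish. Boundedness is inherited, since $\lpoly_d=E_2(\spoly_d)$ is the image of a polytope.

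\textbf{Step 1: at most two zeros per index.} For each $i\in[k]$, at most two of $x_i,y_i,z_i$ can vanish. Suppose $x_i=y_i=z_i=0$. Then row $i$ of \eqref{eq:equality_lpoly} reads $-z_{i-1}-z_{i+1}=2$, with the convention $z_0=z_{k+1}=0$. This is impossible because $z\ge 0$. Combined with the count of at least $2k$ zeros, every vertex has exactly two zeros in each triple $(x_i,y_i,z_i)$, and it satisfies \eqref{eq:equality_lpoly}.

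\textbf{Step 2: the converse and integrality.} For each $i$, fix which one of $x_i,y_i,z_i$ is allowed to be nonzero; there are $3^k$ such choices. Let $S\subseteq[k]$ be the set of indices where $z_i$ is the allowed one. Setting $z_j=0$ for $j\notin S$, the equations indexed by $S$ become $A_S z_S=2\cdot\mathbf 1$. Here $A_S$ is the principal submatrix of $A_k$ on $S$, which splits as a direct sum of blocks $A_m$, one for each maximal run of consecutive indices in $S$. By Lemma~\ref{lem:AkBkbk}(1),(4), each block is invertible and the unique solution on a run of length $m$ is $b_m$. This vector has positive integer entries. For $i\notin S$, the remaining equation forces the chosen variable $x_i$ or $y_i$ to equal $2+z_{i-1}+z_{i+1}\ge 2$, again a positive integer.

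So each choice yields a feasible point. At that point the tight constraints are the $k$ equalities plus $2k$ coordinate hyperplanes, and the corresponding $3k\times 3k$ system has determinant $\pm\det A_S\neq 0$. Therefore each such point is a vertex, and it is integral.

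\textbf{Step 3: counting.} Distinct choices give distinct points, because exactly the chosen coordinates are nonzero: they are strictly positive, as shown in Step 2. Hence there are exactly $3^k=3^{d-2}$ vertices, all integral.

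The main obstacle is Step 1, which rules out degenerate vertices with a fully zero triple. It is settled by the sign argument on row $i$ of \eqref{eq:equality_lpoly}. The only other delicate point is the block decomposition of $A_S$ in Step 2, which is what lets Lemma~\ref{lem:AkBkbk} give explicit nonnegative solutions.
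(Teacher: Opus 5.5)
Your proposal is correct and follows essentially the same route as the paper. It obtains at least $2k$ vanishing coordinates from the $3k$ tight constraints, excludes a fully zero triple via the sign of $-z_{i-1}-z_{i+1}=2$, and solves on maximal runs of $\mathcal{Z}^c$ with Lemma~\ref{lem:AkBkbk}(1),(4) to get explicit positive integral solutions. Your determinant computation $\pm\det A_S\neq 0$ and the distinctness check in Step 3 only make precise what the paper phrases as uniqueness of the solution.
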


\begin{proof}
   First, observe that a vertex of $\lpoly_d$ must necessarily have at least $2k$ entries equal to $0$, since it must satisfy $3k$ of the inequalities in \eqref{eq:lpoly_d} with equality, $k$ of which are given by the last equalities. The remaining $2k$ equalities have to be chosen from the first $3k$ inequalities, which then necessarily impose at least $2k$ entries to be $0$.
   
    On the other hand, we can show that most $2k$ entries of a vertex are $0$: let $v=(x \, |\, y \,|\, z)\in \mathcal V(\lpoly_d)$ with $x,y,z\in\R^k$ and let us define
    \begin{align*}
        \mathcal X(v)&=\{i \in[k] \,:\, x_i=0\},\\
        \mathcal Y(v)&=\{i \in[k] \,:\, y_i=0\},\\
        \mathcal Z(v)&=\{i \in[k] \,:\, z_i=0\}.
    \end{align*}
    Now we claim that $\mathcal X(v)\cap \mathcal Y(v)\cap\mathcal Z(v)=\emptyset$. Indeed, if $i\in \mathcal X(v)\cap \mathcal Y(v)\cap\mathcal Z(v)$
    then the $i$-th equality of the last block becomes
    $$x_i+y_i-z_{i-1}+2z_i-z_{i+1} = 2  \quad \Leftrightarrow \quad -z_{i-1}-z_{i+1}=2,$$
     where we consider by convention $z_0=0$ if $i=1$ and $z_{k+1}=0$ if $i=k$.  This implies that at least one between $z_{i-1}$ and $z_{i+1}$ is strictly negative, which is incompatible with the system \eqref{eq:lpoly_d}.  Thus, $\mathcal X(v)\cap \mathcal Y(v)\cap\mathcal Z(v)=\emptyset$ and hence, we get $|\mathcal X(v)|+|\mathcal Y(v)|+|\mathcal Z(v)|\le 2k$, which means that at most $2k$ entries of $v$ are $0$.
    
    Together with the first observation, we then obtain that a vertex of $\lpoly_d$ must have exactly $2k$ entries equal to $0$. Moreover, the fact that $\mathcal X(v)\cap \mathcal Y(v)\cap\mathcal Z(v)=\emptyset$ implies that for each $i\in[k]$ we must have exactly two between $x_i,y_i$ and $z_i$ equal to $0$. 
    
    It remains to show that, for each of these possible choices, we obtain a vertex of $\lpoly_d$. This happens if and only if there is a unique vector defined by these $2k$ entries chosen to be $0$ together with the $k$ equalities of \eqref{eq:equality_lpoly}, and this vector belongs to $\lpoly_d$, that is, the remaining $k$ nonzero entries are positive. Formally, consider $\mathcal X,\mathcal Y,\mathcal Z\subseteq [k]$ such that, for every $i \in [k]$,
    $$|\{i\}\cap\mathcal X|+|\{i\}\cap\mathcal Y|+|\{i\}\cap\mathcal Z|=2.$$
    This is equivalent to say that $$ [k]= \mathcal X^c\sqcup \mathcal Y^c\sqcup \mathcal Z^c,$$
    where $\cdot^c$ denotes the complement in $[k]$. Let 
   $v=( x\,|\,y\,|\,z)\in \R^{3k}$ be a vector such that
   $$x_i=0  \;\;\forall \,i \in \mathcal X, \; y_i=0  \;\;\forall \,i \in \mathcal Y, \; z_i=0  \;\;\forall \,i \in \mathcal Z,  $$
   and satisfying \eqref{eq:equality_lpoly}.
    If $i\in \mathcal X$, then $x_i=0$, while if $i \notin \mathcal X$, then $i \in \mathcal Y$ and $i \in \mathcal Z$, that is, $y_i=0$ and $z_i=0$, and, combining them with the $i$th equation in \eqref{eq:equality_lpoly}, we obtain
 \begin{equation}\label{eq:aux_xi} x_i=2+z_{i-1}+z_{i+1}.
 \end{equation}
  If $i \in \mathcal Y$, then $y_i=0$, while the same argument as above shows that, if $i \notin \mathcal Y$, then
  \begin{equation}\label{eq:aux_yi}y_i=2+z_{i-1}+z_{i+1}.\end{equation}
   Thus, we only need to show that there is a unique solution in the $z_i$'s, and that it is nonnegative, so that, using \eqref{eq:aux_xi} and \eqref{eq:aux_yi}, we automatically get that the $x_i$'s and the $y_i$'s are unique and nonnegative. To do so, we will only use the equations of \eqref{eq:equality_lpoly} corresponding to the rows indexed by $\mathcal Z^c$. Let us write $\mathcal Z^c$ as union of maximal  subsets of consecutive integers, that is, let $i_1,\ldots,i_r,j_1,\ldots,j_r \in[k]$ be such that  $i_\ell<j_\ell$ for every $\ell\in[r]$, $j_\ell<i_{\ell+1}-1$ for every $\ell \in[r-1]$, and
   $$\mathcal Z^c=\{i_1,\ldots,j_1\}\cup \cdots \cup \{i_r,\ldots,j_r\}.$$
    For each $\ell \in [r]$ let $\mathcal I_\ell:=\{i_\ell,\ldots,j_\ell\}$ and denote by $s_\ell$ its cardinality, that is, $s_{\ell}=j_\ell-i_\ell+1$. Observe that, if $i \in \mathcal I_\ell$, then $i\in \mathcal X\cap \mathcal Y$, and hence, $x_i=y_i=0$. Moreover, by definition of $\mathcal I_\ell$, $i_\ell-1, j_\ell+1\notin \mathcal Z^c$, thus $z_{i_\ell-1}=z_{j_\ell+1}=0$ (where by convention we put $z_0=z_{k+1}=0$). Combining these equalities with the equations of \eqref{eq:equality_lpoly} corresponding to the rows indexed by $\mathcal I_\ell$, we obtain the linear system
    $$A_{s_{\ell}}\begin{pmatrix}
        z_{i_\ell} \\ \vdots \\ z_{j_\ell}
    \end{pmatrix}=\begin{pmatrix}
        2 \\ \vdots \\ 2
    \end{pmatrix}.$$
    Since by Lemma \ref{lem:AkBkbk}(1) $\det(A_{s_\ell})=s_{\ell}+1\neq 0$, this linear system has a unique solution, and, always by Lemma \ref{lem:AkBkbk}(4), such a solution is 
    \begin{equation}\label{eq:aux_zi} 
    (z_{i_\ell} \, \cdots \, z_{j_\ell})=b_{s_\ell}^\top,\end{equation} whose entries are all positive. This shows the first part of the statement and the fact that the number of vertices is exactly $3^{k}=3^{d-2}$. 
    
    The integrality of $\lpoly_d$ follows by observing that all the vertices found in the previous argument have either $0$ entries, or are uniquely determined by \eqref{eq:aux_xi}, \eqref{eq:aux_yi} and \eqref{eq:aux_zi}, which clearly implies that they belong to $\Z^{3k}$. 
\end{proof}

\begin{corollary}\label{cor: vertices Kd}
The set of vertices $\mathcal{V}(\spoly_d)$ of the polytope $\spoly_d$ is the image of the injective map

\[
\tau : \{(\mathcal{X},\mathcal{Y}) \mid \mathcal{X}, \mathcal{Y} \subset [k], \ \mathcal{X}\cup\mathcal{Y} = [k]\} \quad \to \quad \Z^{2k}
\]
(with $k = d-2$) given by
\[
(\mathcal{X},\mathcal{Y}) \mapsto (x_1,\ldots,x_k,y_1,\ldots,y_k),
\]
where $(x_i, y_i) = (0,0)$ for all $i \in \mathcal{X} \cap \mathcal{Y}$,  $(x_i, y_i) = (t_i,0)$ for all $i \in \mathcal{X}^c \cap \mathcal{Y}$, and $(x_i, y_i) = (0,t_i)$ for all $i \in \mathcal{X} \cap \mathcal{Y}^c$, with
\[
t_i := 2 + \max\{s \in \N \mid i-j \in   \mathcal{X}\cap\mathcal{Y} \  \ \forall j \in [s]\} + \max\{s' \in \N \mid i+j \in   \mathcal{X}\cap\mathcal{Y} \ \ \forall j \in [s']\}.
\]
Moreover, the sets $\{(\mathcal{X},\mathcal{Y}) \mid \mathcal{X}, \mathcal{Y} \subset [k], \ \mathcal{X}\cup\mathcal{Y} = [k]\}$ has cardinality $3^k = 3^{d-2}$, and so has $\mathcal{V}(\spoly_d)$.

\end{corollary}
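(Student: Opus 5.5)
The plan is to transfer Proposition \ref{prop;lpoly_integral} from $\lpoly_d$ back to $\spoly_d$ via the embedding $E_2=\operatorname{Id}_{2k}\oplus L_2$ and Lemma \ref{lem:transfer_poly_properties}. Since $\lpoly_d=E_2(\spoly_d)$, part (2) of that lemma gives $\mathcal V(\lpoly_d)=E_2(\mathcal V(\spoly_d))$. Moreover $E_2$ is injective and acts as the identity on the first $2k$ coordinates. Hence $\mathcal V(\spoly_d)$ is exactly the projection onto the $(x,y)$-coordinates of $\mathcal V(\lpoly_d)$, and this projection is a bijection onto it. In particular $|\mathcal V(\spoly_d)|=|\mathcal V(\lpoly_d)|=3^k$, and $\mathcal V(\spoly_d)\subset\Z^{2k}$ because the vertices of $\lpoly_d$ are integral.

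Next I would translate the description of $\mathcal V(\lpoly_d)$ into the language of the map $\tau$. A vertex of $\lpoly_d$ is determined by choosing, for each $i\in[k]$, which one of $x_i,y_i,z_i$ is nonzero. Set $\mathcal X=\{i: x_i=0\}$ and $\mathcal Y=\{i: y_i=0\}$. Since exactly one of the three coordinates is nonzero for each $i$, we have $\mathcal X\cup\mathcal Y=[k]$, and $\mathcal Z^c=\mathcal X\cap\mathcal Y$. Conversely, every pair with $\mathcal X\cup\mathcal Y=[k]$ arises from exactly one choice. There are $3^k$ such pairs, since each $i$ lies in $\mathcal X\setminus\mathcal Y$, in $\mathcal Y\setminus\mathcal X$, or in both.

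The values come from the proof of Proposition \ref{prop;lpoly_integral}. For $i\in\mathcal X^c\cap\mathcal Y$ we have $x_i=2+z_{i-1}+z_{i+1}$, and symmetrically $y_i=2+z_{i-1}+z_{i+1}$ for $i\in\mathcal X\cap\mathcal Y^c$. Here $z$ vanishes outside $\mathcal Z^c=\mathcal X\cap\mathcal Y$. On each maximal run $\mathcal I_\ell$ of consecutive integers in $\mathcal X\cap\mathcal Y$, the vector $z$ equals $b_{s_\ell}$.

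The key computation is evaluating $z_{i-1}$ and $z_{i+1}$ when $i\notin\mathcal X\cap\mathcal Y$. If $i-1\in\mathcal X\cap\mathcal Y$, then $i-1$ is the right endpoint of a run of length $s$, and the last entry of $b_s$ is $s\cdot 1=s$. This $s$ is exactly $\max\{s: i-j\in\mathcal X\cap\mathcal Y\ \forall j\in[s]\}$. If $i-1\notin\mathcal X\cap\mathcal Y$, then $z_{i-1}=0$, which agrees with the maximum being $0$ (taking the max over the empty condition as $0$). Symmetrically, $z_{i+1}$ is the first entry $1\cdot s'=s'$ of the run to the right of $i$. Hence $x_i$ or $y_i$ equals $t_i$, which is the formula defining $\tau$.

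Injectivity of $\tau$ follows from the bijection above, together with $t_i\ge 2>0$: the zero pattern of $\tau(\mathcal X,\mathcal Y)$ recovers $\mathcal X$ and $\mathcal Y$. The only mildly delicate point is the boundary convention $z_0=z_{k+1}=0$ together with the endpoint entries of $b_s$. Beyond that, I expect no real obstacle.
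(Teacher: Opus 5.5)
Your proposal is correct and follows essentially the same route as the paper. Both arguments transfer the vertex description of $\lpoly_d$ from Proposition \ref{prop;lpoly_integral} through $E_2$, identify $\mathcal X,\mathcal Y$ as zero sets with $\mathcal Z^c=\mathcal X\cap\mathcal Y$, and read off $z_{i\pm1}$ as the endpoint entries $s$ of $b_s$ to obtain $t_i$. You also make explicit two points the paper leaves implicit: injectivity of $\tau$ (the zero pattern is recoverable since $t_i\ge 2$) and the convention $\max\emptyset=0$.
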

\begin{proof}
First, via the map $E_2$ we have the bijection between the vertices of $\lpoly_d$ and $\spoly_d$ given by $\mathcal{V}(\lpoly_d) \to \mathcal{V}(\spoly_d): (x_1,\ldots,x_k, y_1,\ldots,y_k, z_1,\ldots,z_k) \mapsto (x_1,\ldots,x_k, y_1,\ldots,y_k)$. Let us take $(x_1,\ldots,x_k, y_1,\ldots,y_k, z_1,\ldots,z_k) \in \mathcal{V}(\lpoly_d)$. 
By Proposition \ref{prop;lpoly_integral}, for every $i \in [k]$, exactly one of $x_i$, $y_i$, $z_i$ is zero. For the positions $j \in \mathcal{Z}^c = \mathcal{X} \cap \mathcal{Y}$, the values of $z_j \neq 0$ are given by Equation \eqref{eq:aux_zi} and satisfy for each $i \in \mathcal{Z}$ with $i-1 \in \mathcal{Z}^c$:
\[
z_{i-1} = \max\{s \in \N \mid i-j \in   \mathcal{Z}^c\  \ \forall j \in [s]\}.
\]
Similarly for $i \in \mathcal{Z}$ with $i+1 \in \mathcal{Z}^c$, we have
\[
z_{i+1} = \max\{s' \in \N \mid i+j \in   \mathcal{Z}^c\  \ \forall j \in [s']\}.
\]
Together with Equations \eqref{eq:aux_xi} and \eqref{eq:aux_yi}, this shows that $(x_i,y_i) = (t_i,0)$ for $i \in \mathcal{X}^c$ and $(x_i,y_i) = (0,t_i)$ for $i \in \mathcal{Y}^c$, and hence $\mathcal{V}(\spoly_d)$ is contained in the image $\tau$, and by Proposition \ref{prop;lpoly_integral} it is clear that $\mathcal{V}(\spoly_d)$ is the full image.
\end{proof}

We are now ready to prove Theorem \ref{thm:main_poly}.

\begin{proof}[Proof of Theorem \ref{thm:main_poly}] By Proposition \ref{prop;lpoly_integral}, the polytope $\lpoly_d$ is an integral polytope with $3^{k}=3^{d-2}$ vertices. In addition, by definition, we have $\lpoly_d=E_2(\spoly_d)$ and by Lemma \ref{lem:transfer_poly_properties}(3) also $\spoly_d$ is an integral polytope. Using Lemma \ref{lem:transfer_poly_properties}(2) we deduce that  $\spoly_d$ has $3^{d-2}$ vertices, and, if $(x_1,\ldots,x_k,y_1,\ldots,y_k)\in\mathcal V(\spoly_d)$, then it satisfies \eqref{eq:equality_lpoly}, that is,
\begin{equation}\label{eq:xi+yi}
\left(
\begin{array}{c}
    x_1\\
    \vdots\\
    x_{k}\\
\end{array}
\right) +\left(
\begin{array}{c}
    y_1\\
    \vdots\\
    y_{k}\\
\end{array}
\right) 
=
\left(
\begin{array}{c}
        2\\
    \vdots\\
    2\\
\end{array}
\right)  -A_k\left(
\begin{array}{c}
        z_1\\
    \vdots\\
    z_k\\
\end{array}
\right),
\end{equation}
for some $z_1,\ldots,z_k\in\mathbb Z$. Since $\poly_d'=E_1(\spoly_d)$,  by Lemma \ref{lem:transfer_poly_properties}(4) we have that $\poly_d'$ is integral if and only if $L_1(\mathcal V(\spoly_d))$ is an integer. Let $(x_1,\ldots,x_k,y_1,\ldots,y_k)\in\mathcal V(\spoly_d)$, then 
\begin{align*}
L_1(x_1,\ldots,x_k,y_1,\ldots,y_k)&=\dfrac{1}{k+1}\left(\displaystyle\sum\limits_{i=1}^k (k+1-i) y_{i} - \sum\limits_{i=1}^kix_i) \right)\\
&=\sum\limits_{i=1}^k y_{i}-\dfrac{1}{k+1}\left(\displaystyle\sum\limits_{i=1}^k i y_{i} + \sum\limits_{i=1}^kix_i \right),
\end{align*}
Since $(y_1,\ldots,y_k)\in\mathbb Z^k$, we have $L_1(x_1,\ldots,x_k,y_1,\ldots,y_k)\in \mathbb Z$  if and only if   
$$\left(\displaystyle\sum\limits_{i=1}^k i y_{i} + \sum\limits_{i=1}^kix_i \right)\in (k+1)\mathbb Z.$$
On the other hand,
\begin{align*}\left(\displaystyle\sum\limits_{i=1}^k i y_{i} + \sum\limits_{i=1}^kix_i \right)&=\begin{pmatrix}
    1 & 2 &\cdots & k 
\end{pmatrix}\left(\left(
\begin{array}{c}
    x_1\\
    \vdots\\
    x_{k}\\
\end{array}
\right) +\left(
\begin{array}{c}
    y_1\\
    \vdots\\
    y_{k}\\
\end{array}
\right)  \right)  \\
&= \begin{pmatrix}
    1 & 2 &\cdots & k 
\end{pmatrix}\left(\left(
\begin{array}{c}
        2\\
    \vdots\\
    2\\
\end{array}
\right)  -A_k\left(
\begin{array}{c}
        z_1\\
    \vdots\\
    z_k\\
\end{array}
\right) \right)  
 \\
&=k(k+1) -\begin{pmatrix}
    0 &\cdots & 0 & k+1 
\end{pmatrix}\left(
\begin{array}{c}
        z_1\\
    \vdots\\
    z_k\\
\end{array}
\right)\\
&=(k+1)(k-z_k),\end{align*}
where the second equality follows from \eqref{eq:xi+yi}, and the third equality is due to Lemma \ref{lem:AkBkbk}(5). Since $z_k\in\mathbb Z$, this shows that $\poly_d'$ is an integral polytope with $3^{d-2}$ vertices, and thus the same holds also for $\poly_d$, because it is obtained from $\poly_d'$ by a unimodular map. 
\end{proof}

\subsection{Integer points and face structure of $\poly_d$}
With Theorem \ref{thm:main_poly} and Corollary \ref{cor: vertices Kd}, we concluded integrality of $\poly_d$ and characterized its $3^{d-2}$ vertices, with $\mathcal{V}(\poly_d) = U(E_1(\mathcal{V}(\spoly_d)))$. 

\begin{example}
	The vertices of $\spoly_3$ ($k = 1$) are
	\[
	\mathcal{V}(\spoly_3) = \{(0,0), (2,0), (0,2)\}
	\]
	and hence, by applying $E_1$ and $U$, we derive
	\[
	\mathcal{V}(\poly_3') = \{(0,0,0), (2,0,-1), (0,2,1)\} =  \mathcal{V}(\poly_3).
	\]
	The vertices of $\spoly_4$  ($k = 2$) are given by
	\begin{align*}
	\mathcal{V}(\spoly_4) = \{(0,0, 0&,0), (3,0,0,0), (0,3,0,0), (0,0,3,0), (0,0,0,3),\\
	&(2,2,0,0), (2,0,0,2), (0,2,2,0), (0,0,2,2)\}
	\end{align*}
	and hence, by applying $E_1$ and $U$, we derive
	\begin{align*}
	\mathcal{V}(\poly_4') = \{(0,0, 0&,0,0), (3,0,0,0,-1), (0,3,0,0,-2), (0,0,3,0,2), (0,0,0,3,1),\\
	&(2,2,0,0,-2), (2,0,0,2,0), (0,2,2,0,0), (0,0,2,2,2)\}
	\end{align*}
	and
	\begin{align*}
	\mathcal{V}(\poly_4) = \{(0,0, 0&,0,0), (3,0,0,0,-1), (3,3,0,0,-2), (0,0,3,3,2), (0,0,3,0,1),\\
	&(4,2,0,0,-2), (2,0,2,0,0), (2,2,2,2,0), (0,0,4,2,2)\}.
	\end{align*}
\end{example}

Now, recall from Theorem \ref{thm bijection mu with polytope} that we have a natural bijection between the set $\irr_d^\mu(\Z)$ of irreducible diagram pairs $(\mD,d)$ and the integer points $\poly_d(\Z)$.
For $d \leq 8$ we computed with \textsc{SageMath} \cite{sagemath} the number of integer points $|\poly_d(\Z)|$ for $\mu \geq d$.  The code can be found at \url{https://github.com/HBeelooSauerbierCouvee/irreducible-ferrers-diagrams}.

\begin{table}[H]
\centering
     \begin{tabular}{|c||c|}
        \hline
        $d$ &   $|\poly_d(\Z)|$\\
        \hline
        \hline
        3 & 4 \\
        4 & 22\\
        5 & 155 \\
        6 & 1301\\
        7 & 12330 \\
        8 & 127275 \\
        \hline
    \end{tabular}
    \caption{For $d \leq 8$: the number of integer points $|\poly_d(\Z)|$.}
\end{table}

For general $d \geq 3$, we do not have a formula for $|\poly_d(\Z)|$, other than the lower bound $3^{d-2}$ and (trivial)  upper bound $(2d-3)^{2d-3}$ by Corollary \ref{cor: poly d bounded}.  However, since the polytopes $\poly_d$ are integral, we suggest that their integer points $\poly_d(\Z)$ could be further studied with tools from Ehrhart theory (see \cite{beck2007computing, rehberg2025}). 

We can partition the integer points by their last coordinate $z$, giving us the integer points $\poly_d^{(a,a+\Delta)}(\Z)$ (under the correspondence $z = \Delta$) for any $a$ with $a \geq d$ or $a+\Delta \geq d$, and $\min(a,a+\Delta) \geq d-1$. The number of these points for $d \leq 6$ is given in Table \ref{tab: num int points by delta}, and the integer points themselves for $d \leq 4$ are shown in Table \ref{tab: int points by delta}. 

\begin{table}[H]
\centering
     \begin{tabular}{|c||c|c|c|c|c|c|c|c|c||c|}
        \hline
        $d \ \backslash \ \Delta $ & $-4$ & $-3$ & $-2$ & $-1$ & 0 & 1 & 2 & 3 & 4 & Total\\
        \hline
        \hline
        3 & & & & 1 & 2 & 1 & & & & 4\\
        \hline
        4 & & & 2 & 5 & 8 & 5 & 2 & & & 22\\
        \hline
        5 & & 5 & 17 & 34 & 43 & 34 & 17 & 5 & & 155 \\
        \hline
        6 & 16 & 66  &  159 & 257 & 305 & 257 & 159 & 66 & 16 & 1301\\
        \hline
    \end{tabular}
    \caption{For $d\leq 6$: the number of integer points $|\poly_d^{(a,a+\Delta)}(\Z)|$ for any $a$ with $a \geq d$ or $a+\Delta \geq d$, and $\min(a,a+\Delta) \geq d-1$, and row wise totals equal to $|\poly_d(\Z)|$.}
    \label{tab: num int points by delta}
\end{table}

\begin{table}[H]
\centering
     \begin{tabular}{|c||c|c|c|c|c|}
        \hline
        $d \ \backslash \ \Delta$ &  $-2$ &  $-1$ &  $0$ &  $1$ &  $2$\\
        \hline
        \hline
        & &  &  & & \\
        3 & & $\bm{(2,0)}$ & $(1,1)$ & $\bm{(0,2)}$ & \\
        & &  & $\bm{(0,0)}$ & & \\
        & &  &  & & \\
        \hline
        & &  &  & & \\
        4 & $\bm{(4, 2, 0, 0)}$ & $(3, 2, 1, 1)$ & $\bm{(2, 2, 2, 2)}$ & $(1,1,3,2)$ & $\bm{(0, 0, 4, 2)}$\\
        & $\bm{(3, 3, 0, 0)}$ & $(2, 2, 1, 0)$ & $(1, 1, 1, 1)$ & $(1, 0, 2, 2)$& $\bm{(0, 0, 3, 3)}$ \\
        & & $(3, 1, 1, 0)$ & $(2, 1, 2, 1)$ & $(1,0,3,1)$& \\
        & & $(2, 1, 0, 0)$ & $(2, 0, 1, 1)$ & $(0,0,2,1)$& \\
        & & $\bm{(3, 0, 0, 0)}$ & $(1, 1, 2, 0)$ & $\bm{(0,0,3,0)}$& \\
        & &  & $\bm{(2, 0, 2, 0)}$ & & \\
        & &  & $(1, 0, 1, 0)$ & & \\
        & &  & $\bm{(0, 0, 0, 0)}$ & & \\
        & &  &  & & \\
        \hline
    \end{tabular}
    \caption{For $d \leq 4$: the integer points $\poly_d^{(a,a+\Delta)}(\Z)$ for any $a$ with $a \geq d$ or $a+\Delta \geq d$, and $\min(a,a+\Delta) \geq d-1$. The points corresponding to the vertices of $\poly_d$ (without last coordinate $z$) are indicated in bold.}
    \label{tab: int points by delta}
\end{table}

\noindent As an example, the five integer points in $\poly_4^{(a,a+1)}(\Z)$ for $a \geq 3$, together with the corresponding five irreducible diagrams in $\irr_4^{(a,a+1)}$ are shown in Example \ref{ex: d=4 and delta=1}.

\begin{comment}
\begin{table}[H]
\centering
     \begin{tabular}{|c|c|c|}
        \hline
        $d$ &  Ehrhart polynomial & Integer points\\
        \hline
        3 & $(t + 1)(t + 1)
$& 4 \\
        4 &  $\frac{1}{3} (t + 1) (7 t^3 + 14 t^2 + 9 t + 3)$ & 22\\
        5 &  $\frac{1}{90} (t + 1) (694 t^5 + 2042 t^4 + 2348 t^3 + 1357 t^2 + 444 t + 90)$ & 155 \\
        6 & $\frac{1}{1260}(t + 1) (38784 t^7 + 150908 t^6 + 246027 t^5 $& \\
       & $ + 218948 t^4 + 116373 t^3 + 38810 t^2 + 8520 t + 1260)$ & 1301\\
        7 & $\frac{1}{226800}(t + 1) (31359872 t^9 + 152106688 t^8 + 320970557 t^7 + 387074923 t^6 $ & \\ & $+ 294761393 t^5 + 148053007 t^4 + 50126298 t^3 + 11619522 t^2 + 1922940 t + 226800)$ & 12330 \\
         8 & $\frac{1}{...}(t + 1) ...$ & \\ & $...$ & 127275 \\
        \hline
    \end{tabular}
    \caption{Caption}
    \label{tab:placeholder}
\end{table}
\todo{add caption, explain f vector and ehrhart polynomial, remove integral column}   
\end{comment}

In addition to the number of vertices (0-dimensional faces), we can computationally find the number of $i$-dimensional faces $f_i$ of the polytopes $\poly_d$, for $i \geq 0$, and obtain the corresponding $f$-vectors $(f_0, f_1,f_2,\ldots)$.

\begin{table}[H]
    \centering
    \begin{tabular}{|c|c|c|}
        \hline
        $d$ & $f$-vector  $(f_{0}, f_1,\ldots,f_{2d-4})$ of $\poly_d$ &  $\dim(\poly_d) = 2d-4$ \\
        \hline
        3  & (3, 3, 1) & 2   \\
        4  & (9, 18, 15, 6, 1)
 & 4 \\
        5  & (27, 81, 108, 81, 36, 9, 1)
& 6  \\
        6 & (81, 324, 594, 648, 459, 216, 66, 12, 1)
 & 8 \\
 7 & (243, 1215, 2835, 4050, 3915, 2673, 1305, 450, 105, 15, 1)
& 10  \\
        \hline
    \end{tabular}
    \caption{The $f$-vectors of $\poly_d$ for $d = 3,4,\ldots,7$.}
\end{table}

The $f$-vectors coincide with the $f$-vectors of Cartesian products of $d-2$ triangles (2-simplices). In fact, by computation of the face lattices, we verified that the polytopes $\poly_d$ are \textit{combinatorially equivalent} to products of $d-2$ triangles for all $d \leq 7$. This also implies that the $f$-vector of $\poly_d$ is the coefficient vector of $(3+3x+x^2)^{d-2}$, i.e., the $(d-2)$-th power of the face generating function of a triangle. The \textsc{SageMath} code used for the verification can be found at \url{https://github.com/HBeelooSauerbierCouvee/irreducible-ferrers-diagrams}. In addition to the verification for $d \leq 7$, the system of linear inequalities \eqref{eq:lpoly_d} also closely resembles that of a product of triangles of all $d$, and thus we conjecture this combinatorial equivalence for all $d$.

\begin{conjecture}\label{conj: product triangles}
For all $d \geq 3$, the polytope $\poly_d$ is combinatorially equivalent to the Cartesian product of $d-2$ triangles.
\end{conjecture}

\section{The case $d=3$}\label{sec: d=3}

In this section, we restrict ourselves to the case $d=3$. Using Theorem \ref{thm:main}, we will show that there are only four families of irreducible pairs $(\mD,3)$. 

For each $n \in \N$, define the following Ferrers diagrams of order $n$:
\begin{align*}
    \mA_n&=(n,n,\ldots, n)=[n]^2\\
    \mathcal G_n&=(n,n-1,n-1,\ldots,n-1,1)\\
    \mathcal E_n&=(n-1,n-1,\ldots,n-1,1,1) \\
    \mathcal F_n&=(n,n-2,n-2,\ldots n-2,0)
\end{align*}

\begin{figure}[h]
    \centering
\ytableausetup{baseline}
  \scalebox{0.7}{\ydiagram[*(white!80!green) \bullet]{7,7,7,7,7,7,7} 
\qquad \qquad  \ydiagram[*(white!80!blue) \bullet]
{6+1}
*[*(white!80!green) \bullet]{6,6,6,6,6,6} *[*(white!80!red) \bullet] {0,0,0,0,0,0,1}
\qquad \qquad      \ydiagram[*(white!80!blue) \bullet]
{5+2}
*[*(white!80!green) \bullet]{5,5,5,5,5,5} 
\qquad \qquad     \ydiagram[*(white!80!red) \bullet]
{0,0,0,0,0,0+1,0+1}
*[*(white!80!green) \bullet]{6,6,6,6,6}} 
    \caption{A graphical illustration of the $4$ Ferrers diagrams $\mA_7,\mG_7,\mE_7,\mF_7$, with their standard forms highlighted.}
    \label{fig:AGEF}
\end{figure}

\begin{theorem}\label{thm:irreducible_d=3}
    Let $\mD$ be a nonempty Ferrers diagram. Then, the pair $(\mD,3)$ is irreducible if and only if
    $$\mD \in \{\mA_n,\mG_n,  \,:\, n \ge 3\}\cup \{\mE_n,\mF_n : n \ge 4\}.$$
\end{theorem}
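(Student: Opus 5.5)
We split according to whether $\mD\cap\mB_{n,2}$ is empty. For $d=3$ we have $\mB_{n,2}=\{2,\ldots,n\}^2$, so $\mD\cap\mB_{n,2}\neq\emptyset$ exactly when $(2,2)\in\mD$.

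\emph{Small case.} Suppose $(2,2)\notin\mD$ and $(\mD,3)$ is irreducible. By Lemma \ref{lem:irreducible_contains_triangle} we have $\mT_3\subseteq\mD$, so $(3,1),(1,3)\in\mD$ while $(2,2)\notin\mD$. Then $\mD$ is a hook, with column heights $(c_1,1,\ldots,1)$ where $c_1\ge 3$ and the number of columns is at least $3$. We show directly that no such hook is irreducible. The point $(2,2)$ is addible, and adding it increases $\nu_0$, $\nu_1$ and $\nu_2$ by one each, because $(2,2)$ lies in every $\mS_{n,3,j}$ (Observation \ref{obs: nu is D cap S}). Hence $\nu_{\min}$ increases, which contradicts Proposition \ref{prop:characterization_irreducible}(1). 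This agrees with Proposition \ref{prop:irreducible_triangular}, since $2d-3=3$ forces $n=3$ and $\mT_3$ is itself a hook. So no irreducible pair has $(2,2)\notin\mD$.

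\emph{Main case.} Suppose $(2,2)\in\mD$. We apply Theorem \ref{thm:main}(3) with $d=3$, so that $d-2=1$. A pair in $(a,b)$-standard form has $X$ and $Y$ each reduced to a single column of height $x:=c_1(X)$ and $y:=c_1(Y)$. The conditions become:
\begin{itemize}
\item $y-x=2(b-a)$;
\item $(b-a+1)-y\ge 0$, which is the case $j=1$;
\item when $a=b=2$, equality must hold, i.e.\ $y=1$.
\end{itemize}
By transposition symmetry (Lemma \ref{lem:adjoint_irred}) the conditions are symmetric, as Lemma \ref{lem: b-a bounded} also shows. They give $0\le y\le \Delta+1$ and $0\le x\le 1-\Delta$ with $\Delta=b-a$, so $|\Delta|\le 1$. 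The integer solutions are then exactly the four points of $\poly_3(\Z)$ already computed: $(x,y,\Delta)\in\{(0,0,0),(1,1,0),(2,0,-1),(0,2,1)\}$.

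We translate each solution back via $\mD=([a]\times[b])\sqcup X^\top\sqcup Y$:
\begin{itemize}
\item $(0,0,0)$ gives the square $[a]^2=\mA_a$;
\item $(1,1,0)$ gives $[a]^2$ plus $(1,a+1)$ and $(a+1,1)$, which is $\mG_{a+1}$;
\item $(2,0,-1)$ with $b=a-1$ gives $[a]\times[a-1]$ plus $(1,a),(2,a)$, which is $\mF_{a+1}$ after transposing, so up to the naming convention one of $\mE_{a+1},\mF_{a+1}$;
\item $(0,2,1)$ gives the other of $\mE,\mF$.
\end{itemize}
We then check the lower bounds on $n$. Standard form requires $a,b\ge 2$.
\begin{itemize}
\item $\mA_n$: $n=a\ge 2$, but $a=b=2$ forces $y=1$. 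So $(0,0,0)$ needs $a\ge 3$, giving $n\ge 3$.
\item $\mG_n$ with $a=b=2$ satisfies $y=1$, so $\mG_3$ is allowed and $n=a+1\ge 3$.
\item $\mE,\mF$ need $\min(a,b)\ge 2$ with $|a-b|=1$, so the order is $\max(a,b)+1\ge 4$.
\end{itemize}

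The converse follows from the same equivalence in Theorem \ref{thm:main}, since each listed diagram contains $(2,2)$. The main obstacle is bookkeeping: orienting $X$ and $Y$ correctly against the definitions of $\mE_n,\mF_n$ and handling the exceptional $a=b=2$ condition. I would verify this by checking the $n=3,4$ instances against the digraphs of Examples \ref{exa:n=d=3} and \ref{exa:n=4}.
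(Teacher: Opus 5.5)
Your main case follows the paper's proof: specialize Theorem~\ref{thm:main}(3) to $d=3$, so that $X$ and $Y$ are single columns, solve $y-x=2(b-a)$, $0\le y\le b-a+1$ to get the four points of $\poly_3(\Z)$, and translate back. You are more careful than the paper in two places: you use the extra condition for $a=b=2$ to exclude $\mA_2$, and you state the converse explicitly. There are, however, two errors to fix.

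\emph{The small case is wrong as written.}
\begin{itemize}
\item $\mT_3=(3,2,1)$ contains $(2,2)$, since column~$2$ has height~$2$, so it is not a hook. In fact $\mT_3=\mG_3$ is on the list. Your remark therefore contradicts Proposition~\ref{prop:irreducible_triangular}, which says $(\mT_3,3)$ is irreducible.
\item $(2,2)$ does not lie in every $\mS_{n,3,j}$. Both $\mS_{n,3,0}=\{3,\ldots,n\}\times[n]$ and $\mS_{n,3,2}=[n]\times\{3,\ldots,n\}$ miss it, so adding $(2,2)$ raises only $\nu_1$.
\end{itemize}
The case is actually immediate. Once Lemma~\ref{lem:irreducible_contains_triangle} gives $\mT_3\subseteq\mD$, you already have $(2,2)\in\mD$, a contradiction. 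Alternatively, as the paper does: $(2,2)\notin\mD$ gives $\nu_1(\mD,3)=|\mD\cap\mS_{n,3,1}|=|\mD\cap\mB_{n,2}|=0$. Hence $\nu_{\min}(\mD,3)=0$, and Lemma~\ref{lem:irreducible_empty} forces $\mD=\emptyset$. Your hook argument could be rescued by noting $\nu_1=0<\min\{\nu_0,\nu_2\}$ when $c_1,r_1\ge 3$, but it is unnecessary and currently rests on false claims.

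\emph{The translation of $(2,0,-1)$ has a slip.} $X^\top$ lies in the first $d-2=1$ rows. So $c_1(X)=2$ means $X^\top=\{(1,b+1),(1,b+2)\}=\{(1,a),(1,a+1)\}$, not $\{(1,a),(2,a)\}$. The correct diagram has column heights $(a,\ldots,a,1,1)$ with $a-1$ columns of height $a$, which is exactly $\mE_{a+1}$. Correspondingly, $(0,2,1)$ gives $\mF_{b+1}$. With your points you would get $(a,\ldots,a,2)$, which is neither $\mE_{a+1}$ nor $\mF_{a+1}$. Your hedge ``one of $\mE_{a+1},\mF_{a+1}$'' lands on the right final list only because both families appear there. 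The explicit identification still needs correcting, because the converse relies on each of $\mE_n$ and $\mF_n$ actually arising from one of the four points.
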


\begin{proof}
    Let $\mD$ be a Ferrers diagram such that $(\mD,3)$ is an irreducible pair.
    
    First, assume that $\mD\cap \mB_{n,2}\neq \emptyset$. Then, $(\mD,3)$ is in $(a,b)$-standard form with $a,b\ge d-1$ and $X(\mD,3)$, $Y(\mD,3)$ Ferrers diagrams with only one column. Using Theorem \ref{thm:main}(3), we deduce
    $c_1(Y)-c_1(X)=|Y|-|X|=2(b-a)$ and
    $(b-a+1)-c_1(Y)\ge 0$, which combined give
    $a-b+1-c_1(X)\ge 0$. Thus,
    $$\begin{cases}
        0\le c_1(X)\le a-b+1 \\
        0 \le c_1(Y) \le b-a+1
    \end{cases}$$
    In particular, we deduce $|a-b|\le 1$. We have now three cases:

    \noindent \textbf{Case I:} If $a=b$, then $c_1(X)=c_1(Y)\le 1$. and hence $\mD \in \{\mA_{b},\mG_{b+1}\}$.

    \noindent \textbf{Case II:} If $a=b-1$, then $c_1(Y)=c_1(X)+2$ and $c_1(Y)\leq 2$, which implies $X=\emptyset$ and $c_1(Y)=2$. Hence, $\mD=\mF_{b+1}$.

    \noindent \textbf{Case III:} If $a=b+1$, then we get the adjoint of the previous case, that is $\mD=\mE_{a+1}$.

    Now, assume that $\mD\cap\mB_{n,2}=\emptyset$. Then $(2,2)\notin \mD$, and hence $\nu_1(\mD,3)=0$. This implies that $\nu_{\min}(\mD,3)=0$ and thus, by Lemma \ref{lem:irreducible_empty}, $\mD=\emptyset$, contradicting the hypothesis of $\mD$ being nonempty.

\end{proof}

Note that the ES-conjecture for $\mathcal A_n$ coincides with the existence of MRD codes in $\F^{n\times n}$, which was proved already by Delsarte in 1978 \cite{delsarte1978bilinear}. Furthermore, note that $\mE_n$ and $\mF_n$ are adjoint to each other. Thus, proving the existence of an $[\mE_n,\nu_{\min}(\mE_n,d),d]_{\F}$ MFD code is equivalent to proving the existence of an $[\mF_n,\nu_{\min}(\mF_n,d),d]_{\F}$ MFD code.

\begin{lemma}\label{lem:matrices1-0}
   Let $(\mD,d)$ be an irreducible pair with $\mD\cap \mB_{n,d-1} \neq \emptyset$ in $(a,b)$-standard form. Let $\C$ be an $[\mD,\nu_{\min}(\mD,d),d]_{\F}$ MFD code. Then, the following hold:
    \begin{enumerate}[label = (\arabic*)]
        \item For every $(i,j)\in X(\mD,d)^\top$ there exists $M^{(i,j)}\in \C$ such that 
$$M^{(i,j)}_{a,b}=\begin{cases}
    1 & \mbox{ if } (a,b)=(i,j), \\
    0 & \mbox{ if } (a,b) \in X(\mD,d)^\top\setminus\{(i,j)\}.
\end{cases}$$
        
        \item For every $(i,j)\in Y(\mD,d)$ there exists $M^{(i,j)}\in \C$ such that 
        $$M^{(i,j)}_{a,b}=\begin{cases}
    1 & \mbox{ if } (a,b)=(i,j), \\
    0 & \mbox{ if } (a,b) \in Y(\mD,d)\setminus\{(i,j)\}.
\end{cases}$$
    \end{enumerate}
\end{lemma}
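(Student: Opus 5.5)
Throughout, fix $(i,j)\in X^\top$; the case $(i,j)\in Y$ then follows by transposition, using Lemma \ref{lem:adjoint_irred} and the fact that $\C^\top$ is an MFD code for $\mD^\top$. The claim is equivalent to saying that the projection $\pi_X:\C\to\F^{X^\top}$, which keeps only the entries in $X^\top$, is surjective. Once this holds, $M^{(i,j)}$ is simply any preimage of the elementary matrix $E_{ij}$.

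To get surjectivity I will compute dimensions. The kernel of $\pi_X$ is $\C\cap\F^{\mD\setminus X^\top}$, and $\mD\setminus X^\top=([a]\times[b])\sqcup Y$ is again a Ferrers diagram. The kernel is a code on this diagram with minimum distance at least $d$, so Theorem \ref{thm:singES_bound} gives
\[
\dim\ker\pi_X\le \nu_{\min}(\mD\setminus X^\top,d)\le \nu_{d-1}(\mD\setminus X^\top,d).
\]
Now $\nu_{d-1}$ counts the points left after deleting the first $d-1$ columns. Every point of $X^\top$ sits in rows $1,\ldots,d-2$ and in columns $>b\ge d-1$, so each such point lies in the region counted by $\nu_{d-1}$. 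Hence
\[
\nu_{d-1}(\mD\setminus X^\top,d)=\nu_{d-1}(\mD,d)-|X|.
\]
Corollary \ref{cor:irreducible implies nu_0 = nu_min} gives $\nu_{d-1}(\mD,d)=\nu_{\min}(\mD,d)=\dim\C$. Therefore $\dim\ker\pi_X\le\dim\C-|X|$, so $\operatorname{rk}\pi_X\ge|X|=\dim\F^{X^\top}$, and $\pi_X$ is surjective.

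For $Y$ I will run the same argument symmetrically, using $\nu_0$ instead. Points of $Y$ lie in columns $1,\ldots,d-2$ and rows $>a\ge d-1$, so they lie in the region counted by $\nu_0$ (delete the first $d-1$ rows). Irreducibility gives $\nu_0(\mD,d)=\nu_{\min}(\mD,d)$, so the same dimension count applies.

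The main point to check is that each point of $X^\top$ (respectively $Y$) lies in the region counted by $\nu_{d-1}$ (respectively $\nu_0$). This follows from $a,b\ge d-1$, which holds in standard form, together with the fact that $\mD\setminus X^\top$ is a Ferrers diagram, so that Theorem \ref{thm:singES_bound} applies to the kernel. One might worry about the degenerate case where the kernel is trivial, but the bound still holds there, so it causes no problem.
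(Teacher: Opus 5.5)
Your proof is correct and is essentially the paper's argument. The paper notes that the projection onto $Z=\mD\cap(\N\times(\N\setminus[d-1]))$ is injective on $\C$ because $\C$ has minimum distance $d$. Since $|Z|=\nu_{d-1}(\mD,d)=\dim\C$, the set $Z\supseteq X^\top$ is an information set. Your Singleton-type bound on $\ker\pi_X$ via $\nu_{d-1}(\mD\setminus X^\top,d)$ is this same column count applied to $\mD\setminus X^\top$, and the $Y$ case by transposition matches the paper as well.
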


\begin{proof}
    We prove only the first statement, since the second one can be directly obtained via its adjoint Ferrers diagram. 

    Observe that it is enough to show that $X(\mD,d)^\top$ is contained in an \emph{information set}, that is a set $Z\subset \mD$ of cardinality $|Z|=\dim_{\F}(\C)$ such that,  $\dim_\F(\pi_Z(\C))=\dim_\F(\C)$, where $\pi_Z$ is the projection map from $\F^\mD$ to $\F^Z$. 

    Since $(\mD,d)$ is an irreducible pair with $\mD\cap \mB_{n,d-1}\neq \emptyset$, then by Theorem \ref{thm:main}, $\nu_{\min}(\mD,d)=\nu_0(\mD,d)=\nu_{d-1}(\mD,d)$. Thus, $\nu_{d-1}(\mD,d)=\dim_\F(\C)$, which, since $\C$ has minimum rank distance $d$, is also equal to $\dim_\F(\pi_Z(\C))$, where $Z=\mD\cap (\mathbb N \times (\mathbb N\setminus [d-1]))$. In particular, $Z$ is an information set for $\C$ and it contains $X(\mD,d)^\top$.
\end{proof}

\begin{remark}
    With the same exact proof of Lemma \ref{lem:matrices1-0}, one can show that such matrices $M^{(i,j)}$ do exist for every $(i,j) \in \mD\cap (\mathbb N \times (\mathbb N\setminus[d-1]))$. And the same can be done for every $(i,j) \in \mD\cap ((\mathbb N\setminus[d-1])\times \mathbb N )$. Indeed, these two sets are always information sets for an MFD code of distance $d$ over $\mD$, when $(\mD,d)$ is an irreducible pair with $\mD\cap\mB_{n,d-1}\neq \emptyset$. 
\end{remark}

\subsection{The Ferrers diagrams $\mG_n$}

We now exhibit a construction of $[\mG_n,\nu_{\min}(\mG_n,3),3]_{\F}$ MFD codes. First of all, we observe that
$$\nu_{\min}(\mG_n,3)=\nu_0(\mG_n,3)=\nu_{d-1}(\mG_n,3)=(n-2)^2.$$
Let us now consider an $[n-1\times n-1,(n-1)(n-3),3]_{\F}$ MRD code $\mC_0$. Since $[n-1]^2\subseteq \mG_n$, this is also a $[\mG_n,(n-1)(n-3),3]_{\F}$ code. Let us call $\mC_0'$ the code obtained from $\C_0$ by erasing the first row and first column of every matrix. By erasing one row and one column to each element in $\C_0$, the rank decreases at most by $2$, and this means that $\dim_\F(\mC_0')=\dim_\F(\mC_0)$, due to the fact that the minimum rank distance of $\C_0$ is $3$.  Since $$\dim_\F(\C_0')=\dim_\F(\C_0)=(n-1)(n-3)<(n-2)^2=\dim_{\F}(\F^{(n-2)\times(n-2)}),$$
then there exists a matrix $A'=(A'_{i,j})\in \F^{(n-2)\times(n-2)}\setminus\mC_0'$. We now construct a matrix $A=(A_{i,j})$ defined as follows:
$$A_{i,j}=\begin{cases}
  1 & \mbox{ if } (i,j)\in\{(1,n),(n,1)\}, \\
  A'_{i-1,j-1} & \mbox{ if } 2\le i,j \le n-1, \\
  0 & \mbox{ otherwise. }
\end{cases}$$
In other words, the matrix $A$ looks like 

$$   A= \left(
    \begin{array}{ccccc}0 \cellcolor{blue!10} & 0 \cellcolor{blue!10}& \ldots \cellcolor{blue!10} & 0 \cellcolor{blue!10}& 1\cellcolor{blue!10} \\
    \cline{2-5}
     0\cellcolor{blue!10} & \multicolumn{1}{|c}{\phantom{0}\cellcolor{blue!10}} & \multicolumn{1}{c}{\phantom{0} \cellcolor{blue!10}} & \multicolumn{1}{c|}{\phantom{0} \cellcolor{blue!10}} &  \\
     \vdots \cellcolor{blue!10} & \multicolumn{1}{|c}{\phantom{0}\cellcolor{blue!10}} & \multicolumn{1}{c}{A'\cellcolor{blue!10}} & \multicolumn{1}{c|}{\phantom{0}\cellcolor{blue!10}} &  \\ 
         0\cellcolor{blue!10} & \multicolumn{1}{|c}{\phantom{0}\cellcolor{blue!10}} & \multicolumn{1}{c}{\phantom{0}\cellcolor{blue!10}}& \multicolumn{1}{c|}{\phantom{0}\cellcolor{blue!10}} &  \\ \cline{2-4}
      \multicolumn{1}{c|}{1\cellcolor{blue!10}} & & \\ \cline{1-1}
    \end{array}
    \right)$$
%%%

We can prove the existence of $[\mG_n,\nu_{\min}(\mG_n,3),3]_{\F}$ MFD codes over any field $\F$ over which there is a $[n-1\times n-1,(n-1)(n-3),3]_{\F}$ MRD code.

\begin{theorem}\label{thm:diagrams_Gn}
    The code $\C_0\oplus \langle A \rangle_{\F}$ is a $[\mG_n,\nu_{\min}(\mG_n,3),3]_{\F}$ MFD code.
\end{theorem}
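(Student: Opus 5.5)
Three things have to be checked: that $\C_0\oplus\langle A\rangle_\F$ lies in $\F^{\mG_n}$, that its dimension equals $\nu_{\min}(\mG_n,3)=(n-2)^2$, and that every nonzero element has rank at least $3$.

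\textbf{Support.} Here $\mG_n=(n,n-1,\ldots,n-1,1)$ contains $[n-1]^2$, the point $(n,1)$ in the first column and $(1,n)$ in the last column. Hence $\C_0\subseteq\F^{[n-1]^2}\subseteq\F^{\mG_n}$, and the support of $A$ is contained in $\{(1,n),(n,1)\}\cup[n-1]^2$.

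\textbf{Dimension.} The matrix $A$ has a nonzero entry at $(1,n)$, while every element of $\C_0$ vanishes there. So $A\notin\C_0$, the sum is direct, and its dimension is $(n-1)(n-3)+1=n^2-4n+4=(n-2)^2$, which matches the stated value of $\nu_{\min}$.

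\textbf{Minimum distance.} Take a nonzero $M=C+\lambda A$ with $C\in\C_0$. If $\lambda=0$ then $\rk(M)\ge 3$ because $\C_0$ is MRD of distance $3$. Otherwise rescale so that $\lambda=1$, and write
\[
M=\begin{pmatrix} \ast & \ast & 1\\ \ast & C'+A' & 0\\ 1 & 0 & 0\end{pmatrix},
\]
with block sizes $1,\,n-2,\,1$ in both rows and columns. Here $C'$ is the central $(n-2)\times(n-2)$ block of $C$, i.e.\ $C$ with its first row and first column deleted, so $C'\in\C_0'$. The last row of $M$ is $e_1^\top$ and the last column is $e_1$. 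Subtracting multiples of the last column from the first $n-1$ columns clears the rest of row $1$; subtracting multiples of the last row clears the rest of column $1$. The remaining middle block is unaffected, because the last row and column vanish there. Therefore
\[
\rk(M)=2+\rk(C'+A').
\]
Since $A'\notin\C_0'$, we have $C'+A'\neq0$, hence $\rk(M)\ge 3$. This settles every case with $\lambda\neq0$.

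The main point needing care is that elimination step: one must check that the pivots at $(1,n)$ and $(n,1)$ do not disturb the central block. They do not, because the last row and last column of $M$ are zero outside the corners, the last column since $C$ lives in $[n-1]^2$ and the last row since $\mG_n$ has only one entry in row $n$. The facts that $\C_0'$ has dimension $(n-1)(n-3)$ and that $A'$ exists were established just before the statement. With support, dimension and distance verified, $\C_0\oplus\langle A\rangle_\F$ is an MFD code.
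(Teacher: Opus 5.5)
Your proof is correct and follows the paper's own argument: write a nonzero element as $\lambda A+T$, handle $\lambda=0$ with the MRD property of $\mathcal{C}_0$, and for $\lambda=1$ use the corner entries at $(1,n)$ and $(n,1)$ to get $\rk(M)=2+\rk(A'+T')\geq 3$, since $A'\notin\mathcal{C}_0'$. You also make explicit the support and dimension checks and the row/column elimination, which the paper leaves implicit or summarizes as ``by the structure of $M$''.
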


\begin{proof}
    We only need to prove that every nonzero matrix $M\in \C_0\oplus \langle A\rangle_{\F}$ has rank at least $3$. Let us write $M=\lambda A+T$, for some $\lambda \in \F$, and $T\in \C_0$. If $\lambda =0$ then $T\in \C_0$ must be nonzero, and by definition of $\C_0$ which is a rank-metric code of minimum rank $3$, we have $\rk(M)=\rk(T)\ge 3$. If $\lambda \neq 0$, then we can consider $\lambda=1$, up to considering the scalar multiple $\lambda^{-1}M$. In the central $(n-2)\times(n-2)$ square block, the matrix $M$ is equal to $A'+T'$, where $T'\in \C_0'$ is the matrix obtained by $T$ after removing the first row and the first column. By the assumption on $A'$, we have that $A' \notin \C_0'$, and thus neither $-A'$ does. In particular, the matrix $A'+T'\neq 0$ and 
    $$M=A+T= \left(
    \begin{array}{ccccc}* \cellcolor{blue!10} & * \cellcolor{blue!10}& \ldots \cellcolor{blue!10} & * \cellcolor{blue!10}& 1\cellcolor{blue!10} \\
    \cline{2-5}
     *\cellcolor{blue!10} & \multicolumn{1}{|c}{\phantom{0}\cellcolor{blue!10}} & \multicolumn{1}{c}{\phantom{0} \cellcolor{blue!10}} & \multicolumn{1}{c|}{\phantom{0} \cellcolor{blue!10}} &  \\
     \vdots \cellcolor{blue!10} & \multicolumn{1}{|c}{\phantom{0}\cellcolor{blue!10}} & \multicolumn{1}{c}{A'+T'\cellcolor{blue!10}} & \multicolumn{1}{c|}{\phantom{0}\cellcolor{blue!10}} &  \\ 
         *\cellcolor{blue!10} & \multicolumn{1}{|c}{\phantom{0}\cellcolor{blue!10}} & \multicolumn{1}{c}{\phantom{0}\cellcolor{blue!10}}& \multicolumn{1}{c|}{\phantom{0}\cellcolor{blue!10}} &  \\ \cline{2-4}
      \multicolumn{1}{c|}{1\cellcolor{blue!10}} & & \\ \cline{1-1}
    \end{array}
    \right).$$
    By the structure of $M$, we have
    $$\rk(M)=2+\rk(A'+T')\geq 3,$$
    which concludes the proof.
\end{proof}

{\color{black}
\begin{remark}
    We must note that a construction for $[\mG_n,(n-2)^2,3]_{\F}$ MFD codes can be obtained also by using \cite[Construction 2]{etzion2016optimal}; see also \cite[Theorem 9]{etzion2016optimal}. Their construction still relies on an auxiliary $[(n-1)\times(n-1),(n-1)(n-3),3]_{\F}$ MRD code, but it does not look equivalent to ours at first sight. Our proof is instead based on \cite[Example II.16]{antrobus2019maximal}, where the authors only exhibit this construction for the diagrams $\mG_4$.
\end{remark}}

\subsection{The Ferrers diagrams $\mE_n$ and $\mF_n$}\label{ssec:XnYn}

We conclude the study of the Etzion-Silberstein conjecture for $d=3$, by analyzing the remaining two families of irreducible pairs in this case. Since $\mE_n^\top=\mF_n$, we can just restrict to study the pairs $(\mE_n,3)$. Moreover, due to Theorem \ref{thm:irreducible_d=3} and the fact that the Etzion-Silberstein conjecture holds over the diagrams $\mA_n$ (proved by Delsarte \cite{delsarte1978bilinear}) and $\mG_n$ with $d=3$ (see Theorem \ref{thm:diagrams_Gn}), the Etzion-Silberstein conjecture for $d=3$ turns out to be equivalent to the Etzion-Silberstein conjecture for the pairs $\{(\mE_{n},d)\,:\, n\ge 4\}$. We can say more, and relate the existence of MFD codes over these diagrams to the existence of  classical MRD codes with special additional features relating to puncturing on a row\footnote{Although the definition of puncturing in the rank-metric setting is a more general concept, by \emph{puncturing} a code on one row we simply indicate the rank-metric code obtained by deleting that row from each matrix in the code.}.

\begin{theorem}\label{thm: MRD conj d=3}Let $\F$ be a finite field. The following are equivalent.
\begin{enumerate}[label = (\arabic*)]
    \item  The Etzion-Silberstein conjecture holds for $d=3$ over any Ferrers diagram $\mD$.
    \item For every $n\ge 3$ there exists an
    $[n\times (n-1),n(n-3),3]_\F$ MRD code $\C$ such that its puncturing on one row is contained in an $[(n-1)\times (n-1),(n-1)(n-2),2]_\F$ MRD code.
\end{enumerate}
   \begin{proof}
       As explained above,  by Theorem \ref{thm:irreducible_d=3}, for proving the Etzion-Silberstein conjecture for $d=3$ it is enough to show it for the irreducible pairs with $d=3$, that are $\{(\mA_n,3), (\mG_n,3)\,:\, n\ge 3\}\cup \{(\mE_n,3), (\mF_n,3)\,:\, n\ge 4\}$. Now, the Etzion-Silberstein conjecture over the full diagrams $\mA_n=[n]^2$ is equivalent to the existence of square MRD codes, which was settled by Delsarte \cite{delsarte1978bilinear}. Also, for the pairs $\{(\mG_n,3)\,:\, n\ge 3\}$, Theorem \ref{thm:diagrams_Gn} ensures that the Etzion-Silberstein conjecture is settled. Since $\mF_n^\top=\mE_n$, the only pairs left are $\{(\mE_n,3)\,:\, n\ge 4\}$.
       Thus, it remains to show that constructing an $[\mE_{n+1},\nu_{\min}(\mE_{n+1},3),3]_{\F}$ MFD code is equivalent to constructing an $[n\times (n-1),n(n-3),3]_\F$ MRD code $\C$ such that its puncturing on one row is contained in an $[(n-1)\times (n-1),(n-1)(n-2),2]_\F$ MRD code.
       {\color{black} This part of the proof follows from the more general result of Theorem \ref{thm:equiv_conj_Ekdr}. which we will see in Section \ref{sec:punct_ext_conjecture}.}

   \end{proof}
\end{theorem}

\section{The puncturing-inclusion MRD conjecture}\label{sec:punct_ext_conjecture}

Let us consider a generalization of the Ferrers diagrams $\mE_n$ and $\mF_n$ defined in Section \ref{ssec:XnYn}. For three positive integers $k,d,r$, define the Ferrers diagrams
\begin{align*}\mE_{k,d,r}&:=(\underbrace{k+r,k+r,\ldots,k+r}_{\text{$k$ times}},\underbrace{r,r,\ldots,r}_{\text{$(d-1)$ times}})\\
\mF_{k,d,r}&:=(\underbrace{k+d-1,\ldots k+d-1}_{\text{$r$ times}},\underbrace{k,k,\ldots,k}_{\text{$k$ times}}),\end{align*}
where we have omitted the $0$ columns.

\begin{figure}[h]
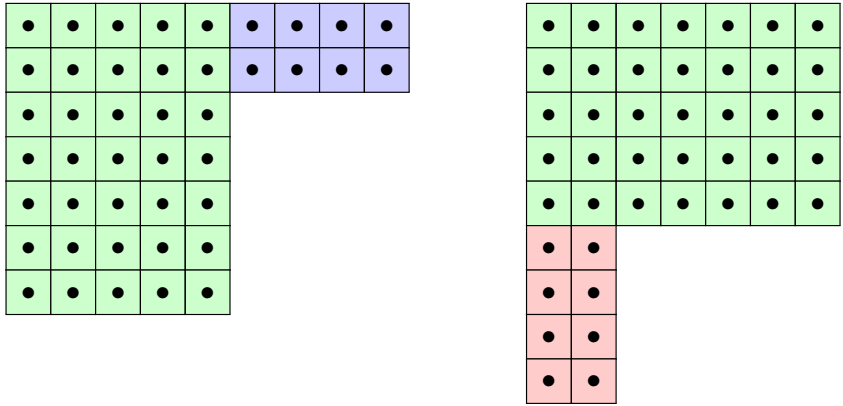

    \centering
\ytableausetup{baseline}
    \ydiagram[*(white!80!blue) \bullet]
{5+4,5+4}
*[*(white!80!green) \bullet]{5,5,5,5,5,5,5} 
\qquad \qquad     \ydiagram[*(white!80!red) \bullet]
{0,0,0,0,0,0+2,0+2,0+2,0+2}
*[*(white!80!green) \bullet]{7,7,7,7,7} 
    \caption{The two diagrams $\mE_{5,5,2}$ and $\mF_{5,5,2}$. The green background represents the $[a]\times[b]$ rectangle of their standard form, while the blue and the red ones are respectively the $X$ and the $Y$ part.}
    \label{fig:secondtry}
\end{figure}

Comparing them with the definition of $\mE_n$ and $\mF_n$ of Section \ref{ssec:XnYn}, we have
$$\mE_{n}=\mE_{n-2,3,1}, \qquad \mF_n=\mF_{n-2,3,1}.$$
Note that also in this case we have that $\mE_{k,d,r}$ and $\mF_{k,d,r}$ are adjoint to each other.

\begin{proposition}
    If $k\ge d$ and $r\le d-2$, then $(\mE_{k,d,r},d)$ and $(\mF_{k,d,r},d)$ are irreducible.
\end{proposition}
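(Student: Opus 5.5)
By Lemma \ref{lem:adjoint_irred} and the fact that $\mE_{k,d,r}^\top=\mF_{k,d,r}$, it suffices to treat $(\mE_{k,d,r},d)$. The plan is to put $\mE_{k,d,r}$ in $(a,b)$-standard form and then verify condition (3) of Theorem \ref{thm:main}.

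First I identify the standard form. The column heights are $k+r$ in the first $k$ columns and $r$ in the next $d-1$ columns. Since $r\le d-2$, the last $d-1$ columns only reach rows $1,\dots,r\subseteq[d-2]$. Hence $\mD\cap\mB_{n,d-1}=\{d-1,\dots,k+r\}\times\{d-1,\dots,k\}$, which is nonempty because $k\ge d$. The pair is therefore in $(a,b)$-standard form with $a=k+r$ and $b=k$. Moreover $Y=\emptyset$, since the rows below $a$ are empty. The transposed part $X$ collects the points in rows $1,\dots,r$ and columns $k+1,\dots,k+d-1$. So $X^\top=[r]\times\{k+1,\dots,k+d-1\}$, and after transposing, $X$ has $d-2$ relevant columns $c_i(X)=d-1$ for $i\le r$ and $c_i(X)=0$ for $r<i\le d-2$. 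This gives $|X|=r(d-1)$.

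Next I check the three conditions of Theorem \ref{thm:main}(3).
\begin{itemize}
\item The equality condition reads $|Y|-|X|=(b-a)(d-1)$, that is, $-r(d-1)=-r(d-1)$, which holds.
\item The third condition is vacuous, since $a=k+r\ge d>d-1$.
\item The inequality for $j\in[d-2]$ reads
\[
j(d-1-r-j)+\sum_{i=1}^{j}c_{d-i}(X)\ge 0 .
\]
Here $c_{d-i}(X)=d-1$ exactly when $d-i\le r$, i.e.\ $i\ge d-r$. So the sum equals $(d-1)\max\{0,\,j-(d-1-r)\}$.
\end{itemize}

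The only real work is this last inequality, which I split into two cases.
\begin{itemize}
\item If $j\le d-1-r$, the first term is $j(d-1-r-j)\ge 0$ and the sum is $0$, so the inequality holds.
\item If $j>d-1-r$, put $t=j-(d-1-r)\in\{1,\dots,r-1\}$. The expression becomes $-jt+(d-1)t=t(d-1-j)$, which is $>0$ because $j\le d-2$.
\end{itemize}
All conditions hold, so $(\mE_{k,d,r},d)$ is irreducible by Theorem \ref{thm:main}, and $(\mF_{k,d,r},d)$ is irreducible by transposition.

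The main point to double-check is the bookkeeping of indices in $X$: that $X$ is the transpose with columns indexed by rows $1,\dots,d-2$, and that the sum $\sum_{i=1}^j c_{d-i}(X)$ picks up the nonzero columns only from the top end. I would sanity-check this on $\mE_n=\mE_{n-2,3,1}$, which must reproduce the point $(2,0,-1)\in\poly_3(\Z)$.
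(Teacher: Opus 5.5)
Your proof is correct and follows essentially the paper's route: reduce to $\mE_{k,d,r}$ by transposition and verify the criterion of Theorem~\ref{thm:main}. The only difference is that you check the column-height form (3), whereas the paper computes the $\nu_j$ directly and uses form (2); under $j=d-1-i$, your quantities $j(d-1-r-j)$ and $t(d-1-j)$ are exactly the paper's differences $\nu_{d-1-i}-\nu_{d-1}$, and your factorization makes the paper's concavity argument unnecessary. You also explicitly identify the $(k+r,k)$-standard form and note that the $a=b=d-1$ clause is vacuous, which the paper's proof leaves implicit.
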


\begin{proof} Since $\mF_{k,d,r}^\top=\mE_{k,d,r}$, it is enough to prove the statement for $\mE_{k,d,r}$. 
    Let us compute the values $\nu_i(\mE_{k,d,r},d)$ for $i \in \{0,\ldots,d-1\}$. Direct computations shows that
   \begin{equation}\label{eq:numin_Ekdr}\nu_0(\mE_{k,d,r},d)=\nu_{d-1}(\mE_{k,d,r},d)=k(k+r-d+1)=(k+r)(k-d+1)+r(d-1).\end{equation}
    Moreover, for $i\in[d-1]$ we have
    $$\nu_{d-1-i}(\mE_{k,d,r},d)=\begin{cases}
        (k+r-i)(k-d+1+i)+(d-1)(r-i) & \mbox{ if } 1\le i \le r \\
        (k+r-i)(k-d+1+i) & \mbox{ if } r\le i \le d-1.
    \end{cases}$$
    
    \noindent \textbf{Case I:} If $0\le i\le r$, then we have
    \begin{align*}\nu_{d-1-i}(\mE_{k,d,r},d)&=(k+r-i)(k-d+1+i)+(d-1)(r-i)\\
    &=(k+r)(k-d+1)+r(d-1)+i(r-i)\\
    &=\nu_{d-1}(\mE_{k,d,r})+i(r-i)\\
    &\ge \nu_{d-1}(\mE_{k,d,r},d). 
    \end{align*}

    \noindent \textbf{Case II:} If $r\le i \le d-1$, then
    \begin{align*}\nu_{d-1-i}(\mE_{k,d,r},d)&=(k+r-i)(k-d+1+i)\\
    &=(k+r)(k-d+1)+i(d+r-1-i)\\
    &\ge (k+r)(k-d+1)+r(d-1) \\&=\nu_{d-1}(\mE_{k,d,r},d),
    \end{align*}
    where the last inequality follows from the fact that the map
    $$i \in\{r,\ldots,d-1\}\longmapsto i(d+r-1-i)$$
    is concave, and attains the minimum value in the extremal points $r$ and $d-1$, in which is equal to $r(d-1)$.
    The claim now follows directly from part 2. of the classification result of Theorem \ref{thm:main}.
\end{proof}

\begin{lemma}\label{lem:intersection_MRD}
    Let $\C$ be an $[\mE_{k,r,d}, \nu_{\min}(\mE_{k,d,r},d),d]_{\F}$ MFD code. Then, 
    $$\C \cap \F^{[k+r]\times [k]}$$
    is a $[(k+r)\times k,(k+r)(k-d+1),d]_{\F}$ MRD code.
\end{lemma}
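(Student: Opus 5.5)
The plan is to shorten $\C$ onto the first $k$ columns, bound the dimension of the result from below by a dimension count, and bound it from above by the Singleton-like bound for rectangular codes.

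\textbf{Setup.} By definition, $\mE_{k,d,r}$ consists of two pieces:
\begin{itemize}
\item the rectangle $[k+r]\times[k]$ (the first $k$ columns, each of height $k+r$);
\item the block $W:=[r]\times\{k+1,\ldots,k+d-1\}$ (the last $d-1$ columns, each of height $r$), with $|W|=r(d-1)$.
\end{itemize}
Hence $\C\cap\F^{[k+r]\times[k]}$ is exactly the kernel of the projection $\pi_W:\C\to\F^{W}$, which reads off the entries of a matrix indexed by $W$.

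\textbf{Step 1: compute $\nu_{\min}$.} I would use the computation in the proof of the preceding proposition, in particular \eqref{eq:numin_Ekdr} and the two cases there. They give $\nu_j(\mE_{k,d,r},d)\ge \nu_{d-1}(\mE_{k,d,r},d)$ for all $j$, so
$$\nu_{\min}(\mE_{k,d,r},d)=(k+r)(k-d+1)+r(d-1).$$
Note that the case analysis only uses $r\le d-1$. If needed, I would check the edge case $r=d-1$ directly, where the term $i(r-i)$ in Case I still stays nonnegative.

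\textbf{Step 2: lower bound on the dimension.} Rank–nullity for $\pi_W$ gives
$$\dim_\F\big(\C\cap\F^{[k+r]\times[k]}\big)\ \ge\ \nu_{\min}(\mE_{k,d,r},d)-r(d-1)=(k+r)(k-d+1).$$

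\textbf{Step 3: distance and upper bound.} Every nonzero element of the shortened code lies in $\C$, so it has rank at least $d$. The shortened code is therefore a subspace of $\F^{(k+r)\times k}$ with minimum rank distance $d'\ge d$. Since $k\le k+r$, Theorem~\ref{thm:singES_bound} applied to the rectangle gives
$$\dim_\F\le (k+r)(k-d'+1)\le (k+r)(k-d+1).$$
This forces equality everywhere.

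\textbf{Step 4: conclusion.} When $k\ge d$, the dimension $(k+r)(k-d+1)$ is positive. A strict inequality $d'>d$ would then contradict the upper bound in Step 3, so $d'=d$. Hence the shortened code is a $[(k+r)\times k,(k+r)(k-d+1),d]_\F$ code, that is, an MRD code.

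\textbf{Main obstacle.} The only delicate point is Step 1: making sure the minimum of the $\nu_j$ is attained at $\nu_{d-1}$ for the full range of parameters in which the lemma is used. I would rely on the explicit case formulas from the previous proof. The rest is the standard shortening argument.
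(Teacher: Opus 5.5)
Your proof is correct and follows the paper's argument. The paper likewise observes that $\C\cap\F^{[k+r]\times[k]}$ has minimum rank at least $d$ and dimension at least $\nu_{\min}(\mE_{k,d,r},d)-r(d-1)=(k+r)(k-d+1)$ by \eqref{eq:numin_Ekdr}, and then concludes via the Singleton-like bound. You simply make explicit what the paper leaves implicit: the upper bound from Theorem~\ref{thm:singES_bound}, the fact that the minimum distance is exactly $d$, and the check that the computation of $\nu_{\min}$ still holds in the edge case $r=d-1$, which is needed for $d=2$ in Theorem~\ref{thm:equiv_conj_Ekdr}.
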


\begin{proof}
    The code $\C \cap \F^{[k+r]\times [k]}$ is a $[(k+r)\times k, K, D]_\F$ code, with $D\geq d$ and $K\geq  \nu_{\min}(\mE_{k,d,r},d)-r(d-1)$. The statement follows by observing that
    $$\nu_{\min}(\mE_{k,d,r}),d)=(k+r)(k-d+1)+r(d-1),$$
    as computed in \eqref{eq:numin_Ekdr}.
\end{proof}

\begin{corollary}\label{cor:E_MFD_MRD}
    Let $\C$ be an $[\mE_{k,r,d}, \nu_{\min}(\mE_{k,d,r}),d),d]_{\F}$ MFD code. Then, 
    there exists a $[(k+r)\times k,(k+r)(k-d+1),d]_{\F}$ MRD code $\C_0$, and matrices $M^{(i,j)}\in \F^{\mE_{k,d,r}}$, for $i\in [r], j \in [d-1]$ with 
    $$(M^{(i,j)})_{a,b}=\begin{cases}
        1 & \mbox{ if } (a,b)=(i,j+k) \\
        0 & \mbox{ if } (a,b)\neq (i,j+k), a \in [r], b \in \{k+1,\ldots k+d-1\} \\
        m^{(i,j)}_{a,b} & \mbox{ 
 otherwise,}
    \end{cases}$$
    such that 
    $$\C=\C_0 \oplus \langle M^{(i,j)}\,:\, i \in [r], j \in [d-1]\rangle_{\F}.$$
\end{corollary}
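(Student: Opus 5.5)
The plan is to compare $\C$ with its subcode of matrices supported on the first $k$ columns, and to use a projection onto the remaining block $Z:=[r]\times\{k+1,\ldots,k+d-1\}$. By the definition of $\mE_{k,d,r}$ (first $k$ columns of height $k+r$, then $d-1$ columns of height $r$), we have the disjoint decomposition $\mE_{k,d,r}=([k+r]\times[k])\sqcup Z$ with $|Z|=r(d-1)$. Let $\pi_Z:\F^{\mE_{k,d,r}}\to\F^{Z}$ be the projection onto the entries indexed by $Z$. Then $\ker(\pi_Z|_{\C})=\C\cap\F^{[k+r]\times[k]}$, and we set $\C_0:=\C\cap\F^{[k+r]\times[k]}$.

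First, Lemma \ref{lem:intersection_MRD} says that $\C_0$ is a $[(k+r)\times k,(k+r)(k-d+1),d]_\F$ MRD code, which gives the first claim. In particular $\dim_\F\C_0=(k+r)(k-d+1)$, and this value is exact: a code of minimum distance $d$ in $\F^{(k+r)\times k}$ cannot exceed the Singleton-like bound. Second, \eqref{eq:numin_Ekdr} gives $\dim_\F\C=\nu_{\min}(\mE_{k,d,r},d)=(k+r)(k-d+1)+r(d-1)$. Rank–nullity applied to $\pi_Z|_\C$ then yields
$$\dim_\F\pi_Z(\C)=\dim_\F\C-\dim_\F\C_0=r(d-1)=|Z|,$$
so $\pi_Z|_\C$ is surjective onto $\F^Z$. (Equivalently, one could invoke Lemma \ref{lem:matrices1-0}, since $Z=X(\mE_{k,d,r},d)^\top$ lies in an information set; the direct count above avoids needing the standard-form bookkeeping.)

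Third, for each $i\in[r]$ and $j\in[d-1]$, surjectivity lets us choose $M^{(i,j)}\in\C$ with $\pi_Z(M^{(i,j)})$ equal to the unit vector at position $(i,j+k)$. This is precisely the prescribed pattern: entry $1$ at $(i,j+k)$, entry $0$ at every other position of $Z$, and arbitrary entries $m^{(i,j)}_{a,b}$ on $[k+r]\times[k]$. Since their images under $\pi_Z$ form a basis of $\F^Z$, the $M^{(i,j)}$ are linearly independent, and their span meets $\ker(\pi_Z|_\C)=\C_0$ only in $0$. Counting dimensions gives $\dim\C_0+r(d-1)=\dim\C$, hence $\C=\C_0\oplus\langle M^{(i,j)}\rangle_\F$.

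There is no real obstacle here. The only point requiring care is checking that the kernel of $\pi_Z$ on $\C$ is exactly $\C_0$ and that $\dim\C_0$ equals, rather than merely exceeds, $(k+r)(k-d+1)$; both follow from the MRD statement in Lemma \ref{lem:intersection_MRD}.
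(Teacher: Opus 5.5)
Your proof is correct and has the same skeleton as the paper's: $\C_0=\C\cap\F^{[k+r]\times[k]}$ is MRD by Lemma~\ref{lem:intersection_MRD}, the $M^{(i,j)}$ are elements of $\C$ whose restriction to $Z=[r]\times\{k+1,\ldots,k+d-1\}$ is a unit vector, and a dimension count gives the direct sum. The one difference is where the $M^{(i,j)}$ come from. The paper takes them from Lemma~\ref{lem:matrices1-0}, an information-set argument that uses $\nu_{\min}=\nu_{d-1}$ for the irreducible pair and $Z=X(\mE_{k,d,r},d)^\top$. You instead get surjectivity of $\pi_Z|_{\C}$ from rank--nullity and the exact value $\dim\C_0=(k+r)(k-d+1)$ forced by the Singleton bound. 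This bypasses the standard-form bookkeeping and makes explicit the paper's terse ``independent from $\C_0$, dimension argument'' step.
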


\begin{proof}
    By Lemma \ref{lem:intersection_MRD} we have that $\C_0=\C\cap \F^{[k+r]\times [k]}$ is a $[(k+r)\times k,(k+r)(k-d+1),d]_{\F}$ MRD code.
    Moreover, Lemma \ref{lem:matrices1-0} ensures the existence of such matrices $M^{(i,j)}$. These matrices are linearly independent, and they are also independent from $\C_0$. For a dimension argument, the statement follows.
\end{proof}

We are now ready to introduce an open problem in the theory of rank-metric codes, which ends up being equivalent to a smaller instance of the Etzion-Silberstein conjecture.

\begin{conjecture}\label{conj:punct_ext_MRD}
    Let $n,d \in \N$ with $1<d<n$, and let $\F$ be a finite field. There exists an
    $[n\times (n-1),n(n-d),d]_\F$ MRD code $\C$ such that its puncturing on one row  is contained in an $[(n-1)\times (n-1),(n-1)(n-d+1),d-1]_\F$ MRD code.
\end{conjecture}

Concerning Conjecture \ref{conj:punct_ext_MRD}, we can observe that when we puncture a $[n\times (n-1),n(n-d),d]_\F$ MRD code on one row, the minimum rank must decrease by one, since otherwise it would violate the  bound of Theorem \ref{thm:singES_bound}. Thus, we obtain a $[(n-1)\times (n-1),n(n-d),d-1]_\F$ code. In order to extend it to an $[(n-1)\times (n-1),(n-1)(n-d+1),d-1]_\F$ MRD code, we need to add a $(d-1)$-dimensional subspace to it.

\begin{theorem}\label{thm:equiv_conj_Ekdr}
    Conjecture \ref{conj:punct_ext_MRD} is equivalent to the existence of $[\mE_{n-1,d,1},\nu_{\min}(\mE_{n-1,d,1},d),d]_{\F}$ MFD codes.
\end{theorem}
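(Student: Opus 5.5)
We prove both implications by passing between codes on $\mE_{n-1,d,1}$ and pairs of rectangular MRD codes. Concretely, $\mE_{n-1,d,1}$ is the rectangle $[n]\times[n-1]$ together with the $d-1$ cells $(1,n),\ldots,(1,n+d-2)$ in the first row. By \eqref{eq:numin_Ekdr} with $k=n-1$ and $r=1$,
$$\nu_{\min}(\mE_{n-1,d,1},d)=n(n-d)+(d-1)=(n-1)(n-d+1).$$
The identity $n(n-d)+d-1=(n-1)(n-d+1)$ is what makes the correspondence work.

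\emph{MFD code $\Rightarrow$ Conjecture \ref{conj:punct_ext_MRD}.} Let $\C$ be an MFD code on $\mE_{n-1,d,1}$.
\begin{enumerate}[label = (\roman*)]
\item By Lemma \ref{lem:intersection_MRD} (equivalently Corollary \ref{cor:E_MFD_MRD}), $\C_0:=\C\cap\F^{[n]\times[n-1]}$ is an $[n\times(n-1),n(n-d),d]_\F$ MRD code.
\item Puncture all of $\C$ on the first row. The remaining rows $2,\ldots,n$ are supported in columns $1,\ldots,n-1$, so we obtain a code $\C''\subseteq\F^{(n-1)\times(n-1)}$.
\item Deleting a row lowers the rank by at most one, so every nonzero element of $\C''$ coming from a nonzero element of $\C$ has rank $\ge d-1$.
\item The puncturing map is injective. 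A nonzero kernel element would be supported on row $1$, hence have rank $1<d$, which is impossible since $d\ge 2$.
\item Hence $\dim\C''=(n-1)(n-d+1)$ and $\C''$ has minimum distance at least $d-1$. By Theorem \ref{thm:singES_bound} it is an $[(n-1)\times(n-1),(n-1)(n-d+1),d-1]_\F$ MRD code, and it contains the puncturing of $\C_0$, as required.
\end{enumerate}

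\emph{Conjecture \ref{conj:punct_ext_MRD} $\Rightarrow$ MFD code.} Let $\C_0$ be the $[n\times(n-1),n(n-d),d]_\F$ MRD code, let $\C_0'$ be its puncturing on a row (without loss of generality the first), and let $\C''\supseteq\C_0'$ be the $[(n-1)\times(n-1),(n-1)(n-d+1),d-1]_\F$ MRD code.
\begin{enumerate}[label = (\roman*)]
\item Puncturing is injective on $\C_0$ by the rank-one argument above, so $\dim\C_0'=n(n-d)$.
\item Choose $N_1,\ldots,N_{d-1}$ spanning a complement of $\C_0'$ in $\C''$.
\item Define $M_j\in\F^{\mE_{n-1,d,1}}$ as follows: its first row is zero in columns $1,\ldots,n-1$ and equals the standard basis vector $e_j$ in columns $n,\ldots,n+d-2$; its rows $2,\ldots,n$ equal $N_j$ in columns $1,\ldots,n-1$.
\item Set $\C:=\C_0\oplus\langle M_1,\ldots,M_{d-1}\rangle_\F$. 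It has dimension $n(n-d)+d-1=\nu_{\min}(\mE_{n-1,d,1},d)$.
\end{enumerate}

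For the rank bound, take a nonzero element $T+\sum_j\lambda_jM_j$ with $T\in\C_0$. If all $\lambda_j=0$, its rank is at least $d$ because $\C_0$ has minimum distance $d$. Otherwise, the first row has a nonzero entry in columns $n,\ldots,n+d-2$, where all other rows vanish. Therefore the rank is $1$ plus the rank of the lower block. That lower block is $T'+\sum_j\lambda_jN_j\in\C''$, where $T'$ is the puncturing of $T$. It is nonzero because the $N_j$ are independent modulo $\C_0'$. So the lower block has rank $\ge d-1$ and the total rank is $\ge d$. Hence $\C$ is MFD.

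The argument is mostly bookkeeping. The points that need care are the dimension identity, the injectivity of puncturing (which uses $d\ge2$), and the block-rank argument in the second direction, which relies on the extra columns $n,\ldots,n+d-2$ being supported only in row $1$.
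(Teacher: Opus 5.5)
Your proof is correct. The direction from Conjecture~\ref{conj:punct_ext_MRD} to an MFD code is the paper's construction: your $M_j$ are the paper's $B^{(\ell)}$, and you use the same block-rank argument. You also make explicit the injectivity of puncturing on $\C_0$, which the paper uses tacitly when it asserts that the complement has dimension $d-1$.

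The converse is where you take a genuinely different route. The paper first invokes Corollary~\ref{cor:E_MFD_MRD} to get the normal form $\C=\C_0\oplus\langle M^{(1)},\ldots,M^{(d-1)}\rangle_\F$, in which the first row restricted to the extra cells is $e_i$. That normal form comes from the information-set argument of Lemma~\ref{lem:matrices1-0}. The paper then checks, by cases on $(\lambda_1,\ldots,\lambda_{d-1})$, that $\pi(\C_0)\cap\langle A^{(1)},\ldots,A^{(d-1)}\rangle_\F=0$ and that the sum has minimum distance $d-1$. You instead puncture the whole MFD code at once:
\begin{itemize}
\item injectivity (a kernel element would be a rank-one matrix on row $1$) gives dimension $(n-1)(n-d+1)$;
\item deleting a row lowers the rank by at most one, which gives distance at least $d-1$;
\item the Singleton-like bound then forces the code to be MRD.
\end{itemize}
The resulting code is the same one the paper obtains, since $\pi(\C_0)+\langle A^{(1)},\ldots,A^{(d-1)}\rangle_\F=\pi(\C)$. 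Your argument, however, needs only Lemma~\ref{lem:intersection_MRD} and Theorem~\ref{thm:singES_bound}, with no normal form and no irreducibility. In particular it handles $d=2$ uniformly. There $(\mE_{n-1,2,1},2)$ is not in standard form (indeed not irreducible), so Lemma~\ref{lem:matrices1-0} does not formally apply, although its conclusion still holds because $\nu_{\min}=\nu_{d-1}$ trivially in that case.

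What the paper's route buys in exchange is structural information. Every MFD code on $\mE_{n-1,d,1}$ has exactly the shape produced in the first direction, so the two constructions are visibly mutually inverse.
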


\begin{proof}
  Let us fix $d$ and $n$ with $1<d<n$.    We show the two implications separately. 

  Assume that Conjecture \ref{conj:punct_ext_MRD} holds true, and hence, there exists an $[n\times (n-1),n(n-d),d]_{\F}$ MRD code $\C$ such that its puncturing on the first row $\pi(\C)$ is contained in a $[(n-1)\times (n-1),(n-1)(n-d+1),d-1]_\F$ MRD code $\tilde{\C}$. Let us write $\tilde{\C}=\pi(\C)\oplus \C'$, with $\dim(\C')=d-1$. We choose a basis $A^{(1)},\ldots, A^{(d-1)}$ for $\C'$, and, for each $\ell \in [d-1]$, we define $B^{(\ell)}\in \F^{\mE_{n-1,d,1}}$ as the matrix $B^{(\ell)}=(B^{(\ell)}_{i,j})$ given by
  $$B^{(\ell)}_{i,j}=\begin{cases}
   1 & \mbox{ if } (i,j)=(1,n-1+\ell) \\
      A^{(\ell)}_{i-1,j} & \mbox{ if } 2\le i \le n-1 \\
      0 & \mbox{ if } i=1, j \in [n-1]
  \end{cases}$$
  We claim that the code $\C\oplus \langle B^{(1)},\ldots,B^{(d-1)}\rangle_{\F}$ is a $[\mE_{n-1,d,1},\nu_{\min}(\mE_{n-1,d,1},d),d]_{\F}$ MFD code. First, it is clear that $\dim_\F (\C\oplus \langle B^{(1)},\ldots,B^{(d-1)}\rangle_{\F})=\nu_{\min}(\mE_{n-1,d,1},d)=(n-1)(n-d+1)+(d-1)$, since the $B^{(i)}$'s are linearly independent from $\C$, by how they are defined on the first row. We only need to show that every nonzero element in $\C\oplus \langle B^{(1)},\ldots,B^{(d-1)}\rangle_{\F}$ has rank at least $d$. Clearly, if $B\in \C$ is nonzero, then $\rk(B)\ge d$, since $\C$ is a  $[n\times (n-1),n(n-d),d]_{\F}$ MRD code. Thus, assume that $B\in \C\oplus \langle B^{(1)},\ldots,B^{(d-1)}\rangle_{\F}\setminus \C$. The matrix $B$ can be written as 
  $B=B_0+\sum_{i=1}^{d-1}\lambda_iB^{(i)}$, for some $\lambda_1,\ldots,\lambda_{d-1}\in \F$ not all zero. If we write
  $$B_0=\begin{pmatrix} \beta_1 & \cdots & \beta_{n-1} \\ \cline{1-3}
  \\
  & A_0 & \\
  \\
  \end{pmatrix}.$$
  Therefore, we have
    $$B=B_0+\sum_{i=1}^{d-1}\lambda_iB^{(i)}= \left(
    \begin{array}{cccccc}   \beta_1 \cellcolor{blue!10}& \ldots \cellcolor{blue!10} & \beta_{n-1} \cellcolor{blue!10}& \lambda_1\cellcolor{blue!10} &\cellcolor{blue!10} \cdots & \lambda_{d-1}\cellcolor{blue!10} \\
    \cline{1-6}
      \multicolumn{1}{|c}{\phantom{0}\cellcolor{blue!10}} & \multicolumn{1}{c}{\phantom{0} \cellcolor{blue!10}} & \multicolumn{1}{c|}{\phantom{0} \cellcolor{blue!10}} & \\
       \multicolumn{1}{|c}{\phantom{0}\cellcolor{blue!10}} & \multicolumn{1}{c}{\phantom{0} \cellcolor{blue!10}} & \multicolumn{1}{c|}{\phantom{0} \cellcolor{blue!10}} &  \\
      \multicolumn{1}{|c}{\phantom{0}\cellcolor{blue!10}} & \multicolumn{1}{c}{A_0+\sum\limits_{i=1}^{d-1}\lambda_iA^{(i)}\cellcolor{blue!10}} & \multicolumn{1}{c|}{\phantom{0}\cellcolor{blue!10}} & & \mathbf{0}  \\ 
        \multicolumn{1}{|c}{\phantom{0}\cellcolor{blue!10}} & \multicolumn{1}{c}{\phantom{0}\cellcolor{blue!10}}& \multicolumn{1}{c|}{\phantom{0}\cellcolor{blue!10}} &  \\ 
         \multicolumn{1}{c}{\phantom{0}\cellcolor{blue!10}} & \multicolumn{1}{c}{\phantom{0} \cellcolor{blue!10}} & \multicolumn{1}{c|}{\phantom{0} \cellcolor{blue!10}} & \\\cline{1-3}   
    \end{array}
    \right).$$
  Note that, $A_0\in \pi(\C)$ and $A^{(i)}\in \C'$, hence the matrix 
  $$A_0+\sum\limits_{i=1}^{d-1}\lambda_iA^{(i)}$$ is a noznero element in the $[(n-1)\times (n-1),(n-1)(n-d+1),d-1]_\F$ MRD code $\tilde{\C}$,  thus its rank is at least $d-1$.
    By the block structure of $B$ and the fact that not all the $\lambda_i$'s are zero, we have
$$\rk(B)=1+\rk\left(A_0+\sum\limits_{i=1}^{d-1}\lambda_iA^{(i)}\right)\geq 1+d-1,$$
    which concludes the first implication.

    Suppose now that there exists an $[\mE_{n-1,d,1},\nu_{\min}(\mE_{n-1,d,1},d),d]_{\F}$ MFD code $\C$. By Corollary \ref{cor:E_MFD_MRD}, there exists an $[n\times (n-1),n(n-d),d]_{\F}$ MRD code $\C_0$ and matrices $M^{(1)},\ldots,M^{(d-1)}$ of the form
    $$(M^{(i)})_{a,b}=\begin{cases}
        1 & \mbox{ if } (a,b)=(1,i+n-1) \\
        0 & \mbox{ if } a=1, b\in \{n,\ldots,n+d-1\}\setminus\{j\} \\ A^{(i)}_{a-1,b}
         & \mbox{ 
 otherwise,}
    \end{cases}$$
    for some matrices $A^{(i)}\in \F^{\F^{(n-1)\times (n-1)}}$,
    such that $\C=\C_0\oplus \langle M^{(1)},\ldots, M^{(d-1)}\rangle_{\F}$.  The proof goes similarly. We have that for every $B_0\in\C_0$ and every $\lambda_1,\ldots,\lambda_{d-1}$, the matrix $$B=B_0+\sum_{i=1}^{d-1}\lambda_i M_i$$ belongs to $\C$, and hence its rank is at least $d$, if it is a nonzero matrix.
    If we write   $$B_0=\begin{pmatrix} \beta_1 & \cdots & \beta_{n-1} \\ \cline{1-3}
  \\
  & A_0 & \\
  \\
  \end{pmatrix}.$$
  Therefore, we have
    $$B=B_0+\sum_{i=1}^{d-1}\lambda_iM^{(i)}= \left(
    \begin{array}{cccccc}   \beta_1 \cellcolor{blue!10}& \ldots \cellcolor{blue!10} & \beta_{n-1} \cellcolor{blue!10}& \lambda_1\cellcolor{blue!10} &\cellcolor{blue!10} \cdots & \lambda_{d-1}\cellcolor{blue!10} \\
    \cline{1-6}
      \multicolumn{1}{|c}{\phantom{0}\cellcolor{blue!10}} & \multicolumn{1}{c}{\phantom{0} \cellcolor{blue!10}} & \multicolumn{1}{c|}{\phantom{0} \cellcolor{blue!10}} & \\
       \multicolumn{1}{|c}{\phantom{0}\cellcolor{blue!10}} & \multicolumn{1}{c}{\phantom{0} \cellcolor{blue!10}} & \multicolumn{1}{c|}{\phantom{0} \cellcolor{blue!10}} & \\
      \multicolumn{1}{|c}{\phantom{0}\cellcolor{blue!10}} & \multicolumn{1}{c}{A_0+\sum\limits_{i=1}^{d-1}\lambda_iA^{(i)}\cellcolor{blue!10}} & \multicolumn{1}{c|}{\phantom{0}\cellcolor{blue!10}} &  & \mathbf{0} \\ 
        \multicolumn{1}{|c}{\phantom{0}\cellcolor{blue!10}} & \multicolumn{1}{c}{\phantom{0}\cellcolor{blue!10}}& \multicolumn{1}{c|}{\phantom{0}\cellcolor{blue!10}} &  \\ 
         \multicolumn{1}{c}{\phantom{0}\cellcolor{blue!10}} & \multicolumn{1}{c}{\phantom{0} \cellcolor{blue!10}} & \multicolumn{1}{c|}{\phantom{0} \cellcolor{blue!10}} & \\\cline{1-3}   
    \end{array}
    \right).$$
    Note that, by the block matrix structure we have, when $B$ is nonzero, that
    $$d\le \rk(B)=\begin{cases}1+\rk\left(A_0+\sum\limits_{i=1}^{d-1}\lambda_iA^{(i)}\right) & \mbox{ if } (\lambda_1,\ldots,\lambda_{d-1})\neq(0,\ldots,0) \\
    \rk(B_0) &  \mbox{ if } \lambda_1=\ldots=\lambda_{d-1}=0\end{cases}$$
implying that, for every 
$B_0 \in \C_0$ and every $\lambda_1,\ldots,\lambda_{d-1}$, we have
$$\rk\left(A_0+\sum\limits_{i=1}^{d-1}\lambda_iA^{(i)}\right)\ge d-1$$
whenever the matrix $B$ is nonzero. In particular, the spaces 
$$\pi(\C_0):=\langle A_0 \,:\, B_0 \in \C_0\rangle_{|F}\quad \mbox{and} \quad  \langle A^{(1)},\ldots,A^{(d-1)}\rangle_{\F}$$
must intersect trivially, otherwise there would be a matrix of rank $1<d$ in $\C$. 
Thus, their span is a $[(n-1)\times(n-1),(n-1)(n-d),d-1]_{\F}$ MRD code, which contains $\pi(\C_0)$.

\end{proof}

\begin{remark}
    Theorem \ref{thm:equiv_conj_Ekdr} gives an idea on how difficult is proving the whole Etzion-Silberstein conjecture. Indeed, proving the puncturing-inclusion MRD conjecture -- that is, Conjecture \ref{conj:punct_ext_MRD} -- is equivalent to  show the Etzion-Silberstein conjecture only for one family of irreducible instances of the Etzion--Silberstein conjecture, the one given by $\{(\mE_{n-1,d,1},d)\,:\, {d,n \in \mathbb N}\}$. On the contrary, we have shown that there are many more irreducible Ferrers diagrams families for which the Etzion-Silberstein conjecture is still open.
\end{remark}

\bigskip 

\bibliographystyle{abbrv}
\bibliography{biblio}

\end{document}

%% file: TikZ_pictures/BulletsDigraph_n=3d=3.tex
\begin{tikzcd}[every arrow/.append style={dash},column sep={3.5cm,between origins},row sep={3cm,between origins}] 
 &  & \ydiagram[*(red!50) \bullet]{3,3,3}\ar[->>,thick]{d} & &\\
 &  & {\ydiagram[\bullet]{3,3,2}}\ar[->>,thick]{ld}\ar[->>,thick]{rd} & & \\
 &\ydiagram[\bullet]{3,3,1}\ar[->>,thick]{ld}\ar[<<-,thick]{rd} & & \ydiagram[\bullet]{3,2,2}\ar[<<-,thick]{ld}\ar[->>,thick]{rd}&  \\
 \ydiagram[\bullet]{3,3} & & \ydiagram[*(red!50) \bullet]{3,2,1}\ar[->>,thick]{ld}\ar[->>,thick]{d}\ar[->>,thick]{rd} & & \ydiagram[\bullet]{2,2,2}  \\
  &\ydiagram[\bullet]{3,2}\ar[->>,thick]{lu} & \ydiagram[\bullet]{3,1,1} & \ydiagram[\bullet]{2,2,1}\ar[->>,thick]{ru}&  \\
   & \ydiagram[\bullet]{3,1}\ar[->>,thick]{u}\ar[->>,thick]{ru} & \ydiagram[\bullet]{2, 2}\ar[->>,thick]{lu}\ar[->>,thick]{ru} & \ydiagram[\bullet]{2, 1, 1}\ar[->>,thick]{lu}\ar[->>,thick]{u} & & \\
                \ydiagram[\bullet]{3}\ar[->>,thick]{ru} & & \ydiagram[\bullet]{2, 1}\ar[->>,thick]{lu}\ar[->>,thick]{ru}\ar[->>,thick]{u} & & \ydiagram[\bullet]{1, 1, 1}\ar[->>,thick]{lu}\\
                &\ydiagram[\bullet]{2}\ar[->>,thick]{ru}\ar[->>,thick]{lu}& &\ydiagram[\bullet]{1, 1} \ar[->>,thick]{lu}\ar[->>,thick]{ru}\\
                & &\ydiagram[\bullet]{1}  \ar[->>,thick]{ru}\ar[->>,thick]{lu}  & &\\   
                & & \mathlarger{\mathlarger{\mathlarger{\mathlarger{\mathlarger{\colorbox{red!50}{$\emptyset$}}}}}}\ar[->>,thick]{u}  
\end{tikzcd}

%% file: TikZ_pictures/BulletsDigraph_n=4_d=3.tex
\adjustbox{scale=0.3,center}{
\begin{tikzcd}[every arrow/.append style={dash},column sep={2cm,between origins},row sep={4.2cm,between origins}] 
& & &&&& & \ydiagram[*(red!50) \bullet]{4,4,4,4}\ar[->>,thick]{d} & & & && &&\\
& & &&&& & {\ydiagram[\bullet]{4,4,4,3}}\ar[->>,thick]{lld}\ar[->>,thick]{rrd} & & \\
&& & & &\ydiagram[\bullet]{4,4,4,2}\ar[->>,thick]{lld}\ar[->>,thick]{rrd} & &&& \ydiagram[\bullet]{4,4,3,3}\ar[->>,thick]{lld}\ar[->>,thick]{rrd}&  \\&&
 &\ydiagram[\bullet]{4,4,4,1}\ar[->>,thick]{d}\ar[->>,thick]{rrd} & & & &\ydiagram[\bullet]{4,4,3,2}\ar[->>,thick]{d}\ar[->>,thick]{lld}\ar[->>,thick]{rrd} & & &&\ydiagram[\bullet]{4,3,3,3}\ar[->>,thick]{lld}\ar[->>,thick]{d}  \\ &&
& \ydiagram[\bullet]{4,4,4}\ar[->>,thick]{d}&&\ydiagram[\bullet]{4,4,3,1}\ar[->>,thick]{lld}\ar[->>,thick]{d}\ar[<<-,thick]{rrd} && \ydiagram[\bullet]{4,4,2,2}\ar[->>,thick]{lld}\ar[->>,thick]{rrd}& &\ydiagram[\bullet]{4,3,3,2}\ar[->>,thick]{rrd}\ar[->>,thick]{d}\ar[<<-,thick]{lld}& &\ydiagram[\bullet]{3,3,3,3}\ar[->>,thick]{d}\\ & &
& \ydiagram[\bullet]{4,4,3}\ar[->>,thick]{lld}\ar[<<-,thick]{rrd}   & &\ydiagram[\bullet]{4,4,2,1}\ar[->>,thick]{lllld}\ar[->>,thick]{lld}\ar[<<-,thick]{rrd} & & \ydiagram[*(red!50) \bullet]{4,3,3,1}\ar[->>,thick]{lld}\ar[->>,thick]{d}\ar[->>,thick]{rrd} && \ydiagram[\bullet]{4,3,2,2}\ar[->>,thick]{rrrrd}\ar[->>,thick]{rrd}\ar[<<-,thick]{lld} & &\ydiagram[\bullet]{3,3,3,2}\ar[->>,thick]{rrd}\ar[<<-,thick]{lld}  & \\ &
 \ydiagram[\bullet]{4,4,2}\ar[->>,thick]{d}\ar[<<-,thick]{rrd}   &  &   \ydiagram[\bullet]{4,4,1,1} \ar[->>,thick]{lld}\ar[<<-,thick]{rrd}   & & \ydiagram[\bullet]{4,3,3} \ar[->>,thick]{lld}\ar[<<-,thick]{rrd} & &  \ydiagram[\bullet]{4,3,2,1}\ar[->>,thick]{lllld}\ar[->>,thick]{lld}\ar[->>,thick]{rrd}\ar[->>,thick]{rrrrd}& & \ydiagram[\bullet]{3,3,3,1}\ar[->>,thick]{rrd}\ar[<<-,thick]{lld} &  & \ydiagram[\bullet]{4,2,2,2}\ar[->>,thick]{rrd}\ar[<<-,thick]{lld} & &  \ydiagram[\bullet]{3,3,2,2}\ar[->>,thick]{d}\ar[<<-,thick]{lld} \\ &
 \ydiagram[\bullet]{4,4,1} \ar[->>,thick]{ld}\ar[<<-,thick]{rd} & &\ydiagram[\bullet]{4,3,2}\ar[->>,thick]{ld}\ar[<<-,thick]{rd}\ar[<<-,thick]{rrrd} & & \ydiagram[\bullet]{4,3,1,1}\ar[->>,thick]{llld}\ar[->>,thick]{rrrd}\ar[<<-,thick]{rrrrrd}   & &\ydiagram[*(red!50) \bullet]{3,3,3}\ar[->>,thick]{ld}& &\ydiagram[\bullet]{4, 2,2,1}\ar[<<-,thick]{llllld}\ar[->>,thick]{ld}\ar[->>,thick]{rrrd} & &\ydiagram[\bullet]{3,3,2,1}\ar[<<-,thick]{llllld}\ar[<<-,thick]{ld}\ar[->>,thick]{rd} & & \ydiagram[\bullet]{3,2,2,2} \ar[->>,thick]{rd}\ar[<<-,thick]{ld}   \\
\ydiagram[\bullet]{4,4}\ar[<<-,thick]{rd} & & \ydiagram[\bullet]{4,3,1}\ar[->>,thick]{ld}\ar[<<-,thick]{rd}\ar[<<-,thick]{rrrd} & & \ydiagram[*(red!50) \bullet]{4,2,2}\ar[->>,thick]{ld}\ar[->>,thick]{rrrrrd}  & & \ydiagram[\bullet]{3,3,2}\ar[->>,thick]{ld}\ar[->>,thick]{rrrd} & &\ydiagram[\bullet]{4,2,1,1}\ar[<<-,thick]{llllld}\ar[->>,thick]{ld}\ar[<<-,thick]{rrrd} & & \ydiagram[*(red!50) \bullet]{3,3,1,1}\ar[->>,thick]{llllld}\ar[->>,thick]{rd} & & \ydiagram[\bullet]{3,2,2,1}\ar[<<-,thick]{llld}\ar[<<-,thick]{ld}\ar[->>,thick]{rd} & &\ydiagram[\bullet]{2,2,2,2}\ar[<<-,thick]{ld}
 \\&
 \ydiagram[\bullet]{4,3}\ar[<<-,thick]{d}\ar[<<-,thick]{rrd} & & 
\ydiagram[\bullet]{4,2,1}\ar[->>,thick]{d}\ar[->>,thick]{rrd}\ar[<<-,thick]{rrrrd} & & \ydiagram[\bullet]{3,3,1}\ar[->>,thick]{lllld}\ar[<<-,thick]{rrd} & & \ydiagram[\bullet]{4,1,1,1}\ar[<<-,thick]{lld}\ar[<<-,thick]{rrd} & & \ydiagram[\bullet]{3,2,2}\ar[->>,thick]{rrrrd}\ar[<<-,thick]{lld} & & \ydiagram[\bullet]{3,2,1,1}\ar[->>,thick]{d}\ar[->>,thick]{lld}\ar[<<-,thick]{lllld} & & \ydiagram[\bullet]{2,2,2,1}\ar[<<-,thick]{d}\ar[<<-,thick]{lld} \\
& \ydiagram[\bullet]{3,3}\ar[<<-,thick]{rrd} & &\ydiagram[\bullet]{4,2}\ar[<<-,thick]{d}\ar[<<-,thick]{rrd} &&  \ydiagram[\bullet]{4,1,1}\ar[<<-,thick]{d}\ar[<<-,thick]{rrd} & &\ydiagram[*(red!50) \bullet]{3,2,1}\ar[->>,thick]{lllld}\ar[->>,thick]{d}\ar[->>,thick]{rrrrd} & &\ydiagram[\bullet]{3,1,1,1}\ar[<<-,thick]{d}\ar[<<-,thick]{lld}  & &\ydiagram[\bullet]{2,2,1,1}\ar[<<-,thick]{d}\ar[<<-,thick]{lld}& &\ydiagram[\bullet]{2,2,2}\ar[<<-,thick]{lld} \\& &&
 \ydiagram[\bullet]{3,2}\ar[<<-,thick]{rrd}\ar[<<-,thick]{rrrrd} & &\ydiagram[\bullet]{4,1}\ar[<<-,thick]{lld}\ar[<<-,thick]{d}  && \ydiagram[\bullet]{3,1,1}\ar[<<-,thick]{lld}\ar[<<-,thick]{rrd} & &\ydiagram[\bullet]{2,1,1,1}\ar[<<-,thick]{d}\ar[<<-,thick]{rrd}&&\ydiagram[\bullet]{2,2,1}\ar[<<-,thick]{lllld}\ar[<<-,thick]{lld} &\\ &&
& \ydiagram[\bullet]{4} &&\ydiagram[\bullet]{3,1}  && \ydiagram[\bullet]{2,2}  &&\ydiagram[\bullet]{2,1,1} && \ydiagram[\bullet]{1,1,1,1}  \\
&& &  \ydiagram[\bullet]{3}\ar[->>,thick]{rru}\ar[->>,thick]{u} &&& & \ydiagram[\bullet]{2, 1}\ar[->>,thick]{llu}\ar[->>,thick]{rru}\ar[->>,thick]{u} &&& & \ydiagram[\bullet]{1, 1, 1}\ar[->>,thick]{llu}\ar[->>,thick]{u}\\
     & &&  &        &\ydiagram[\bullet]{2}\ar[->>,thick]{rru}\ar[->>,thick]{llu}& && &\ydiagram[\bullet]{1, 1} \ar[->>,thick]{llu}\ar[->>,thick]{rru}\\
&& &&         &       & &\ydiagram[\bullet]{1}  \ar[->>,thick]{rru}\ar[->>,thick]{llu}\\
&& &&         &       & & \mathlarger{\mathlarger{\mathlarger{\mathlarger{\mathlarger{\colorbox{red!50}{$\emptyset$}}}}}}\ar[->>,thick]{u}  
\end{tikzcd}
}\hspace*{-2cm}

%% file: TikZ_pictures/BulletsDigraph_n=4_d=4.tex
\adjustbox{scale=0.3,center}{
\begin{tikzcd}[every arrow/.append style={dash},column sep={2cm,between origins},row sep={4.2cm,between origins}] 
& & &&&& & \ydiagram[*(red!50) \bullet]{4,4,4,4}\ar[->>,thick]{d} & & & && &&\\
& & &&&& & {\ydiagram[\bullet]{4,4,4,3}}\ar[->>,thick]{lld}\ar[->>,thick]{rrd} & & \\
&& & & &\ydiagram[\bullet]{4,4,4,2}\ar[->>,thick]{lld}\ar[<<-,thick]{rrd} & &&& \ydiagram[\bullet]{4,4,3,3}\ar[<<-,thick]{lld}\ar[->>,thick]{rrd}&  \\&&
 &\ydiagram[\bullet]{4,4,4,1}\ar[->>,thick]{d}\ar[<<-,thick]{rrd} & & & &\ydiagram[\bullet]{4,4,3,2}\ar[<<-,thick]{d}\ar[->>,thick]{lld}\ar[->>,thick]{rrd} & & &&\ydiagram[\bullet]{4,3,3,3}\ar[<<-,thick]{lld}\ar[->>,thick]{d}  \\ &&
& \ydiagram[\bullet]{4,4,4}\ar[<<-,thick]{d}&&\ydiagram[\bullet]{4,4,3,1}\ar[->>,thick]{lld}\ar[<<-,thick]{d}\ar[<<-,thick]{rrd} && \ydiagram[*(red!50) \bullet]{4,4,2,2}\ar[->>,thick]{lld}\ar[->>,thick]{rrd}& &\ydiagram[\bullet]{4,3,3,2}\ar[->>,thick]{rrd}\ar[<<-,thick]{d}\ar[<<-,thick]{lld}& &\ydiagram[\bullet]{3,3,3,3}\ar[<<-,thick]{d}\\ & &
& \ydiagram[\bullet]{4,4,3}\ar[<<-,thick]{lld}\ar[<<-,thick]{rrd}   & &\ydiagram[\bullet]{4,4,2,1}\ar[->>,thick]{lllld}\ar[->>,thick]{lld}\ar[<<-,thick]{rrd} & & \ydiagram[\bullet]{4,3,3,1}\ar[->>,thick]{lld}\ar[<<-,thick]{d}\ar[->>,thick]{rrd} && \ydiagram[\bullet]{4,3,2,2}\ar[->>,thick]{rrrrd}\ar[->>,thick]{rrd}\ar[<<-,thick]{lld} & &\ydiagram[\bullet]{3,3,3,2}\ar[<<-,thick]{rrd}\ar[<<-,thick]{lld}  & \\ &
 \ydiagram[\bullet]{4,4,2}\ar[<<-,thick]{d}\ar[<<-,thick]{rrd}   &  &   \ydiagram[\bullet]{4,4,1,1} \ar[<<-,thick]{lld}\ar[<<-,thick]{rrd}   & & \ydiagram[\bullet]{4,3,3} \ar[<<-,thick]{lld}\ar[<<-,thick]{rrd} & &  \ydiagram[*(red!50) \bullet]{4,3,2,1}\ar[->>,thick]{lllld}\ar[->>,thick]{lld}\ar[->>,thick]{rrd}\ar[->>,thick]{rrrrd}& & \ydiagram[\bullet]{3,3,3,1}\ar[<<-,thick]{rrd}\ar[<<-,thick]{lld} &  & \ydiagram[\bullet]{4,2,2,2}\ar[<<-,thick]{rrd}\ar[<<-,thick]{lld} & &  \ydiagram[\bullet]{3,3,2,2}\ar[<<-,thick]{d}\ar[<<-,thick]{lld} \\ &
 \ydiagram[\bullet]{4,4,1} \ar[<<-,thick]{ld}\ar[<<-,thick]{rd} & &\ydiagram[\bullet]{4,3,2}\ar[<<-,thick]{ld}\ar[<<-,thick]{rd}\ar[<<-,thick]{rrrd} & & \ydiagram[\bullet]{4,3,1,1}\ar[<<-,thick]{llld}\ar[<<-,thick]{rrrd}\ar[<<-,thick]{rrrrrd}   & &\ydiagram[\bullet]{3,3,3}\ar[<<-,thick]{ld}& &\ydiagram[\bullet]{4, 2,2,1}\ar[<<-,thick]{llllld}\ar[<<-,thick]{ld}\ar[<<-,thick]{rrrd} & &\ydiagram[\bullet]{3,3,2,1}\ar[<<-,thick]{llllld}\ar[<<-,thick]{ld}\ar[<<-,thick]{rd} & & \ydiagram[\bullet]{3,2,2,2} \ar[<<-,thick]{rd}\ar[<<-,thick]{ld}   \\
\ydiagram[\bullet]{4,4}\ar[<<-,thick]{rd} & & \ydiagram[\bullet]{4,3,1}\ar[<<-,thick]{ld}\ar[<<-,thick]{rd}\ar[<<-,thick]{rrrd} & & \ydiagram[\bullet]{4,2,2}\ar[<<-,thick]{ld}\ar[<<-,thick]{rrrrrd}  & & \ydiagram[\bullet]{3,3,2}\ar[<<-,thick]{ld}\ar[<<-,thick]{rrrd} & &\ydiagram[\bullet]{4,2,1,1}\ar[<<-,thick]{llllld}\ar[<<-,thick]{ld}\ar[<<-,thick]{rrrd} & & \ydiagram[\bullet]{3,3,1,1}\ar[<<-,thick]{llllld}\ar[<<-,thick]{rd} & & \ydiagram[\bullet]{3,2,2,1}\ar[<<-,thick]{llld}\ar[<<-,thick]{ld}\ar[<<-,thick]{rd} & &\ydiagram[\bullet]{2,2,2,2}\ar[<<-,thick]{ld}
 \\&
 \ydiagram[\bullet]{4,3}\ar[<<-,thick]{d}\ar[<<-,thick]{rrd} & & 
\ydiagram[\bullet]{4,2,1}\ar[<<-,thick]{d}\ar[<<-,thick]{rrd}\ar[<<-,thick]{rrrrd} & & \ydiagram[\bullet]{3,3,1}\ar[<<-,thick]{lllld}\ar[<<-,thick]{rrd} & & \ydiagram[\bullet]{4,1,1,1}\ar[<<-,thick]{lld}\ar[<<-,thick]{rrd} & & \ydiagram[\bullet]{3,2,2}\ar[<<-,thick]{rrrrd}\ar[<<-,thick]{lld} & & \ydiagram[\bullet]{3,2,1,1}\ar[<<-,thick]{d}\ar[<<-,thick]{lld}\ar[<<-,thick]{lllld} & & \ydiagram[\bullet]{2,2,2,1}\ar[<<-,thick]{d}\ar[<<-,thick]{lld} \\
& \ydiagram[\bullet]{3,3}\ar[<<-,thick]{rrd} & &\ydiagram[\bullet]{4,2}\ar[<<-,thick]{d}\ar[<<-,thick]{rrd} &&  \ydiagram[\bullet]{4,1,1}\ar[<<-,thick]{d}\ar[<<-,thick]{rrd} & &\ydiagram[\bullet]{3,2,1}\ar[<<-,thick]{lllld}\ar[<<-,thick]{d}\ar[<<-,thick]{rrrrd} & &\ydiagram[\bullet]{3,1,1,1}\ar[<<-,thick]{d}\ar[<<-,thick]{lld}  & &\ydiagram[\bullet]{2,2,1,1}\ar[<<-,thick]{d}\ar[<<-,thick]{lld}& &\ydiagram[\bullet]{2,2,2}\ar[<<-,thick]{lld} \\& &&
 \ydiagram[\bullet]{3,2}\ar[<<-,thick]{rrd}\ar[<<-,thick]{rrrrd} & &\ydiagram[\bullet]{4,1}\ar[<<-,thick]{lld}\ar[<<-,thick]{d}  && \ydiagram[\bullet]{3,1,1}\ar[<<-,thick]{lld}\ar[<<-,thick]{rrd} & &\ydiagram[\bullet]{2,1,1,1}\ar[<<-,thick]{d}\ar[<<-,thick]{rrd}&&\ydiagram[\bullet]{2,2,1}\ar[<<-,thick]{lllld}\ar[<<-,thick]{lld} &\\ &&
& \ydiagram[\bullet]{4} &&\ydiagram[\bullet]{3,1}  && \ydiagram[\bullet]{2,2}  &&\ydiagram[\bullet]{2,1,1} && \ydiagram[\bullet]{1,1,1,1}  \\
&& &  \ydiagram[\bullet]{3}\ar[->>,thick]{rru}\ar[->>,thick]{u} &&& & \ydiagram[\bullet]{2, 1}\ar[->>,thick]{llu}\ar[->>,thick]{rru}\ar[->>,thick]{u} &&& & \ydiagram[\bullet]{1, 1, 1}\ar[->>,thick]{llu}\ar[->>,thick]{u}\\
     & &&  &        &\ydiagram[\bullet]{2}\ar[->>,thick]{rru}\ar[->>,thick]{llu}& && &\ydiagram[\bullet]{1, 1} \ar[->>,thick]{llu}\ar[->>,thick]{rru}\\
&& &&         &       & &\ydiagram[\bullet]{1}  \ar[->>,thick]{rru}\ar[->>,thick]{llu}\\
&& &&         &       & & \mathlarger{\mathlarger{\mathlarger{\mathlarger{\mathlarger{\colorbox{red!50}{$\emptyset$}}}}}}\ar[->>,thick]{u}  
\end{tikzcd}
}

%% file: TikZ_pictures/BulletsSubDigraph_n=3_d=3.tex
\begin{tikzcd}[every arrow/.append style={dash},column sep={3.5cm,between origins},row sep={3cm,between origins}] 
 &  & \ydiagram[*(red!50) \bullet]{3,3,3}\ar[->>,thick]{d} & &\\
 &  & {\ydiagram[\bullet]{3,3,2}}\ar[->>,thick]{ld}\ar[->>,thick]{rd} & & \\
 &\ydiagram[\bullet]{3,3,1}\ar[<<-,thick]{rd} & & \ydiagram[\bullet]{3,2,2}\ar[<<-,thick]{ld}&  \\
  & & \ydiagram[*(red!50) \bullet]{3,2,1} & &   
\end{tikzcd}

%% file: TikZ_pictures/BulletsSubDigraph_n=4_d=3.tex
\adjustbox{scale=0.3,center}{
\begin{tikzcd}[every arrow/.append style={dash},column sep={2cm,between origins},row sep={4.2cm,between origins}] 
& & &&&& & \ydiagram[*(red!50) \bullet]{4,4,4,4}\ar[->>,thick]{d} & & & && &&\\
& & &&&& & {\ydiagram[\bullet]{4,4,4,3}}\ar[->>,thick]{lld}\ar[->>,thick]{rrd} & & \\
&& & & &\ydiagram[\bullet]{4,4,4,2}\ar[->>,thick]{lld}\ar[->>,thick]{rrd} & &&& \ydiagram[\bullet]{4,4,3,3}\ar[->>,thick]{lld}\ar[->>,thick]{rrd}&  \\&&
 &\ydiagram[\bullet]{4,4,4,1}\ar[->>,thick]{d}\ar[->>,thick]{rrd} & & & &\ydiagram[\bullet]{4,4,3,2}\ar[->>,thick]{d}\ar[->>,thick]{lld}\ar[->>,thick]{rrd} & & &&\ydiagram[\bullet]{4,3,3,3}\ar[->>,thick]{lld}\ar[->>,thick]{d}  \\ &&
& \ydiagram[\bullet]{4,4,4}\ar[->>,thick]{d}&&\ydiagram[\bullet]{4,4,3,1}\ar[->>,thick]{lld}\ar[->>,thick]{d}\ar[<<-,thick]{rrd} && \ydiagram[\bullet]{4,4,2,2}\ar[->>,thick]{lld}\ar[->>,thick]{rrd}& &\ydiagram[\bullet]{4,3,3,2}\ar[->>,thick]{rrd}\ar[->>,thick]{d}\ar[<<-,thick]{lld}& &\ydiagram[\bullet]{3,3,3,3}\ar[->>,thick]{d}\\ & &
& \ydiagram[\bullet]{4,4,3}\ar[->>,thick]{lld}\ar[<<-,thick]{rrd}   & &\ydiagram[\bullet]{4,4,2,1}\ar[->>,thick]{lllld}\ar[->>,thick]{lld}\ar[<<-,thick]{rrd} & & \ydiagram[*(red!50) \bullet]{4,3,3,1}\ar[->>,thick]{lld}\ar[->>,thick]{d}\ar[->>,thick]{rrd} && \ydiagram[\bullet]{4,3,2,2}\ar[->>,thick]{rrrrd}\ar[->>,thick]{rrd}\ar[<<-,thick]{lld} & &\ydiagram[\bullet]{3,3,3,2}\ar[->>,thick]{rrd}\ar[<<-,thick]{lld}  & \\ &
 \ydiagram[\bullet]{4,4,2}\ar[->>,thick]{d}\ar[<<-,thick]{rrd}   &  &   \ydiagram[\bullet]{4,4,1,1} \ar[->>,thick]{lld}\ar[<<-,thick]{rrd}   & & \ydiagram[\bullet]{4,3,3} \ar[->>,thick]{lld}\ar[<<-,thick]{rrd} & &  \ydiagram[\bullet]{4,3,2,1}\ar[->>,thick]{lllld}\ar[->>,thick]{lld}\ar[->>,thick]{rrd}\ar[->>,thick]{rrrrd}& & \ydiagram[\bullet]{3,3,3,1}\ar[->>,thick]{rrd}\ar[<<-,thick]{lld} &  & \ydiagram[\bullet]{4,2,2,2}\ar[->>,thick]{rrd}\ar[<<-,thick]{lld} & &  \ydiagram[\bullet]{3,3,2,2}\ar[->>,thick]{d}\ar[<<-,thick]{lld} \\ &
 \ydiagram[\bullet]{4,4,1}\ar[<<-,thick]{rd} & &\ydiagram[\bullet]{4,3,2}\ar[->>,thick]{ld}\ar[<<-,thick]{rd}\ar[<<-,thick]{rrrd} & & \ydiagram[\bullet]{4,3,1,1}\ar[->>,thick]{llld}\ar[->>,thick]{rrrd}\ar[<<-,thick]{rrrrrd}   & &\ydiagram[*(red!50) \bullet]{3,3,3}\ar[->>,thick]{ld}& &\ydiagram[\bullet]{4, 2,2,1}\ar[<<-,thick]{llllld}\ar[->>,thick]{ld}\ar[->>,thick]{rrrd} & &\ydiagram[\bullet]{3,3,2,1}\ar[<<-,thick]{llllld}\ar[<<-,thick]{ld}\ar[->>,thick]{rd} & & \ydiagram[\bullet]{3,2,2,2}\ar[<<-,thick]{ld}   \\
  & & \ydiagram[\bullet]{4,3,1}\ar[<<-,thick]{rd}\ar[<<-,thick]{rrrd} & & \ydiagram[*(red!50) \bullet]{4,2,2}\ar[->>,thick]{ld}\ar[->>,thick]{rrrrrd}  & & \ydiagram[\bullet]{3,3,2}\ar[->>,thick]{ld}\ar[->>,thick]{rrrd} & &\ydiagram[\bullet]{4,2,1,1}\ar[<<-,thick]{llllld}\ar[<<-,thick]{rrrd} & & \ydiagram[*(red!50) \bullet]{3,3,1,1}\ar[->>,thick]{llllld}\ar[->>,thick]{rd} & & \ydiagram[\bullet]{3,2,2,1}\ar[<<-,thick]{llld}\ar[<<-,thick]{ld} & & 
 \\&
 & & 
\ydiagram[\bullet]{4,2,1}\ar[<<-,thick]{rrrrd} & & \ydiagram[\bullet]{3,3,1}\ar[<<-,thick]{rrd} & &  & & \ydiagram[\bullet]{3,2,2}\ar[<<-,thick]{lld} & & \ydiagram[\bullet]{3,2,1,1}\ar[<<-,thick]{lllld} & &   \\
&   & &  &&    & &\ydiagram[*(red!50) \bullet]{3,2,1}  & &  & & & & 
\end{tikzcd}
}\hspace*{-2cm}

%% file: TikZ_pictures/BulletsSubDigraph_n=4_d=4.tex
\adjustbox{scale=0.3,center}{
\begin{tikzcd}[every arrow/.append style={dash},column sep={2cm,between origins},row sep={4.2cm,between origins}] 
& & &&&& & \ydiagram[*(red!50) \bullet]{4,4,4,4}\ar[->>,thick]{d} & & & && &&\\
& & &&&& & {\ydiagram[\bullet]{4,4,4,3}}\ar[->>,thick]{lld}\ar[->>,thick]{rrd} & & \\
&& & & &\ydiagram[\bullet]{4,4,4,2}\ar[->>,thick]{lld}\ar[<<-,thick]{rrd} & &&& \ydiagram[\bullet]{4,4,3,3}\ar[<<-,thick]{lld}\ar[->>,thick]{rrd}&  \\&&
 &\ydiagram[\bullet]{4,4,4,1}\ar[<<-,thick]{rrd} & & & &\ydiagram[\bullet]{4,4,3,2}\ar[<<-,thick]{d}\ar[->>,thick]{lld}\ar[->>,thick]{rrd} & & &&\ydiagram[\bullet]{4,3,3,3}\ar[<<-,thick]{lld} \\ &&
&  &&\ydiagram[\bullet]{4,4,3,1}\ar[<<-,thick]{d}\ar[<<-,thick]{rrd} && \ydiagram[*(red!50) \bullet]{4,4,2,2}\ar[->>,thick]{lld}\ar[->>,thick]{rrd}& &\ydiagram[\bullet]{4,3,3,2}\ar[<<-,thick]{d}\ar[<<-,thick]{lld}& &\\ & &
&    & &\ydiagram[\bullet]{4,4,2,1}\ar[<<-,thick]{rrd} & & \ydiagram[\bullet]{4,3,3,1}\ar[<<-,thick]{d} && \ydiagram[\bullet]{4,3,2,2}\ar[<<-,thick]{lld} & &  & \\ &
   &  &      & & & &  \ydiagram[*(red!50) \bullet]{4,3,2,1}& &  &  &  & &  \\ &
 
\end{tikzcd}
}

%% file: TikZ_pictures/BulletsSubDigraph_n=5_d=5.tex
\adjustbox{scale=0.6,center}{

\begin{tikzcd}[every arrow/.append style={dash},column sep={2.5cm,between origins},row sep={5.6cm,between origins}] 
& & &&&& & \ydiagram[*(red!50) \bullet]{5,5,5,5,5}\ar[->>,thick]{d} & & & && &&\\& & &&&& & \ydiagram[\bullet]{5,5,5,5,4}\ar[->>,thick]{ld}\ar[->>,thick]{rd} & & & && &&\\ & & &  &  & & \ydiagram[\bullet]{5,5,5,5,3}\ar[->>,thick]{ld}\ar[<<-,thick]{rd} &&  \ydiagram[\bullet]{5,5,5,4,4}\ar[->>,thick]{rd}\ar[<<-,thick]{ld} & &\\ & & &  & &\ydiagram[\bullet]{5,5,5,5,2}\ar[->>,thick]{lld}\ar[<<-,thick]{d} & & \ydiagram[\bullet]{5,5,5,4,3}\ar[->>,thick]{lld}\ar[<<-,thick]{d}\ar[->>,thick]{rrd} &&  \ydiagram[\bullet]{5,5,4,4,4}\ar[->>,thick]{rrd}\ar[<<-,thick]{d} & & \\ & & & \ydiagram[\bullet]{5,5,5,5,1}\ar[<<-,thick]{d} & &\ydiagram[\bullet]{5,5,5,4,2}\ar[->>,thick]{lld}\ar[<<-,thick]{d}\ar[<<-,thick]{rrd} & & \ydiagram[*(red!50) \bullet]{5,5,5,3,3}\ar[->>,thick]{lld}\ar[->>,thick]{rrd} &&  \ydiagram[\bullet]{5,5,4,4,3}\ar[->>,thick]{rrd}\ar[<<-,thick]{d}\ar[<<-,thick]{lld}  & & \ydiagram[\bullet]{5,4,4,4,4}\ar[<<-,thick]{d} \\
 & & & \ydiagram[\bullet]{5,5,5,4,1}\ar[<<-,thick]{lld}\ar[<<-,thick]{rrd}   & &\ydiagram[\bullet]{5,5,5,3,2}\ar[->>,thick]{lllld} \ar[<<-,thick]{lld}\ar[<<-,thick]{rrd}   & & \ydiagram[\bullet]{5,5,4,4,2}\ar[->>,thick]{lld}\ar[<<-,thick]{d}\ar[->>,thick]{rrd} &&  \ydiagram[\bullet]{5,5,4,3,3}\ar[->>,thick]{rrrrd} \ar[<<-,thick]{rrd}\ar[<<-,thick]{lld} & & \ydiagram[\bullet]{5,4,4,4,3}\ar[<<-,thick]{rrd}\ar[<<-,thick]{lld}  \\ 
 &\ydiagram[\bullet]{5,5,5,3,1}\ar[<<-,thick]{d}\ar[<<-,thick]{rrd}  & & \ydiagram[\bullet]{5,5,5,2,2}\ar[->>,thick]{lld}\ar[<<-,thick]{rrd} & &\ydiagram[\bullet]{5,5,4,4,1}\ar[<<-,thick]{lld}\ar[<<-,thick]{rrd} & & \ydiagram[\bullet]{5,5,4,3,2}\ar[->>,thick]{lllld}\ar[<<-,thick]{lld}\ar[<<-,thick]{rrd}\ar[->>,thick]{rrrrd} &&\ydiagram[\bullet]{5,4,4,4,2}\ar[<<-,thick]{rrd}\ar[<<-,thick]{lld} & & \ydiagram[\bullet]{5,5,3,3,3}\ar[->>,thick]{rrd}\ar[<<-,thick]{lld} &&\ydiagram[\bullet]{5,4,4,3,3}\ar[<<-,thick]{d}\ar[<<-,thick]{lld} & &  \\
 &\ydiagram[\bullet]{5,5,5,2,1}\ar[<<-,thick]{rd}  & & \ydiagram[\bullet]{5,5,4,3,1}\ar[<<-,thick]{ld}\ar[<<-,thick]{rd}\ar[<<-,thick]{rrrd} & &\ydiagram[*(red!50) \bullet]{5,5,4,2,2}\ar[->>,thick]{llld}\ar[->>,thick]{rrrd}\ar[->>,thick]{rrrrrd} & & \ydiagram[\bullet]{5,4,4,4,1}\ar[<<-,thick]{ld} & & \ydiagram[*(red!50) \bullet]{5,5,3,3,2}\ar[->>,thick]{llllld}\ar[->>,thick]{ld}\ar[->>,thick]{rrrd} &&\ydiagram[\bullet]{5,4,4,3,2}\ar[<<-,thick]{llllld}\ar[<<-,thick]{rd}\ar[<<-,thick]{ld}    & & \ydiagram[\bullet]{5,4,3,3,3}\ar[<<-,thick]{ld} \\& &
\ydiagram[\bullet]{5,5,4,2,1}\ar[<<-,thick]{rrd}\ar[<<-,thick]{rrrrd}  &    & \ydiagram[\bullet]{5,5,3,3,1}\ar[<<-,thick]{d}\ar[<<-,thick]{rrrrd} & &  \ydiagram[\bullet]{5,4,4,3,1}\ar[<<-,thick]{d}\ar[<<-,thick]{rrd} & &\ydiagram[\bullet]{5,5,3,2,2}\ar[<<-,thick]{lllld}\ar[<<-,thick]{rrd} && \ydiagram[\bullet]{5,4,4,2,2}\ar[<<-,thick]{lllld}\ar[<<-,thick]{d} & & \ydiagram[\bullet]{5,4,3,3,2}\ar[<<-,thick]{lllld}\ar[<<-,thick]{lld} \\  & &
&    & \ydiagram[\bullet]{5,5,3,2,1}\ar[<<-,thick]{rrrd} & & \ydiagram[\bullet]{5,4,4,2,1}\ar[<<-,thick]{rd} && \ydiagram[\bullet]{5,4,3,3,1}\ar[<<-,thick]{ld} & & \ydiagram[\bullet]{5,4,3,2,2}\ar[<<-,thick]{llld} & & \\ &
   &  &      & & & &  \ydiagram[*(red!50) \bullet]{5,4,3,2,1}& &  &  &  & &  
\end{tikzcd}
}

%% file: TikZ_pictures/Polytope_P3.tex
\begin{center}
		\tdplotsetmaincoords{60}{115}
		\begin{tikzpicture}[tdplot_main_coords, scale =1.2]
		\draw[thick,->] (0,0,0)--(4,0,0) node[anchor=north east]{\scalebox{0.8}{$x$}};
		\draw[thick,->] (0,0,0)--(0,3,0) node[anchor=north west]{\scalebox{0.8}{$y$}};
		\draw[thick,->] (0,0,0)--(0,0,2) node[anchor=south]{\scalebox{0.8}{$z$}};

		\draw[dashed,gray] (0,0,0)--(0,0,-1);

		\draw[dashed,gray] (2,0,0)--(2,0,-1);
		\draw[dashed,gray] (0,0,-1)--(2,0,-1);
		
		\draw[dashed,gray] (1,0,0)--(1,1,0);
		\draw[dashed,gray] (0,1,0)--(1,1,0);
		
		\draw[dashed,gray] (0,2,0)--(0,2,1);	\draw[dashed,gray] (0,0,1)--(0,2,1);
	
	\filldraw[
        draw=blue,%
        fill=blue!50, opacity=0.2%
    ]          (0,0,0)
            -- (2,0,-1)
            -- (0,2,1)
            -- cycle;

		\draw[thick,blue,-] (1.99,0,-0.99)--(0, 1.99,0.99);
		
		\fill[fill=black] (2,0,-1) circle (2.5pt) 
		node[below right] {\contour{white}{\scalebox{0.8}{$\mathcal{E}$}}};

        \draw[]
        (1.9,0.2,-1.4)  
		node[below] {\contour{white}{\scalebox{0.7}{$(2,0,-1)$}}};

        \draw[]
        (1.5,1.6,-0)  
		node[below] {\contour{white}{\scalebox{0.7}{$(1,1,0)$}}};

        \draw[]
        (0,2,0.9)  
		node[right] {\contour{white}{\scalebox{0.7}{$(0,2,1)$}}};

        \draw[]
        (0,0,-0.1)  
		node[left] {\contour{white}{\scalebox{0.7}{$(0,0,0)$}}};
        
		\fill[fill=black] (1,1,0) circle (2.5pt) node[below right] {\contour{white}{\scalebox{0.8}{$\mathcal{G}$}}};
		
		\fill[fill=black] (0,2,1) circle (2.5pt) node[above right] {\contour{white}{\scalebox{0.8}{$\mathcal{F}$}}};

		\draw[thick,blue,-] (0,0,0)--(1.99, 0, -0.99);
		\draw[thick,blue,-] (0,0,0)--(0.99, 0.99,0);
		\draw[thick,blue,-] (0,0,0)--(0, 1.99,0.99);
		
		\fill[fill=black] (0,0,0) circle (2.5pt)
		node[above left] {\contour{white}{\scalebox{0.8}{$\mathcal{A}$}}};

\end{tikzpicture}
\end{center}